\definecolor{shadecolor}{gray}{0.875}
\numberwithin{equation}{section}
\theoremstyle{plain}
\newtheorem{prop}{Proposition}[section]
\newtheorem{theo}[prop]{Theorem}
\newtheorem{coro}[prop]{Corollary}
\newtheorem{lemm}[prop]{Lemma}
\theoremstyle{definition}
\newtheorem{defi}[prop]{Definition}
\newtheorem{conj}[prop]{Conjecture}
\newtheorem{assum}[prop]{Assumption}
\newtheorem{rema}[prop]{Remark}
\newtheorem{exam}[prop]{Example}
\def\bR{{\mathbb R}}
\def\Pic{\mathrm{Pic}}
\def\et{\mathrm{et}}
\def\vol{\mathrm{vol}}
\def\Nef{\mathrm{Nef}}
\def\Hilb{\mathrm{Hilb}}
\def\Pic{\mathrm{Pic}}
\def\Gal{\mathrm{Gal}}
\def\Sing{\mathrm{Sing}}
\def\Span{\mathrm{Span}}
\def\Eff{\overline{\mathrm{Eff}}}
\def\et{\textrm{\'et}}
\def\Spec{\mathrm{Spec}}
\author{Brian Lehmann}
\address{Department of Mathematics \\
Boston College  \\
Chestnut Hill, MA \, \, 02467}
\email{lehmannb@bc.edu}
\author{Akash Kumar Sengupta}
\address{Department of Mathematics, Columbia University, New York, NY 10027}
\email{akashs@math.columbia.edu}
\author{Sho Tanimoto}
\address{Graduate School of Mathematics, Nagoya University, Furocho Chikusa-ku, Nagoya, 464-8602, Japan}
\email{sho.tanimoto@math.nagoya-u.ac.jp}
\title[Geometric consistency of Manin's conjecture]{Geometric consistency of Manin's Conjecture}
\begin{document}

\begin{abstract}

We conjecture that the exceptional set in Manin's Conjecture has an explicit geometric description. Our proposal includes the rational point contributions from any generically finite map with larger geometric invariants.  We prove that this set is contained in a thin subset of rational points, verifying there is no counterexample to Manin's Conjecture which arises from an incompatibility of geometric invariants.

\end{abstract}

\maketitle

\date{\today}

\tableofcontents

\section{Introduction} 
\label{secct:intro}

Let $X$ be a geometrically integral smooth projective Fano variety over a number field $F$ and let $\mathcal{L} = \mathcal{O}_{X}(L)$ be an adelically metrized big and nef line bundle on $X$. Manin's Conjecture, first formulated in \cite{FMT89} and \cite{BM}, predicts that the growth in the number of rational points on $X$ of bounded $\mathcal{L}$-height is controlled by two geometric constants $a(X,L)$ and $b(F,X,L)$.  These constants are defined for any smooth projective variety $X$ and any big and nef divisor $L$ on $X$ as
\begin{equation*}
a(X,L) = \min \{ t\in \bR \mid  K_X + tL \in \Eff^{1}(X) \}
\end{equation*}
and
\begin{align*}
b(F, X,L) = & \textrm{ the codimension of the minimal supported face} \\
& \textrm{ of }  \Eff^{1}(X) \textrm{ containing } K_{X} + a(X, L)L
\end{align*}
where $\Eff^{1}(X)$ is the pseudo-effective cone of divisors of $X$.  If $L$ is nef but not big, we set $a(X,L) =b(F, X, L) =  \infty$.

\begin{rema}
One can define the $a$ and $b$ invariants analogously for any big divisor $L$ on $X$, and it is natural to ask whether they still control the behavior of asymptotic point counts for the associated height function.  However, in this situation the invariants can exhibit pathological behavior so that we cannot expect Manin's Conjecture to hold.  In particular this pathological behavior can occur for the anticanonical divisor on almost Fano varieties as defined in \cite[Section 3]{Peyre03}.  See Section \ref{sect: nonbigdiv} for more details.
\end{rema}

For any subset $Q \subset X(F)$, we let $N(Q, \mathcal L, T)$ denote the number of rational points on $Q$ whose height associated to $\mathcal L$ is bounded above by $T$. Manin's Conjecture predicts the asymptotic formula for the counting function $N(Q, \mathcal L, T)$ for a suitable choice of $Q$, and it has been formulated mainly by Batyrev, Manin, Peyre, and Tschinkel in a series of papers \cite{FMT89}, \cite{BM}, \cite{Peyre}, \cite{BT}, and \cite{Peyre03}. The conjecture relies upon the notion of a thin set: a subset of $X(F)$ which is a finite union of sets of the form $f(Y(F))$ where $f : Y \to X$ is a generically finite morphism from a variety $Y$ such that $f$ does not admit a rational section. (See Definition~\ref{defi:thinmaps} for more details.)

\begin{conj}[Manin's Conjecture] \label{conj: maninsconjecture_intro}
Let $F$ be a number field.  Let $X$ be a geometrically rationally connected and geometrically integral smooth projective variety defined over $F$ and let $\mathcal{L}$ be a big and nef line bundle with an adelic metrization on $X$.

Suppose that $X(F)$ is not a thin set. Then there exists a thin set $Z \subset X(F)$ such that we have
\[
N(X(F) \setminus Z, \mathcal L, T) \sim c(F, Z, \mathcal L)T^{a(X, L)} \log (T)^{b(F, X, L)-1}
\]
as $T \rightarrow \infty$ where $c(F, Z, \mathcal L)$ is Peyre-Batyrev-Tschinkel's constant introduced in \cite{Peyre} and \cite{BT}.
\end{conj}

In the statement of Manin's Conjecture an exceptional set $Z$ of rational points must be removed in order to obtain the expected growth rate.  
For example, it is possible for points to grow more quickly than predicted along certain subvarieties of $X$ and such points should not be counted.  More precisely, the following definition identifies the possible geometric obstructions to Manin's Conjecture.

\begin{defi}
Let $X$ be a smooth projective variety over a field $F$ of characteristic $0$ and let $L$ be a big and nef divisor on $X$.  A morphism of  smooth projective varieties $f: Y \to X$ is called a breaking thin map if it satisfies the following two conditions:
\begin{enumerate}
\item $f$ is generically finite onto its image, and
\item $(a(Y,f^{*}L),b(F, Y,f^{*}L)) > (a(X,L),b(F, X,L))$ in the lexicographic order.
\end{enumerate}
Note that this definition implicitly depends on the choice of $L$. 
\end{defi}

If Manin's Conjecture is self-consistent then the exceptional set should include all subsets of the form $f(Y(F))$ where $f: Y \to X$ is a breaking thin map.  However, the point contributions from breaking thin maps need not lie on a Zariski closed proper subset of $X$.  \cite{BT-cubic} used this idea to show that the exceptional set in Manin's Conjecture can be Zariski dense, contradicting the original formulation of the conjecture. 
\cite{Peyre03} conjectured that Manin's Conjecture should be revised by allowing the exceptional set to be a thin set of points, and this version was subsequently verified in a few examples (\cite{LeRudulier}, \cite{BHB18}). 

Our main theorem shows that point contributions from breaking thin maps will always be contained in a thin set.  This gives strong support to the conjecture of \cite{Peyre03}: one can never construct a counterexample to the thin set version of Manin's Conjecture using breaking thin maps. 

\begin{theo} \label{theo: maintheorem}
Let $X$ be a geometrically uniruled smooth projective variety over a field $F$ of characteristic $0$ and let $L$ be a big and nef divisor on $X$.  As we vary over all breaking thin $F$-maps $f: Y \to X$, the points
\begin{equation*}
\bigcup_{f} f(Y(F))
\end{equation*}
are contained in a thin subset of $X(F)$.
\end{theo}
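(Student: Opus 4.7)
The plan is to reduce the infinite union $\bigcup_{f} f(Y(F))$ to a finite union of contributions from generically finite morphisms to $X$, each of which is thin by definition. I would begin by separating breaking thin maps $f : Y \to X$ into two classes according to whether $f$ is dominant or not, and treat each class by a boundedness argument.

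For the non-dominant case, the image $V = \overline{f(Y)}$ is a proper subvariety of $X$ and a smooth model $\widetilde V$ has invariants exceeding those of $(X,L)$. The key geometric input is a classification of such subvarieties: using uniruledness of $X$ together with the Hassett--Tschinkel and Lehmann--Tanimoto--Tschinkel theory of the $a$-invariant, and ultimately Birkar's boundedness of Fano-type pairs, one shows that these $V$'s sweep out finitely many irreducible components of the relevant Chow or Hilbert scheme. Resolving the corresponding universal families $\mathcal{V}_j \to T_j$ yields finitely many morphisms $g_j : W_j \to X$ whose images exhaust the union over all non-dominant breaking thin $f$. If $g_j$ is not dominant, $g_j(W_j)$ is a proper closed subset of $X$ and contributes a thin set; if $g_j$ is dominant, then by construction a general fiber of $\mathcal{V}_j \to T_j$ has strictly larger $(a,b)$-invariants than $(X,L)$, which forces the evaluation to factor nontrivially and thus $g_j$ fails to be birational onto $X$. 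Either way $g_j(W_j(F))$ is thin.

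For the dominant case, the goal is to show that dominant generically finite breaking thin maps factor birationally through one of finitely many maximal examples $f_i : Y_i \to X$. The needed ingredients are: (i) the set of values $a(Y, f^*L)$ over all such $f$ is discrete, finite, and bounded above, as a consequence of rationality and ACC-type properties for adjoint pairs on bounded families; (ii) for each fixed pair of invariants, the covers $(Y, f^*L)$ lie in a bounded family after passing to relative canonical models over $X$, again by Birkar-type boundedness; and (iii) within each bounded family a Stein factorization and moduli-theoretic argument identifies finitely many distinguished covers through which all others factor. Combining the finitely many $f_i$ with the $g_j$ from the non-dominant case exhibits $\bigcup_f f(Y(F))$ as a subset of a thin set.

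The main obstacle I expect is the boundedness of dominant breaking thin covers with fixed $(a,b)$-invariants: one must control the ramification divisor of $f$ in terms of the $a$-invariant and bound the birational geometry of $Y$ accordingly, which demands substantial MMP input and careful use of adjoint rigidity. A secondary difficulty arises in the borderline case where $a(Y,f^*L) = a(X,L)$ and only the $b$-invariant strictly increases: here one must track minimal supported faces of the pseudo-effective cone under pullback, which is more delicate than the purely numerical analysis of the $a$-invariant and may require separate arguments involving the Iitaka fibration associated to $K_X + a(X,L)L$.
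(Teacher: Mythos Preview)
Your plan has the right geometric inputs for the first step, but it misses the central arithmetic difficulty and would not close. The key gap is twists. Over the algebraic closure one can indeed show that every breaking thin map factors (after an Iitaka base change) through one of finitely many covers $f_i:Y_i\to X$; this is essentially the content of the paper's Theorem~\ref{theo: mainfiniteness}. But over the number field $F$ a single geometric cover $\overline{Y}_i\to\overline{X}$ corresponds to infinitely many distinct $F$-forms $f_i^{\sigma}:Y_i^{\sigma}\to X$, and each of these is a genuinely different thin map over $F$. So your reduction does \emph{not} produce a finite union of thin maps; it produces a finite list modulo twist, and the union $\bigcup_{\sigma} f_i^{\sigma}(Y_i^{\sigma}(F))$ over all twists of a fixed $f_i$ is a priori an infinite union that need not be thin. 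The paper handles this with a separate argument (Theorem~\ref{theo: mainthinness}, proved as Theorem~\ref{theo:twists}): using Hilbert's Irreducibility Theorem one shows that among all twists $Y_i^{\sigma}$, only those for which the $b$-invariant drops and the map is not face contracting can have rational points outside a fixed thin set, so the breaking twists together contribute a thin set. Nothing in your outline plays this role.

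There is a second, related gap in the descent step. Even granting boundedness over $\overline{F}$, you assert that the finitely many distinguished covers and the factoring property are available over $F$. This is far from automatic: the covers are constructed from subgroups of geometric \'etale fundamental groups, and there is no reason for them to be Galois-stable. The paper resolves this by a careful construction (Lemma~\ref{lemm: finitelymanycoversovernf}) using strong Galois closures and, crucially, the existence of an $F$-rational point on $Y$ as a basepoint to force descent; this is why the theorem is only about $f(Y(F))$ and not about the maps themselves. Finally, your claim (ii) that the dominant covers $Y$ lie in a bounded family is false as stated: the paper explicitly notes that the $Y$'s are unbounded, and what is bounded is the family of adjoint rigid \emph{images} in $X$ of general fibers of the canonical model of $(Y,a(Y,f^*L)f^*L)$, together with a degree bound on the cover coming from \cite{Sen17}.
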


\begin{rema}
In the proof of Theorem \ref{theo: maintheorem} we explicitly identify the finite set of maps which contribute to the thin set.  Thus our results are relevant for all fields $F$ of characteristic $0$, even those for which the notion of a thin set is not meaningful.  For example, over an algebraically closed field the geometric result Theorem \ref{theo: mainfiniteness} can be seen as an alternative to Theorem \ref{theo: maintheorem}.
\end{rema}

Theorem \ref{theo: maintheorem} generalizes earlier partial results in \cite{BT}, \cite{HTT15}, \cite{LTT14}, \cite{HJ16},  \cite{LTDuke}, and \cite{Sen17}.  These papers also establish some practical techniques for computing this thin set. See \cite{LTRMS} for some computational aspects of the above exceptional sets.

In fact, we prove a more precise statement (Theorem \ref{theo: precisetheorem}) which also addresses generically finite morphisms $f: Y \to X$ such that $Y$ has the same $a$ and $b$ invariants as $X$.  In this situation the rational point contributions from $Y$ can affect the leading constant in Manin's Conjecture (as in \cite{LeRudulier}, \cite{BHB18}), and thus one must decide whether or not $f(Y(F))$ should be included in the exceptional set in order to obtain the constant predicted by \cite{Peyre} and \cite{BT}.
We distinguish the two possibilities using the geometric notion of a face contracting morphism (Definition \ref{defi: facecontraction}).

Finally, we conjecture that the exceptional set in Manin's Conjecture will actually coincide with the geometrically defined subset of $X(F)$ constructed in Theorem \ref{theo: precisetheorem}.  Conjecture \ref{conj: maninsconjecture} gives a precise formulation of the exceptional set using geometric invariants.  In Section \ref{sect: conjecturaldescription} we verify this in many examples where Manin's Conjecture is known to hold.

\subsection{A summary of the proof}
To prove Theorem \ref{theo: maintheorem}, it would suffice to show that there is a finite set of breaking thin maps $\{ f_{i}: Y_{i} \to X \}_{i=1}^{r}$ such that
\begin{equation*}
\bigcup_{f} f(Y(F)) = \bigcup_{i=1}^{r} f_{i}(Y_{i}(F)).
\end{equation*}
In particular, it would suffice to show that there is a finite set of breaking thin maps $\{ f_{i} \}$ such that every breaking thin map $f: Y \to X$ factors through some $f_{i}$.  Our proof is built on this idea.

The first step is to prove a factoring result for breaking thin maps over an algebraically closed field of characteristic $0$.  However, to obtain a factoring for the map $f: Y \to X$ we will need to allow ourselves to alter the variety $Y$ in the following way.

\begin{defi} 
Let $Y$ be a smooth projective variety over a field of characteristic $0$ and let $L$ be a big and nef $\mathbb{Q}$-divisor on $X$.  Let $\pi: Y \dashrightarrow W$ be the canonical model associated to $K_{Y} + a(Y,L)L$ and let $U$ be the maximal open subset of $Y$ where $\pi$ is defined. Suppose that $T \to W$ is a dominant morphism of normal projective varieties.  Then there is a unique component of $T \times_W U$ mapping dominantly to $T$.   
Denote by $\widetilde{Y}$ the normalization of the Zariski closure of this component in $T \times Y$.

We call such an $\widetilde{Y}$ an Iitaka base change of $Y$; it is naturally equipped with maps $\widetilde{Y} \to T$ and $\widetilde{Y} \to Y$.  
\end{defi}

The following theorem extends earlier results of \cite{HJ16}, \cite{LTDuke}, and \cite{Sen17} to the most general setting. 

\begin{theo} \label{theo: mainfiniteness}
Let $X$ be a uniruled smooth projective variety over an algebraically closed field of characteristic $0$. Let $L$ be a big and nef $\mathbb{Q}$-divisor on $X$.  There is a finite set of breaking thin maps $\{ f_{i}: Y_{i} \to X\}$ such that for any breaking thin map $f: Y \to X$ either the image of $f : Y \to X$ is contained in the augmented base locus $\mathbf B_+(L)$ or there is an Iitaka base change $\widetilde{Y}$ of $Y$ with respect to $f^{*}L$ such that the induced morphism $\widetilde{f}: \widetilde{Y} \to X$ factors rationally through one of the $f_{i}$.
\end{theo}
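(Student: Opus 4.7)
The plan is to stratify breaking thin maps by which invariant jumps and establish finiteness on each stratum. First I would reduce to the dominant case by induction on $\dim X$. If $V := f(Y) \subsetneq X$ and $V \not\subset \mathbf{B}_+(L)$, then the pullback of $L$ to a resolution $\widetilde V$ of $V$ is big and nef and $\widetilde V$ is uniruled (since $a(\widetilde V, L|_{\widetilde V}) \geq a(X,L) > 0$ by BDPP). Prior boundedness results of \cite{LTT14}, \cite{HJ16}, and \cite{LTDuke} ensure that the subvarieties $V \subset X$ with $a(\widetilde V, L|_{\widetilde V}) \geq a(X,L)$ and $V \not\subset \mathbf{B}_+(L)$ fit into finitely many bounded families. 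Picking a representative from each family, which has strictly smaller dimension, and applying the theorem inductively yields a finite list of breaking thin maps that absorbs every breaking thin map with proper image.

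For dominant $f$ I separate two sub-cases. If $a(Y, f^{*}L) > a(X,L)$, I would invoke the cover-theoretic boundedness of dominant generically finite maps with strictly larger $a$-invariant (the dominant analogue of the subvariety case, as developed in \cite{Sen17}) to produce finitely many universal dominant covers $g_{j}\colon Z_{j} \to X$ through which every such $f$ factors rationally. Adjoint rigidity on the $Z_j$-side pins down the canonical model and makes the Iitaka base change unnecessary in this branch.

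The main obstacle is the remaining sub-case $a(Y, f^{*}L) = a(X,L)$ and $b(F, Y, f^{*}L) > b(F, X, L)$. Here the key tool is the canonical-model fibration $\pi\colon Y \dashrightarrow W$ associated to $K_{Y} + a(Y, f^{*}L) f^{*}L$, whose base is positive-dimensional precisely because of the jump in $b$. On a general fiber $Y_{w}$ the adjoint divisor lies on a face of $\Eff^{1}(Y_{w})$ whose codimension controls $b$, and these adjoint-rigid fibers are what must be classified. The Iitaka base change $\widetilde Y$ is designed exactly to rigidify the family of fibers over $W$ by pulling back along a dominant $T \to W$, so that $\widetilde Y \to X$ becomes comparable to one of finitely many ``universal'' covers constructed from the moduli of the fibers. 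The technical heart, which I expect to be the dominant obstacle, is proving a boundedness statement for the moduli of adjoint-rigid fibers equipped with their generically finite maps to $X$; this should combine BAB-type boundedness for canonical models with a structural analysis of the codimension-$b$ face of $\Eff^{1}(Y)$ containing the adjoint class, and a semicontinuity argument ensuring that the relevant face structure is locally constant in families.

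Finally, concatenating the finite lists obtained from the three sub-cases and verifying that ``factors rationally through $f_{i}$'' is compatible with the Iitaka base change construction (so that factorings pulled back along $T \to W$ still factor through the chosen $f_{i}$) produces the desired finite collection $\{f_{i}\}$.
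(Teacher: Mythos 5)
Your overall shape (stratify by which invariant jumps, reduce to adjoint rigid pieces, use the canonical model fibration and the Iitaka base change) matches the paper's strategy, but there are genuine gaps at the two places where the actual work happens.

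First, the reduction of the non-dominant case by ``picking a representative from each bounded family and applying the theorem inductively'' does not work as stated. The subvarieties $V=f(Y)$ with $a(V,L|_V)\geq a(X,L)$ move in positive-dimensional families, and a breaking thin map onto one member $V_t$ does not factor through a chosen representative $V_{t_0}$; you need a statement that is uniform over the whole family. The paper achieves this by working with the total space $p_i:\mathcal{U}_i\to W_i$ of each family (Theorem \ref{theo: aconstruction}, Lemma \ref{lemm: opensetgoodfamily}) and constructing covers of $\mathcal{U}_i$ fiberwise-uniformly, using the decomposition $\pi_1^{\et}(\mathcal{V},\zeta(w))=\pi_1^{\et}(\mathcal{V}_w,\zeta(w))\rtimes\zeta_*\pi_1^{\et}(W,w)$ and the homotopy lifting property (Lemma \ref{lemm: finitelymanycoversinduction}). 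This relative construction is also exactly why the Iitaka base change is needed: a subgroup classifying the cover of one fiber only extends over a finite cover $T\to W$ of the base. Your concatenation step glosses over this.

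Second, your ``technical heart'' is named but not supplied, and the ingredients you propose for it are not the right ones. The finiteness of universal covers of a fixed adjoint rigid fiber does not come from analyzing the codimension-$b$ face of $\Eff^1$; it comes from two facts of \cite{Sen17}: the branch divisor of any adjoint-rigid cover with equal $a$-value is contained in the locus $Q$ of divisors with strictly larger $a$-invariant (so one works with $\pi_1^{\et}$ of the complement of $Q$), and the degree of such covers is bounded by a constant depending only on $\dim$, $a$, and $L^{\dim}$ (via BAB). Together with topological finite presentation of $\pi_1^{\et}$ this yields finitely many candidate subgroups $\Xi_i$, which is the actual finiteness mechanism. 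Two smaller points: your sub-case of dominant $f$ with $a(Y,f^*L)>a(X,L)$ is vacuous by the ramification formula (Lemma \ref{lemm: genfinite}), and the base of the canonical fibration of $Y$ is positive-dimensional if and only if $(Y,f^*L)$ fails to be adjoint rigid, which is not equivalent to the jump in $b$ as you assert.
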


The key input is Birkar's solution of the Borisov-Alexeev-Borisov Conjecture (\cite{birkar16} and \cite{birkar16b}).  Although the varieties $Y$ in Theorem \ref{theo: mainfiniteness} need not form a bounded family, using Birkar's result we show that their images in $X$ are covered by a set of adjoint rigid subvarieties (Definition \ref{defi: adjointrigid}) which do form a bounded family.  Furthermore, \cite{Sen17} controls the possible ramification loci of such morphisms $f$ and their degrees are again bounded by the Borisov-Alexeev-Borisov Conjecture.  Thus we can construct the $\{f_{i}\}$ in Theorem \ref{theo: mainfiniteness} using suitably chosen covers over the universal family of adjoint rigid subvarieties. Then the corresponding factoring property follows from the homotopy lifting property of covering spaces.

The second step is to ``descend'' Theorem \ref{theo: mainfiniteness} from the algebraic closure $\overline{F}$ to the ground field $F$.  There are two main obstacles.  First, infinitely many twists over $F$ can be identified with a single map over $\overline{F}$.  Thus it is more natural to allow ourselves to work with all twists of a finite set of maps $\{ f_{i}: Y_{i} \to X \}$ when proving our factoring result.  Second, it is quite difficult to determine when the maps constructed by Theorem \ref{theo: mainfiniteness} descend to the ground field.  Even when they do descend, the corresponding homotopy lifting property may not be available over the ground field, and thus it is unclear whether the desired factoring holds. Fortunately, we only care about the situation when our varieties are equipped with a rational point.  Using a delicate construction involving the arithmetic \'etale fundamental group and the homotopy lifting property for a rational basepoint, we prove a factoring result over an arbitrary field of characteristic $0$ (Lemma~\ref{lemm: finitelymanycoversovernf}) in the situation when $Y$ admits a rational point.
Here we state a shorter version of Lemma~\ref{lemm: finitelymanycoversovernf} to give readers a sense of this lemma. We construct universal families of generically finite maps which satisfy a certain universal property up to twisting:

\begin{lemm} \label{lemm: finitelymanycoversovernf_intro}
Let $X$ be a geometrically uniruled geometrically integral smooth projective variety defined over a field $F$ of characteristic $0$ and let $L$ be a big and nef $\mathbb Q$-divisor on $X$. Let $p : X \rightarrow W$ be a surjective morphism between projective varieties.  Suppose that any general fiber over $W$ has the same $a$-invariant with respect to $L$ as $X$ does with respect to $L$. Then there exist a non-empty open subset $W^\circ \subset W$ (with preimage $X^\circ \subset X$), a proper closed subset $C \subsetneq X$, and a finite set of dominant generically finite morphisms  $\{ f_{j}: \mathcal{Y}_{j} \to X \}$ defined over $F$ that fit into commutative diagrams
\begin{equation*}
\xymatrix{ \mathcal{Y}_{j} \ar[r]^{f_{j}} \ar[d]_{q_{j}} &  X \ar[d]_{p} \\
T_{j} \ar[r] & W}
\end{equation*}
such that the following holds.
(1) $p : X^\circ \to W^\circ$ is a good family of adjoint rigid varieties (See Definition~\ref{defi:goodfamily} for the definition for good families of adjoint rigid varieties); (2) Suppose that $q: \mathcal{Y} \to T$ is a projective surjective morphism of varieties over $F$ where $\mathcal{Y}$ is smooth and geometrically integral and that we have a diagram
\begin{equation*}
\xymatrix{ \mathcal{Y} \ar[r]^{f} \ar[d]_{q} &  X \ar[d]_{p} \\
T \ar[r]^{g} & W}
\end{equation*}
 satisfying the following properties:
\begin{enumerate}
\item There is some open subset $T' \subset T$ such that $\mathcal{Y}$ is a good family of adjoint rigid varieties over $T'$ and the map $f: q^{-1}(T') \to X$ has image in $X^\circ$ and is a good morphism of good families. (See Definition~\ref{defi:goodfamily} for the definition for good morphisms.)
\item There is a rational point $y \in \mathcal{Y}(F)$ such that $f(y) \not \in C$.
\end{enumerate}
Then for some index $j$ there will be a twist $f_{j}^\sigma : \mathcal Y_j^\sigma \rightarrow X$ such that $f(y) \in f_{j}^{\sigma}(\mathcal{Y}_{j}^{\sigma}(F))$.  
Furthermore, there is a dominant generically finite map $\widetilde{T} \to T$ such that the main component $\widetilde{q}: \widetilde{\mathcal{Y}} \to \widetilde{T}$ of the base change of $q$ by $\widetilde{T} \to T$ satisfies that the induced map $\widetilde{f}: \widetilde{\mathcal{Y}} \to \mathcal{U}$ will factor rationally through $f_{j}^{\sigma}$ and a general geometric fiber of $\widetilde{q}$ will map birationally to a geometric fiber of the map $q_{j}^{\sigma}: \mathcal{Y}_{j}^{\sigma} \to T_{j}^{\sigma}$.
\end{lemm}

Using the above lemma we have now established that $\cup_{f} f(Y(F))$ is contained in the rational points coming from the twists of a finite set of thin maps.  The final step is to show that if we fix a morphism $f_{i}: Y_{i} \to X$, all of its twists which are breaking thin maps will together only contribute a thin set of rational points. The essential ingredient of the following theorem is the Hilbert Irreducibility Theorem proved by Serre.

\begin{theo} \label{theo: mainthinness}
Let $X$ be a geometrically uniruled smooth projective variety over a field $F$ of characteristic $0$. Suppose that $f: Y \to X$ is a generically finite morphism from a smooth projective variety $Y$.  As $\sigma$ varies over all $\sigma \in H^1(F, \mathrm{Aut}(Y/X))$ such that
$Y^{\sigma}$ is irreducible and
\begin{equation*}
(a(X, L), b(F, X, L)) < (a(Y^{\sigma}, (f^{\sigma})^{*}L), b(F, Y^\sigma, (f^\sigma)^*L))
\end{equation*}
the set 
\begin{equation*}
 Z= \bigcup_{\sigma} f^\sigma(Y^\sigma (F)) \subset X(F)
 \end{equation*}
is contained in a thin subset of $X(F)$.
\end{theo}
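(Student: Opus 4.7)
The plan is to apply Serre's Hilbert Irreducibility Theorem (HIT) to the geometric Galois closure of $f$ and to use the breaking hypothesis to rule out contributions from outside a thin subset of $X(F)$. First, I note that $f$ must have degree at least two, since a birational $f$ would give birational twists and force the $(a, b)$-invariants to coincide with those of $X$, contradicting the strict breaking inequality. Let $\pi \colon \widetilde{Y} \to X$ be a geometric Galois closure of $f$ with finite monodromy group $G$ (carrying a natural $\Gamma_F$-action), and write $Y = \widetilde{Y}/H$ for a subgroup $H \leq G$, so that $\mathrm{Aut}(Y/X) \cong N_G(H)/H$ as $\Gamma_F$-groups. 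For a point $x \in X(F)$ in the \'{e}tale locus $U$ of $\pi$, a choice of $\bar{F}$-point $y \in Y(\bar{F})$ above $x$ defines a cocycle $\sigma_x \colon \Gamma_F \to \mathrm{Aut}(Y/X)(\bar{F})$ via $\gamma \cdot y = \sigma_x(\gamma) \cdot y$, well-defined up to conjugation, and non-abelian Galois descent gives the criterion that $x \in f^\sigma(Y^\sigma(F))$ if and only if $\sigma$ coincides with $[\sigma_x]^{-1}$ in $H^1(F, \mathrm{Aut}(Y/X))$; the irreducibility hypothesis on $Y^\sigma$ ensures we work with genuine varieties.

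I would then apply Serre's HIT to $\pi$: since this is a non-trivial Galois cover, there is a thin subset $N \subset X(F)$ such that for every $x \in U(F) \setminus N$, the class $[\sigma_x]$ (lifted via the connecting map for $H \to N_G(H) \to N_G(H)/H$ to $H^1(F, G)$) has image in $G$ equal to the arithmetic monodromy $G_F \leq G$. In particular, the image of $[\sigma_x]$ in $\mathrm{Aut}(Y/X)$ is as large as the HIT-generic construction allows. To exploit this, observe that $a(Y^\sigma, (f^\sigma)^*L) = a(Y, f^*L)$ is a geometric invariant bounded above by $a(X, L)$ via the ramification formula $K_Y = f^*K_X + R_f$ with $R_f$ effective, so the strict lexicographic breaking inequality forces $a(Y, f^*L) = a(X, L)$ and $b(F, Y^\sigma, (f^\sigma)^*L) > b(F, X, L)$. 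The quantity $b(F, Y^\sigma, (f^\sigma)^*L)$ is the codimension of the minimal face of $\Eff^1(Y_{\bar{F}})$ invariant under the $\sigma$-twisted $\Gamma_F$-action on $\NS(Y_{\bar{F}})$, and enlarging the image of $\sigma$ under $\mathrm{Aut}(Y/X) \to \mathrm{GL}(\NS(Y_{\bar{F}}))$ enlarges the symmetries acting, shrinks the invariant subspace, and thereby lowers $b(F, Y^\sigma, (f^\sigma)^*L)$.

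For the HIT-generic class $[\sigma_x]$, the twisted $\Gamma_F$-action on $\NS(Y_{\bar{F}})$ attains its largest possible image, so the invariant subspace collapses essentially to the $f^*$-pullback of $\Gamma_F$-invariant classes on $X$, yielding $b(F, Y^{[\sigma_x]^{-1}}, (f^{[\sigma_x]^{-1}})^*L) \leq b(F, X, L)$; the strict breaking inequality then fails for $\sigma = [\sigma_x]^{-1}$, so $x$ does not lie in $\bigcup_\sigma f^\sigma(Y^\sigma(F))$. The union is therefore contained in $N \cup (X \setminus U)(F)$, which is thin. The hard part will be rigorously establishing the monotonicity of the $b$-invariant under twisting and, specifically, verifying that for the HIT-generic image the invariant face under the twisted action is no larger in codimension than the minimal $\Gamma_F$-invariant face of $\Eff^1(X_{\bar{F}})$ containing $K_X + a(X, L)L$. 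This comparison relies on a structural analysis of the pseudo-effective cones of $X$ and $Y$ via $f^*$, using the big and nef property of $L$ and the $G$-equivariant structure of $\Eff^1(Y_{\bar{F}})$ induced by the deck transformations.
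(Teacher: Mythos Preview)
Your strategy is the same as the paper's: apply Hilbert irreducibility to force the Galois action on the fiber to be as large as possible, and then argue that this large action on $N^{1}(\overline{Y})$ forces $b(F,Y^{\sigma},(f^{\sigma})^{*}L)\leq b(F,X,L)$, so that the breaking inequality fails outside a thin set.  The difference is that you stop exactly at the point where the real content lies, and you flag it yourself: the ``hard part'' you postpone is the entire argument.

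Concretely, the assertion ``for the HIT-generic class the invariant subspace collapses essentially to the $f^{*}$-pullback of $\Gamma_{F}$-invariant classes on $X$'' is not a formality, and your proposed justification (monotonicity of $b$ in the image of the twist, plus some vague $G$-equivariant structure on $\Eff^{1}(\overline{Y})$) does not pin down why the quotient has dimension at most $b(F,X,L)$.  The paper supplies the missing lemma: after reducing to the case where $\overline{F}(\overline{Y})/\overline{F}(\overline{X})$ is Galois with group $G=\mathrm{Aut}(\overline{Y}/\overline{X})$, one shows directly that $N^{1}(\overline{Y})^{G}$ is spanned by $f^{*}N^{1}(\overline{X})$ together with $f$-exceptional divisors.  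The proof is a push-pull through the Stein factorization $\overline{Y}\to\overline{W}\to\overline{X}$: for a $G$-invariant divisor $D_{\overline{Y}}$ one has $\overline{g}_{*}D_{\overline{Y}}=\tfrac{1}{|G|}\overline{h}^{*}\overline{f}_{*}D_{\overline{Y}}$ as $\mathbb{Q}$-Weil divisors on $\overline{W}$, so $D_{\overline{Y}}-\tfrac{1}{|G|}\overline{f}^{*}\overline{f}_{*}D_{\overline{Y}}$ is $\overline{g}$-exceptional, hence $\overline{f}$-exceptional.  Since every $f$-exceptional divisor lies in $\mathcal{F}^{\overline{Y}}$, one obtains a surjection
\[
N^{1}(\overline{X})/\Span(\mathcal{F}^{\overline{X}})\;\twoheadrightarrow\;N^{1}(\overline{Y})^{G}/\Span(\mathcal{F}^{\overline{Y}})^{G},
\]
and taking $\Gal(\overline{F}/F)$-invariants (after arranging via HIT over a suitable finite extension $F_{1}$ that $\Gal(\overline{F}/F)$ acts on $N^{1}(\overline{Y^{\sigma}})$ at least through $G$) gives $b(F,Y^{\sigma},(f^{\sigma})^{*}L)\leq b(F,X,L)$.

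One further point: your detour through the Galois closure $\widetilde{Y}$ with $Y=\widetilde{Y}/H$ and $\mathrm{Aut}(Y/X)\cong N_{G}(H)/H$ adds a layer you never actually use, and it obscures the case split.  The paper instead disposes of the non-Galois case in one line by citing \cite[Proposition~8.2]{LTDuke} (when $\overline{F}(\overline{Y})/\overline{F}(\overline{X})$ is not Galois the union over twists is already thin), and then carries out the argument above purely in the Galois situation where $H$ is trivial.  If you want to keep your Galois-closure framing you would still need to reduce to this case, since the action of $N_{G}(H)/H$ on $N^{1}(\overline{Y})$ does not by itself give the spanning statement you need.
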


Recall that our main Theorem \ref{theo: precisetheorem} also addresses thin maps $f: Y \to X$ such that the $a$ and $b$ invariants of $Y$ and $X$ are the same.  Thus we will actually prove stronger versions of Theorem \ref{theo: mainfiniteness} and Theorem \ref{theo: mainthinness} which address this more general situation.

\subsection{Structure of the paper}
Section~\ref{Preliminaries} is devoted to preliminaries.  In Section \ref{sect: mmp}, we briefly review some foundational results of the minimal model program from \cite{BCHM}, \cite{birkar16}, and \cite{birkar16b} and derive some consequences in preparation for the rest of the paper.  

Section~\ref{sec: geoinv} develops the theory of the geometric invariants $a(X, L), b(F, X, L)$ over an arbitrary field of characteristic $0$.  In particular, we construct universal families of adjoint rigid subvarieties in Section \ref{subsec: BAB} and we introduce face contracting morphisms in Section \ref{subsec: facecontracting}. 

In Section~\ref{sect: conjecturaldescription}, we propose a conjectural geometric description of the exceptional set in Manin's Conjecture (Definition~\ref{defi:exceptionalsets}). The main result of this paper (Theorem~\ref{theo: precisetheorem}) is that this proposed set is contained in a thin subset of rational points in accordance with the prediction made by Peyre.  We then compare our construction with the exceptional set in various examples and discuss a few counterexamples to possible extensions.

The rest of the paper is devoted to proofs of our main theorems.  Section~\ref{sec: twists} is devoted to the study of twists and the relationship with thin sets, proving a stronger version of Theorem~\ref{theo: mainthinness}.  In Section~\ref{sect: boundedness} we study the boundedness of breaking thin maps over an algebraically closed field of characteristic $0$ and prove a stronger version of Theorem~\ref{theo: mainfiniteness}.  In Section~\ref{sec: thinset}, we work over an arbitrary field $F$ of characteristic $0$ and prove our main Theorem \ref{theo: precisetheorem}.  The key technical result is Lemma~\ref{lemm: finitelymanycoversovernf}, which constructs universal families for breaking thin maps by combining the results of Section \ref{sect: boundedness} with a careful analysis of arithmetic \'etale fundamental groups.

\bigskip

\noindent
{\bf Acknowledgments.}  
 The second author would like to thank his advisor J\'{a}nos Koll\'{a}r for constant support and encouragement. The authors would like to thank Yuri Tschinkel and Anthony V\'arilly-Alvarado for constant supports of this research. The authors would also like to thank Yuri Tschinkel and Anthony V\'arilly-Alvarado for answering our questions about toric varieties and the lifting property and Yoshinori Gongyo for his explanation of Theorem \ref{theo: BAB2}. The authors would like to thank Tim Browning, Brendan Hassett, and Marta Pieropan for useful comments.  Finally, the authors would like to thank an anonymous reviewer for constructive criticisms and detailed suggestions which significantly improved the exposition of the paper. The authors would also like to thank multiple referees for careful reading of the paper and many detailed suggestions to improve the exposition of the paper.
 
Brian Lehmann was supported by NSF grant 1600875.  Sho Tanimoto was partially supported by MEXT Japan, Leading Initiative for Excellent Young Researchers (LEADER), by Inamori Foundation, by JSPS KAKENHI Early-Career Scientists Grant number 19K14512, by JSPS Bilateral Joint Research Projects Grant number JPJSBP120219935, and by JST FOREST program Grant number JPMJFR212Z.

\section{Preliminaries}
\label{Preliminaries}

Let $F$ be a field of characteristic $0$.  A variety $X$ defined over $F$ is an integral separated scheme of finite type over $F$.  For an extension of fields $F'/F$, we denote the base change of $X$ to $F'$ by $X_{F'}$ and denote the pullback of a $\mathbb{Q}$-Cartier divisor $L$ from $X$ to $X_{F'}$ by $L_{F'}$. For an algebraic closure $\overline{F}/F$, we will also sometimes denote the base change by $\overline{X}$ and the pullback by $\overline{L}$, particularly when $X$ is geometrically integral. 

\begin{defi}
\label{defi:thinmaps}
Let $X$ and $Y$ be varieties.  A morphism $f: Y \to X$ is thin if it is generically finite onto its image and admits no rational section.

A thin subset of $X(F)$ is a finite union $\cup_{j} f_{j}(Y_{j}(F))$ where $f_{j}: Y_{j} \to X$ are thin maps over $F$.
\end{defi}

\bigskip

Consider a commutative diagram of dominant morphisms of varieties
\begin{equation*}
\xymatrix{ &  U  \ar[d]\\
T  \ar[r] & W}
\end{equation*}
If there is a unique component of $T \times_{W} U$ which dominates $T$ and $U$ under the projection maps, then we call it the ``main component'' of the product.

\bigskip

Let $X$ be a projective variety.  A family of subvarieties of $X$ is a diagram
\begin{equation*}
X \times W \supset \xymatrix{  \mathcal{U} \ar[r]^{s} \ar[d]^{p} &  X \\
W & }
\end{equation*}
such that $W$ is a variety, $p$ is projective and flat with irreducible fibers, $\mathcal U$ is a closed subscheme of $X \times W$, and $s$ is the restriction of the projection to $\mathcal{U}$.  A curve $C$ is said to be a movable curve if it is a member of a family of $1$-dimensional subvarieties such that $s$ is dominant.

\bigskip

We will denote the Hilbert scheme of $X$ by $\Hilb(X)$.  A bounded subset of $\Hilb(X)$ is any set that is contained in a finite union of irreducible components.  

\bigskip

We next recall several definitions from birational geometry.  Suppose that $X$ is a normal  projective variety defined over $F$.
We denote the N\'eron-Severi space of $\mathbb{R}$-Cartier divisors up to numerical equivalence by $N^1(X)$ and the space of $\mathbb{R}$-$1$-cycles modulo numerical equivalence by $N_1(X)$. We denote the pseudo-effective cone and the nef cone of divisors by
\[
\overline{\mathrm{Eff}}^1(X), \quad \mathrm{Nef}^1(X)
\]
respectively, and the pseudo-effective cone and the nef cone of curves by
\[
\overline{\mathrm{Eff}}_1(X), \quad \mathrm{Nef}_1(X)
\]
respectively.  These are pointed closed convex cones in $N^1(X)$ and $N_1(X)$.

\begin{defi}[\cite{Nakamaye00}, \cite{ELMNP06}]
Let $X$ be a smooth projective variety defined over a field $F$ of characteristic $0$.  Let $D$ be a $\mathbb Q$-divisor on $X$.  The asymptotic base locus of $D$ is
\begin{equation*}
\mathbf{B}(D) = \bigcap_{m \in \mathbb{Z}_{>0}} \mathrm{Bs}(|mdD|)
\end{equation*}
where $d$ is any positive integer such that $dD$ is Cartier.  The definition is independent of the choice of $d$.
The augmented base locus of $D$ is
\begin{equation*}
\mathbf{B}_{+}(D) =  \bigcap_{\textrm{ample }\mathbb{Q}\textrm{-div }A} \mathbf{B}(D-A).
\end{equation*}
The augmented base locus is always a closed subset of $X$ by \cite[Proposition 1.5]{ELMNP06}.
\end{defi}

Suppose that $L$ is a big and nef $\mathbb{Q}$-divisor on a smooth variety over an algebraically closed field of characteristic $0$.  By \cite[Theorem 0.3]{Nakamaye00} the augmented base locus coincides with the Zariski closure of the set of subvarieties $V \subset X$ such that $L|_{V}$ is not big.  In fact, the restriction of $L$ to any component of the augmented base locus fails to be big. 
We claim that the same is true for any ground field of characteristic $0$.  Indeed, if $X$ is defined over the ground field $F$, then by \cite[Proposition 1.5]{ELMNP06} $\mathbf{B}_{+}(L_{\overline{F}}) = \mathbf{B}(L_{\overline{F}} - A_{\overline{F}})$ for some ample $\mathbb{Q}$-divisor $A$ defined over the ground field.  Since the formation of a base locus is compatible with change of base field, we conclude that both sides are defined over the ground field.  The fact that $\mathbf{B}_{+}(L)$ is the Zariski closure of the subvarieties $V$ such that $L|_{V}$ is not big can be deduced from the corresponding statement over $\overline{F}$.

Note that if $X$ is defined over a number field and $L$ is a big and nef $\mathbb{Q}$-divisor, the Northcott property for rational points is only guaranteed to hold after removing the points contained in $\mathbf{B}_{+}(L)$.  In particular, we will always include $\mathbf{B}_{+}(L)$ in the exceptional set for Manin's Conjecture.

\section{The minimal model program} \label{sect: mmp}

We will use the standard notations of the minimal model program regarding singularities of pairs. We refer to \cite[Definition 2.34]{KM98} for their definitions. We will frequently use the following well-known lemma (see for example \cite[Theorem 2.3]{LTT14}): 

\begin{lemm} \label{lemm: terminalpair}
Let $X$ be a smooth projective variety over a field $F$ of characteristic $0$ and let $L$ be a big and nef $\mathbb{Q}$-divisor on $X$.  Then there is an effective $\mathbb{Q}$-divisor $\Delta$ and an effective ample $\mathbb{Q}$-divisor $A$ such that $(X,\Delta + A)$ is terminal and $L$ is $\mathbb{Q}$-linearly equivalent to $\Delta + A$.
\end{lemm}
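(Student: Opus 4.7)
My plan is to combine Kodaira's lemma with a Bertini-type argument. Since $L$ is big, Kodaira's lemma applied over $F$ produces an ample $\mathbb{Q}$-divisor $H$ and an effective $\mathbb{Q}$-divisor $E$ on $X$, both defined over $F$, such that $L \sim_{\mathbb{Q}} H + E$. I will introduce a small parameter $\epsilon \in (0,1)$ and set $\Delta := \epsilon E$. The class
\[
L - \Delta \;\sim_{\mathbb{Q}}\; (1-\epsilon)L + \epsilon H
\]
is ample as a sum of a nef class and an ample class. For a sufficiently divisible integer $m$, the class $m(L - \Delta)$ will be very ample, and I will pick a general member $A_0 \in |m(L - \Delta)|$ defined over $F$ (possible because $F$, having characteristic zero, is infinite). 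Then $A := \tfrac{1}{m}A_0$ is an effective ample $\mathbb{Q}$-divisor with $L \sim_{\mathbb{Q}} \Delta + A$.

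The heart of the argument is verifying that $\epsilon$ and $m$ can be chosen so that $(X, \Delta + A)$ is terminal. To see this, I will fix a log resolution $\pi : \widetilde{X} \to X$ of the support of $E$, and write $K_{\widetilde{X}} = \pi^* K_X + \sum k_i E_i$ with $k_i > 0$ for each $\pi$-exceptional divisor $E_i$, reflecting the smoothness of $X$. Writing $\pi^* E = \pi^{-1}_* E + \sum e_i E_i$, the $\pi$-exceptional discrepancies of $(X, \epsilon E + \tfrac{1}{m} A_0)$ equal
\[
k_i - \epsilon e_i - \tfrac{1}{m}\,\mathrm{mult}_{E_i}(\pi^* A_0).
\]
Since $m(L - \Delta)$ is very ample, a general member $A_0$ will avoid the finitely many centers $\pi(E_i)$, so $\mathrm{mult}_{E_i}(\pi^* A_0) = 0$; moreover, Bertini guarantees that $\pi^* A_0$ is smooth and meets the exceptional locus and the strict transform of $E$ transversally, making $\pi$ a log resolution of $(X, E + A_0)$ as well. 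Choosing $\epsilon$ small enough that $k_i - \epsilon e_i > 0$ for all $i$ then makes $(X, \Delta + A)$ terminal, as required.

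The main obstacle will be the discrepancy bookkeeping in the second step, but this reduces to standard facts once a log resolution is fixed; the Bertini step works uniformly over the characteristic zero ground field $F$, and the only nontrivial genericity hypothesis is that $A_0$ be a sufficiently general $F$-rational member of a very ample base-point-free linear system, which is available since $F$ is infinite.
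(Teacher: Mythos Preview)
The paper does not give its own proof of this lemma; it simply cites \cite[Theorem 2.3]{LTT14} and calls the result well-known. Your Kodaira-plus-Bertini approach is exactly the standard argument underlying that reference, and the overall strategy is correct.

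There is one genuine gap in your discrepancy verification. You conclude that $(X,\Delta+A)$ is terminal once the $\pi$-exceptional discrepancies $k_i - \epsilon e_i$ are all positive, but checking positivity only for the exceptional divisors of a single log resolution does not suffice for terminality: on $\mathbb{A}^2$ the SNC pair $\bigl(\mathbb{A}^2,\tfrac12 L_1 + \tfrac12 L_2\bigr)$ with two transversal lines has no exceptional divisors for the identity resolution, yet blowing up the origin gives discrepancy $0$. What you are missing is control of further blow-ups on $\widetilde X$, which amounts to showing that the SNC pair $\bigl(\widetilde X,\pi_*^{-1}(\Delta+A)\bigr)$ is itself terminal. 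In your setup this is immediate once you also require every coefficient of $\pi_*^{-1}(\Delta+A)$ to be strictly less than $\tfrac12$: at most $r$ components of an SNC divisor can contain a smooth codimension-$r$ centre, so any such blow-up has discrepancy strictly greater than $(r-1)-\tfrac{r}{2}\ge 0$. Concretely, first fix the log resolution $\pi$ of $\mathrm{Supp}(E)$, then choose $\epsilon$ with both $k_i-\epsilon e_i>0$ and each $\epsilon c_j<\tfrac12$, then take $m\ge 3$ divisible enough that $m(L-\epsilon E)$ is very ample, and finally choose $A_0$ general. This also straightens out the order of quantifiers in your write-up, where $m$ (which depends on $\epsilon$ through $\Delta=\epsilon E$) is currently fixed before $\epsilon$ is.
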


\subsection{Canonical models}
Suppose that $X$ is a smooth projective variety over a field $F$ of characteristic $0$ and that $L$ is a big and nef $\mathbb{Q}$-divisor on $X$.  Fix a positive integer $d$ such that $dL$ is Cartier.  Note that the base change of the section ring of $dL$ to $\overline{F}$ is isomorphic to the product of the section rings of the pullbacks of $dL$ to all geometric components of $X_{\overline{F}}$.  
Thus \cite[Theorem 1.2]{BCHM} (combined with Lemma \ref{lemm: terminalpair}) shows that the section ring
\begin{equation*}
\bigoplus_{m \geq 0} H^{0}(X,\mathcal{O}_{X}(md(K_{X} + L)))
\end{equation*}
is finitely generated. When this ring is non-zero, via the Proj construction we obtain a rational map $\pi: X \dashrightarrow W$ such that $\dim(W) = \kappa(X,K_{X} + L)$.  The map $\pi: X \dashrightarrow W$ is known as the canonical model for $(X,L)$, or equivalently, the canonical model for $K_{X} + L$.

\begin{lemm} \label{lemm:birationaltocanonical}
Let $X$ be a geometrically uniruled  smooth projective variety and let $L$ be a big and nef $\mathbb{Q}$-divisor on $X$ such that $K_{X} + L$ is pseudo-effective.  Suppose that $\psi: X \to W$ is a surjective morphism of projective varieties such that:
\begin{enumerate}
\item the base change of $\psi$ to $\overline{F}$ has connected fibers,
\item $\kappa(X_{w},K_{X_{w}} + L|_{X_{w}}) = 0$ for a general fiber $X_{w}$ over a closed point $w \in W$, and
\item there is an ample $\mathbb{Q}$-divisor $H$ on $W$ such that $K_{X} + L - \psi^{*}H$ is $\mathbb{Q}$-linearly equivalent to an effective divisor.
\end{enumerate}
Then $\psi$ is birationally equivalent to the canonical map for $K_{X} + L$.  If the canonical map is a morphism on $X$, then there is a non-empty open subset $W^{\circ} \subset W$ such that on $\psi^{-1}(W^{\circ})$ the canonical map coincides with $\psi$.
\end{lemm}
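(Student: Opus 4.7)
The plan is to recognize $\psi$ as an Iitaka fibration for $K_X + L$ and then invoke uniqueness of such fibrations up to birational equivalence. Since all the relevant constructions are compatible with extension of scalars, I would first base change to an algebraic closure $\overline{F}$ and carry out the argument there.

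The crucial numerical step is to compute $\kappa(X, K_X + L) = \dim W$. Condition $(3)$ writes $K_X + L \sim_{\mathbb{Q}} \psi^{*}H + E$ with $E$ effective and $H$ ample on $W$, yielding the lower bound $\kappa(X, K_X+L) \geq \kappa(X, \psi^{*}H) = \dim W$. For the upper bound I would apply Iitaka's easy addition theorem to the fibration $\psi$: using condition $(2)$ together with the fact that $(K_X + L)|_{X_w} = K_{X_w} + L|_{X_w}$ by adjunction on the smooth general fiber, one obtains $\kappa(X, K_X + L) \leq \dim W + \kappa(X_w, (K_X+L)|_{X_w}) = \dim W$. (A standard passage to a sufficiently divisible multiple makes easy addition applicable to the $\mathbb{Q}$-Cartier divisor $K_X+L$.)

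By the discussion of canonical models preceding the lemma, the canonical map $\pi: X \dashrightarrow V$ exists with $\dim V = \kappa(X, K_X + L) = \dim W$, and on a log resolution of indeterminacies $\pi$ becomes a morphism with connected general fibers of $K_X+L$-Iitaka dimension zero, i.e., an Iitaka fibration for $K_X + L$. The map $\psi: X \to W$ satisfies the same three defining properties after base change: connected general fibers by $(1)$, base of dimension $\kappa(X,K_X+L)$ by the previous paragraph, and general fibers of Iitaka dimension zero by $(2)$. The uniqueness of the Iitaka fibration up to birational equivalence (see e.g.\ Lazarsfeld, \emph{Positivity in Algebraic Geometry I}, Theorem 2.1.33) then produces a birational map $\phi: W \dashrightarrow V$ such that $\pi = \phi \circ \psi$ as rational maps from $X$, which is precisely the desired birational equivalence. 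If $\pi$ is in addition a morphism on $X$, I pick a nonempty open $W^{\circ} \subset W$ on which $\phi$ is defined and restricts to an isomorphism onto its image; then $\pi = \phi \circ \psi$ holds as an equality of morphisms on $\psi^{-1}(W^{\circ})$, giving the claimed coincidence.

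The main obstacle will be carefully matching the hypotheses for uniqueness of the Iitaka fibration to our setting and justifying the passage to a resolution: the canonical map need not be a morphism on $X$ and its fibers on $X$ itself may fail to be connected, so the comparison is inherently birational and must be made on a common smooth model.
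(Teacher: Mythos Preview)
Your proposal is correct. The paper's argument is essentially a direct, self-contained version of yours: rather than first computing $\kappa(X,K_X+L) = \dim W$ via easy addition and then quoting uniqueness of the Iitaka fibration, the paper uses condition~(3) to produce the factoring $\psi = g \circ \pi$ explicitly (sections of $m(K_X+L)$ contain the pullbacks of sections of $mH$, so the canonical map $\pi$ refines $\psi$), and then uses conditions~(1) and~(2) to see that a general fiber of $\psi$ is contracted to a single point by $\pi$, forcing $g$ to be generically injective and hence birational. This is exactly what you would unwind if you proved the uniqueness statement you are invoking, so the two arguments are close in spirit; the paper's version simply avoids appealing to easy addition and to a black-box characterization. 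One minor point: Lazarsfeld's Theorem~2.1.33 is an existence statement for the Iitaka fibration and does not quite state the characterization you use (that any algebraic fiber space with base of dimension $\kappa$ and general fiber of Iitaka dimension zero is birationally the Iitaka fibration); that characterization is standard, but you should either cite it precisely or supply the short direct argument as the paper does.
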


\begin{proof}
To prove the first statement, we may replace $X$ by a birational model for which the canonical map for $K_{X} + L$ is a morphism and replace $L$ with its pullback to this birational model.  Thus it suffices to prove that the last statement holds.

Let $\pi: X \to W'$ denote the canonical map for $K_{X} + L$. By condition (3), there is a rational map $g: W' \dashrightarrow W$ such that $\psi = g \circ \pi$ as rational maps.  By condition (2), a general fiber of $\psi$ is contracted by $\pi$.  Together with condition (1) these show that $g$ is birational, yielding the desired claim.  
\end{proof}

\subsection{Boundedness of singular Fano varieties}
In \cite{birkar16} and \cite{birkar16b} Birkar establishes the Borisov-Alexeev-Borisov Conjecture concerning the boundedness of mildly singular Fano varieties.  We will use the following special cases of Birkar's results.

\begin{theo}[\cite{birkar16b} Theorem 1.1]
\label{theo: BAB}
Let $\overline{F}$ be an algebraically closed field of characteristic $0$.  Let $d$ be a positive integer and fix $\epsilon > 0$. Then there exists a constant $C_{1} = C_{1}(d,\epsilon) > 0$ such that for any $\epsilon$-lc pair $(X,\Delta)$ such that $X$ has dimension $\leq d$ and $K_{X}+\Delta$ is antiample, we have
\[
(-K_{X}-\Delta)^{\dim X} \leq C_{1}.
\]
\end{theo}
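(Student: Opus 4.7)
The statement is Birkar's theorem resolving (a form of) the Borisov--Alexeev--Borisov conjecture; a genuine proof is very far beyond a short sketch, so the plan is to describe the skeleton of Birkar's strategy and indicate where the real difficulty sits.

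The argument proceeds by a delicate simultaneous induction on dimension $d$. The central new tool is the theory of bounded complements: given an $\epsilon$-lc pair $(X,\Delta)$ with $K_X+\Delta$ antiample, the first aim is to produce, with $n$ depending only on $(d,\epsilon)$, an effective boundary $\Delta^{+}\geq \Delta$ such that $n(K_X+\Delta^{+})\sim 0$ and $(X,\Delta^{+})$ is log canonical (an $n$-complement). To construct such an $n$-complement one extracts a suitable divisor of small log discrepancy, runs a $(K_X+\Delta)$-MMP to a Mori fibre space, and either reduces to a lower-dimensional pair by adjunction along the fibration or concludes via non-vanishing. Lifting sections across the resulting log Calabi--Yau fibration is the technical heart: it requires the canonical bundle formula of Ambro--Kawamata together with boundedness for log Calabi--Yau pairs of bounded index.

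The second step is the effective birationality statement: there exists $m=m(d,\epsilon)$ such that $|{-m(K_X+\Delta)}|$ defines a birational map. Starting from an $n$-complement, one uses tie-breaking to create non-klt centres and then applies Nadel-type vanishing and extension theorems to separate general points. Once effective birationality is available, the volume bound follows from a now-standard argument: a general member of $|{-m(K_X+\Delta)}|$ pulled back to a resolution, compared with the movable part and combined with basic intersection inequalities on the resolved model, forces $(-K_X-\Delta)^{\dim X}\leq C_{1}(d,\epsilon)$.

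The main obstacle --- the heart of Birkar's contribution --- is the simultaneous induction: the complements theorem, effective birationality, and BAB in lower dimensions must all be proved together, since each step appeals to the others on the generic fibre of the log Calabi--Yau fibration produced by the MMP. Controlling the denominators of the boundary after adjunction along such generic fibres, and propagating the $\epsilon$-lc hypothesis through MMP steps, is where the most delicate bookkeeping is required; without a careful packaging of the induction the three implications become circular.
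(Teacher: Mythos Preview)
Your outline is a reasonable high-level sketch of Birkar's actual argument, but it is not what the paper does. The paper does not attempt to reprove Birkar's theorem from scratch; instead, the accompanying Remark explains how the specific form stated here (for $\epsilon$-lc pairs $(X,\Delta)$ with $K_X+\Delta$ antiample) is deduced from the results already established in \cite{birkar16b}, treated as a black box.

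Concretely, the paper's reduction is as follows. Writing $A \sim_{\mathbb{R}} -(K_X+\Delta)$ ample, one has $-\delta K_X \sim_{\mathbb{R}} K_X + (1+\delta)(A+\Delta)$, and for suitable $\delta$ and $A$ the pair $(X,(1+\delta)(A+\Delta))$ is $\epsilon'$-lc for any $\epsilon'<\epsilon$. Running the $(-\delta K_X)$-MMP produces a birational contraction $\phi: X \dashrightarrow X'$ with $X'$ an $\epsilon$-lc weak Fano variety (taking a limit in $\epsilon'$). Then
\[
(-K_X-\Delta)^{\dim X} = \vol(-K_X-\Delta) \leq \vol(-K_{X'}-\phi_*\Delta) \leq \vol(-K_{X'}),
\]
and one invokes \cite[Theorem 2.11]{birkar16b}, which directly bounds $\vol(-K_{X'})$ for $\epsilon$-lc weak Fanos in terms of $d$ and $\epsilon$.

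So the difference is one of scope: you sketched the hard theorem; the paper only needs a two-line MMP reduction to a form Birkar already stated. Your approach ``buys'' a self-contained account but at the cost of hundreds of pages; the paper's approach is the appropriate one for a citation-with-justification, and is what was intended here.
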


\begin{rema}
This form of \cite[Theorem 1.1]{birkar16b} is not explicitly stated.  However, as in the proof of \cite[Theorem 1.1]{birkar16b}, we can write $-\delta K_X  \sim_{\mathbb{R}} K_{X} + (1 +\delta)(A + \Delta)$ where $A$ is any ample divisor that is $\mathbb{R}$-linearly equivalent to $-(K_{X} + \Delta)$.  
For any non-negative $\epsilon'$ that is less than $\epsilon$, we can ensure that the pair $(X,(1+\delta)(A+\Delta))$ is $\epsilon'$-lc by choosing $\delta$ and $A$ appropriately.  Thus by running the $(-\delta K_X)$-minimal model program we obtain a rational map $\phi: X \dashrightarrow X'$ where $X'$ is an $\epsilon'$-lc weak Fano variety.  In fact, since $X'$ does not depend on $\delta$ and $A$, by taking a limit as $\epsilon' \to \epsilon$ we see that $X'$ is $\epsilon$-lc.  Note that
\begin{equation*}
\vol(-K_{X}-\Delta) \leq \vol(-K_{X'} - \phi_{*}\Delta) \leq \vol(-K_{X'}).
\end{equation*}
As $X'$ varies over all $\epsilon$-lc weak Fano varieties \cite[Theorem 2.11]{birkar16b} shows that $\vol(-K_{X'})$ has a universal upper bound depending only on $d$ and $\epsilon$, yielding the desired statement. 
\end{rema}

\begin{theo}[\cite{birkar16b}]
\label{theo: BAB2}
Let $\overline{F}$ be an algebraically closed field of characteristic $0$.  Let $d$ be a positive integer and fix an $\epsilon > 0$ and a finite set of rational numbers $I \subset [0,1)$.  There is a constant $C_{2} = C_{2}(d,\epsilon,I)$ such that the following holds.  Suppose that
\begin{itemize}
\item $X$ is a projective variety of dimension $\leq d$,
\item $(X,\Delta)$ is an $\epsilon$-lc pair such that the coefficients of $\Delta$ lie in $I$, and
\item $K_{X} + \Delta$ is an antiample $\mathbb{Q}$-Cartier divisor.
\end{itemize}
Then $C_{2}(K_{X} + \Delta)$ is Cartier for all such pairs $(X,\Delta)$.
\end{theo}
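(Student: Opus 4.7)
The plan is to deduce the statement from Birkar's resolution of the Borisov-Alexeev-Borisov Conjecture in \cite{birkar16b}, specifically from the log boundedness of the set of pairs $(X,\Delta)$ satisfying the hypotheses. Birkar's Theorem~1.1 of \cite{birkar16b} directly supplies boundedness of the underlying varieties $X$, and since the coefficients of $\Delta$ are confined to the fixed finite set $I$ while $-(K_X + \Delta)$ is ample with volume bounded above by Theorem~\ref{theo: BAB}, the boundaries $\Delta$ also range through a bounded family of Weil divisors on the bounded family of $X$'s. Combining these inputs, there exist a finite-type scheme $S$ over $\overline{F}$, a projective morphism $\cX \to S$, and a $\bQ$-Weil divisor $\cD$ on $\cX$ with coefficients drawn from $I$, such that every pair $(X,\Delta)$ meeting the hypotheses is isomorphic to a geometric fiber $(\cX_s, \cD_s)$ for some closed point $s \in S$.

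Granted this family, the proof concludes by a routine spreading-out argument. Using generic flatness and Noetherian induction, I stratify $S$ into finitely many locally closed subschemes $S_i$ so that, on each stratum, the restricted family $\cX|_{S_i} \to S_i$ is flat with normal geometric fibers, each component of $\cD$ restricts to a well-defined prime Weil divisor on $\cX|_{S_i}$ with constant coefficient in $I$, and the relative log canonical class $K_{\cX/S_i} + \cD|_{S_i}$ is relatively $\bQ$-Cartier. I then choose a positive integer $N_i$ for which $N_i(K_{\cX/S_i} + \cD|_{S_i})$ is genuinely Cartier on $\cX|_{S_i}$; restricting this Cartier divisor to a geometric fiber shows that $N_i(K_X + \Delta)$ is Cartier for every pair $(X,\Delta)$ parametrized by $S_i$. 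Taking $C_2$ to be the least common multiple of the finitely many $N_i$ yields the required uniform bound.

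The principal obstacle is the input from \cite{birkar16b}: the volume bound of Theorem~\ref{theo: BAB} by itself does not suffice, and one needs the full boundedness of the underlying Fano varieties, which is a deep result of Birkar. Once boundedness of the varieties is in hand, the finiteness of $I$ promotes it to log boundedness of the pairs, and the remaining work is standard bookkeeping in a bounded family with no further birational geometry required.
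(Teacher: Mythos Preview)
Your approach is essentially the same as the paper's: deduce boundedness of the underlying $X$ from Birkar's Theorem~1.1, upgrade to log boundedness of the pairs $(X,\Delta)$ using the finiteness of the coefficient set $I$, and then read off a uniform Cartier index from the bounded family. The paper invokes \cite[Lemma~2.24]{birkar16} for the last step; your spreading-out and Noetherian induction is exactly what that lemma packages.

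One small correction: your justification for log boundedness invokes the volume bound $(-(K_X+\Delta))^{\dim X}\le C_1$ from Theorem~\ref{theo: BAB}, but that is not the relevant input once you already have boundedness of $X$. What you actually need is a bound on the degree of $\Delta$ against a uniform very ample divisor $A$ on the bounded family, and this comes from the inequality $\Delta\cdot A^{\dim X-1}<-K_X\cdot A^{\dim X-1}$ (valid since $-(K_X+\Delta)$ is ample and $\Delta\ge 0$), together with boundedness of $-K_X\cdot A^{\dim X-1}$ from the boundedness of $X$. This is how the paper argues, and it is also the only way your sentence can be made precise; the anticanonical volume bound alone does not control $\Delta\cdot A^{\dim X-1}$.
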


\begin{proof}
\cite[Theorem 1.1]{birkar16b} shows that as we vary over all such pairs the set of underlying varieties $X$ is bounded.  We show that in fact the set of pairs $(X,\Delta)$ is log bounded.  Using the boundedness of the underlying $X$, we can find a family of very ample divisors $A$ on these varieties $X$ such that the space of sections of $A$ is bounded in dimension.  Since the coefficient set of $\Delta$ is finite, it suffices to prove that the degree of $\Delta$ against $A$ is bounded.  This follows from
\begin{equation*}
\Delta \cdot A^{\dim X-1} < -K_{X} \cdot A^{\dim X-1}
\end{equation*}
and the boundedness of the varieties $X$.

Since the pairs $(X,\Delta)$ are log bounded, by \cite[Lemma 2.24]{birkar16} and the fact that the coefficient set of $\Delta$ is finite we deduce the desired statement on the Cartier index.
\end{proof}

\cite{Araujo10} shows how the Borisov-Alexeev-Borisov Conjecture can be used to deduce a structure theorem for the cone of nef curves.   We will give a quick explanation of Araujo's arguments since \cite{Araujo10} only explicitly addresses the case when $\dim(X) = 3$.

\begin{lemm} \label{lemm:intbound}
Let $\overline{F}$ be an algebraically closed field of characteristic $0$.  Let $d$ be a positive integer and fix an $\epsilon > 0$ and a finite set of rational numbers $I \subset [0,1)$.  There is a constant $C_{3} = C_{3}(d,\epsilon,I)$ such that the following holds. Suppose that
\begin{itemize}
\item $X$ is a projective variety of dimension $\leq d$;
\item $(X,\Delta)$ is a $\epsilon$-lc pair such that the coefficients of $\Delta$ lie in $I$, and;
\item $K_{X} + \Delta$ is an antiample $\mathbb{Q}$-Cartier divisor.
\end{itemize}
Then any codimension $2$ set $B \subset X$ there is a movable curve $C$ avoiding $B$ and satisfying
\begin{equation*}
-(K_{X} + \Delta) \cdot C \leq C_{3}.
\end{equation*}
\end{lemm}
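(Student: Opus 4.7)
The plan is to produce movable curves as complete intersections of general members of a uniformly very ample linear system, and to bound their $-(K_X+\Delta)$-degree using Theorem~\ref{theo: BAB}. First, I would invoke Theorem~\ref{theo: BAB2} and, more crucially, the log boundedness of the pairs $(X,\Delta)$ established in its proof. Log boundedness gives a projective family $(\mathcal{X},\mathcal{D}) \to S$ over a scheme of finite type whose fibers include all $(X,\Delta)$ under consideration, and on which $-(K_{\mathcal{X}/S} + \mathcal{D})$ is relatively $\mathbb{Q}$-Cartier with bounded Cartier index (by Theorem~\ref{theo: BAB2}). Combined with relative ampleness of $-(K_X+\Delta)$ on each fiber, a standard spreading-out and noetherian-induction argument produces a positive integer $m = m(d,\epsilon,I)$ such that $H := -m(K_X + \Delta)$ is a very ample Cartier divisor on every $X$ in the family.

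Set $d = \dim X$ and consider complete intersection curves
\begin{equation*}
C = D_1 \cap D_2 \cap \cdots \cap D_{d-1},
\end{equation*}
where the $D_i$ are general members of $|H|$. As the $D_i$ vary, such curves sweep out all of $X$, so each such $C$ is a movable curve. Computing intersection numbers,
\begin{equation*}
-(K_X + \Delta) \cdot C \;=\; -(K_X + \Delta) \cdot H^{d-1} \;=\; m^{d-1} \cdot \bigl(-(K_X + \Delta)\bigr)^{d} \;\leq\; m^{d-1} C_1,
\end{equation*}
where $C_1 = C_1(d,\epsilon)$ is the volume bound furnished by Theorem~\ref{theo: BAB}. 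Thus $C_3 := m^{d-1} C_1$ is a constant depending only on $d$, $\epsilon$ and $I$.

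It remains to arrange that $C$ avoids a prescribed codimension-$2$ subset $B \subset X$. Since $|H|$ is basepoint free, the hypersurfaces $D_i$ cut down dimensions generically: for a generic $D_1 \in |H|$ the intersection $B \cap D_1$ has dimension $\dim B - 1 = d - 3$, and iterating, $B \cap D_1 \cap \cdots \cap D_{d-1}$ has expected dimension $(d-2) - (d-1) = -1$, hence is empty for a sufficiently general choice of the $D_i$. (The one-dimensional case $d = 1$ is vacuous since any codimension-$2$ subset is empty.) Thus $C$ can be chosen to avoid $B$, completing the argument.

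The main obstacle is the first step: extracting a uniform effective very ampleness constant $m$ across the whole family. Boundedness from Theorem~\ref{theo: BAB} alone is not enough, because it does not bound the Cartier index of $K_X + \Delta$; this is precisely where Theorem~\ref{theo: BAB2} (together with the log boundedness proved along the way) is indispensable. Once a uniform $m$ is in hand, the rest of the proof is a routine dimension count and an intersection-number computation.
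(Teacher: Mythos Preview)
Your proposal is correct and follows essentially the same strategy as the paper: obtain a uniform very ampleness bound for $-(K_X+\Delta)$, take complete intersections of general members, and bound the resulting degree via the volume bound from Theorem~\ref{theo: BAB}. The only difference is in how you secure the uniform very ampleness constant $m$. You appeal to log boundedness plus a spreading-out/noetherian-induction argument, whereas the paper proceeds more directly: Theorem~\ref{theo: BAB2} gives a uniform Cartier index $C_2$ so that $-C_2(K_X+\Delta)$ is Cartier and ample, and then Koll\'ar's effective basepoint freeness (\cite[1.1 Theorem and 1.2 Lemma]{Kollar93}) supplies a constant $M=M(d)$ such that $-MC_2(K_X+\Delta)$ is very ample. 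This avoids having to argue in families and yields an explicit $C_3 = C_1(C_2M)^{d-1}$. Your route is valid, but the paper's is shorter and more transparent once the Cartier index is controlled.
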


\begin{proof}
Theorem \ref{theo: BAB} yields an upper bound $C_{1}$ on $(-K_{X} - \Delta)^{\dim X}$ that only depends on $d$ and $\epsilon$. Choose a positive integer $C_{2}$ as in Theorem \ref{theo: BAB2} so that $-C_{2}(K_{X} + \Delta)$ is a Cartier ample divisor.  By \cite[1.1 Theorem and 1.2 Lemma]{Kollar93} there is a positive integer $M = M(d)$ such that $-MC_{2}(K_{X} + \Delta)$ is very ample.  Set $C_{3} = C_{1}(C_{2}M)^{d-1}$.  Then one can find a suitable curve $C$ by taking intersections of general elements in $|-MC_{2}(K_{X} + \Delta)|$.
\end{proof}

Given a cone $\mathcal{C}$ in $N_{1}(X)$ and an element $\ell \in N^{1}(X)$, we will let $\mathcal{C}_{\ell \geq 0}$ denote the intersection of $\mathcal{C}$ with the half-space of curve classes with non-negative intersection against $\ell$.

\begin{lemm}[\cite{Araujo10}] \label{lemm:conetheorem}
Let $X$ be a smooth projective variety and let $\Delta$ be an effective $\mathbb{Q}$-Cartier divisor on $X$ such that $(X,\Delta)$ is an $\epsilon$-lc pair for some $\epsilon > 0$.  Fix an ample $\mathbb{Q}$-Cartier divisor $A$.  Then the cone $\Eff_{1}(X)_{K_{X} + \Delta \geq 0} + \Nef_{1}(X)$ has only finitely many extremal rays with negative intersection against $K_{X} + \Delta + A$.  Furthermore, these rays are generated by the classes of movable curves.
\end{lemm}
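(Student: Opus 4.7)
The plan is to reduce the claim to an Araujo-style cone theorem for $\Nef_1(X)$ and then invoke the Borisov--Alexeev--Borisov boundedness results already established in Lemma~\ref{lemm:intbound}.

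First I would check that every extremal ray $R$ of the sum cone $\mathcal{C} := \Eff_1(X)_{K_X+\Delta \geq 0} + \Nef_1(X)$ with $(K_X + \Delta + A) \cdot R < 0$ lies inside $\Nef_1(X)$. An extremal ray of a sum of two cones is contained in one of them, so it suffices to rule out the case $R \subset \Eff_1(X)_{K_X+\Delta \geq 0}$. In that case $(K_X + \Delta) \cdot R \geq 0$ by definition of the slice, and $A \cdot R > 0$ since $A$ lies in the interior of $\Nef^1(X)$ and hence is strictly positive on $\Eff_1(X) \setminus \{0\}$ by cone duality; this would force $(K_X + \Delta + A) \cdot R > 0$, contradicting our hypothesis. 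The task therefore reduces to proving that $\Nef_1(X)$ has only finitely many extremal rays with $(K_X+\Delta+A)$-negative intersection, and that each such ray is spanned by the class of a movable curve.

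For this I would follow Araujo's strategy. By BDPP, $\Nef_1(X)$ is the closure of the cone of movable curves, so its extremal rays are dual to facets of $\Eff^1(X)$. Given an extremal ray $R$ with $(K_X + \Delta + A) \cdot R < 0$, one selects a divisor on the interior of the dual facet and runs the $(K_X + \Delta)$-MMP with scaling from \cite{BCHM} to produce a birational model carrying a Mori-fiber-type contraction $\pi \colon X' \to Y'$ whose horizontal curve classes span $R$. Since a general fibre $F$ of $\pi$ is an $\epsilon$-lc Fano variety of dimension at most $\dim X$, Lemma~\ref{lemm:intbound} applied to $F$ (with a suitable boundary inherited from $\Delta$) produces a movable curve $C \subset F$ whose class lies in $R$ and whose intersection with $-(K_X + \Delta)$ is bounded above by the universal constant $C_3(\dim X, \epsilon, I)$, where $I$ is the coefficient set of $\Delta$.

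Finally, normalising the rays under consideration to lie on the affine hyperplane $\{A \cdot \alpha = 1\}$ places them inside a fixed compact slice of $\Nef_1(X)$; combined with the uniform bound on $(K_X + \Delta) \cdot C$ this confines the movable representatives to finitely many numerical classes, yielding the asserted finiteness and the movable-curve representation simultaneously. The main obstacle is the passage from an abstract extremal ray of $\Nef_1(X)$ to a concrete Mori-fiber-type contraction in arbitrary dimension: Araujo carries this out in dimension three, and the general case requires assembling the MMP with scaling of \cite{BCHM} with Birkar's boundedness theorem to exhibit a bounded family of Fano fibres whose curves realise the rays.
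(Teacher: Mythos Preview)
Your approach is essentially the paper's: invoke Araujo's cone decomposition (via the MMP with scaling from \cite{BCHM}) to identify the $(K_X+\Delta+A)$-negative extremal rays with curves coming from Mori fiber space outputs, then use Lemma~\ref{lemm:intbound} to bound their anticanonical degree and deduce finiteness. Your opening reduction---showing that any such extremal ray of the sum cone already lies in $\Nef_1(X)$---is a correct repackaging of what the paper extracts directly from \cite[Theorem~1.1]{Araujo10}.

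Two omissions are worth flagging. First, the movable curve $C$ produced by Lemma~\ref{lemm:intbound} lives in a general fiber of the Mori fiber space $X' \to Y'$, and the bound you obtain is on $-(K_{X'}+\phi_*\Delta)\cdot C$; to convert this into a bound on $-(K_X+\Delta)$ against a curve on $X$ you must take the strict transform and ensure $C$ avoids the $\phi^{-1}$-indeterminacy locus. This is precisely why Lemma~\ref{lemm:intbound} allows you to avoid an arbitrary codimension-$2$ set, and the paper makes this step explicit; you should too. Second, you work implicitly over an algebraically closed field, but the lemma is later applied over arbitrary fields of characteristic $0$ (e.g.\ in Lemma~\ref{lemm:alternativedescription} and Lemma~\ref{lemm: monodromyandbvalue}). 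The paper handles this by averaging the finitely many geometric extremal classes $\overline{\alpha}_i$ over the Galois action to obtain $F$-rational classes $\alpha_i = \tfrac{1}{|G|}\sum_{g\in G} g\overline{\alpha}_i$, and then descending the movable curves via Galois orbits; your proposal should include this step.
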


\begin{proof}
We first prove the result when the base field is algebraically closed.  Let $I$ be the coefficient set of $\Delta$.  Suppose that $\phi: X \dashrightarrow X'$ is a run of the $(K_{X} + \Delta)$-minimal model program with scaling resulting in a Mori fiber space $\pi: X' \to Z'$. 
Let $Y$ denote a general fiber of $\pi$.  Then $\phi_{*}\Delta|_{Y}$ has coefficients in $I$ and $(Y,\phi_{*}\Delta|_{Y})$ is an $\epsilon$-lc Fano pair.  Altogether we see that there is a universal bound $C_{3}$ as in Lemma \ref{lemm:intbound} for all such pairs $(Y,\Delta|_{Y})$ obtained in this way.

By \cite[Theorem 1.1]{Araujo10} we have
\begin{equation*}
\Eff_{1}(X)_{K_{X} + \Delta \geq 0} + \Nef_{1}(X) = \Eff_{1}(X)_{K_{X} + \Delta \geq 0} + \overline{\sum_{\alpha \in \Sigma} \mathbb{R}_{\geq 0}\alpha}
\end{equation*}
where each $\alpha$ is obtained by running a $(K_{X} + \Delta)$-minimal model program with scaling to obtain $\phi: X \dashrightarrow X'$ where $X'$ carries a Mori fiber space structure $\pi: X' \to Z'$ and setting $\alpha$ to be the numerical pullback (as in \cite[Section 4]{Araujo10}) of the class of a curve in a general fiber $Y$ of $\pi$. 
As discussed earlier, we know that for any such $Y$ there is a movable curve $C'$ in $Y$ avoiding the $\phi^{-1}$-indeterminacy locus and satisfying $-(K_{X'} + \phi_{*}\Delta) \cdot C' \leq C_{3}$.  Since $C'$ avoids the $\phi^{-1}$-indeterminacy locus, its strict transform $C$ in $X$ satisfies $-(K_{X} + \Delta) \cdot C \leq C_{3}$.  Note that the numerical pullback of the corresponding class $\alpha$ is represented by the movable curve $C$ because $\rho(X'/Z') =1$.

There are only finitely many classes of curves $C$ which satisfy both $(K_{X} + \Delta + A) \cdot C \leq 0$ and $-(K_{X} + \Delta) \cdot C \leq C_{3}$. This proves that the set of such $\alpha$ as above satisfying $(K_{X} + \Delta + A) \cdot \alpha < 0$ is finite, finishing the proof when the ground field is algebraically closed.

For a general ground field $F$ of characteristic $0$, let $\{\overline{X}_{j}\}_{j=1}^{s}$ denote the components of $X_{\overline{F}/F}$.  Note that
\begin{equation*}
N_{1}(X) = (\oplus_{j} N_{1}(\overline{X}_{j}))^{\Gal(\overline{F}/F)}
\end{equation*}
and that the action of $\Gal(\overline{F}/F)$ on $\oplus_{j} N_{1}(\overline{X}_{j})$ factors through a finite group $G$.  We can apply the statement of the theorem to each component $\overline{X}_{j}$ and take direct sums to see that $\Eff_{1}(X_{\overline{F}})_{K_{X} + \Delta \geq 0} + \Nef_{1}(X_{\overline{F}})$ admits only finitely many $(K_{X} + \Delta + A)$-negative extremal rays generated by numerical classes $\{ \overline{\alpha}_{i} \}_{i=1}^{r}$.

Define $\alpha_{i} = \frac{1}{|G|} \sum_{g \in G} g\overline{\alpha}_{i}$.  Since the pseudo-effective and nef cones of curves are preserved by the $G$-action, it is clear that $\Eff_{1}(X)_{K_{X} + \Delta \geq 0} + \Nef_{1}(X)$ has only finitely many extremal rays with negative intersection against $K_{X} + \Delta + A$ and these rays are generated by a subset of $\{\alpha_{i}\}_{i=1}^{r}$.
Furthermore, we can construct an irreducible movable curve over the ground field representing some multiple of $\alpha_{i}$ by taking the Galois orbit of an irreducible curve representing $\overline{\alpha}_{i}$ over $\overline{F}$ and descending to $F$.
\end{proof}

\begin{rema}  
If $(X,\Delta)$ is a terminal pair, then it is also $\epsilon$-lc for some $\epsilon > 0$.  Thus we can apply Lemma \ref{lemm:conetheorem} to terminal pairs $(X,\Delta)$. 
\end{rema}

\section{Geometric invariants in Manin's Conjecture}
\label{sec: geoinv}

In this section we study the geometric behavior of the $a$ and $b$ invariants over an arbitrary field $F$ of characteristic $0$.

\subsection{$a$-invariant}
\label{subsec: a-inv}

\begin{defi}
Let $X$ be a smooth projective variety defined over a field $F$ of characteristic $0$. Let $L$ be a big and nef $\mathbb Q$-divisor on $X$. Then we define the {\it Fujita invariant} (or $a$-invariant) by
\[
a(X, L) = \min \{ t \in \mathbb R \mid K_{X} + tL \in \overline{\mathrm{Eff}}^1(X)\}.
\]
When $L$ is nef but not big, we formally set $a(X, L) = +\infty$.
When $X$ is singular, we define the Fujita invariant as the Fujita invariant of the pullback of $L$ to any smooth birational model of $X$. This is well-defined by \cite[Proposition 2.7]{HTT15}.
\end{defi}

We will frequently use the following fundamental properties of the $a$-invariant:
\begin{itemize}
\item If $\phi: X' \to X$ is a birational map then $a(X',\phi^{*}L) = a(X,L)$.  (\cite[Proposition 2.7]{HTT15})
\item $a(X, L) > 0$ if and only if $X$ is geometrically uniruled.  (\cite[0.3 Corollary]{BDPP})
\item When $a(X, L) > 0$, $a(X, L)$ is always a rational number.  (See \cite[Corollary 1.1.7]{BCHM} when $L$ is ample and \cite[Theorem 2.16]{HTT15} when $L$ is big and nef.)  
\end{itemize}
It is convenient to concentrate on a certain class of pairs:

\begin{defi} \label{defi: adjointrigid}
Let $X$ be a projective variety and let $L$ be a big and nef divisor on $X$.  Let $\phi: X' \to X$ be a resolution of singularities.  We say that  $(X, L)$ is adjoint rigid if $a(X,L) > 0$ and $\kappa(X',K_{X'} + a(X',\phi^{*}L)\phi^{*}L) = 0$.  Note that this definition does not depend on the choice of resolution. 
\end{defi}

One should think of adjoint rigid pairs as birational analogues of mildly singular Fano varieties. (See [LTT18, Proposition 2.5] for a precise statement in this direction.)  In particular, one can analyze their structure using the recent results of Birkar on the boundedness of singular Fano varieties.

On the other hand, the geometry of arbitrary polarized pairs is controlled by adjoint rigid pairs.  Indeed, if $a(X,L)>0$, the following lemma 
shows that the closure of a general fiber of the canonical map for $(X,a(X,L)L)$ has the same $a$-value as $X$ and is adjoint rigid.
We will frequently leverage this fact by replacing an arbitrary pair with the fibers of its canonical model map.

\begin{lemm} \label{lemm:ainvandcanonicalfibers}
Let $X$ be a smooth geometrically uniruled projective variety defined over $F$ and $L$ a big and nef $\mathbb Q$-divisor on $X$. Let $\pi: X \dashrightarrow W$ denote the canonical map for $(X,a(X,L)L)$.  Let $Y$ be the closure in $X$ of a general fiber of $\pi$.  Then $a(Y,L|_{Y}) = a(X,L)$ and $(Y, L|_Y)$ is adjoint rigid.
\end{lemm}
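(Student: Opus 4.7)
The plan is to pass to a sufficiently fine birational model and then read off both claims from the Iitaka fibration. First I would choose a smooth birational model $\phi : X' \to X$ on which the canonical map for $K_X + a(X,L)L$ is realized as an honest morphism $\pi : X' \to W$; after Stein factorization we may assume its geometric fibers are connected. Setting $L' = \phi^*L$, birational invariance of the $a$-invariant gives $a(X', L') = a(X, L)$, and for a general $w \in W$ the fiber $X'_w$ is smooth and $\phi|_{X'_w} : X'_w \to Y$ is a birational morphism exhibiting $X'_w$ as a resolution of $Y$. Thus $a(Y, L|_Y) = a(X'_w, L'|_{X'_w})$, and adjoint rigidity for $(Y, L|_Y)$ is equivalent to adjoint rigidity for $(X'_w, L'|_{X'_w})$.

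Next I would extract the key decomposition from the Iitaka fibration. Finite generation of the section ring of $K_X + a(X,L)L$ (via Lemma \ref{lemm: terminalpair} and \cite{BCHM}), together with a possibly higher birational model, gives
\[
\phi^*(K_X + a(X,L)L) \sim_{\mathbb{Q}} \pi^*H + E
\]
with $H$ an ample $\mathbb{Q}$-divisor on $W$ and $E$ an effective $\mathbb{Q}$-divisor. Writing $K_{X'} = \phi^*K_X + \Xi$ with $\Xi \geq 0$ and $\phi$-exceptional (valid since $X$ is smooth), adjunction on the smooth fiber $X'_w$ yields
\[
K_{X'_w} + a(X,L)\, L'|_{X'_w} \sim_{\mathbb{Q}} (E + \Xi)|_{X'_w},
\]
which is effective with Iitaka dimension zero: we have $\kappa(X'_w, E|_{X'_w}) = 0$ by the defining property of the Iitaka fibration, and for general $w$ the divisor $\Xi|_{X'_w}$ is $\phi|_{X'_w}$-exceptional by a codimension count for $\phi(\Xi) \cap Y$, so it contributes nothing to $\kappa$. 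Pseudo-effectivity therefore yields $a(X'_w, L'|_{X'_w}) \leq a(X,L)$, and once equality is in hand the $\kappa = 0$ statement is exactly adjoint rigidity.

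The reverse inequality I would obtain by contradiction. Suppose $a(X'_w, L'|_{X'_w}) = a' < a(X,L)$, so that $K_{X'_w} + a' L'|_{X'_w}$ is pseudo-effective. Since $L'$ is big and nef on $X'$, the augmented base locus $\mathbf{B}_+(L')$ is a proper closed subset and cannot contain the general fiber, so $L'|_{X'_w}$ is big and nef. Adding the big class $(a(X,L) - a') L'|_{X'_w}$ to the pseudo-effective $K_{X'_w} + a' L'|_{X'_w}$ then makes $K_{X'_w} + a(X,L) L'|_{X'_w}$ big, contradicting the conclusion above that its Iitaka dimension is zero. The main technical hurdle I expect is arranging the decomposition $\phi^*(K_X + a(X,L)L) \sim_{\mathbb{Q}} \pi^*H + E$ on a high enough model and verifying that $\Xi|_{X'_w}$ genuinely restricts to a $\phi|_{X'_w}$-exceptional divisor on the general fiber; both follow cleanly from standard birational geometry, but they are the places where care is required.
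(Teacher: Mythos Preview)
Your proof is correct and follows the same underlying idea as the paper: pass to a model where $\pi$ is a morphism, then use that the Iitaka fibration of $K_{X}+a(X,L)L$ has general fiber on which the restricted adjoint divisor has Iitaka dimension $0$, which forces both $a(Y,L|_{Y})=a(X,L)$ and adjoint rigidity. The paper's argument is just the two-line version of yours: once you replace $X$ by $X'$ (rather than keeping both), adjunction gives $(K_{X'}+a(X',L')L')|_{X'_w}=K_{X'_w}+a(X',L')L'|_{X'_w}$ directly, and the Iitaka fibration property applied to $K_{X'}+a(X',L')L'$ on $X'$ yields $\kappa=0$ on the fiber without any need to introduce the decomposition $\pi^*H+E$ or the exceptional correction $\Xi$; the ``boundary of the pseudo-effective cone'' observation then replaces your separate $\leq$ and contradiction-for-$\geq$ steps in one stroke.
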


\begin{proof}
By the birational invariance of the $a$-invariant, it suffices to prove this when $\pi$ is a morphism and $Y$ is a smooth fiber of $\pi$.  The equality $a(Y,L|_{Y}) = a(X,L)$ follows from the fact that the restriction of $K_{X} + a(X,L)L$ to a general fiber $Y$ has Iitaka dimension $0$, hence lies on the boundary of the pseudo-effective cone.
\end{proof}

We next consider how the $a$-invariant behaves under change of ground field. 

\begin{prop} \label{prop: galinvofa}
Let $X$ be a smooth projective variety over a field $F$ of characteristic $0$ and let $L$ be a big and nef $\mathbb Q$-divisor on $X$.
We fix an extension $F'/F$ and let $Y \subset X_{F'}$ be an integral subvariety defined over $F'$. Let $\sigma \in \mathrm{Aut}(F'/F)$.
Then we have
\[
a(Y, L_{F'}|_{Y})= a(\sigma(Y), L_{F'}|_{\sigma(Y)})
\]
and the adjoint divisors with respect to $L$ on smooth models of $Y$ and $\sigma (Y)$ have the same Iitaka dimension.
\end{prop}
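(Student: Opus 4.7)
The plan is to realize $\sigma$ as an abstract scheme isomorphism between $Y$ and $\sigma(Y)$ that identifies the relevant polarizations, and then to observe that both quantities in the claim depend only on the isomorphism class of such a pair.

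First I would set up the ambient action. Since $X$ is defined over $F$, the automorphism $\sigma$ induces an $F$-scheme automorphism $\sigma_X = \mathrm{id}_X \times \mathrm{Spec}(\sigma)$ of $X_{F'} = X \times_F \mathrm{Spec}(F')$; this is an isomorphism of schemes over $F$, although not over $F'$. Because $\sigma_X$ commutes with the structure projection $p: X_{F'} \to X$, we have $\sigma_X^* L_{F'} = \sigma_X^* p^* L = p^* L = L_{F'}$. By the definition of $\sigma(Y)$, the restriction of $\sigma_X$ gives an $F$-scheme isomorphism $Y \xrightarrow{\sim} \sigma(Y)$ under which $L_{F'}|_Y$ and $L_{F'}|_{\sigma(Y)}$ correspond.

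Next, I would pass to resolutions to reduce to the smooth case. Choosing a resolution $\phi: \widetilde Y \to Y$, the composition with $\sigma_X|_Y$ gives a resolution $\widetilde\phi: \widetilde{\sigma(Y)} \to \sigma(Y)$ together with an $F$-scheme isomorphism $\widetilde Y \cong \widetilde{\sigma(Y)}$. Since the canonical sheaf is an intrinsic invariant of a smooth scheme, this isomorphism identifies $K_{\widetilde Y}$ with $K_{\widetilde{\sigma(Y)}}$, and by the previous paragraph it identifies the pulled-back polarizations. Hence for every $t \in \mathbb{R}$ the adjoint divisors $K_{\widetilde Y} + t\phi^* L_{F'}|_Y$ and $K_{\widetilde{\sigma(Y)}} + t\widetilde\phi^* L_{F'}|_{\sigma(Y)}$ correspond under the induced isomorphism on $N^1$.

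Finally I would invoke that pseudo-effective cones and section rings of $\mathbb{Q}$-Cartier divisors are invariants of the underlying scheme together with a divisor class. The isomorphism $\widetilde Y \cong \widetilde{\sigma(Y)}$ sends $\overline{\mathrm{Eff}}^1$ to $\overline{\mathrm{Eff}}^1$ and preserves the class of the adjoint divisor, forcing the equality of $a$-invariants, and it induces an isomorphism of section rings of the adjoint divisors, giving equality of their Iitaka dimensions. I do not foresee a serious obstacle; the only mild subtlety is that $\sigma_X$ is only an $F$-morphism rather than an $F'$-morphism, but since N\'eron-Severi groups, their cones, and section rings depend only on the abstract scheme structure together with a Cartier divisor class, this causes no problem.
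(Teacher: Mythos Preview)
Your proposal is correct and follows essentially the same approach as the paper: both arguments transport a resolution of $Y$ to a resolution of $\sigma(Y)$ via the $F$-scheme automorphism of $X_{F'}$ induced by $\sigma$, observe that $L_{F'}$ is $\sigma$-invariant because it is pulled back from $X$, and conclude by noting that canonical divisors, effective/pseudo-effective cones, and section rings are invariants of the underlying scheme with its divisor. The only cosmetic difference is that the paper performs an embedded resolution of $Y$ inside $X_{F'}$ and then pushes the whole ambient model forward by $\sigma$, whereas you resolve $Y$ abstractly and transport the resolution directly; neither choice affects the substance of the argument.
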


\begin{proof}
After applying an embedded resolution of singularities, we have a birational model $X'$ over $F'$ such that the strict transform $Y'$ of $Y$ is smooth. Let $\sigma_*X'$ be the pullback of $X' \to \Spec (F')$ by $\sigma^{-1} : \Spec (F') \to \Spec (F')$. We have a natural isomorphism as schemes $\sigma : X' \to \sigma_*X'$ and we denote the image of $Y'$ by $\sigma(Y')$. Then $\sigma(Y')$ is a smooth model of $\sigma(Y)$. Let $K_{Y'}$ be the canonical divisor on $Y'$. Then we have $\sigma(K_{Y'}) = K_{\sigma(Y')}$. Moreover for any integer $m$ such that $ma(Y,L|_{Y})L$ is a Cartier divisor we have
\[
m(K_{Y'} + a(Y,L_{F'}|_{Y})L_{F'}|_{Y'}) \sim D \geq 0 \iff m(K_{\sigma(Y')} + a(Y, L_{F'}|_{Y})L_{F'}|_{\sigma(Y')}) \sim \sigma(D) \geq 0.
\]
This shows that $K_{Y} + a(Y,L_{F'}|_{Y})L_{F'}|_{Y}$ and $K_{\sigma(Y')} + a(Y, L_{F'}|_{Y})L_{F'}|_{\sigma(Y')}$ have the same Iitaka dimension, proving the claims.
\end{proof}

\begin{coro}
\label{coro: flatbasechange}
Let $X$ be a smooth projective variety defined over a field of characteristic $0$ and let $L$ be a big and nef $\mathbb Q$-divisor on $X$.  Let $F'/F$ denote a field extension and let $X'$ denote any irreducible component of $X_{F'}$.  Then $a(X,L) = a(X',L_{F'}|_{X'})$ and $\kappa(X,K_{X} + a(X,L)L) = \kappa(X', K_{X'} + a(X', L_{F'}|_{X'})L_{F'}|_{X'})$.  
\end{coro}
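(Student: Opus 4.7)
The plan is to reduce to two simpler cases and chain them together.

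First I would handle the case where $X$ is geometrically integral, so that $X' = X_{F'}$ is also integral. For any $\mathbb{Q}$-Cartier divisor $D$ on $X$ with $mD$ Cartier, flat base change for coherent cohomology along the proper flat morphism $X \to \Spec F$ gives $h^0(X, mD) = h^0(X', mD_{F'})$. Thus Iitaka dimensions of all such divisors are preserved, and so are volumes. Since pseudo-effectivity is equivalent to $\vol(D + \epsilon A) > 0$ for all rational $\epsilon > 0$ and any fixed ample $A$ on $X$, pseudo-effectivity of $K_X + tL$ transfers between $X$ and $X'$. This gives $a(X,L) = a(X', L_{F'})$, and applying the same $h^0$ equality to multiples of $K_X + a(X,L)L$ gives the equality of adjoint Iitaka dimensions.

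Next I would treat the case of a finite Galois extension $F'/F$ with group $G$. Since $X$ is smooth and integral, $X_{F'}$ is a disjoint union of smooth components $\{X_i\}$ permuted transitively by $G$. By Proposition \ref{prop: galinvofa} all the pairs $(X_i, L_{F'}|_{X_i})$ share a common $a$-invariant $a$ and common adjoint Iitaka dimension $\kappa$. To conclude $a(X,L) = a$, I would use Galois descent: any approximating sequence of effective divisors for a pseudo-effective $G$-invariant $\mathbb{Q}$-divisor class on $X_{F'}$ can be averaged over $G$ to become $G$-invariant, and $G$-invariant effective $\mathbb{Q}$-divisors descend to effective $\mathbb{Q}$-divisors on $X$ (the étale map $X_{F'}\to X$ sends $G$-orbits of prime divisors to prime divisors on $X$, with pullback equal to the orbit sum). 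The reverse inclusion is immediate by pullback. For Iitaka dimensions, flat base change gives $H^0(X, m(K_X + aL)) \otimes_F F' \cong \bigoplus_i H^0(X_i, m(K_{X_i} + aL|_{X_i}))$, and all summands on the right grow at the same rate in $m$.

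For an arbitrary extension $F'/F$ and a component $X' \subseteq X_{F'}$, I would choose compatible algebraic closures $\overline{F} \subseteq \overline{F'}$ and pick a component $X_1$ of $X_{\overline{F}}$ whose base change to $\overline{F'}$ maps into $X'$. Applying the finite Galois case to a sufficiently large finite Galois subextension of $\overline{F}/F$ over which $X_1$ is defined yields $a(X,L) = a(X_1, L|_{X_1})$; applying the geometrically integral case to the extension $\overline{F} \subseteq \overline{F'}$ yields $a(X_1, L|_{X_1}) = a((X_1)_{\overline{F'}}, L|_{X_1})$; and applying the finite Galois case to a sufficiently large finite Galois subextension of $\overline{F'}/F'$ yields $a((X_1)_{\overline{F'}}, L|_{X_1}) = a(X', L_{F'}|_{X'})$. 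Chaining these produces the desired equality, and the Iitaka dimension statement follows by repeating the same chain for the adjoint divisor. The main technical point is the Galois descent step, namely verifying that Galois-invariant pseudo-effective classes on the possibly disconnected $X_{F'}$ descend to pseudo-effective classes on $X$; the other ingredients are formal consequences of flat base change and the volume characterization of pseudo-effectivity.
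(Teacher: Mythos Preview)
Your proof is correct and rests on the same two ingredients as the paper's: Proposition~\ref{prop: galinvofa} for the Galois case, and preservation of $h^0$ under flat base change. The organization differs, however. The paper factors an arbitrary extension $F'/F$ as a purely transcendental extension followed by an algebraic one: over the transcendental part $X$ stays irreducible and $h^0$-preservation gives the result directly, and the algebraic part reduces to finite Galois. Your route instead isolates the geometrically integral case (for arbitrary extensions), the finite Galois case, and then chains through compatible algebraic closures $\overline{F} \subset \overline{F'}$. This works, but Step~3 as written elides one link: after applying the Galois case to get $a(X,L) = a(X_1,L|_{X_1})$ with $X_1$ viewed over the finite Galois subfield $F_1$, you then apply Step~1 to $X_1$ over $\overline{F}$; the passage from $F_1$ to $\overline{F}$ is itself an instance of Step~1 (since $X_1$ is geometrically integral over $F_1$) and should be mentioned. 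The paper's transcendental-then-algebraic factoring avoids this bookkeeping entirely.

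On the other hand, your argument is more explicit in one respect: in the Galois case the paper simply asserts that the common $a$-value of the components equals $a(X,L)$, whereas you spell out the Galois-descent step for pseudo-effective classes (average an approximating sequence, descend the resulting $G$-invariant effective divisors). That is a genuine clarification. Your use of the volume characterization of pseudo-effectivity in Step~1 is also fine, though one can equally argue via bigness: for rational $t > a(X,L)$ the class $K_X + tL$ is big, hence $\kappa = \dim X$, and this is what $h^0$-preservation transfers.
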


\begin{proof}
It suffices to prove the statement when $F'/F$ is either an algebraic extension or a purely transcendental extension since any extension of fields is a composition of these two types.

Suppose first that $F'/F$ is algebraic.  Let $\widetilde{F}/F'$ be an extension such that $\widetilde{F}/F$ is Galois.  Then we can prove the desired statement for $F'/F$ by verifying it for both $\widetilde{F}/F$ and $\widetilde{F}/F'$.  Thus it suffices to prove the statement under the additional condition that $F'/F$ is Galois.

When the extension is Galois then $\Gal(F'/F)$ acts transitively on the components of $X_{F'}$.  By Proposition \ref{prop: galinvofa} we see that every component of $X_{F'}$ has the same $a$-invariant and thus this $a$-invariant is equal to $a(X,L)$. 
Furthermore, since base change commutes with taking spaces of sections, we see that for every integer $m$ such that $m(K_{X} + a(X,L)L)$ is a Cartier divisor we have
\begin{equation*}
\dim H^{0}(X,m(K_{X} + a(X,L)L)) = \dim H^{0}(X_{F'},m(K_{X_{F'}} + a(X,L)L_{F'})).
\end{equation*}
Since $X_{F'}$ is a disjoint union of Galois translates of $X'$, we have
\begin{equation*}
\dim H^{0}(X_{F'},m(K_{X_{F'}} + a(X,L)L_{F'})) = r \cdot \dim H^{0}(X',m(K_{X'} + a(X,L)L_{F'}|_{X'})).
\end{equation*}
where $r$ denotes the number of components of $X_{F'}$.  Since $r$ does not change as $m$ increases, the Iitaka dimensions coincide.

Now suppose that $F'/F$ is purely transcendental.  This implies that $X_{F'}$ is smooth and irreducible.  Since sections are preserved by base change we deduce that for any positive rational number $a$ we have $\kappa(X,K_{X} + aL) = \kappa(X_{F'},K_{X_{F'}} + aL_{F'})$.  This implies that the $a$-invariants  over $F$ and $F'$ also coincide.
\end{proof}

The $a$-invariant is constant for general fibers of a smooth morphism:

\begin{theo} \label{theo: aconstancy}
Let $\pi : \mathcal X \rightarrow W$ be a smooth projective morphism defined over a field $F$ of characteristic $0$ such that every fiber of $\pi$ is integral and geometrically uniruled and let $L$ be a relatively big and nef $\mathbb Q$-divisor on $\mathcal X$. Then there exists a Zariski open subset $W^{\circ} \subset W$ such that the $a$-invariant $a(X_w,L|_{X_w})$ and the Iitaka dimension of $a(X_w, L|_{X_{w}})L|_{X_w} + K_{X_w}$ remain constant as we vary over all closed points $w \in W^{\circ}$.
\end{theo}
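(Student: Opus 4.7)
The plan is to proceed in three steps, reducing first to an algebraically closed base field, then establishing the Iitaka dimension constancy via a spreading-out of the canonical model, and finally deducing the $a$-invariant constancy from it. By Corollary~\ref{coro: flatbasechange} both the $a$-invariant and the Iitaka dimension of adjoint divisors are insensitive to base change along field extensions, so I may assume $F = \overline{F}$; then every closed fiber is integral over an algebraically closed field, hence geometrically integral. Let $\eta$ denote the generic point of $W$ and set $a := a(X_\eta, L|_{X_\eta})$, which is a positive rational number since $X_\eta$ is uniruled and $L|_{X_\eta}$ is big and nef. Combining Lemma~\ref{lemm: terminalpair} with \cite{BCHM} applied on the generic fiber, the section ring $R_\eta := \bigoplus_m H^0(X_\eta, md(K_{X_\eta} + aL|_{X_\eta}))$ is a finitely generated $F(W)$-algebra with $\kappa(X_\eta, K_{X_\eta} + aL|_{X_\eta}) \geq 0$. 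Picking $m_0$ with $h^0(X_\eta, m_0 d(K_{X_\eta} + aL|_{X_\eta})) > 0$, the flatness and properness of $\pi$ together with semicontinuity of cohomology yield an open $W_1 \subseteq W$ on which $h^0(X_w, m_0 d(K_{X_w} + aL|_{X_w})) > 0$; in particular $K_{X_w} + aL|_{X_w}$ is effective and $a(X_w, L|_{X_w}) \leq a$ for $w \in W_1$.

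The next step spreads out the canonical model of the generic fiber. Fix a finite set of homogeneous generators of $R_\eta$; after shrinking to an open $W_2 \subseteq W_1$ these lift to global sections of $\pi_*\mathcal O_{\mathcal X}(md(K_{\mathcal X/W_2} + aL))$ for various $m$, and by cohomology and base change applied to the finitely many relevant degrees (together with a further shrinking) they generate a finitely generated graded $\mathcal O_{W_2}$-algebra $\mathcal R$ whose formation commutes with arbitrary base change along $W_2$. Setting $\mathcal V := \mathrm{Proj}_{W_2}(\mathcal R)$, after one final shrinking the morphism $\mathcal V \to W_2$ is flat and projective with integral fibers $\mathcal V_w$, each of which is the canonical model of $K_{X_w} + aL|_{X_w}$. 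Consequently
\begin{equation*}
\kappa(X_w, K_{X_w} + aL|_{X_w}) \; = \; \dim \mathcal V_w \; = \; \dim \mathcal V_\eta \; = \; \kappa(X_\eta, K_{X_\eta} + aL|_{X_\eta})
\end{equation*}
for every $w$ in this open subset.

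Finally, to upgrade this to $a(X_w, L|_{X_w}) \geq a$ on a further open $W^\circ$, I argue by contradiction: if $a(X_w) < a$, then
\begin{equation*}
K_{X_w} + aL|_{X_w} \; = \; \bigl(K_{X_w} + a(X_w)L|_{X_w}\bigr) + (a - a(X_w))L|_{X_w}
\end{equation*}
expresses the left-hand side as a sum of a pseudo-effective divisor and a big divisor, so it is big and $\kappa(X_w, K_{X_w} + aL|_{X_w}) = \dim X_w$. However $a$ is by definition the pseudo-effective threshold on $X_\eta$, so $K_{X_\eta} + aL|_{X_\eta}$ lies on the boundary of $\overline{\mathrm{Eff}}^1(X_\eta)$ and cannot be big; hence $\kappa(X_\eta, K_{X_\eta} + aL|_{X_\eta}) < \dim X_\eta = \dim X_w$, contradicting the Iitaka dimension equality established above. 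Therefore $a(X_w, L|_{X_w}) = a$ and the two invariants are simultaneously constant on $W^\circ$. The main technical obstacle is the spreading-out step: one must ensure the relative $\mathrm{Proj}$ construction recovers the \emph{absolute} canonical model on each fiber, which requires a careful combination of finite generation from \cite{BCHM}, flatness of $\pi$, and cohomology and base change for the finitely many generating degrees.
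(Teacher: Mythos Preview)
The paper's own proof is a citation: over an algebraically closed field this is \cite[Theorem 4.3]{LTDuke}, and the reduction from arbitrary $F$ to $\overline{F}$ proceeds via Corollary~\ref{coro: flatbasechange} (with extra care involving Stein factorization, since fibers of $\pi$ are assumed integral but not geometrically integral and may split after base change --- your reduction is slightly too quick on this point). So you are attempting to reprove the content of \cite[Theorem 4.3]{LTDuke} directly, and the architecture of your argument is correct, but there is a genuine gap in the spreading-out step.

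You define $\mathcal{R}$ as the $\mathcal{O}_{W_2}$-subalgebra generated by lifts of finitely many generators of $R_\eta$ and then assert that $\mathcal{V}_w = \mathrm{Proj}(\mathcal{R}\otimes k(w))$ is the canonical model of $K_{X_w}+aL|_{X_w}$. But $\mathcal{R}\otimes k(w)$ is only a subalgebra of the full fiberwise section ring $R_w = \bigoplus_m H^0(X_w, md(K_{X_w}+aL|_{X_w}))$, and cohomology and base change in the finitely many generating degrees does not force equality: upper semicontinuity allows $h^0$ to jump up in degrees beyond those you control, so $\kappa(X_w, K_{X_w}+aL|_{X_w})$ could a priori exceed $\kappa_\eta$, and the contradiction in your final paragraph does not close. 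What is missing is an invariance-of-plurigenera statement for the adjoint divisor, guaranteeing that $h^0(X_w, md(K_{X_w}+aL|_{X_w}))$ is constant in $w$ for \emph{every} $m$; this is exactly the deformation-theoretic input underlying \cite[Theorem 4.3]{LTDuke} (resting ultimately on extension theorems of the type used in \cite{BCHM}), which is why the paper cites that result rather than reproving it. Your closing sentence correctly identifies this step as the crux, but ``finite generation from \cite{BCHM} plus base change in finitely many degrees'' is not enough on its own to bridge it.
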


\begin{proof}
Over an algebraically closed field this is \cite[Theorem 4.3]{LTDuke}. Over a general $F$, we can take the base change to $\overline{F}$, replace $\mathcal{X}$ with any component of $\mathcal{X}_{\overline{F}}$, and replace the morphism to $W_{\overline{F}}$ with its Stein factorization.  Since the space of sections associated to a divisor is stable under flat base change, the result over an arbitrary field of characteristic $0$ follows from the result over its algebraic closure combined with Corollary~\ref{coro: flatbasechange}. 
\end{proof}

The following lemmas describe some important geometric properties of the $a$-invariant.

\begin{lemm} \label{lemm: genfinite}
Let $X$ be a projective variety and let $L$ be a big and nef $\mathbb Q$-Cartier divisor on $X$. Suppose that $f : Y \to X$ is a generically finite dominant morphism from a projective variety.
Then we have $a(Y, f^*L) \leq a(X, L)$.
\end{lemm}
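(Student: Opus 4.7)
The plan is to reduce to the smooth case and combine Riemann--Hurwitz with the fact that pullback under a generically finite morphism preserves pseudo-effectivity. First, using the birational invariance of the $a$-invariant recorded in the bulleted list after the definition (\cite[Proposition 2.7]{HTT15}), I would choose a resolution $\phi: X' \to X$ and then a resolution $\psi: Y' \to Y$ that also resolves the indeterminacy of $\phi^{-1}\circ f \circ \psi$, obtaining a commutative diagram in which $g: Y' \to X'$ is a morphism of smooth projective varieties that is generically finite and dominant, with $a(X',\phi^{*}L) = a(X,L)$ and $a(Y',\psi^{*}f^{*}L)=a(Y,f^{*}L)$. After replacing $L$ and $f^{*}L$ by $\phi^{*}L$ and $g^{*}\phi^{*}L = \psi^{*}f^{*}L$ respectively, we may thus assume $X$ and $Y$ are smooth and $f$ itself is the generically finite morphism.

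Next I would apply Riemann--Hurwitz in the smooth setting, which gives
\[
K_{Y} = f^{*}K_{X} + R
\]
for an effective ramification divisor $R$ on $Y$. Writing $a := a(X,L)$, this yields
\[
K_{Y} + a f^{*}L = f^{*}(K_{X} + aL) + R.
\]
By definition of $a(X,L)$, the class $K_{X} + aL$ lies in $\Eff^{1}(X)$. Since $f$ is generically finite, the pullback $f^{*}: N^{1}(X) \to N^{1}(Y)$ sends pseudo-effective classes to pseudo-effective classes: any effective $\mathbb{Q}$-divisor $D$ numerically equivalent to a small perturbation of $K_{X}+aL$ pulls back to an effective $\mathbb{Q}$-divisor $f^{*}D$ on $Y$ (since $f$ is dominant with finite general fiber, no component is contracted to lower dimension), and taking limits we conclude that $f^{*}(K_{X}+aL) \in \Eff^{1}(Y)$. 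Adding the effective class $[R]$ preserves pseudo-effectivity, so $K_{Y} + a f^{*}L \in \Eff^{1}(Y)$.

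By the defining minimality of $a(Y,f^{*}L)$, this forces $a(Y,f^{*}L) \leq a = a(X,L)$, which is the desired inequality. The only subtle point, and the one I would take care to justify explicitly, is that pullback under a generically finite morphism of smooth projective varieties preserves the pseudo-effective cone; this is immediate from the argument above (pullback of an effective divisor is effective when no component is contracted, combined with a limiting argument, or alternatively by intersecting against pushforwards of movable curves via the projection formula). Everything else is a direct manipulation of the defining inequality for $a(Y,f^{*}L)$.
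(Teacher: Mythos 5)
Your proof is correct and follows essentially the same route as the paper: reduce to smooth models by birational invariance of the $a$-invariant, apply the ramification formula $K_{Y} = f^{*}K_{X} + R$ with $R \geq 0$, and conclude that $K_{Y} + a(X,L)f^{*}L$ is pseudo-effective. The only difference is that you spell out the (standard) fact that $f^{*}$ preserves pseudo-effectivity for a generically finite dominant morphism, which the paper leaves implicit.
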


\begin{proof}
After birational modifications to $X$ and $Y$ we may assume that $X$ and $Y$ are smooth.
By the ramification formula, we have
\[
a(X, L)f^*L + K_Y = a(X, L)f^*L + f^*K_X + R
\]
where $R \geq 0$ is the ramification divisor. Thus $K_{Y} + a(X, L)f^*L$ is pseudo-effective and our assertion follows.
\end{proof}

\begin{lemm} \label{lemm:ainvdominantfamily}
Let $X$ be a smooth projective variety defined over $F$ and $L$ a big and nef $\mathbb Q$-divisor on $X$. Let $p : \mathcal U \to W$ be a family of subvarieties on $X$ with the evaluation map $s : \mathcal U \rightarrow X$. Suppose that $s$ is dominant. Then for a general member $Y$ of $p$ we have $a(Y, L|_{Y}) \leq a(X, L)$.
\end{lemm}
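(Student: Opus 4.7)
The plan is to reduce to the case where $s$ is generically finite and then invoke Lemma~\ref{lemm: genfinite}. By resolving singularities, I may assume $\mathcal U$ and $W$ are smooth; this does not affect the $a$-invariants of general fibers by the birational invariance of the $a$-invariant. Since $s$ is dominant we have $\dim \mathcal U \geq \dim X$. If strict, let $d = \dim \mathcal U - \dim X$ and take a general complete intersection $W' \subset W$ of codimension $d$; put $\mathcal U' := p^{-1}(W')$, so that $\dim \mathcal U' = \dim X$. A dimension count using the map $(p, s) : \mathcal U \to W \times X$ shows that for sufficiently general $W'$ the restricted evaluation $s' := s|_{\mathcal U'} : \mathcal U' \to X$ remains dominant, and hence generically finite. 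Applying Lemma~\ref{lemm: genfinite} to $s'$ then yields $a(\mathcal U', s'^{*}L) \leq a(X, L)$, so that $D := K_{\mathcal U'} + a(X, L)\, s'^{*}L$ is pseudo-effective on $\mathcal U'$.

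I next wish to restrict $D$ to a general fiber $Y$ of $p|_{\mathcal U'}$. By generic smoothness in characteristic $0$ and after shrinking $W'$ further, a general fiber $Y$ is smooth. Smoothness of $p|_{\mathcal U'}$ gives $K_{\mathcal U'}|_Y = K_Y$ (since $p^{*}K_{W'}$ restricts trivially to a fiber), and since members of the family are literal subvarieties of $X$ we have $(s'^{*}L)|_Y = L|_Y$. A general fiber of $p|_{\mathcal U'}$ is a general fiber of the original $p$, so it suffices to show that $D|_Y$ is pseudo-effective.

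For this, fix an ample divisor $A$ on $\mathcal U'$; for each $n \in \mathbb Z_{>0}$, write $D + \tfrac{1}{n}A \equiv E_n$ with $E_n$ an effective $\mathbb Q$-divisor. A very general fiber $Y$ is contained in no component of any $\Supp(E_n)$, so that $(D + \tfrac{1}{n}A)|_Y$ is pseudo-effective for every $n$, and closedness of the pseudo-effective cone yields that $D|_Y$ is pseudo-effective. Therefore $K_Y + a(X, L) L|_Y$ is pseudo-effective, i.e., $a(Y, L|_Y) \leq a(X, L)$. By Theorem~\ref{theo: aconstancy} the $a$-invariant is constant on an open subset of $W$, so this bound at a very general fiber propagates to a general fiber; if $F$ is countable, one first passes to an uncountable base change, which preserves $a$-invariants by Corollary~\ref{coro: flatbasechange}. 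The principal subtlety is precisely this passage from pseudo-effectivity of $D$ on $\mathcal U'$ to pseudo-effectivity of $D|_Y$ on a general fiber, which is addressed via the approximation-and-limit argument above.
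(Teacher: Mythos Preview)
The paper's own proof is just a citation to \cite[Proposition~4.1]{LTT14} for the algebraically closed case together with Corollary~\ref{coro: flatbasechange}, so you are supplying a direct argument rather than comparing against one. Your reduction to a generically finite evaluation map is sound: since the fibers of $p$ embed in $X$, the projection $p$ is injective on each fiber of $s$, so $p(s^{-1}(x))$ has dimension $d$ for general $x$ and therefore meets any complete intersection of $d$ very ample divisors in $W$. Lemma~\ref{lemm: genfinite} then gives pseudo-effectivity of $D = K_{\mathcal U'} + a(X,L)\,s'^{*}L$, and your approximation-and-limit step correctly yields that $D|_Y$ is pseudo-effective for a \emph{very general} fiber $Y$.

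There is one genuine gap in the last step. Theorem~\ref{theo: aconstancy}, which you invoke to pass from very general to general, requires every fiber of the family to be geometrically uniruled, and you have not verified this. The repair is short once one observes that the lemma is only applied in the paper with $X$ geometrically uniruled, so that $a(X,L)>0$. Under that hypothesis one splits into two cases. If a general fiber $Y_w$ is not geometrically uniruled then $a(Y_w, L|_{Y_w}) \leq 0 < a(X,L)$ and there is nothing to prove. Otherwise the general fiber is uniruled; since uniruledness of fibers is an open condition in smooth projective families in characteristic $0$ (a free rational curve on one fiber has globally generated normal bundle in the total space and hence deforms to cover nearby fibers), after shrinking $W'$ every fiber is geometrically uniruled and Theorem~\ref{theo: aconstancy} applies as you wrote. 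With this case split inserted, your argument is complete.
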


\begin{proof}
This is stated in \cite[Proposition 4.1]{LTT14} assuming the base field is algebraically closed.
Its generalization to arbitrary fields of characteristic $0$ follows from Corollary~\ref{coro: flatbasechange}. 
\end{proof}

\begin{lemm} \label{lemm:dominantequalitycase}
Let $f: Y \to X$ be a dominant generically finite morphism of smooth projective varieties and let $L$ be a big and nef $\mathbb{Q}$-Cartier divisor on $X$.  Suppose that $a(Y,f^{*}L) = a(X,L)$.  Let $\Gamma$ denote the closure of a general fiber of the canonical map for $(Y,a(Y,f^{*}L)f^{*}L)$ and let $S$ denote its image in $X$.  Then $a(S,L|_{S}) = a(X,L)$ and $S$ is adjoint rigid with respect to $L|_{S}$.
\end{lemm}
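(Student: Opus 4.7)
The plan is to establish in turn the two assertions: first the equality $a(S,L|_S)=a(X,L)$, and then the adjoint rigidness of $S$. Write $a=a(X,L)=a(Y,f^*L)$, which must be positive since the conclusion forces $X$ to be uniruled. Let $g=f|_\Gamma:\Gamma\to S$ denote the induced surjective generically finite morphism. By Lemma~\ref{lemm:ainvandcanonicalfibers} applied to $(Y,f^*L)$, we have $a(\Gamma,f^*L|_\Gamma)=a$ and $(\Gamma,f^*L|_\Gamma)$ is adjoint rigid; in particular $f^*L|_\Gamma=g^*(L|_S)$ is big and nef, and since $g$ is surjective and generically finite, bigness and nefness descend to $L|_S$. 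To bound $a(S,L|_S)$ from above I apply Lemma~\ref{lemm:ainvdominantfamily} to the family $\{S_w=f(\Gamma_w)\}$ of subvarieties of $X$, as $w$ varies in the base of the canonical map of $(Y,af^*L)$: this family is dominant because the $\Gamma_w$ cover $Y$ and $f$ is dominant, so a general member $S$ satisfies $a(S,L|_S)\leq a$. The reverse inequality $a\leq a(S,L|_S)$ is immediate from Lemma~\ref{lemm: genfinite} applied to $g$.

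Next I turn to adjoint rigidness. Since $a>0$, it suffices to show $\kappa(S',K_{S'}+aL|_{S'})=0$ for a resolution $S'\to S$. After further birational modifications I may assume the induced rational map is a morphism $g:\widetilde{\Gamma}\to S'$ between smooth projective varieties, with ramification formula $K_{\widetilde{\Gamma}}=g^*K_{S'}+R$ where $R\geq 0$. This rewrites the adjoint as
\[
K_{\widetilde{\Gamma}}+af^*L|_{\widetilde{\Gamma}}=g^*(K_{S'}+aL|_{S'})+R,
\]
and by invariance of Iitaka dimension under generically finite pullback together with its monotonicity under the addition of effective divisors,
\[
\kappa(S',K_{S'}+aL|_{S'})=\kappa(\widetilde{\Gamma},g^*(K_{S'}+aL|_{S'}))\leq\kappa(\widetilde{\Gamma},K_{\widetilde{\Gamma}}+af^*L|_{\widetilde{\Gamma}})=0,
\]
the last equality being the adjoint rigidness of $\Gamma$.

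The reverse direction $\kappa\geq 0$ is the main obstacle: pseudo-effectiveness of $K_{S'}+aL|_{S'}$ does not by itself produce a section. To overcome this I appeal to BCHM. Via Lemma~\ref{lemm: terminalpair} and a Kodaira-type perturbation I will write $aL|_{S'}\sim_\mathbb{Q} a\Delta+A'$ with $\Delta\geq 0$ of sufficiently small coefficients and $A'$ an ample $\mathbb{Q}$-divisor represented as $\tfrac{1}{N}B$ for $B$ a general member of a suitably ample linear system, arranged so that $(S',a\Delta+A')$ is klt and $A'$ remains big. Since $K_{S'}+a\Delta+A'\sim_\mathbb{Q} K_{S'}+aL|_{S'}$ is pseudo-effective (as $a=a(S,L|_S)$), \cite{BCHM} guarantees that $(S',a\Delta+A')$ admits a log canonical model, so $\kappa(S',K_{S'}+aL|_{S'})\geq 0$. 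Combined with the upper bound this gives $\kappa=0$ and establishes that $S$ is adjoint rigid.
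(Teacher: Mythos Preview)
Your proof is correct and follows essentially the same route as the paper: you use Lemma~\ref{lemm:ainvandcanonicalfibers}, Lemma~\ref{lemm: genfinite}, and Lemma~\ref{lemm:ainvdominantfamily} to pin down $a(S,L|_S)=a(X,L)$, and then the ramification formula plus monotonicity of Iitaka dimension to get $\kappa\leq 0$. The only substantive difference is that you spell out the lower bound $\kappa\geq 0$ via a BCHM argument, whereas the paper simply concludes adjoint rigidity from $\kappa\leq 0$, leaving implicit the (standard) fact that for smooth $S'$ with $L|_{S'}$ big and nef and $a(S',L|_{S'})>0$ the adjoint divisor has non-negative Iitaka dimension; your explicit treatment is a welcome clarification but not a different approach.
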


\begin{proof}
By Lemma \ref{lemm:ainvandcanonicalfibers} we have $a(\Gamma,f^{*}L|_{\Gamma}) = a(Y,f^*L)$ and $\Gamma$ is adjoint rigid with respect to $f^{*}L|_{\Gamma}$.  Since $f|_{\Gamma}$ is generically finite for a general $\Gamma$, Lemma \ref{lemm: genfinite} shows that $a(S,L|_{S}) \geq a(\Gamma,f^{*}L|_{\Gamma})$.   However as we vary $\Gamma$ the images $S$ form a dominant family of subvarieties of $X$, so by Lemma \ref{lemm:ainvdominantfamily} we must have $a(S,L|_{S}) \leq a(X,L)$.  By combining these inequalities with the fact that $a(Y,f^{*}L) = a(X,L)$ we deduce that we have equalities of $a$-values everywhere.

Let $g: \Gamma' \to S'$ be a birational model of $f|_{\Gamma}: \Gamma \to S$ such that $\Gamma'$ and $S'$ are smooth and let $L'$ denote the pullback of $L$ to $S'$.  By the ramification formula we have
\[
a(S',L')g^*L' + K_{\Gamma'} = a(S', L')g^{*}L' + g^*K_{S'} + R
\]
for some effective divisor $R$.  Since the $a$-values of $\Gamma'$ and $S'$ are the same, we deduce that
\begin{equation*}
\kappa(S',K_{S'}+a(S',L')L') \leq \kappa(\Gamma',K_{\Gamma'}+a(\Gamma',g^{*}L')g^{*}L') = 0.
\end{equation*}
Thus $S$ is adjoint rigid with respect to $L|_{S}$.
\end{proof}

\subsection{$b$-invariant}
\label{subsec: b-inv}

If $\mathcal{C}$ is a closed convex pointed cone in a finite-dimensional real vector space, then a face of $\mathcal{C}$ is any closed convex subcone $\mathcal{F} \subset \mathcal{C}$ such that if $\alpha_{1},\alpha_{2} \in \mathcal{C}$ and $\alpha_{1} + \alpha_{2} \in \mathcal{F}$ then $\alpha_{1},\alpha_{2} \in \mathcal{F}$.

\begin{defi} \label{defi:facedefinition}
Let $X$ be a smooth projective variety defined over a field $F$ of characteristic $0$ and let $L$ be a big and nef $\mathbb{Q}$-divisor on $X$.  
We let $\mathcal{F}_{X,L}$ denote the face of $\Nef_{1}(X)$ consisting of classes with vanishing intersection against $K_{X} + a(X,L)L$.  We also let $\mathcal{F}^{X,L} = \mathcal{F}_{X,L}^{\vee}$ denote the dual face of $\Eff^{1}(X)$. 
\end{defi}

\begin{defi}
Let $X$ be a smooth projective variety defined over a field $F$ of characteristic $0$ and let $L$ be a big and nef $\mathbb Q$-divisor on $X$.  We define the $b$-invariant by
\begin{align*}
b(F, X, L) = \dim \, \mathcal{F}_{X,L} = \mathrm{codim} \, \mathcal{F}^{X,L}.
\end{align*}
When $X$ is singular, we define the $b$-invariant by pulling $L$ back to any smooth birational model of $X$; the result is independent of the choice of model by \cite[Proposition 2.10]{HTT15}.  When $L$ is nef but not big, we formally set $b(F, X,L) = \infty$.
\end{defi}

The faces $\mathcal{F}_{X,L}$ satisfy a natural compatibility under birational transforms.

\begin{lemm} \label{lemm:birfaceinv}  
Let $f : X' \to X$ be a generically finite morphism of smooth projective varieties such that every geometric component of $X'$ maps birationally to a geometric component of $X$ via $\overline{f}$. Let $L$ be a big and nef $\mathbb Q$-divisor on $X$.  Then the pushforward map $f_{*}: N_{1}(X') \to N_{1}(X)$ induces an isomorphism $f_{*}: \mathcal{F}_{X',f^{*}L} \cong \mathcal{F}_{X,L}$.
\end{lemm}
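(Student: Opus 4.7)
The plan is to first match up the $a$-invariants, use the ramification and projection formulas to get the forward inclusion, and then leverage the birational invariance of the $b$-invariant together with an explicit construction of preimages to get the full isomorphism.

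The hypothesis that each geometric component of $X'_{\overline{F}}$ maps birationally to a component of $X_{\overline{F}}$, combined with componentwise birational invariance of the Fujita invariant and Corollary~\ref{coro: flatbasechange} for descent from $\overline{F}$ to $F$, gives $a(X', f^*L) = a(X, L) =: a$. Since $f$ is generically finite between smooth varieties, the ramification formula yields
\[
K_{X'} + a f^* L \;=\; f^*(K_X + aL) + R,
\]
with $R$ an effective divisor supported on the locus where $f$ fails to be \'etale.

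For the inclusion $f_*(\mathcal{F}_{X'}) \subseteq \mathcal{F}_X$, take $\alpha \in \mathcal{F}_{X'}$. The projection formula gives $f_* \alpha \cdot D = \alpha \cdot f^* D \geq 0$ for every nef $D$ on $X$, so $f_* \alpha$ is nef. A further projection-formula computation combined with the ramification identity yields
\[
(K_X + aL) \cdot f_* \alpha \;=\; f^*(K_X + aL) \cdot \alpha \;=\; (K_{X'} + af^* L) \cdot \alpha - R \cdot \alpha \;=\; -R \cdot \alpha.
\]
The left-hand side is nonnegative (pseudo-effective paired with nef), while $R \cdot \alpha \geq 0$ (effective paired with nef), so both must vanish. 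In particular $(K_X + aL) \cdot f_* \alpha = 0$, placing $f_* \alpha$ in $\mathcal{F}_X$.

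For bijectivity, I would first invoke the birational invariance of the $b$-invariant from \cite[Proposition 2.10]{HTT15} (applied componentwise and descended via Galois) to obtain $\dim \mathcal{F}_{X'} = \dim \mathcal{F}_X$. To construct preimages, I would use Lemma~\ref{lemm:conetheorem} (applied to a terminal pair $(X,\Delta)$ with $\Delta \sim_{\mathbb{Q}} aL$ furnished by Lemma~\ref{lemm: terminalpair}) to produce movable-curve generators for the extremal rays of $\mathcal{F}_X$; each such generator can be moved away from the codimension-$\geq 2$ image $f(\mathrm{Exc}(f))$, and its strict transform $\widetilde{C}$ on $X'$ is then nef, pushes forward to the original class, and is disjoint from $R$. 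These properties together with the calculation $(K_{X'} + a f^* L) \cdot \widetilde{C} = f^*(K_X + aL) \cdot \widetilde{C} + R \cdot \widetilde{C} = (K_X + aL) \cdot C + 0 = 0$ place $[\widetilde{C}] \in \mathcal{F}_{X'}$. The main obstacle will be showing that these explicit preimages actually span the closed cone $\mathcal{F}_X$ rather than a proper subcone: the dimension equality rules out the image lying in a proper subspace, and combining this with the closure of $f_*(\mathcal{F}_{X'})$ (image of a closed cone under an injective linear map in finite dimensions) and the extremality of the movable-curve generators of $\mathcal{F}_X$ should force equality.
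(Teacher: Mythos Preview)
Your forward inclusion matches the paper's. For the converse you take a different and more elaborate route, and there is a small circularity in your last paragraph: you invoke ``an injective linear map'' before injectivity has been established. The repair is to note that $\mathcal{F}_X$ is rational polyhedral (this is how Lemma~\ref{lemm:conetheorem} is used elsewhere, e.g.\ in the proof of Lemma~\ref{lemm: monodromyandbvalue}(3)), so producing a preimage of each extremal ray already gives $\mathcal{F}_X \subseteq f_*(\mathcal{F}_{X'})$ without any closure argument, after which the dimension equality upgrades surjectivity to bijectivity. Your dimension-equality step via \cite[Proposition~2.10]{HTT15} ``componentwise plus Galois descent'' also needs more justification than you indicate, since $f$ need not be birational over $F$ and one must track how $\mathcal{F}_{\overline{X'}} = \prod_j \mathcal{F}_{\overline{X'_j}}$ interacts with both $\overline{f}_*$ (which may collapse several components onto one) and the Galois action.

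The paper's argument is considerably shorter and avoids all of this. The key extra observation, already implicit in what you wrote, is that your divisor $R$ contains \emph{every} $f$-exceptional prime divisor with positive coefficient (this is the standard discrepancy statement for a morphism of smooth varieties that is componentwise birational over $\overline{F}$). Since you have shown $R \cdot \alpha = 0$ for $\alpha \in \mathcal{F}_{X'}$ and each summand $R_i \cdot \alpha \geq 0$, every $f$-exceptional divisor vanishes on $\mathcal{F}_{X'}$. Because $N^1(X')$ is spanned by $f^*N^1(X)$ together with $f$-exceptional classes, each $\alpha \in \mathcal{F}_{X'}$ is then the curve-pullback of a unique class in $N_1(X)$, which one checks lies in $\mathcal{F}_X$; thus $\tfrac{1}{\deg f}f^*$ is an explicit inverse. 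No movable-curve construction, no polyhedrality of $\mathcal{F}_X$, and no separate $b$-invariant computation are needed.
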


\begin{proof}
For any geometric component of $X'$ the N\'eron-Severi space is spanned by the $\overline{f}$-exceptional divisors and the restriction of $\overline{f}^{*}N^{1}(\overline{X})$ to the component.  Since $X'$ is integral we know that $\Gal(\overline{F}/F)$ acts transitively on the geometric components of $X'$.  We conclude that $N^{1}(X')$ is spanned by $f$-exceptional divisors and $f^{*}N^{1}(X)$.

We next argue that $f_{*}$ actually maps $\mathcal{F}_{X',f^{*}L}$ to $\mathcal{F}_{X,L}$.  Note that there is an effective $\phi$-exceptional divisor $E$ on $X'$ such that $K_{X'} + a(X',f^{*}L)f^{*}L = f^{*}(K_{X} + a(X,L)L) + E$.  Thus any nef curve class with vanishing intersection against $K_{X'} + a(X',f^{*}L)f^{*}L$ will also have vanishing intersection against $f^{*}(K_{X} + a(X,L)L)$ and vanishing intersection against $E$.  The former property shows that $f_{*}$ maps $\mathcal{F}_{X',f^{*}L}$ to $\mathcal{F}_{X,L}$.

Next note that $E$ contains every $f$-exceptional divisor with a positive coefficient. Hence any class in $\mathcal{F}_{X',f^{*}L}$ has zero intersection against every $f$-exceptional divisor. Therefore every curve class in $\mathcal{F}_{X',f^{*}L}$ is pulled back from $\mathcal{F}_{X,L}$. 
Thus the pullback for curve classes induces a map $\frac{1}{\deg(f)} f^* : \mathcal{F}_{X,L} \to \mathcal{F}_{X',f^{*}L}$ inverse to $f_{*}$ and our assertion follows.  
\end{proof}

In contrast to the $a$-value, the $b$-value can change upon field extension, but it can only increase.

\begin{prop}
Let $X$ be a geometrically integral smooth projective variety defined over a field $F$ of characteristic $0$ and let $L$ be a big and nef $\mathbb Q$-divisor on $X$. Let $F'/F$ be a finite extension. Then we have
\[
b(F, X, L) \leq b(F', X_{F'}, L_{F'}).
\]
\end{prop}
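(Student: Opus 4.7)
The plan is to produce an injective linear map $\mathcal{F}_X \hookrightarrow \mathcal{F}_{X_{F'}}$ by flat pullback along the base change $\pi : X_{F'} \to X$, at which point the desired inequality $b(F,X,L) \leq b(F',X_{F'},L_{F'})$ follows from comparing the dimensions of the linear spans of the two faces. The geometric intuition is that nef curves on $X$ and the vanishing condition cutting out $\mathcal{F}_X$ both propagate along pullback, while new nef curves can appear on $X_{F'}$ that are not defined over $F$.

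First I would collect the basic linear-algebra facts. Because $F'/F$ is finite and separable, $\pi$ is finite \'etale of degree $d := [F':F]$; in particular $X_{F'}$ is smooth (and geometrically integral over $F'$) and $\pi^{*}K_X = K_{X_{F'}}$. Flat pullback gives maps $\pi^{*} : N^{1}(X) \to N^{1}(X_{F'})$ and $\pi^{*} : N_{1}(X) \to N_{1}(X_{F'})$ satisfying $\pi_{*}\pi^{*} = d \cdot \mathrm{id}$, so both are injective. By Corollary~\ref{coro: flatbasechange} one has $a(X,L) = a(X_{F'}, L_{F'})$, and hence
\[
\pi^{*}(K_X + a(X,L)L) = K_{X_{F'}} + a(X_{F'}, L_{F'})L_{F'}.
\]

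The one substantive step is showing that $\pi^{*}$ sends $\Nef_{1}(X)$ into $\Nef_{1}(X_{F'})$, which I would argue by duality. A curve class $\beta$ on $X_{F'}$ is nef iff $\beta \cdot D' \geq 0$ for every $D' \in \Eff^{1}(X_{F'})$. For $\alpha \in \Nef_{1}(X)$ and any such $D'$, the projection formula gives $\pi^{*}\alpha \cdot D' = \alpha \cdot \pi_{*}D'$, and $\pi_{*}D' \in \Eff^{1}(X)$ since pushforward of an effective divisor along a finite morphism is effective and pseudo-effectivity is preserved under limits. Nefness of $\alpha$ then forces $\pi^{*}\alpha \cdot D' \geq 0$.

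To finish, if $\alpha \in \mathcal{F}_{X}$, the previous step gives $\pi^{*}\alpha \in \Nef_{1}(X_{F'})$, while the projection formula yields
\[
(K_{X_{F'}} + a(X_{F'}, L_{F'})L_{F'}) \cdot \pi^{*}\alpha = d\cdot (K_{X} + a(X,L)L)\cdot \alpha = 0,
\]
so $\pi^{*}\alpha \in \mathcal{F}_{X_{F'}}$. Since $\pi^{*}$ is injective on $N_{1}(X)$, its restriction to $\mathcal{F}_X$ is injective as well, and therefore $\dim \mathcal{F}_X \leq \dim \mathcal{F}_{X_{F'}}$, which is the claim. I do not anticipate any real obstacle; the argument is formal once one has established the compatibility of flat pullback with the cones $\Eff^{1}$ and $\Nef_{1}$.
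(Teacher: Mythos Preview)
Your argument is correct. The paper takes a shorter path: it observes that $b(F,X,L) = \dim\bigl(\Nef_1(\overline{X})^{\Gal(\overline{F}/F)} \cap (K_X + a(X,L)L)^{\perp}\bigr)$, and since $\Gal(\overline{F}/F')$ is a subgroup of $\Gal(\overline{F}/F)$, passing to the smaller group can only enlarge the invariant subspace. Your flat pullback $\pi^*$ is, under the identification $N_1(X) = N_1(\overline{X})^{\Gal(\overline{F}/F)}$, precisely this inclusion of invariant subspaces, so the two proofs are the same idea viewed from different angles. The paper's formulation is more economical because the compatibility of $\pi^*$ with $\Nef_1$ and with the adjoint hyperplane is absorbed into the single statement $\Nef_1(X) = \Nef_1(\overline{X}) \cap N_1(X)$, whereas you verify these by hand via the projection formula; on the other hand your version is self-contained and does not presuppose that description. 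One minor point worth tidying: your two displayed uses of the projection formula place the degree factor $d$ inconsistently (whether $d$ appears depends on whether intersection numbers on $X_{F'}$ are computed relative to $F'$ or relative to $F$), but since the argument only uses non-negativity and vanishing this does not affect the conclusion.
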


\begin{proof}
This follows from the fact that $b(F, X, L)$ is given by
\[
\dim \mathrm{Nef}_1(\overline{X})^{\mathrm{Gal}(\overline{F}/F)} \cap (K_{X} + a(X,L)L)^{\perp}.
\]
\end{proof}

Recall that a face $\mathcal{F}$ of a cone $\mathcal{C}$ is said to be supported if there is a linear functional which is non-negative on $\mathcal{C}$ and vanishes precisely along $\mathcal{F}$.  It follows from the theory of dual cones that $\mathcal{F}^{X,L}$ is the minimal supported face of $\Eff^{1}(X)$ containing the class of $K_{X} + a(X,L)L$.  The following result shows that $\mathcal{F}^{X,L}$ satisfies a stronger property; a related statement is given in \cite[Theorem 2.16]{HTT15}.

\begin{lemm} \label{lemm:alternativedescription}
Let $X$ be a geometrically uniruled geometrically integral smooth projective variety and let $L$ be a big and nef $\mathbb{Q}$-divisor on $X$. Then:
\begin{enumerate}
\item $\mathcal{F}^{X,L}$ is the minimal face of $\Eff^{1}(X)$ containing $K_{X} + a(X,L)L$.
\item Let $\mathcal{E}$ denote the set of irreducible divisors $E$ such that for some $c>0$ the divisor $K_{X} + a(X,L)L - cE$ is $\mathbb{Q}$-linearly equivalent to an effective divisor.   Then the cone generated by the classes of the divisors in $\mathcal{E}$ contains a relatively open neighborhood of $K_{X} + a(X,L)L$ in $\mathcal{F}^{X,L}$. In particular, the classes of the divisors in $\mathcal{E}$ span $\Span(\mathcal{F}^{X,L})$.
\end{enumerate}
\end{lemm}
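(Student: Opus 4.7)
The plan is to prove part (2) first and then derive part (1) from it. The reduction is a short convex-geometry argument: the minimal face of a closed convex cone containing a point $p$ is the unique face whose relative interior contains $p$, so (1) reduces to showing $M := K_X + a(X,L)L$ lies in the relative interior of $\mathcal{F}^X$. Granting (2), for any $D \in \mathcal{F}^X$ and small $\epsilon > 0$ the class $M - \epsilon D$ lies in the promised neighborhood and hence in $\mathrm{cone}(\mathcal{E}) \subseteq \Eff^1(X)$; writing $M = (M - \epsilon D) + \epsilon D$ and applying the face property to the minimal face $\mathcal{F}_{\min}$ of $\Eff^1(X)$ containing $M$ forces $\epsilon D \in \mathcal{F}_{\min}$, hence $D \in \mathcal{F}_{\min}$, so $\mathcal{F}^X \subseteq \mathcal{F}_{\min}$ and the reverse inclusion is immediate.

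For (2), two easy preliminary steps are: (a) for $E \in \mathcal{E}$, writing $M \sim_\mathbb{Q} cE + D'$ with $c > 0$ and $D' \geq 0$ and testing against any $C \in \mathcal{F}_X \subseteq \Nef_1(X)$ gives $0 = M \cdot C = cE \cdot C + D' \cdot C$ with non-negative summands, so $E \cdot C = 0$ and $E \in \mathcal{F}^X$; and (b) every effective representative of $M$ writes $M$ as a strictly positive combination of classes in $\mathcal{E}$, and by averaging finitely many such representatives we can arrange that their components span $\Span(\mathcal{E}) \subseteq N^1(X)$, placing $M$ in the relative interior of $\mathrm{cone}(\mathcal{E})$ inside $\Span(\mathcal{E})$.

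The heart of (2) is the equality $\Span(\mathcal{E}) = \Span(\mathcal{F}^X)$. My plan is to pass to a birational resolution $\mu : \widetilde{X} \to X$ on which the canonical map $\pi : \widetilde{X} \to W$ for the adjoint divisor $K_{\widetilde{X}} + a(X,L)\mu^{*}L$ is a morphism; such a resolution exists because the canonical ring is finitely generated by \cite{BCHM} as reviewed in Section~\ref{sect: mmp}. On $\widetilde{X}$ one has $\widetilde{M} := K_{\widetilde{X}} + a(X,L)\mu^{*}L \sim_\mathbb{Q} \pi^{*}H + E$ with $H$ ample on $W$ and $E$ effective. Any $C \in \mathcal{F}_{\widetilde{X}}$ satisfies $0 = \widetilde{M} \cdot C = \pi^{*}H \cdot C + E \cdot C$ with non-negative summands, so $C$ is $\pi$-contracted and trivial against each component $E_i$ of $E$; the dual description $\Span(\mathcal{F}^{\widetilde{X}}) = \pi^{*}N^1(W) + \Span\{[E_i]\}$ then matches $\Span(\mathcal{E}_{\widetilde{X}})$, because the $E_i$ lie in $\mathcal{E}_{\widetilde{X}}$ by definition and pulling back effective representatives $D_W \sim_\mathbb{Q} kH$ on $W$ exhibits their components in $\mathcal{E}_{\widetilde{X}}$ via the effective representative $\pi^{*}D_W + kE \sim_\mathbb{Q} k\widetilde{M}$, so $\pi^{*}N^1(W) \subseteq \Span(\mathcal{E}_{\widetilde{X}})$. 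Descent to $X$ uses Lemma \ref{lemm:birfaceinv} together with the fact that $\mu_{*}$ annihilates precisely the $\mu$-exceptional directions on both sides. The principal obstacle I anticipate is the inclusion $\Span(\mathcal{F}^{\widetilde{X}}) \subseteq \pi^{*}N^1(W) + \Span\{[E_i]\}$, which requires combining the duality of $\mathcal{F}_{\widetilde{X}}$ with $\mathcal{F}^{\widetilde{X}}$ and the Leray-type structure of $N^1(\widetilde{X})$ over $N^1(W)$ to rule out any horizontal divisorial direction in $\mathcal{F}^{\widetilde{X}}$ not pulled back from $W$ or supported on $E$.
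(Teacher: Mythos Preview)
Your approach differs substantially from the paper's, and the step you flag as the obstacle is a genuine gap.  The paper never passes to the canonical model.  Instead it writes $a(X,L)L \sim_{\mathbb{Q}} \Delta + A$ with $(X,\Delta+A)$ terminal and $A$ ample (Lemma~\ref{lemm: terminalpair}) and applies the cone result Lemma~\ref{lemm:conetheorem} to $(X,\Delta)$ and a small multiple of $A$: only finitely many extremal rays of $\Nef_1(X)$ can be orthogonal to a boundary point of $\Eff^1(X)$ lying in a fixed polyhedral neighborhood $U$ of $M$.  Hence $\Eff^1(X)\cap U$ is rationally polyhedral, so every face of $\Eff^1(X)$ meeting the interior of $U$ is supported; this is (1).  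For (2) the paper then invokes \cite[Theorem~D]{BCHM} to represent every pseudo-effective class in $U$ by an effective divisor, and combines this with $M\in\mathrm{relint}(\mathcal{F}^X)$ (from (1)) to get the neighborhood statement.

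Your proposed identity $\Span(\mathcal{F}^{\widetilde X}) = \pi^{*}N^1(W) + \Span\{[E_i]\}$ is false in general as written.  If $\pi$ has a reducible divisorial fiber over some prime divisor $D_W\subset W$, with components $G_1,\dots,G_r$, then each $G_j$ lies in $\mathcal{E}_{\widetilde X}$ (it appears with positive coefficient in the effective representative $\pi^{*}D_W + kE \sim_{\mathbb{Q}} k\widetilde M$ for suitable $k$), hence $[G_j]\in\mathcal{F}^{\widetilde X}$; but the individual $[G_j]$ need not lie in $\pi^{*}N^1(W)+\Span\{[E_i]\}$, only their weighted sum does.  So your $V$ is already too small to equal $\Span(\mathcal{E}_{\widetilde X})$, let alone $\Span(\mathcal{F}^{\widetilde X})$.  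Even after enlarging $V$ to include all $\pi$-vertical divisor classes, the inclusion $\Span(\mathcal{F}^{\widetilde X})\subseteq V$ is equivalent to showing that $\Nef_1(\widetilde X)\cap V^{\perp}$ spans $V^{\perp}$, which is exactly a local polyhedrality statement for $\Nef_1(\widetilde X)$ along $\mathcal{F}_{\widetilde X}$.  The ``Leray-type structure'' you invoke only describes the subspace of vertical divisor classes; it does not by itself produce enough nef curve classes in $V^{\perp}$ to span.  I do not see how to close this without an input on the level of Lemma~\ref{lemm:conetheorem}, which is precisely what the paper uses.  Your reductions --- deriving (1) from (2), and steps (a), (b) --- are correct, but the central equality needs the same MMP machinery the paper invokes directly.
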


\begin{proof}
By Lemma \ref{lemm: terminalpair} $a(X,L)L$ is $\mathbb{Q}$-linearly equivalent to a sum $\Delta + A$ where $(X,\Delta+A)$ is terminal and $A$ is ample. 
Let $C$ be a compact polyhedral convex hull of a finite set of ample $\mathbb{Q}$-divisors such that the interior of $C$ contains $A$.
Let $U$ denote the set of classes in $N^{1}(X)$ of the form
\begin{equation*}
U = \left\{ \left. K_{X} + \Delta + \frac{1}{2}A + \lambda D \right| D \in C, \lambda \geq 0 \right\}.
\end{equation*}
In particular $U$ is a closed rational polyhedral set containing $K_{X} + a(X,L)L$ in its interior. 

(1) We apply Lemma \ref{lemm:conetheorem} to the pair $(X,\Delta)$ and the ample $\mathbb{Q}$-divisor $\frac{1}{4}A$ to obtain a finite set of $(K_{X} + \Delta+ \frac{1}{4}A)$-negative extremal rays.  Note that these are the only extremal rays of $\Nef_{1}(X)$ which can have vanishing intersection against an element of $U$ that lies on the boundary of the pseudo-effective cone.  Thus $\Eff^1(X) \cap U$ is a rational polyhedral set, since it is cut out in $U$ by finitely many rational linear inequalities.  In particular, every face of $\Eff^{1}(X)$ intersecting the interior of $U$ must be a supported face, proving (1).

(2) By \cite[Theorem D]{BCHM} (and the fact that $\mathbb{R}$-linear equivalence can be detected after change of base field), every pseudo-effective class contained in $U$ is represented by an effective divisor.  By (1) we know that $K_{X} + a(X,L)L$ is in the relative interior of $\mathcal{F}^{X,L}$. Thus, if $E$ is an effective divisor representing a class in $\mathcal{F}^{X,L} \cap U$, there is some positive constant $c$ such that $K_{X} + a(X,L)L - cE$ also has class in $\mathcal{F}^{X,L} \cap U$ and is thus represented by an effective divisor.  The statement follows.
\end{proof}

\cite[Theorem 1.2]{Sengupta17} shows that over an algebraically closed field of characteristic $0$ the $b$-invariant is constant for an open set of fibers in a smooth family of uniruled projective varieties.  However, the behavior over an arbitrary field of characteristic $0$ is more subtle since the $b$-invariant depends on the splitting behavior of the fibers.

The following useful criterion gives a geometric characterization of the $b$-invariant over an arbitrary field of characteristic $0$.  An analogous statement over $\mathbb{C}$ is proved in \cite[Lemma 2.11]{Sengupta17}.

\begin{lemm} \label{lemm: monodromyandbvalue}
Let $X$ be a geometrically uniruled geometrically integral smooth projective variety defined over a field $F$ of characteristic $0$, and let $L$ be a big and nef $\mathbb Q$-divisor on $X$.  Let $\pi: X \dashrightarrow W$ be the canonical model for $K_{X} + a(X,L)L$.  Suppose that there is a non-empty open set $W^{\circ} \subset W$ such that (i) $W^\circ$ is smooth, (ii) $\pi$ is well-defined and smooth over $W^{\circ}$, and (iii) every fiber $X_{w}$ over a closed point $w$ satisfies $a(X_{w},L|_{X_{w}}) = a(X,L)$.  Let $w \in W^{\circ}$ be a closed point and fix a geometric point $\overline{w}$ above $w$.
\begin{enumerate}
\item We have
\begin{align*}
b(F,X,L) & = \dim \, \left((N^{1}(X_w)\cap N^1(\overline{X}_{\overline{w}})^{\pi^{\et}_1(\overline{W^{\circ}},\overline{w})}) / \Span(\{E_{i}\}_{i = 1}^r) \right)
\end{align*}
where $\{ E_{i} \}_{i=1}^r$ is the finite set of irreducible divisors which dominate $W$ under $\pi$ and which satisfy $K_{X} + a(X,L)L - c_iE_{i} \in \Eff^{1}(X)$ for some $c_i > 0$.   (Here we are identifying $N^1(X_w)$ with its image under the natural embedding into $N^1(\overline{X}_{\overline{w}})$.) 
\item Let $i: X_w \hookrightarrow X$ denote the inclusion.  Then $i_{*}(\mathcal{F}_{X_w,L|_{X_{w}}}) \subset \mathcal{F}_{X,L}$.
\item If $X_{w}$ is a general fiber of $\pi$ then $i_{*}: \mathcal{F}_{X_w,L|_{X_{w}}} \to \mathcal{F}_{X,L}$ is a surjection.
\end{enumerate}
\end{lemm}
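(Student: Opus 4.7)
The plan is to prove (2) by a short projection-formula computation, (3) via a cone-theoretic argument combined with the observation that nef curves orthogonal to $K_X+a(X,L)L$ are contracted by $\pi$, and then to deduce (1) by dualizing (3) against the restriction map $r : N^1(X) \to N^1(X_w)$. Throughout I use the projection formula $D \cdot i_*\alpha = r(D)\cdot\alpha$ relating $r$ and $i_* : N_1(X_w)\to N_1(X)$.

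For (2): given $\alpha\in\mathcal{F}_{X_w}$ and any nef divisor $D$ on $X$, the restriction $D|_{X_w}$ is nef, so $D\cdot i_*\alpha = D|_{X_w}\cdot\alpha \geq 0$ and $i_*\alpha$ is nef. Smoothness of $\pi$ over $W^{\circ}$ gives $K_{X_w} = K_X|_{X_w}$, and combined with the hypothesis $a(X_w, L|_{X_w}) = a(X,L)$ this yields $(K_X + a(X,L)L)\cdot i_*\alpha = (K_{X_w} + a(X_w, L|_{X_w})L|_{X_w})\cdot\alpha = 0$, so $i_*\alpha\in\mathcal{F}_X$.

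For (3): I first replace $X$ by a smooth birational model on which $\pi$ is a morphism (preserving $\mathcal{F}_X$ by Lemma \ref{lemm:birfaceinv}) and write $K_X + a(X,L)L \sim_{\mathbb{Q}} \pi^*H + E$ with $H$ ample on $W$ and $E$ effective. Using Lemma \ref{lemm: terminalpair} I choose a terminal pair $(X,\Delta)$ with $\Delta + A' \sim_{\mathbb{Q}} a(X,L)L$ and $A'$ ample, and then apply Lemma \ref{lemm:conetheorem} to $(X,\Delta)$ with ample divisors $A' - \epsilon H'$ (for $\epsilon$ small and any fixed ample $H'$), passing to a limit as $\epsilon \to 0$, to deduce that $\mathcal{F}_X = \mathrm{Nef}_1(X)\cap(K_X+a(X,L)L)^\perp$ is rational polyhedral with extremal rays generated by movable curves. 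For such a generator $C$, nefness of $[C]$ together with $[C]\cdot(K_X+a(X,L)L)=0$ and non-negativity of $[C]\cdot\pi^*H$ and $[C]\cdot E$ forces both to vanish, so $\pi_*[C]=0$ and $C$ lies in a fiber of $\pi$. The covering family realizing the movability of $C$ restricts over $W^\circ$ to a covering family of curves inside general fibers $X_w$, producing a nef class $[C_w]\in\mathrm{Nef}_1(X_w)$ with $i_*[C_w]=[C]$; by adjunction $(K_{X_w}+a(X_w,L|_{X_w})L|_{X_w})\cdot[C_w]=(K_X+a(X,L)L)\cdot[C]=0$, so $[C_w]\in\mathcal{F}_{X_w}$ and $i_*$ surjects.

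For (1): by (3) one has $b = \dim i_*\mathcal{F}_{X_w}$. Duality in $N^1(X)$ gives $\mathrm{Span}(i_*\mathcal{F}_{X_w})^\perp = r^{-1}(\mathrm{Span}(\mathcal{F}^{X_w}))$, so
\[
b = \dim r(N^1(X)) - \dim\bigl(r(N^1(X))\cap \mathrm{Span}(\mathcal{F}^{X_w})\bigr).
\]
Applying Lemma \ref{lemm:alternativedescription}(2) to the adjoint-rigid pair $(X_w, L|_{X_w})$, $\mathrm{Span}(\mathcal{F}^{X_w})$ is generated by effective divisors $F$ on $X_w$ with $K_{X_w}+a(X_w,L|_{X_w})L|_{X_w} - cF$ effective for some $c > 0$. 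Since vertical components of any effective representative of $K_X+a(X,L)L$ miss a general fiber, every such $F$ has support in $\bigcup_i E_i|_{X_w}$, so $\mathrm{Span}(\mathcal{F}^{X_w}) = \mathrm{Span}\{E_i|_{X_w}\}_{i=1}^r \subset r(N^1(X))$ and
\[
b = \dim\bigl(r(N^1(X)) / \mathrm{Span}\{E_i|_{X_w}\}_{i=1}^r\bigr).
\]
The main obstacle is identifying $r(N^1(X)) = N^1(X_w)\cap N^1(\overline{X}_{\overline{w}})^{\pi_1^{\et}(\overline{W^\circ}, \overline{w})}$: the inclusion $\subset$ is immediate, while $\supset$ requires smooth base change for $\ell$-adic \'etale cohomology applied to $\pi^{-1}(W^\circ)\to W^\circ$ (monodromy-invariant classes in $R^2\pi_*\mathbb{Q}_\ell$ extend to global sections over $\overline{W^\circ}$, where algebraicity combined with Lefschetz-type statements identifies their preimages with divisor classes on the total space modulo $\ker r$), together with Galois descent at the closed point $w$.
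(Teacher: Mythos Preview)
Your argument for (2) contains a genuine error. You test $i_*\alpha$ only against \emph{nef} divisors $D$ on $X$, but $\Nef_1(X)$ is the dual cone to $\Eff^1(X)$, not to $\Nef^1(X)$. Showing $D\cdot i_*\alpha\geq 0$ for all nef $D$ only places $i_*\alpha$ in $\Eff_1(X)$; to conclude $i_*\alpha\in\Nef_1(X)$ you must check $D\cdot i_*\alpha\geq 0$ for every \emph{pseudo-effective} divisor $D$. That in turn requires knowing that pseudo-effective divisors on $X$ restrict to pseudo-effective divisors on the smooth fiber $X_w$, which is not automatic. The paper handles this by invoking Peternell's theorem (\cite[Arxiv version: 6.8 Theorem]{Peternell}) on restriction of pseudo-effective divisors to smooth fibers with trivial normal bundle; some input of this kind is unavoidable.

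Your approach to (3) is essentially the same as the paper's: both apply Lemma~\ref{lemm:conetheorem} to see $\mathcal F_X$ has finitely many extremal rays generated by movable curves, both use the decomposition $K_X+a(X,L)L\sim_{\mathbb Q}\pi^*H+E$ to see these curves are $\pi$-vertical, and both deform into a general fiber.

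Your approach to (1) differs from the paper's and has two problems. First, you deduce (1) from (3), but (3) is stated only for a \emph{general} fiber while (1) is asserted for \emph{every} closed point $w\in W^\circ$; your argument (including the step ``vertical components miss a general fiber'') therefore proves (1) only at general points. Second, and more seriously, the entire content of (1) lies in the identification $r(N^1(X)) = N^1(X_w)\cap N^1(\overline X_{\overline w})^{\pi_1^\et(\overline{W^\circ},\overline w)}$, which you do not prove: the gesture toward smooth base change and ``Lefschetz-type statements'' does not establish that monodromy-invariant \emph{algebraic} classes lift to algebraic classes on the total space. The paper handles this via the specialization results of Andr\'e \cite{andre96} and Maulik--Poonen \cite{MP12}, which first show the restriction $N^1(\overline X)\to N^1(\overline X_{\overline s})^{\pi_1^\et}$ is surjective at a single geometric point, and then propagate this to every point of $\overline{W}^\circ$ by comparing kernels (spanned by vertical divisors) and using constancy of the rank of the monodromy invariants. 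This argument is independent of (3) and gives (1) at every $w\in W^\circ$.
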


\begin{proof}
We may resolve $\pi$ to be a morphism without affecting $\pi^{-1}(W^{\circ})$, and since by Lemma \ref{lemm:birfaceinv} blowing-up induces an isomorphism of the faces $\mathcal{F}_{X,L}$ the statement for the blow-up is equivalent to the statement for the original variety.  

(1) We first show that there are only finitely many divisors $\{ E_{i} \}_{i=1}^r$ which dominate $W$ and satisfy $K_{X} + a(X,L)L - c_iE_{i} \in \Eff^{1}(X)$ for some $c_i > 0$.  The geometric generic fiber $\overline{X}_{\overline{\eta}}$ is adjoint rigid with respect to $L$.  If we restrict $E_{i}$ to $\overline{X}_{\overline{\eta}}$, then the support must lie in the unique effective divisor numerically equivalent to $K_{\overline{X}_{\overline{\eta}}} + a(X,L)L|_{\overline{X}_{\overline{\eta}}}$.  Thus there are only finitely many divisors $E_{i}$ of this type.

By \cite[Th\'eor\`eme 5.2]{andre96} and \cite[Theorem 1.1]{MP12} we know that for some geometric point $\overline{s} \in \overline{W}^{\circ}$  the specialization map $N^{1}(\overline{X}_{\overline{\eta}}) \to N^{1}(\overline{X}_{\overline{s}})$ is an isomorphism where $\overline{\eta}$ denotes the geometric generic point of $W$.  Furthermore by \cite[Proposition 3.3]{MP12} this isomorphism is compatible with restriction from $N^{1}(\overline{X})$ and is a map of $\pi^{\et}_1(\overline{W^{\circ}},\overline{s})$-modules.
Note that the image of the map
\begin{equation*}
N^1(\overline{X}) \twoheadrightarrow N^{1}(\overline{X}_{\eta})  \to N^{1}(\overline{X}_{\overline{\eta}})
\end{equation*}
is exactly the $\Gal(\overline{\overline{F}(\overline{W})}/\overline{F}(\overline{W}))$-invariant part, or equivalently, the $\pi^{\et}_1(\overline{W^{\circ}},\overline{s})$-invariant part.  Altogether we conclude that under the restriction map $N^{1}(\overline{X})$ will surject onto $N^{1}(\overline{X}_{\overline{s}})^{\pi^{\et}_1(\overline{W^{\circ}},\overline{s})}$ for a single fiber $\overline{X}_{\overline{s}}$.

We claim that $N^{1}(\overline{X}) \to N^{1}(\overline{X}_{\overline{t}})^{\pi^{\et}_1(\overline{W^{\circ}},\overline{t})}$ is surjective for every $\overline{t} \in \overline{W}^{\circ}$.  Indeed, consider the sequence of maps
\begin{equation*}
N^1(\overline{X}) \to N^{1}(\overline{X}_{\eta})  \to N^{1}(\overline{X}_{\overline{\eta}}) \to N^{1}(\overline{X}_{\overline{t}})
\end{equation*}
where \cite[Proposition 3.6]{MP12} defines the last map and shows it is injective.  The map in the middle is also injective.  Since $\overline{X}_{\eta}$ is rationally connected, the kernel of the leftmost map is generated by the kernel of $\Pic(\overline{X}) \to \Pic(\overline{X}_{\eta})$.  Altogether we see that for every $\overline{t}$ the kernel of $N^{1}(\overline{X}) \to N^{1}(\overline{X}_{\overline{t}})$ is the subspace of $N^{1}(\overline{X})$ spanned by all $\pi$-vertical divisors.  To prove our claim it suffices to show that the rank of $N^{1}(\overline{X}_{\overline{t}})^{\pi^{\et}_1(\overline{W^{\circ}},\overline{t})}$ is constant. Since the generic fiber of $\overline{\pi}$ is rationally connected, the Picard rank of the geometric fibers of $\pi$ is constant.  By taking monodromy invariants we obtain the claim.

In particular, we have a surjection $N^{1}(\overline{X}) \to N^{1}(\overline{X}_{\overline{w}})^{\pi^{\et}_1(\overline{W^{\circ}},\overline{w})}$.  Note that the action of $\Gal(\overline{F}/F)$ on $N^{1}(\overline{X})$ and on $N^{1}(\overline{X}_{w})$ factors through a finite group.  Thus we see that the restriction map
\begin{equation}\label{equation:monodromy}
N^{1}(\overline{X})^{\Gal(\overline{F}/F)} \to N^{1}(X_w)\cap N^1(\overline{X}_{\overline{w}})^{\pi^{\et}_1(\overline{W^{\circ}},\overline{w})}
\end{equation}
is surjective.

By Lemma \ref{lemm:alternativedescription}, $b(F,X,L)$ is the dimension of the quotient of $N^{1}(X)$ by all effective irreducible divisors $E$ satisfying $K_{X} + a(X,L)L - c E \in \Eff^{1}(X)$ for some $c>0$.  Note that $E$ lies in the kernel of the restriction map to $N^{1}(X_{w})$ if and only if $\pi(E) \subsetneq W$ and $E$ satisfies $K_{X} + a(X,L)L - cE \in \Eff^{1}(X)$ for some $c > 0$.  Thus if we further quotient by the finitely many $E_i$'s which dominate $W$ and satisfy $K_{X} + a(X,L)L - c E \in \Eff^{1}(X)$ for some $c>0$, the dimension of the resulting space is $b(F,X,L)$.

(2) First we claim that for every fiber $X_{w}$ over a closed point of $W^{\circ}$ an effective divisor on $X$ will restrict to a divisor on $X_{w}$ linearly equivalent to an effective divisor. Indeed, since the invertible sheaf associated to any effective divisor $D$ on $X$ is flat over $W^\circ$, the function $w \mapsto h^0(X_{w}, \mathcal O_X(D)|_{X_w})$ satisfies upper semicontinuity. Since this space of sections has positive dimension for a general fiber, it must be positive for all points on $W^\circ $. Then dually, nef curve classes on $X_w$ push forward to nef curve classes on $X$.  Using adjunction and the assumption that $a(X,L) = a(X_w,L|_{X_w})$ we see that $f_*$ maps $\mathcal{F}_{X_w,L|_{X_{w}}}$ to $\mathcal{F}_{X,L}$.

(3) Finally, we show that when $X_{w}$ is general then $i_{*}$ induces a surjection of faces.  By Lemma \ref{lemm: terminalpair} $L$ is $\mathbb{Q}$-linearly equivalent to a sum $\Delta + A$ where $A$ is ample and $(X,\Delta + A)$ is terminal.  By Lemma \ref{lemm:conetheorem} the face $\mathcal{F}_{X,L}$ only contains finitely many extremal rays of $\Nef_{1}(X)$ and each such ray is generated by the class of a movable curve $C_{j}$ on $X$.   Recall that we have modified $X$ so that the canonical map $\pi$ is a morphism on $X$.  Thus there is an ample divisor $H$ on $W$ such that $K_{X} + a(X,L) L - \pi^{*}H$ is $\mathbb{Q}$-linearly equivalent to an effective divisor.   
This implies that each $C_{j}$ must be $\pi$-vertical. In particular, for each $C_{j}$, a general fiber of $\pi$ will contain an irreducible deformation of $C_{j}$ which is a movable curve on that fiber.  Thus the class of each such curve is nef on $X_{w}$, and in particular, is contained in $\mathcal{F}_{X_{w},L|_{X_{w}}}$. This proves that $i_* : \mathcal{F}_{X_w,L|_{X_{w}}} \to \mathcal{F}_{X,L}$ is surjective. 
\end{proof}

\begin{coro} \label{coro: bvalequality}
Let $X$ be a geometrically uniruled geometrically integral smooth projective variety defined over a field $F$ of characteristic $0$ and let $L$ be a big and nef $\mathbb Q$-divisor on $X$.  Suppose that $\pi: X \to W$ is a surjective morphism to a projective variety $W$ which is birationally equivalent to the canonical model map for $K_{X} + a(X,L)L$.

Suppose that there is a non-empty smooth open set $W^{\circ} \subset W$ such that $\pi$ is smooth over $W^{\circ}$ and the geometric monodromy action of $\pi^{\et}_1(\overline{W^{\circ}},\overline{w})$ on $N^1(\overline{X}_{\overline{w}})$ is trivial for closed points $w \in W^{\circ}$.  
Then for a general point $w \in W^{\circ}$ the inclusion $i: X_{w} \to X$ induces an isomorphism $i_{*}: \mathcal{F}_{X_{w},L|_{X_{w}}} \to \mathcal{F}_{X,L}$.  In particular we have $b(F,X,L) = b(F,X_{w},L|_{X_{w}})$.
\end{coro}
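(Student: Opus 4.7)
The plan is to promote the surjection $i_*: \mathcal F_{X_w} \twoheadrightarrow \mathcal F_X$ supplied by Lemma \ref{lemm: monodromyandbvalue}(3) to an isomorphism, using the trivial monodromy hypothesis to force injectivity via the projection formula. The first step is to arrange the hypotheses of Lemma \ref{lemm: monodromyandbvalue}: conditions (i) and (ii) are immediate from the corollary's setup, and condition (iii) is obtained by shrinking $W^\circ$ using Theorem \ref{theo: aconstancy} together with Lemma \ref{lemm:ainvandcanonicalfibers}, so that the $a$-invariant of each closed fiber $X_w$ with $w\in W^\circ$ equals $a(X,L)$. Lemma \ref{lemm: monodromyandbvalue}(2) then makes $i_*$ well-defined, and Lemma \ref{lemm: monodromyandbvalue}(3) supplies surjectivity for general $w \in W^\circ$.

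For injectivity, the key observation is that the proof of Lemma \ref{lemm: monodromyandbvalue}(1) actually establishes the stronger statement that the restriction map
\[
r: N^1(X) \longrightarrow N^1(X_w) \cap N^1(\overline{X}_{\overline w})^{\pi^{\et}_1(\overline{W^\circ}, \overline w)}
\]
is surjective. The hypothesis of trivial geometric monodromy collapses the target to all of $N^1(X_w)$, so that $r$ surjects onto $N^1(X_w)$ itself. Now suppose $\gamma \in \mathcal F_{X_w}$ satisfies $i_*\gamma = 0$ in $N_1(X)$. The projection formula gives
\[
\gamma \cdot D|_{X_w} = i_*\gamma \cdot D = 0 \quad \text{for every } D \in N^1(X).
\]
Since the restrictions $D|_{X_w}$ fill out $N^1(X_w)$ and the intersection pairing on $X_w$ is perfect, it follows that $\gamma = 0$. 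Therefore $i_*: \mathcal F_{X_w} \to \mathcal F_X$ is both surjective and injective, and comparing dimensions gives $b(F,X,L) = \dim \mathcal F_X = \dim \mathcal F_{X_w} = b(F,X_w, L|_{X_w})$ as claimed.

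The only subtlety lies in recognizing that the surjectivity of $r$ used here is already embedded in the proof of Lemma \ref{lemm: monodromyandbvalue}; the trivial monodromy hypothesis is precisely what promotes that statement into the honest surjection $N^1(X) \twoheadrightarrow N^1(X_w)$ that the projection-formula argument requires. With this observation in hand, the proof is a short consequence of results already assembled in Section \ref{sec: geoinv}.
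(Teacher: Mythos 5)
Your proof is correct and follows essentially the same route as the paper: surjectivity of $i_*$ on the faces comes from Lemma \ref{lemm: monodromyandbvalue} (3), and injectivity comes from the surjection \eqref{equation:monodromy} collapsing, under the trivial-monodromy hypothesis, to a surjection $N^{1}(X)\to N^{1}(X_w)$, which dualizes to injectivity of $i_*:N_1(X_w)\to N_1(X)$ (your projection-formula computation is exactly this duality spelled out). The only cosmetic difference is that the paper first passes to a birational model $X'$ on which the canonical map is an honest morphism coinciding with $\pi$ over $W^{\circ}$ and then transfers the surjection back via Lemma \ref{lemm:birfaceinv}, a bookkeeping step you elide when treating $\pi$ as if it were literally the canonical model map.
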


\begin{proof}
Let $\phi: X' \to X$ be a smooth birational model such that the canonical map for $K_{X'} + a(X',\phi^{*}L)\phi^{*}L$ is a morphism on $X'$.  Let $\pi': X' \to W$ be the composition $\pi \circ \phi$.  Note that $\pi'$ agrees with the canonical map over an open subset of $W$.  We first shrink $W^{\circ}$ to ensure that $\pi'$ coincides with the canonical map over $W^{\circ}$.  After shrinking $W^{\circ}$ further we may ensure that $W^{\circ}$ is smooth
and that $\pi'$ is smooth over $W^{\circ}$, and by Lemma \ref{lemm:ainvandcanonicalfibers} we may also ensure that every fiber $X'_{w}$ over a closed point $w \in W^{\circ}$ satisfies $a(X'_{w},\phi^{*}L|_{X'_{w}}) = a(X',\phi^{*}L)$.  We may now apply Lemma \ref{lemm: monodromyandbvalue} (3) to deduce that for a general $w \in W^{\circ}$ the map $i_{*}: \mathcal{F}_{X'_{w},\phi^{*}L|_{X'_{w}}} \to \mathcal{F}_{X',\phi^{*}L}$ is a surjection.  By Lemma~\ref{lemm:birfaceinv} the map $i_{*}: \mathcal{F}_{X_{w},L|_{X_{w}}} \to \mathcal{F}_{X,L}$ is also surjective.  On the other hand, the monodromy assumption shows that $N^1(X) \rightarrow N^1(X_w)$ is surjective by \eqref{equation:monodromy}. Dually $i_{*}: N_1(X_w) \to N_1(X)$ is an injection, proving the claim.
\end{proof}

\subsection{Boundedness results}
\label{subsec: BAB}
Using the boundedness of singular Fano varieties proved by Birkar in \cite{birkar16} and \cite{birkar16b}, the papers \cite{LTT14}, \cite{HJ16}, \cite{LTDuke}, \cite{Sen17} prove certain types of boundedness results for the $a,b$-invariants over an algebraically closed field.  In this section we revisit these results in the setting of an arbitrary field $F$ of characteristic $0$.  
The following theorem proves the boundedness of subvarieties with $a$-value at least as large as $X$.

\begin{theo} \label{theo: aconstruction}
Let $X$ be a geometrically uniruled smooth projective variety defined over $F$ and let $L$ be a big and nef $\mathbb Q$-divisor on $X$.  There exists a constructible bounded subset $T \subset \Hilb(X)$ of the Hilbert scheme of $X$, a decomposition $T = \cup T_{i}$ of $T$ into locally closed subvarieties, and smooth projective morphisms $p_{i}: \mathcal U_{i} \to T_{i}$ and morphisms $s_{i}: \mathcal U_{i} \to X$ such that
\begin{itemize}
\item over $\overline{F}$, each fiber of $\overline{p}_i : \overline{\mathcal U}_i \rightarrow \overline{T}_i$ is an integral uniruled variety which is mapped birationally by $\overline{s}_{i}$ onto the subvariety of $\overline{X}$ parametrized by the corresponding point of $\Hilb(\overline{X})$;
\item every fiber $Y$ of $p_{i}$ is a smooth variety satisfying $a(Y,s_{i}^{*}L|_{Y}) \geq a(X,L)$ and is adjoint rigid with respect to $s_{i}^{*}L|_{Y}$;
and
\item for every subvariety $Y \subset X$ not contained in $\mathbf{B}_{+}(L)$ which satisfies $a(Y,L|_{Y}) \geq a(X,L)$ and which is adjoint rigid with respect to $L$, there is some index $i$ such that $Y$ is birational to a fiber of $p_{i}$ under the map $s_{i}$.
\end{itemize}
Furthermore, if $s_{i}: \mathcal U_{i} \to X$ is dominant then $s_i$ must be generically finite.  
\end{theo}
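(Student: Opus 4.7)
The plan is to proceed in three steps: (a) establish boundedness of the relevant adjoint rigid subvarieties over $\overline F$ by reducing to Birkar's theorems; (b) descend the resulting Hilbert-scheme locus to $F$ via Galois invariance of the $a$-invariant; and (c) stratify and refine the universal family to obtain the smoothness and ``dominant implies generically finite'' properties. The main obstacle is step (a); the rest is essentially bookkeeping with flat families, resolution of singularities, and Galois descent.

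For (a), I work over $\overline F$. Given an adjoint rigid subvariety $Y \subset \overline X$ with $Y \not\subset \mathbf B_+(L)$ and $a(Y, L|_Y) \geq a(\overline X, L) > 0$, take a resolution $Y' \to Y$; by definition $K_{Y'} + a(Y', L|_{Y'})L|_{Y'}$ has Iitaka dimension $0$. Perturbing to a terminal boundary via Lemma~\ref{lemm: terminalpair} and running a suitable minimal model program with scaling produces an $\epsilon$-log canonical weak Fano model whose boundary lies in a finite coefficient set. Birkar's theorems (Theorems~\ref{theo: BAB} and~\ref{theo: BAB2}) then bound the anticanonical volume and the Cartier index of this model, and pulling back a bounded polarization via the canonical map yields boundedness of such $Y$ in $\Hilb(\overline X)$. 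This is an extension of the constructions in \cite{LTT14}, \cite{HJ16}, \cite{LTDuke}, \cite{Sen17} to the present setting.

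For (b) and (c), let $\overline T \subset \Hilb(\overline X)$ be the closure of the constructible locus from (a). By Proposition~\ref{prop: galinvofa}, the $a$-invariant and the relevant Iitaka dimension are preserved by $\Gal(\overline F/F)$, so $\overline T$ is Galois-stable and descends to a constructible bounded $T \subset \Hilb(X)$. I then stratify $T$ into locally closed subsets on which the universal subscheme is flat with geometrically integral fibers; a generic-smoothness argument in characteristic $0$, combined with further stratification, yields a simultaneous resolution producing smooth projective $p_i : \mathcal U_i \to T_i$ with birational evaluation $s_i : \mathcal U_i \to X$ on every fiber. Applying Theorem~\ref{theo: aconstancy} to refine the stratification by constancy of $a$ and $\kappa$ on fibers, and restricting to the strata where $a \geq a(X,L)$ and $\kappa = 0$, I obtain a decomposition whose fibers are all adjoint rigid with the required $a$-value. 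Finally, if some $s_i$ is dominant but not generically finite, then $\dim \mathcal U_i > \dim X$ and Lemma~\ref{lemm:ainvdominantfamily} forces $a(Y, s_i^*L|_Y) \leq a(\overline X, L)$; cutting $T_i$ by the general complete intersection of $\dim \mathcal U_i - \dim X$ members of a very ample linear system on a compactification yields a refinement whose evaluation is dominant and generically finite, while the complementary locus of strictly smaller dimension is handled by Noetherian induction.
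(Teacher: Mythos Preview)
Your overall strategy matches the paper's proof closely: bound the adjoint rigid subvarieties over $\overline F$ via Birkar, descend by Galois invariance, then stratify using Theorem~\ref{theo: aconstancy}. Two points deserve comment.

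First, in step (a) the phrase ``pulling back a bounded polarization via the canonical map'' is not quite right: an adjoint rigid variety has Iitaka dimension $0$, so there is no useful canonical map to pull back from. What the paper actually does (following \cite[Theorem 3.5]{LTDuke}) is run the MMP to a terminal weak Fano model $\overline Y'$ on which $a(\overline Y,L|_{\overline Y})\cdot\phi_*\psi^*L|_{\overline Y}\equiv -K_{\overline Y'}$, then bound $(-K_{\overline Y'})^{\dim \overline Y'}$ by Birkar and deduce a uniform bound on $L|_{\overline Y}^{\dim \overline Y}$; boundedness in $\Hilb(\overline X)$ then follows from \cite[Lemma 4.7]{LTT14}. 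Your invocation of the Cartier-index bound (Theorem~\ref{theo: BAB2}) is not needed here. This is a matter of precision rather than a real error.

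Second, and more seriously, your argument for the ``furthermore'' clause does not work. If $s_i$ is dominant with $\dim\mathcal U_i>\dim X$ and you replace $T_i$ by a general complete intersection $T_i'$ of codimension $\dim\mathcal U_i-\dim X$, then the complement $T_i\setminus T_i'$ is \emph{open and dense} in $T_i$, hence of the same dimension; your Noetherian induction has nothing to induct on, and the subvarieties parametrized by $T_i\setminus T_i'$ are no longer accounted for. The paper instead observes that generic finiteness is automatic from the construction, citing \cite[Proposition 4.14]{LTDuke}: if a dominant family of subvarieties of $X$ has general member $Y$ which is adjoint rigid with $a(Y,L|_Y)=a(X,L)$, then the evaluation map is forced to be generically finite. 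The mechanism is rigidity, not dimension-cutting; through a general point of $X$ there cannot pass a positive-dimensional family of such $Y$, essentially because one can compare with the Iitaka fibration of $(X,a(X,L)L)$. You should replace your cutting argument with this structural statement.
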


Related results have appeared in \cite{HJ16} and \cite{LTDuke}.  We will give a quick proof by appealing to \cite{birkar16b}.  

\begin{proof}

Suppose that we have a subvariety $\overline{Y} \subset \overline{X}$ which is adjoint rigid with respect to $L$, not contained in $\mathbf{B}_{+}(L_{\overline{X}})$, and has $a$-value at least as large as $a(X,L)$.  Let $\psi: \widetilde{\overline{Y}} \to \overline{Y}$ be a resolution.  By \cite[Theorem 3.5]{LTDuke}, there is a birational contraction $\phi: \widetilde{\overline{Y}} \dashrightarrow \overline{Y}'$ where $\overline{Y}'$ is a $\mathbb{Q}$-factorial terminal weak Fano variety such that $a(\overline{Y},L|_{\overline{Y}})\phi_{*}\psi^{*}L|_{\overline{Y}} \equiv -K_{\overline{Y}'}$.  By \cite[Theorem 2.11]{birkar16b} (whose validity is established in all dimensions by induction), we see that there is some constant $C$ depending only on the dimension of $X$ such that
\begin{align*}
C \geq (-K_{\overline{Y}'})^{\dim \overline{Y}'} & = a(\overline{Y},L|_{\overline{Y}})^{\dim \overline{Y}} \left(\phi_{*}\psi^{*}L|_{\overline{Y}}\right)^{\dim \overline{Y}} \\
& \geq a(\overline{Y},L|_{\overline{Y}})^{\dim \overline{Y}} (\psi^{*}L|_{\overline{Y}})^{\dim \overline{Y}} \\
& = a(\overline{Y},L|_{\overline{Y}})^{\dim \overline{Y}} L|_{\overline{Y}}^{\dim \overline{Y}}.
\end{align*}
Thus we have
\begin{equation*}
L|_{\overline{Y}}^{\dim \overline{Y}} \leq C/a(\overline{Y},L|_{\overline{Y}})^{\dim \overline{Y}} \leq C/a(\overline{X},L)^{\dim \overline{Y}}.
\end{equation*}
By applying \cite[Lemma 4.7]{LTT14} to each irreducible component of $\overline{X}$ we conclude that such subvarieties $\overline{Y}$ are parametrized by a bounded subset of $\Hilb(\overline{X})$.  
We let $\overline{M} \subset \Hilb(\overline{X})$ denote a finite union of components which contains this locus (equipped with the reduced structure). 

Let $\overline{N} \subset \overline{M}$ denote the constructible subset over which the universal family has irreducible and reduced fibers.  By repeatedly resolving singularities in the fibers and stratifying the base, we find a finite union of locally closed subsets $\overline{N}_{i} \subset \overline{N}$ and smooth morphisms $\overline{p}_{i}: \overline{\mathcal U}_{i} \to \overline{N}_{i}$ whose fibers are smooth projective varieties birational to the subvarieties of $\overline{X}$ parametrized by points of $\overline{N}$.  We next replace each $\overline{N}_{i}$ by the open subset parametrizing subvarieties not contained in $\mathbf{B}_{+}(L)$; this ensures that $L$ is relatively big and nef on the universal family over $\overline{N}_{i}$.  Applying Theorem \ref{theo: aconstancy} and further stratifying the $\overline{N}_{i}$, we may suppose that the $a$-invariant and Iitaka dimension of the adjoint pair is constant in each family.  In particular, there is a constructible sublocus $\overline{T} \subset \overline{M}$ parametrizing the subvarieties which are not contained in $\mathbf{B}_{+}(L)$, have $a$-invariant at least $a(X,L)$ and are adjoint rigid with respect to $L$.

Now we note that $\overline{T}$ is defined over the ground field and thus descends to a subset $T$ of $\Hilb(X)$.  Indeed, given any subvariety $Y$ parametrized by a point of $\overline{T}$ and any $\sigma \in \Gal(\overline{F}/F)$, by Proposition \ref{prop: galinvofa} $\sigma(Y)$ is also parametrized by $\overline{T}$.
 Furthermore $\mathbf{B}_{+}(L_{\overline{X}})$ descends to $\mathbf{B}_{+}(L)$.  By repeatedly resolving singularities of fibers of the universal family over $T$ and taking finer stratifications of the base, we construct smooth families $p_{i}: \mathcal U_{i} \to T_{i}$ whose fibers are birational to the subvarieties parametrized by $T$.  We can further stratify $T$ so that after base-change to $\overline{F}$ we obtain a substratification of the original stratification of $\overline{T}$; since the $a$-invariant and Iitaka dimension are not affected by base change (see Corollary \ref{coro: flatbasechange}) these families will now have the desired properties. Altogether we established all claims in the statement except the last claim.

Assume that $s_i : \mathcal U_i \to X$ is dominant. We claim that a general member $Y$ of $p_i : \mathcal U_i \rightarrow T_i$ is contracted by $\pi$ where $\pi : X \dashrightarrow W$ is the canonical map associated to $a(X, L)L + K_X$.
Indeed, take a general complete intersection $T \subset T_i$ of dimension $\dim(X) - \dim(Y)$.  Then the base change $\mathcal U_{i, T}$ has an evaluation map $s_i' : \mathcal U_{i, T} \to X$ which is dominant and generically finite. The ramification formula implies that there exists an effective divisor $R \geq0$ on $\mathcal U_{i, T}$ such that for a general member $Y$ of $\mathcal U_{i, T}$, we have
 \begin{align*}
 a(X, L)s_i^*L|_Y + K_Y &= (a(X, L)(s_i')^*L + K_{\mathcal U_{i, T}})|_Y\\
 &= (a(X, L)(s_i')^*L + (s_i')^*K_X + R)|_Y.
 \end{align*}
 Since this divisor has Iitaka dimension $0$, we conclude that $(a(X, L)(s_i')^*L + (s_i')^*K_X)|_{Y}$ also has Iitaka dimension $0$, proving our claim.

 Since general fibers of $\pi : X \dashrightarrow W$ are adjoint rigid, \cite[Proposition 4.14]{LTDuke} applied to a general fiber of $\pi$ shows that the dominant family of adjoint rigid subvarieties contained in that fiber will have a generically finite evaluation map.  We deduce that the evaluation map $s_{i}$ will also be generically finite.
\end{proof}

It will be convenient to work with two variants of Theorem \ref{theo: aconstruction}.
The first focuses on subvarieties with strictly larger $a$-invariant.  \cite[Theorem 1.3]{HJ16} proves a similar result when $L$ is big and semiample. 

\begin{theo}
\label{theo: HJ}
Let $X$ be a geometrically uniruled smooth projective variety and let $L$ be a big and nef $\mathbb Q$-divisor on $X$.
\begin{enumerate}
\item As we vary over all projective subvarieties $Y$ not contained in $\mathbf{B}_{+}(L)$, the set
\begin{equation*}
\{ \, a(Y,L|_{Y}) \, | \, Y \not \subset \mathbf{B}_{+}(L) \} \cap [a(X,L),\infty)
\end{equation*}
is finite.
\item For any fixed $a \geq a(X,L)$, let $V^{a}$ denote the union of all subvarieties $Y$ such that $a(Y, L|_Y) > a$.
Then $V^{a} \subsetneq X$ is a proper closed subset and each component $V^{a}_{d} \subset V^{a}$ satisfies $a(V^{a}_{d}, L|_{V^{a}_{d}}) > a$.
\end{enumerate}
\end{theo}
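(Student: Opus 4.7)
The plan is to first prove (1) by reducing to the adjoint rigid case and invoking the boundedness result Theorem~\ref{theo: aconstruction}, and then to deduce (2) by expressing $V^{a}$ as a finite union of closed subvarieties arising from that bounded family.

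For part (1), given any $Y \not\subset \mathbf{B}_{+}(L)$ with $a(Y, L|_{Y}) \geq a(X, L)$, I apply Lemma~\ref{lemm:ainvandcanonicalfibers} to the pair $(Y, a(Y, L|_{Y}) L|_{Y})$ on a resolution of $Y$, obtaining an adjoint rigid subvariety $Y_{0} \subset X$ (the image of the closure of a general canonical model fiber) with $a(Y_{0}, L|_{Y_{0}}) = a(Y, L|_{Y})$. Because a general such fiber passes through a general point of $Y$, we may ensure $Y_{0} \not\subset \mathbf{B}_{+}(L)$. By Theorem~\ref{theo: aconstruction}, every such $Y_{0}$ is realized as a fiber of one of finitely many families $p_{i}: \mathcal{U}_{i} \to T_{i}$; after further stratifying via Theorem~\ref{theo: aconstancy}, the $a$-invariant is constant on each $T_{i}$, so only finitely many $a$-values $\geq a(X, L)$ can arise.

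For part (2), fix $a \geq a(X, L)$. By (1) the values $a(Y, L|_{Y}) > a$ form a finite set, and we let $I$ index those strata $T_{i}$ whose constant $a$-value $a_{i}$ exceeds $a$. Set $Z_{i} = \overline{s_{i}(\mathcal{U}_{i})} \subset X$ for $i \in I$, and $V = \bigcup_{i \in I} Z_{i}$, a closed subset since $I$ is finite. I claim $V^{a} = V$. For $V \subseteq V^{a}$: the family $\mathcal{U}_{i} \to T_{i}$ with evaluation $s_{i}$ dominates $Z_{i}$ and its general member has $a$-invariant $a_{i}$, so Lemma~\ref{lemm:ainvdominantfamily} applied on a resolution of $Z_{i}$ (noting $L|_{Z_{i}}$ is big and nef because $Z_{i} \not\subset \mathbf{B}_{+}(L)$) gives $a(Z_{i}, L|_{Z_{i}}) \geq a_{i} > a$, so $Z_{i} \subset V^{a}$. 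For $V^{a} \subseteq V$: if $a(Y, L|_{Y}) > a$, then by the reduction above a Zariski dense open of $Y$ is covered by adjoint rigid subvarieties $Y_{0}$ with $a(Y_{0}, L|_{Y_{0}}) > a$; each such $Y_{0}$ lies in some $Z_{i}$ with $i \in I$, and since $V$ is closed we conclude $Y \subset V$.

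To finish, note $V^{a} \neq X$: otherwise the irreducibility of $X$ forces $Z_{i} = X$ for some $i \in I$, contradicting $a(X, L) \geq a_{i} > a \geq a(X, L)$. For an irreducible component $V^{a}_{d}$, choose $i \in I$ with $V^{a}_{d}$ a component of $Z_{i}$; the subfamily of $\mathcal{U}_{i}$ parametrizing fibers landing in $V^{a}_{d}$ still dominates $V^{a}_{d}$ (since $V^{a}_{d}$ itself is covered by such subvarieties), so another application of Lemma~\ref{lemm:ainvdominantfamily} yields $a(V^{a}_{d}, L|_{V^{a}_{d}}) \geq a_{i} > a$. The principal delicacy is showing that the closures $Z_{i}$ (and not merely the a priori constructible images $s_{i}(\mathcal{U}_{i})$) actually lie inside $V^{a}$; this is exactly where Lemma~\ref{lemm:ainvdominantfamily} is needed.
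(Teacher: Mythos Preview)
Your approach is essentially the same as the paper's --- boundedness from Theorem~\ref{theo: aconstruction} plus Lemma~\ref{lemm:ainvdominantfamily} --- and part (1) is fine.  However, there is a genuine gap in your proof of part (2): you never account for subvarieties contained in $\mathbf{B}_{+}(L)$.

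Concretely, if $Y \subset \mathbf{B}_{+}(L)$ then $L|_{Y}$ is not big, so by convention $a(Y,L|_{Y}) = \infty > a$ and hence $Y \subset V^{a}$.  In particular every component of $\mathbf{B}_{+}(L)$ lies in $V^{a}$.  But your closed set $V = \bigcup_{i \in I} Z_{i}$ is built entirely from the families of Theorem~\ref{theo: aconstruction}, which by construction parametrize only subvarieties \emph{not} contained in $\mathbf{B}_{+}(L)$; there is no reason the closures $Z_{i}$ should cover $\mathbf{B}_{+}(L)$.  Your argument for $V^{a} \subseteq V$ tacitly assumes $Y \not\subset \mathbf{B}_{+}(L)$ when you invoke ``the reduction above'' (which needed $L|_{Y}$ big), so this case is simply missing.

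The fix is easy: add $\mathbf{B}_{+}(L)$ to your $V$.  For the final claim about components you then need the extra observation (which the paper makes explicitly, citing \cite{Nakamaye00}) that each component $B_{d}$ of $\mathbf{B}_{+}(L)$ satisfies $a(B_{d},L|_{B_{d}}) = \infty > a$.  With this amendment your argument goes through and matches the paper's.
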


\begin{proof}
(1) follows immediately from the finiteness of the families in Theorem \ref{theo: aconstruction}.  To see (2), we first combine the finiteness of the families in Theorem \ref{theo: aconstruction} with Lemma \ref{lemm:ainvdominantfamily} to see that $V^{a}$ is not Zariski dense in $X$.  This argument also shows that each component of the Zariski closure of $V^{a}$ which is not contained in $\mathbf{B}_{+}(L)$ admits a dominant family of subvarieties $Y$ such that $a(Y,L|_{Y}) > a$.  By again applying Lemma \ref{lemm:ainvdominantfamily} we see that each component of the Zariski closure of $V^{a}$ which is not contained in $\mathbf{B}_{+}(L)$ has $a$-value larger than $a$.  Furthermore \cite{Nakamaye00} shows that each component $B_{d}$ of $\mathbf{B}_{+}(L)$ satisfies $a(B_{d},L|_{B_{d}}) = \infty$.  Together these imply that $V^{a}$ is closed and that $a(V^{a}_{d}, L|_{V^{a}_{d}}) > a$ for every component $V^{a}_{d}$.
\end{proof}

The second variant replaces the families in Theorem \ref{theo: aconstruction} with projective closures and replaces the smooth families with the corresponding universal families in $\Hilb(X)$.

\begin{theo}
\label{theo: rigidfamilies}
Let $X$ be a geometrically uniruled smooth projective variety and let $L$ be a big and nef $\mathbb Q$-divisor on $X$.
Then there exist a proper closed subset $V$ of $X$ and finitely many families $p_i : \mathcal U_i \rightarrow W_i$ of closed subschemes of $X$ where $W_i$ is a projective subscheme of $\Hilb(X)$ such that
\begin{itemize}
\item over $\overline{F}$, $\overline{p}_i : \overline{\mathcal U}_i \rightarrow \overline{W}_i$ generically parametrizes integral uniruled subvarieties of $\overline{X}$;  
\item for each $i$, the evaluation map $s_i : \mathcal U_i \rightarrow X$ is generically finite and dominant;
\item for each $i$, a general member $Y$ of $p_i$ is a subvariety of $X$ such that $(Y,L|_{Y})$ is adjoint rigid and $a(X,L) = a(Y,L|_{Y})$; and
\item for any subvariety $Y$ such that $(Y,L|_{Y})$ is adjoint rigid and $a(Y, L|_Y) \geq a(X, L)$, either $Y$ is contained in $V$ or $Y$ is a member of a family $p_i : \mathcal U_i \rightarrow W_i$ for some $i$.
\end{itemize}
\end{theo}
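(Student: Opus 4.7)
The plan is to derive this theorem essentially as a bookkeeping reorganization of Theorem \ref{theo: aconstruction} and Theorem \ref{theo: HJ}, compactifying the parameter spaces inside $\Hilb(X)$ and absorbing every subvariety that does not fit into the resulting families into the closed set $V$.

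First, I would apply Theorem \ref{theo: aconstruction} to obtain finitely many locally closed strata $T_i \subset \Hilb(X)$ with smooth families $p_i' : \mathcal U_i' \to T_i$ and evaluation maps $s_i' : \mathcal U_i' \to X$, refining the stratification so that each $T_i$ is irreducible. The indices fall into two classes: those for which $s_i'$ is dominant (hence generically finite by the last assertion of Theorem \ref{theo: aconstruction}), and those for which $s_i'(\mathcal U_i')$ is contained in a proper closed subset of $X$. I would then define
\[
V = \mathbf{B}_{+}(L) \cup V^{a(X,L)} \cup \bigcup_{i \text{ non-dominant}} \overline{s_i'(\mathcal U_i')},
\]
where $V^{a(X,L)}$ is the proper closed subset from Theorem \ref{theo: HJ}(2). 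This is a finite union of proper closed subsets, hence a proper closed subset of $X$.

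Next, for each remaining index $i$ (with $s_i'$ dominant), I would set $W_i$ to be the Zariski closure of $T_i$ in $\Hilb(X)$, let $p_i : \mathcal U_i \to W_i$ be the pullback of the universal family of $\Hilb(X)$, and let $s_i : \mathcal U_i \to X$ be the tautological evaluation map. Since $T_i$ is dense open in $W_i$ and a general fiber of $p_i$ is the actual subvariety of $X$ parametrized by the corresponding point of $T_i$, the statement that $\overline p_i$ generically parametrizes integral uniruled subvarieties of $\overline X$ follows immediately from the first bullet of Theorem \ref{theo: aconstruction}. Similarly, a general member $Y$ of $p_i$ coincides with a subvariety parametrized by $T_i$, so $(Y, L|_Y)$ is adjoint rigid and $a(Y, L|_Y) = a(X, L)$. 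Dominance of $s_i$ is inherited from $s_i'$, and generic finiteness follows either from the last assertion of Theorem \ref{theo: aconstruction} applied to $s_i'$ (combined with the fact that fibers of $p_i'$ map birationally onto the corresponding fibers of $p_i$) or by a direct dimension count.

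Finally, for the closure property (last bullet), let $Y$ be any adjoint rigid subvariety with $a(Y, L|_Y) \geq a(X, L)$. If $a(Y, L|_Y) > a(X, L)$, then $Y \subset V^{a(X,L)} \subset V$ by Theorem \ref{theo: HJ}(2), and if $Y \subset \mathbf{B}_{+}(L)$ then again $Y \subset V$. Otherwise $a(Y, L|_Y) = a(X, L)$ and $Y \not\subset \mathbf{B}_{+}(L)$, so Theorem \ref{theo: aconstruction} places $Y$ as the birational image of a fiber of some $p_i'$ under $s_i'$; for a non-dominant index $i$ this forces $Y \subset V$, while for a dominant index $i$ the subvariety $Y$ is precisely the subscheme parametrized by the corresponding point of $T_i \subset W_i$, so $Y$ is a member of the family $p_i$. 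The only real subtlety is confirming that the general fibers of the universal family over the compactified base $W_i$ faithfully inherit the adjoint-rigidity and $a$-value information from the smooth model $\mathcal U_i' \to T_i$, which is immediate from the identification of fibers over $T_i$ together with the birationality of $s_i'$ on fibers.
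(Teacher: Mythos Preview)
Your proof is correct and follows essentially the same approach as the paper's own argument: take projective closures of the $T_i$ from Theorem~\ref{theo: aconstruction}, keep only the families whose evaluation maps are dominant, and absorb the rest (together with $\mathbf{B}_+(L)$) into $V$. The only cosmetic difference is that you explicitly add $V^{a(X,L)}$ to $V$, whereas the paper observes that this set is already contained in the union of the non-dominant images (since any family from Theorem~\ref{theo: aconstruction} whose fibers have $a$-value strictly larger than $a(X,L)$ cannot have dominant evaluation map, by Lemma~\ref{lemm:ainvdominantfamily} or Theorem~\ref{theo: HJ}); this same observation is what gives you the equality $a(Y,L|_Y)=a(X,L)$ in the third bullet for the surviving families, a step you pass over a little quickly.
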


\begin{proof}
First, we replace the loci $T_{i} \subset \Hilb(X)$ in Theorem \ref{theo: aconstruction} with their projective closures $W_{i}$ (equipped with the reduced structure). Let $p_{i}: \mathcal{U}_{i} \to W_{i}$ be the universal family equipped with evaluation maps $s_{i}: \mathcal{U}_{i} \to X$.  To start with we set $V = \mathbf{B}_{+}(L)$.  Then for each $i$ such that $s_{i}: \mathcal{U}_{i} \to X$ is not dominant, we add the image of $s_{i}$ to $V$ and remove $\mathcal{U}_{i}$ from our set of families.  By Theorem \ref{theo: HJ}, any subvariety with $a$-invariant larger than $X$ will be contained in $V$. The desired properties of the $\mathcal{U}_{i}$ then follow directly from Theorem \ref{theo: aconstruction}.
\end{proof}

Finally, we will need two results useful for understanding dominant breaking thin maps.  The cited statements are proved over an algebraically closed field but the extension to arbitrary fields of characteristic $0$ is immediate via Corollary \ref{coro: flatbasechange}.

\begin{theo}{\cite[Corollary 2.20]{Sen17}}
\label{theo: akash} 
Let $X$ be a geometrically uniruled smooth projective variety and let $L$ be a big and nef $\mathbb{Q}$-divisor on $X$.
Suppose that $f: Y \rightarrow X$ is a dominant generically finite morphism with $Y$ smooth projective and with $a(Y, f^*L)=a(X, L)$.  Suppose $R_{i}$ is a component of the ramification divisor $R$ on $Y$ which dominates the base of the canonical model map for $K_{Y} + a(Y,f^{*}L)f^{*}L$ and whose image $B_i$ is a component of the branch divisor $B$ on $X$. Then
\[
a(B_i, L|_{B_i}) > a(X, L).
\]
\end{theo}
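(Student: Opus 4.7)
The plan is to reduce to the adjoint rigid case and then derive non-pseudo-effectivity via the ramification formula together with the boundedness of singular Fano varieties. Set $a = a(X, L) = a(Y, f^*L)$ and let $\pi : Y \dashrightarrow W$ be the canonical model map for $K_Y + a f^*L$. Fix $\Gamma$ the closure of a general fiber of $\pi$; by Lemma~\ref{lemm:ainvandcanonicalfibers}, $(\Gamma, f^*L|_\Gamma)$ is adjoint rigid with $a$-value $a$. Setting $S := f(\Gamma)$, Lemma~\ref{lemm:dominantequalitycase} shows that $(S, L|_S)$ is also adjoint rigid with $a(S, L|_S) = a$ and that $f|_\Gamma : \Gamma \to S$ is generically finite dominant. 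Because $R_i$ dominates $W$ by hypothesis, $R_i \cap \Gamma$ is a nonzero divisor on $\Gamma$ containing a ramification component $R_i'$ of $f|_\Gamma$ whose image $B_i' \subseteq B_i \cap S$ is a branch component. As $\Gamma$ varies in its dominant family, the subvarieties $B_i'$ sweep out $B_i$, so Lemma~\ref{lemm:ainvdominantfamily} reduces the inequality $a(B_i, L|_{B_i}) > a$ to the analogous statement for $(B_i', L|_{B_i'})$. We may therefore replace $(X, Y, B_i, R_i)$ with $(S, \Gamma, B_i', R_i')$ and assume that both $X$ and $Y$ are adjoint rigid.

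In this reduced situation, apply Riemann--Hurwitz to the generically finite dominant morphism $f|_{R_i} : R_i \to B_i$ (passing to smooth models if necessary) to obtain an effective ramification divisor $R_i^\circ \geq 0$ satisfying $K_{R_i} = (f|_{R_i})^* K_{B_i} + R_i^\circ$. Adding $af^*L|_{R_i}$ to both sides gives
\begin{equation*}
K_{R_i} + a f^*L|_{R_i} \;=\; (f|_{R_i})^*\bigl(K_{B_i} + a L|_{B_i}\bigr) \,+\, R_i^\circ.
\end{equation*}
Since pullback along a generically finite dominant morphism preserves pseudo-effectivity, pseudo-effectivity of $K_{B_i} + aL|_{B_i}$ would force pseudo-effectivity of $K_{R_i} + af^*L|_{R_i}$. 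Contrapositively, it suffices to prove the stronger statement $a(R_i, f^*L|_{R_i}) > a$, i.e.\ that $K_{R_i} + af^*L|_{R_i}$ fails to be pseudo-effective.

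For this final step, I would exploit adjoint rigidity: the unique effective $\mathbb{Q}$-representatives $D_X$ of $K_X + aL$ and $D_Y$ of $K_Y + af^*L$ satisfy $D_Y = f^*D_X + R$, as follows by applying the uniqueness clause of adjoint rigidity to the identity $K_Y + af^*L = f^*(K_X + aL) + R$. By adjunction,
\begin{equation*}
K_{R_i} + af^*L|_{R_i} \;=\; (K_Y + R_i + af^*L)\big|_{R_i} \;\sim_{\mathbb{Q}}\; \bigl( f^*D_X + R + R_i \bigr)\big|_{R_i},
\end{equation*}
in which the coefficient of $R_i$ in $R + R_i$ equals the ramification index $e_i$ of $f$ along $R_i$. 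The strategy to rule out pseudo-effectivity is to run a $(K_Y + af^*L)$-MMP, legitimized by Lemma~\ref{lemm: terminalpair} together with \cite{BCHM}, passing to a $\mathbb{Q}$-factorial terminal weak Fano model $\phi : Y \dashrightarrow Y'$ on which $D_Y$ is contracted. Since $R_i$ is a component of $D_Y$, it is $\phi$-exceptional, so its normal bundle paired against a $\phi$-fiber curve $C \subset R_i$ is negative; combined with Birkar's BAB bounds (Theorems~\ref{theo: BAB} and \ref{theo: BAB2}) controlling $Y'$ and its polarization, and with Lemma~\ref{lemm:conetheorem} controlling the structure of extremal rays, this should produce a movable curve class on $R_i$ against which the restriction above pairs negatively. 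The main obstacle is precisely this last negativity step: the effective contributions from $(f^*D_X)|_{R_i}$ and $\sum_{j \neq i} r_j R_j|_{R_i}$ must be shown to be uniformly dominated by the negative normal bundle contribution $e_i \, R_i|_{R_i}$, and it is here that the uniform boundedness provided by BAB is essential and the technical heart of the argument in \cite{Sen17} resides.
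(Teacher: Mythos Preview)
The paper does not prove this theorem; it is quoted from \cite[Corollary~2.8]{Sen17}, and the only content the paper adds is the remark (just before the statement) that the extension from an algebraically closed field to an arbitrary field of characteristic~$0$ follows from Corollary~\ref{coro: flatbasechange}. So there is no in-paper argument to compare against beyond that base-change observation.

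Your proposal is a genuine attempt at a proof, and the two reductions are sound. Passing to a general fiber $\Gamma$ of the canonical map and its image $S$ to reduce to the adjoint rigid case is correct and natural (modulo routine care with smoothness of $S$ and with checking that $R_i\cap\Gamma$ really is a ramification component of $f|_\Gamma$, both of which are handled by genericity and resolution). The second reduction, via Riemann--Hurwitz on $R_i\to B_i$, to the stronger inequality $a(R_i,f^*L|_{R_i})>a$ is also valid.

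The gap is the final step, which you explicitly acknowledge as unfinished. Your sketch --- run the $(K_Y+af^*L)$-MMP to a weak Fano model $Y'$, observe that $R_i$ is $\phi$-exceptional, and produce a covering curve on $R_i$ with negative pairing against $(f^*D_X+R+R_i)|_{R_i}$ --- points in a reasonable direction, but two things are left unjustified. First, the assertion that the negative normal-bundle term $e_iR_i|_{R_i}\cdot C$ dominates the nonnegative contributions from $(f^*D_X)|_{R_i}$ and $\sum_{j\neq i}r_jR_j|_{R_i}$ is exactly the content to be proved, and you do not carry it out; a cleaner line is to locate the specific divisorial contraction at which the strict transform of $R_i$ is extracted and use the extremal curves of that step directly, but then one must still verify that the relevant intersection numbers and the bigness of $f^*L|_{R_i}$ transfer correctly across the birational map. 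Second, your appeal to BAB (Theorems~\ref{theo: BAB} and~\ref{theo: BAB2}) and Lemma~\ref{lemm:conetheorem} here is speculative: it is not clear what uniform bound you intend to extract or why it is needed to show that a single $\phi$-exceptional divisor has higher $a$-value. As written, the proposal correctly isolates where the difficulty lies but does not resolve it; the argument remains a reduction to \cite{Sen17} rather than an independent proof.
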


\begin{prop}{\cite[Proposition 2.17]{Sen17}}
\label{prop: degree}
Let $X$ be a geometrically uniruled smooth projective variety and let $L$ be a big and nef $\mathbb Q$-divisor on $X$. Then there exists a constant $M$ only depending on $\dim X$, $L^{\dim X}$, and $a(X, L)$ such that for any dominant thin map $f: Y \rightarrow X$ such that $Y$ is geometrically integral, $a(Y, f^{*}L) = a(X, L)$, and $(Y, f^*L)$ is adjoint rigid, we have
\[
\deg (f: Y \rightarrow f(Y)) \leq M.
\]
\end{prop}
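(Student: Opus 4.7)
The plan is to bound $\deg(f)$ by leveraging Birkar's boundedness theorem for singular weak Fano varieties, following the same strategy used inside the proof of Theorem \ref{theo: aconstruction} but applied directly to $Y$ instead of to a subvariety of $X$. Since every quantity involved is birational in nature, I first replace $Y$ by a smooth resolution without changing $\deg(f)$, $(f^{*}L)^{\dim Y}$, the $a$-invariant, or the adjoint Iitaka dimension.

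Because $X$ is geometrically uniruled, $a(X,L) > 0$, and hence $a(Y, f^{*}L) = a(X,L) > 0$. Combined with the adjoint rigidity hypothesis $\kappa(Y, K_{Y} + a(Y, f^{*}L)f^{*}L) = 0$, \cite[Theorem 3.5]{LTDuke} produces a birational contraction $\phi : Y \dashrightarrow Y'$ onto a $\mathbb{Q}$-factorial terminal weak Fano variety $Y'$ with
\[
a(Y, f^{*}L)\, \phi_{*}f^{*}L \;\equiv\; -K_{Y'}.
\]
Birkar's boundedness theorem (\cite[Theorem 2.11]{birkar16b}, cf.\ Theorem \ref{theo: BAB}) then supplies a constant $C = C(\dim X)$ with $(-K_{Y'})^{\dim Y'} \leq C$.

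Using the displayed numerical equivalence together with the fact that pushforward of a nef divisor under a birational contraction does not decrease the top self-intersection---on a common resolution $p : W \to Y$, $q : W \to Y'$ the difference $p^{*}f^{*}L - q^{*}\phi_{*}f^{*}L$ is effective and $q$-exceptional---I obtain
\[
C \;\geq\; (-K_{Y'})^{\dim Y'} \;=\; a(X,L)^{\dim X}\,(\phi_{*}f^{*}L)^{\dim Y} \;\geq\; a(X,L)^{\dim X}\,(f^{*}L)^{\dim Y}.
\]
Since $f : Y \to X$ is dominant and generically finite, $(f^{*}L)^{\dim Y} = \deg(f)\cdot L^{\dim X}$, and rearranging gives
\[
\deg(f) \;\leq\; \frac{C(\dim X)}{a(X,L)^{\dim X}\cdot L^{\dim X}} \;=:\; M,
\]
which depends only on $\dim X$, $a(X,L)$, and $L^{\dim X}$, as required.

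The only delicate step is verifying the hypotheses of \cite[Theorem 3.5]{LTDuke}: one must check that adjoint rigidity plus the equality $a(Y,f^{*}L)=a(X,L)$ lets us terminate the $(K_{Y} + a(Y, f^{*}L)f^{*}L)$-MMP at a weak Fano model with controlled ($\epsilon$-lc, in fact terminal) singularities, which is precisely what makes Birkar's boundedness yield a dimension-only constant for $(-K_{Y'})^{\dim Y'}$. Everything else amounts to the elementary intersection-theoretic inequalities above, so this is where I expect all the real content to sit.
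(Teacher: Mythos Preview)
Your argument is correct and is exactly the computation the paper itself carries out in the proof of Theorem~\ref{theo: aconstruction} (and is the content of \cite[Proposition~2.15]{Sen17}, which the paper simply cites here). One small correction to your justification of the inequality $(\phi_{*}f^{*}L)^{\dim Y} \geq (f^{*}L)^{\dim Y}$: the difference $p^{*}f^{*}L - q^{*}\phi_{*}f^{*}L$ is $q$-exceptional but need not be effective (for instance, blow down a $(-1)$-curve $E$ on a surface and take the nef divisor $H - \epsilon E$; the difference is $-\epsilon E$). The inequality is nevertheless true because volume can only increase under a birational contraction: since $\phi^{-1}$ contracts no divisors, every section of $mf^{*}L$ on $Y$ gives a section of $m\phi_{*}f^{*}L$ on $Y'$, whence $\vol(f^{*}L) \leq \vol(\phi_{*}f^{*}L)$, and both volumes coincide with the top self-intersection since both divisors are nef.
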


\subsection{Fiber dimension}
\label{subsec: fiberdim}

As discussed earlier, we will frequently study the geometry of an arbitrary polarized pair $(X,L)$ by replacing it with the fibers of its canonical model.  In this section we define and study an invariant which helps us understand this replacement operation.

\begin{defi}
Let $X$ be a smooth projective variety defined over a field $F$ of characteristic $0$. Let $L$ be a big and nef $\mathbb Q$-divisor on $X$. We define
\[
d(X, L) = \dim(X) - \kappa(X,K_{X} + a(X,L)L).
\]
When $X$ is singular, we define $d(X,L)$ by pulling $L$ back to a smooth birational model of $X$.  Note that $d(X,L)$ is invariant under extension of the ground field.
\end{defi}

The following lemma shows that any adjoint rigid subvarieties of $X$ with dimension larger than $d(X,L)$ must be contained in a closed subset.  We will use it to show that adjoint rigid subvarieties only contribute a thin subset of rational points.

\begin{lemm}
\label{lemm:d(Y)>d(X)}
Let $X$ be a geometrically uniruled smooth projective variety and let $L$ be a big and nef $\mathbb{Q}$-divisor on $X$.  There is a proper closed subset $V \subsetneq X$ such that if $f: Y \to X$ is any thin map satisfying $a(Y,f^{*}L) \geq a(X,L)$ and $d(Y,L) > d(X,L)$ then $f(Y) \subset V$.
\end{lemm}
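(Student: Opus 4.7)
The plan is to reduce to a statement about adjoint rigid subvarieties of $X$ of dimension larger than $d(X,L)$ and then to rule out that such subvarieties form a covering family of $X$, using the boundedness furnished by Theorem~\ref{theo: aconstruction}. Replace $Y$ by a resolution of singularities, and let $Y_0 \subset Y$ denote the closure of a general fiber of the canonical map of $(Y, a(Y,f^*L)f^*L)$. By Lemma~\ref{lemm:ainvandcanonicalfibers}, $(Y_0, f^*L|_{Y_0})$ is adjoint rigid with $a(Y_0, f^*L|_{Y_0}) = a(Y, f^*L) \geq a(X,L)$, and by definition $\dim Y_0 = d(Y, f^*L) > d(X,L)$. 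As $Y_0$ varies, these fibers cover a dense open of $Y$, so $\bigcup_{Y_0} f(Y_0)$ is dense in $f(Y)$; hence it is enough to exhibit a proper closed subset $V \subsetneq X$ containing $f(Y_0)$ for every such thin map $f$ and every general $Y_0$.

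Set $S = f(Y_0)$. Since $f|_{Y_0}$ is generically finite, $\dim S = \dim Y_0 > d(X,L)$, and Lemma~\ref{lemm: genfinite} gives $a(S, L|_S) \geq a(Y_0, f^*L|_{Y_0}) \geq a(X,L)$. If the inequality is strict, then $S$ is contained in the locus $V^{a(X,L)}$, which is a proper closed subset of $X$ by Theorem~\ref{theo: HJ}(2). Otherwise $a(S, L|_S) = a(X,L) =: a$, and applying the ramification formula to $f|_{Y_0}$ exactly as in the proof of Lemma~\ref{lemm:dominantequalitycase} shows that $(S, L|_S)$ is adjoint rigid. Theorem~\ref{theo: aconstruction} then produces finitely many smooth projective families $p_i \colon \mathcal{U}_i \to T_i$ with evaluation maps $s_i \colon \mathcal{U}_i \to X$ such that every such $S$ is birational to a fiber of some $p_i$ via $s_i$. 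Each $p_i$ has constant fiber dimension $k_i$, so it suffices to show that $s_i$ is not dominant whenever $k_i > d(X,L)$; one then takes
\[
V \;:=\; V^{a(X,L)} \;\cup\; \bigcup_{k_i > d(X,L)} s_i(\mathcal{U}_i).
\]

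The heart of the argument is this last claim, where the key tension is between the fibers of $p_i$ (adjoint rigid of dimension $k_i$) and the fibers of the canonical map of $X$ (of dimension $d(X,L) < k_i$). Suppose for contradiction that $s_i$ is dominant with $k_i > d(X,L)$; then $s_i$ is generically finite by Theorem~\ref{theo: aconstruction}, and Lemmas~\ref{lemm: genfinite} and~\ref{lemm:ainvdominantfamily} give $a(\mathcal{U}_i, s_i^*L) = a$. Let $\pi_{\mathcal{U}_i}$ and $\pi_X$ denote the canonical maps of $(\mathcal{U}_i, a s_i^*L)$ and $(X, aL)$, respectively, and let $W$ be the image of $\pi_X$. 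Since $p_i$ is smooth, $K_{\mathcal{U}_i}|_F = K_F$ on each fiber $F$, so the adjoint rigidity of $F$ forces the restriction of every pluricanonical section of $m(K_{\mathcal{U}_i} + a s_i^*L)$ to $F$ to be essentially unique; therefore $F$ is contracted by $\pi_{\mathcal{U}_i}$, giving $d(\mathcal{U}_i, s_i^*L) \geq k_i > d(X,L)$. On the other hand, the ramification formula $K_{\mathcal{U}_i} + a s_i^*L = s_i^*(K_X + aL) + R$ with $R \geq 0$ embeds pluricanonical sections from $X$ into those of $\mathcal{U}_i$, so $\pi_X \circ s_i$ factors rationally through $\pi_{\mathcal{U}_i}$. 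Thus the general fibers of $\pi_{\mathcal{U}_i}$ are contained in the general fibers of $\pi_X \circ s_i$, which have dimension $\dim \mathcal{U}_i - \dim W = d(X,L)$ by generic finiteness of $s_i$. This contradicts $d(\mathcal{U}_i, s_i^*L) > d(X,L)$, completing the proof.
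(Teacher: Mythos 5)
Your proof is correct and follows essentially the same route as the paper's: both reduce to the adjoint-rigid fibers of the canonical map of $Y$, invoke the boundedness of adjoint rigid subvarieties with large $a$-invariant (Theorem \ref{theo: aconstruction}, which the paper accesses through Theorem \ref{theo: rigidfamilies}) to obtain finitely many families, and derive the contradiction from the fact that a fiber of dimension $> d(X,L)$ cannot be contracted by $\pi_X \circ s_i$ while adjoint rigidity forces it to be contracted by the Iitaka fibration of $K_{\mathcal{U}_i} + a\,s_i^*L$. Your last step phrases this as a comparison of Iitaka fibrations where the paper restricts $s_i^*\pi_X^*H$ to a fiber, but the content is identical; just note that $\mathcal{U}_i$ is only quasi-projective, so the invariants and Iitaka fibrations should be computed on a smooth projective compactification, and $V$ should be built from the Zariski closures $\overline{s_i(\mathcal{U}_i)}$.
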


\begin{proof}
Let $V \subset X$ denote the closed subset defined by Theorem \ref{theo: rigidfamilies}.  Assume for a contradiction that $f(Y) \not \subset V$.  In particular this implies that $a(Y,f^{*}L) = a(f(Y),L|_{f(Y)}) = a(X,L)$.  Let $\Gamma$ denote the closure of a general fiber of the canonical model for $(Y,a(Y,f^{*}L)f^{*}L)$ and let $S$ denote its image in $X$.  By Lemma \ref{lemm:dominantequalitycase} we see that $S$  has the same $a$-invariant as $X$ and is adjoint rigid with respect to the restriction of $L$.  Since the general such $S$ is not contained in $V$, as we vary $\Gamma$ the images $S$ are parametrized by a family $p_{i}: \mathcal{U}_{i} \to W_{i}$ as in Theorem \ref{theo: rigidfamilies} such that the evaluation map $s_{i}$ is dominant (even if $f$ is not dominant).  
To summarize this discussion, we will obtain the desired contradiction if we can prove that there is no dominant family of subvarieties $S$ such that $(S,L|_{S})$ is adjoint rigid, $a(S,L|_{S}) = a(X,L)$ and $\dim(S) > d(X,L)$.

To prove this, we may replace $X$ by any birational model.  In particular we may suppose that the canonical model map for $K_{X} + a(X,L)L$ is a morphism $\pi: X \to W$.  Thus there is an ample $\mathbb{Q}$-divisor $H$ on $W$ and an effective $\mathbb{Q}$-divisor $E$ such that $K_{X} + a(X,L)L$ is $\mathbb{Q}$-linearly equivalent to $\pi^{*}H + E$.  Suppose we have a diagram of smooth varieties
\begin{equation*}
\xymatrix{ \mathcal{S} \ar[r]^{g} \ar[d]_{q} &  X \\
T & }
\end{equation*}
such that $g$ is generically finite and dominant and the fibers of $q$ are smooth varieties $S_t$ satisfying $a(S_t,g^{*}L|_{S_t}) = a(X,L)$ and $\dim(S_t) > d(X,L)$.  We can write 
\begin{equation*}
K_{\mathcal{S}} + a(X,L)g^{*}L = g^{*}(K_{X} + a(X,L)L) + R \sim_{\mathbb{Q}} g^{*}\pi^{*}H + (g^{*}E + R)
\end{equation*}
for some effective divisor $R$. Note that the restriction of $g^{*}\pi^{*}H$ to a general fiber of $q$ has Iitaka dimension at least $1$, so that the general fiber of $q$ can not possibly be adjoint rigid with respect to the restriction of $L$.  This proves the desired contradiction.
\end{proof}

\subsection{Face contraction}
\label{subsec: facecontracting}

The notion of face contraction refines the $b$-invariant.  We will use it to help classify when the point contributions of a dominant map with equal geometric invariants should lie in the exceptional set for Manin's Conjecture.  

We will be interested in the following situation:

\begin{assum} \label{assum:fc}
Let $X$ be a geometrically uniruled geometrically integral smooth projective variety defined over a field $F$ of characteristic $0$ and let $L$ be a big and nef $\mathbb{Q}$-divisor on $X$.  Let $f: Y \to X$ be a morphism of smooth projective varieties that is generically finite onto its image.  Suppose 
that either
\begin{enumerate}
\item $f$ is dominant and $a(Y,f^{*}L) = a(X,L)$, or
\item $a(Y,f^{*}L) = a(X,L)$, $d(Y,f^{*}L) = d(X,L)$, and there is a commuting diagram
\begin{equation*}
\xymatrix{ Y \ar[r]^{f} \ar[d]_{\pi_{Y}} &  X \ar[d]_{\pi_{X}} \\
T \ar[r] & W}
\end{equation*}
where $\pi_{Y}$ and $\pi_{X}$ are the canonical models for the adjoint pairs and the general fiber of $\pi_{Y}$ maps onto a fiber of $\pi_{X}$ which has the same $a$-value as $X$ and is contained in the locus where $\pi_{X}$ is smooth.
\end{enumerate}
\end{assum}

\begin{lemm} \label{lemm:pushforwardpreservesfaces}
Suppose we are in the situation of Assumption \ref{assum:fc}. Then the pushforward map $f_{*}: N_{1}(Y) \to N_{1}(X)$ satisfies $f_{*}\mathcal{F}_{Y,f^{*}L} \subset \mathcal{F}_{X,L}$.
\end{lemm}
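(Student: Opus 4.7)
The plan is to unwind what $f_{*}\alpha \in \mathcal{F}_X$ means for $\alpha \in \mathcal{F}_Y$: one must verify that $f_{*}\alpha$ is nef on $X$ and that $f_{*}\alpha \cdot (K_X + a(X,L)L) = 0$. I will handle case~(1) directly from the ramification formula and the projection formula, and then reduce case~(2) to case~(1) by restricting to a general fiber of $\pi_Y$ and invoking the compatibility results of Lemma~\ref{lemm: monodromyandbvalue}.

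In case~(1), I first note that $f_*$ sends $\Nef_1(Y)$ into $\Nef_1(X)$: for any pseudo-effective divisor $D$ on $X$, the pullback $f^{*}D$ is pseudo-effective on $Y$ (as $f$ is dominant, hence surjective since proper), and the projection formula gives $f_{*}\alpha \cdot D = \alpha \cdot f^{*}D \geq 0$. Next, since $a(Y,f^{*}L) = a(X,L)$, the ramification formula yields
\begin{equation*}
K_Y + a(Y,f^{*}L)f^{*}L = f^{*}(K_X + a(X,L)L) + R
\end{equation*}
for an effective $\mathbb{Q}$-divisor $R$. For $\alpha \in \mathcal{F}_Y$, the left side has zero intersection with $\alpha$, so the projection formula gives
\begin{equation*}
0 = f_{*}\alpha \cdot (K_X + a(X,L)L) + \alpha \cdot R.
\end{equation*}
Both summands are nonnegative (the first since $f_{*}\alpha$ is nef and $K_X + a(X,L)L$ is pseudo-effective, the second since $\alpha$ is nef and $R$ is effective), so both vanish, giving $f_{*}\alpha \in \mathcal{F}_X$.

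For case~(2), after replacing $Y$ by a smooth birational model (without affecting $\mathcal{F}_Y$ by Lemma~\ref{lemm:birfaceinv}) I may assume that $\pi_Y$ is a morphism smooth over a dense open $T^{\circ} \subset T$ whose fibers satisfy the hypotheses of Lemma~\ref{lemm: monodromyandbvalue} (use Theorem~\ref{theo: aconstancy} and Lemma~\ref{lemm:ainvandcanonicalfibers} to ensure constancy of the relevant $a$-value on fibers). By part~(3) of that lemma, for a general $t \in T^{\circ}$ the inclusion $i: Y_t \hookrightarrow Y$ induces a surjection $i_{*}: \mathcal{F}_{Y_t} \twoheadrightarrow \mathcal{F}_Y$, so I may write $\alpha = i_{*}\beta$ for some $\beta \in \mathcal{F}_{Y_t}$. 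By the hypothesis in Assumption~\ref{assum:fc}(2), for general such $t$ the image $X_w = f(Y_t)$ is a smooth fiber of $\pi_X$ with $a(X_w,L|_{X_w}) = a(X,L)$, and the dimension identity $d(Y,f^{*}L) = d(X,L)$ forces $f|_{Y_t}: Y_t \to X_w$ to be dominant and generically finite, with $a(Y_t, f^{*}L|_{Y_t}) = a(X,L) = a(X_w, L|_{X_w})$. Hence case~(1) applies to $f|_{Y_t}$ and gives $(f|_{Y_t})_{*}\beta \in \mathcal{F}_{X_w}$. Finally, Lemma~\ref{lemm: monodromyandbvalue}(2) applied to $j: X_w \hookrightarrow X$ yields $j_{*}\mathcal{F}_{X_w} \subset \mathcal{F}_X$, so $f_{*}\alpha = j_{*}(f|_{Y_t})_{*}\beta \in \mathcal{F}_X$.

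The main obstacle is the bookkeeping in case~(2): one must arrange a single birational modification of $Y$ and an open subset of $T$ over which all the hypotheses of Lemma~\ref{lemm: monodromyandbvalue} parts~(2) and~(3) are simultaneously verified for both the Iitaka fibration of $Y$ and the corresponding fibers of $\pi_X$. Once this genericity is in place, the reduction to case~(1) proceeds as above and the proof is essentially formal.
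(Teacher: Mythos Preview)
Your proposal is correct and follows essentially the same approach as the paper: case~(1) via the ramification formula and projection formula, case~(2) by reducing to the fiberwise situation using Lemma~\ref{lemm: monodromyandbvalue}~(2) and~(3) and then applying case~(1) to $f|_{Y_t}: Y_t \to X_w$. The only differences are cosmetic: you are more explicit about the nefness of $f_{*}\alpha$ in case~(1), and you insert a birational modification in case~(2) that the paper omits because the diagram in Assumption~\ref{assum:fc}(2) already presents $\pi_Y$ as a morphism and the hypotheses of Lemma~\ref{lemm: monodromyandbvalue} hold over a suitable open subset of $T$ by generic smoothness and Lemma~\ref{lemm:ainvandcanonicalfibers}.
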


\begin{proof}
In case (1) $f$ is a dominant map, so by the Riemann-Hurwitz formula we have $K_{Y} \geq f^{*}K_{X}$.  Since the $a$-invariants are the same we have $K_{Y} + a(Y,f^{*}L)f^{*}L \geq f^{*}(K_{X} + a(X,L)L)$.  Thus any nef curve class on $Y$ with vanishing intersection against $K_{Y} + a(Y,f^{*}L)L$ also has vanishing intersection against $f^{*}(K_{X} + a(X,L)L)$, showing that $f_{*}\mathcal{F}_{Y,f^{*}L} \subset \mathcal{F}_{X,L}$. 

Suppose that we are in case (2).  Fix a general closed point $w$ in the $(\pi_{X}\circ f)$-image of $Y$.  By assumption $X_{w}$ is smooth.  Let $Y_{t}$ denote the fiber over a closed point $t \in T$ mapping to $w$. Since the fiber $Y_{t}$ is general, Lemma \ref{lemm: monodromyandbvalue} (3) shows that pushforward induces a surjection $\mathcal{F}_{Y_{t},f^{*}L|_{Y_{t}}} \to \mathcal{F}_{Y,f^{*}L}$.  By Lemma \ref{lemm: monodromyandbvalue} (2) pushforward induces a map $\mathcal{F}_{X_w,L|_{X_{w}}} \to \mathcal{F}_{X,L}$.  Thus it suffices to prove that $f|_{Y_{t}*} \mathcal{F}_{Y_{t},f^{*}L|_{Y_{t}}} \subset \mathcal{F}_{X_{w},L|_{X_{w}}}$.

Since $f|_{Y_{t}}: Y_{t} \to X_{w}$ is a dominant generically finite map of smooth varieties, by the Riemann-Hurwitz formula we have that $K_{Y_{t}} - f|_{Y_{t}}^{*}K_{X_{w}}$ is effective.  Since by Lemma \ref{lemm:ainvandcanonicalfibers} the $a$-invariants are the same this implies that $f|_{Y_{t}*}\mathcal{F}_{Y_{t},f^{*}L|_{Y_{t}}} \subset \mathcal{F}_{X_{w},L|_{X_{w}}}$ by the same argument as before. 
\end{proof}

Note that by the birational invariance of faces proved in Lemma \ref{lemm:birfaceinv}, any morphism $f: Y \to X$ with a birational model that satisfies Assumption \ref{assum:fc} will still satisfy $f_{*}\mathcal{F}_{Y,f^{*}L} \subset \mathcal{F}_{X,L}$.  This leads us to the following definition.

\begin{defi}{\cite[Definition 3.6]{LT17}} \label{defi: facecontraction}
Let $f: Y \to X$ be a morphism of geometrically integral projective varieties that is generically finite onto its image.  Furthermore assume that there is a commuting diagram
\begin{equation*}
\xymatrix{ Y' \ar[r]^{f'} \ar[d]_{\phi_{Y}} &  X' \ar[d]^{\phi_{X}} \\
Y \ar[r]^{f} & X}
\end{equation*}
such that $\phi_{Y}$ and $\phi_{X}$ are birational and $f': Y' \to X'$ satisfies Assumption \ref{assum:fc}.

We say that such a morphism $f$ is face contracting if the induced map $f_{*}: \mathcal{F}_{Y,f^{*}L} \to \mathcal{F}_{X,L}$ is not injective.
\end{defi}

Since the dimensions of $\mathcal{F}_{Y,f^{*}L}$ and $\mathcal{F}_{X,L}$ are $b(F,Y,f^{*}L)$ and $b(F,X,L)$ respectively, a dominant breaking thin map is automatically face contracting.  However, the converse is not true (see \cite[Example 3.7]{LT17}).

The following lemma shows that for certain families $Y$ of adjoint rigid subvarieties on $X$ the map $f: Y \to X$ must be a face contracting map. We will use it to show that subvarieties of this type can only contribute a thin set of rational points.

\begin{lemm} \label{lemm: facecontractingcondition}
Let $f: Y \to X$ be a dominant generically finite morphism of geometrically integral projective varieties with $X$ geometrically uniruled and fix a big and nef $\mathbb{Q}$-divisor $L$ on $X$.  Suppose there is a birational model $f': Y' \to X'$ of $f$ with birational maps $\phi_X: X' \to X$, $\phi_Y : Y' \to Y$ and a diagram 
\begin{equation*}
\xymatrix{ Y' \ar[r]^{f'} \ar[d]_{q} &  X' \ar[d]_{p} \\
T \ar[r]^{g} & W}
\end{equation*}
satisfying the following conditions:
\begin{enumerate}
\item $X'$ and $Y'$ are smooth and projective,
\item $q$ and $p$ are projective and surjective,
\item $g$ is generically finite and dominant,
\item $a(Y,f^{*}L) = a(X,L)$ and $b(F,Y,f^{*}L) = b(F,X,L)$,
\item $q$ is birationally equivalent to the canonical model for $K_{Y'} + a(Y',f'^{*}\phi_X^{*}L) f'^{*}\phi_X^{*}L$, 
\item $\dim(W) > \kappa(X,K_{X} + a(X,L)L)$.\end{enumerate}
Then $f$ is face contracting.
\end{lemm}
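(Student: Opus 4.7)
The plan is to prove that $f_* : \mathcal{F}_Y \to \mathcal{F}_X$ fails to be injective; since hypothesis (4) gives $\dim \mathcal{F}_Y = b(F, Y, f^*L) = b(F, X, L) = \dim \mathcal{F}_X$, this is equivalent to showing $f_*$ fails to be surjective. Using the birational invariance of faces (Lemma~\ref{lemm:birfaceinv}), I work throughout with the model $f' : Y' \to X'$ provided by the diagram. The core strategy is to prove $f'_* \mathcal{F}_{Y'} \subset \ker(p_*)$ where $p_* : N_1(X') \to N_1(W)$, and then to exhibit an element of $\mathcal{F}_{X'}$ not contracted by $p$, so that $f'_*$ cannot surject onto $\mathcal{F}_{X'}$.

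For the first inclusion, I use that $q$ is birationally the canonical model map for $K_{Y'} + a f'^*\phi_X^*L$; after passing if necessary to a further birational model (which leaves the face unchanged by Lemma~\ref{lemm:birfaceinv}), we may write
\begin{equation*}
K_{Y'} + a f'^*\phi_X^*L \sim_{\mathbb{Q}} q^*H + E
\end{equation*}
with $H$ ample on $T$ and $E$ effective. For $\alpha \in \mathcal{F}_{Y'}$, intersecting with $\alpha$ gives $q^*H \cdot \alpha = -E \cdot \alpha \leq 0$, while $q_*\alpha$ is pseudo-effective as the pushforward of a nef class and $H$ is ample, forcing $H \cdot q_*\alpha = 0$ and hence $q_*\alpha = 0$. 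By the commutation $p \circ f' = g \circ q$, this yields $p_*(f'_*\alpha) = g_*(q_*\alpha) = 0$, as required.

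To produce a class in $\mathcal{F}_{X'}$ that is not contracted by $p$, I appeal to hypothesis (6): since $\dim W > \kappa(X, K_X + a(X,L)L) = \dim W'$ where $\pi_{X'} : X' \dashrightarrow W'$ is the canonical model of $X'$, the morphism $p$ cannot factor through $\pi_{X'}$ (such a factorization would force $\dim W \leq \dim W'$). In particular, for a general fiber $X'_{w'}$ of $\pi_{X'}$, the restriction $p|_{X'_{w'}}$ is non-constant onto its image. By Lemma~\ref{lemm: monodromyandbvalue}(3), pushforward under the inclusion $i : X'_{w'} \hookrightarrow X'$ induces a surjection $i_* : \mathcal{F}_{X'_{w'}} \twoheadrightarrow \mathcal{F}_{X'}$, so it suffices to produce a class $\alpha \in \mathcal{F}_{X'_{w'}}$ with $(p|_{X'_{w'}})_*\alpha \neq 0$; pushing it forward to $X'$ then yields the desired witness in $\mathcal{F}_{X'} \setminus \ker p_*$.

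The main obstacle is producing such an $\alpha$, which I would approach by contradiction. If $(p|_{X'_{w'}})_*\mathcal{F}_{X'_{w'}} = 0$, then for an ample class $A$ on $W$ the pullback $(p|_{X'_{w'}})^*A$ annihilates $\mathcal{F}_{X'_{w'}}$ and therefore lies in $\mathrm{Span}(\mathcal{F}^{X'_{w'}})$. By Lemma~\ref{lemm:alternativedescription}(2) applied to the adjoint rigid pair $(X'_{w'}, L|_{X'_{w'}})$, this span is generated by the finitely many irreducible components of the unique effective representative of $K_{X'_{w'}} + aL|_{X'_{w'}}$, each of Iitaka dimension zero and supported on a fixed effective divisor. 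A careful comparison of the semi-ample class $(p|_{X'_{w'}})^*A$, whose Iitaka dimension equals $\dim p(X'_{w'}) > 0$, against the numerical structure imposed on classes in this span---handling both the non-negative and mixed-sign cases of the linear combination via a rearrangement argument that exploits $\kappa(D_1 + D_2) \geq \kappa(D_i)$ for effective divisors---yields the contradiction and completes the proof.
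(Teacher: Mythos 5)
The first half of your argument (showing $p_{*}f'_{*}\mathcal{F}_{Y'}=0$, equivalently that $p^{*}H$ annihilates $f'_{*}\mathcal{F}_{Y'}$) is correct and is essentially the paper's first step. The problem is in the second half. Your detour through a general fiber $X'_{w'}$ of the canonical model of $X'$ is unnecessary, and the contradiction you sketch there does not close as written. After reducing to $(p|_{X'_{w'}})^{*}A\equiv\sum a_{i}E_{i}$ with the $E_{i}$ the rigid components spanning $\Span(\mathcal{F}^{X'_{w'}})$, you propose to compare Iitaka dimensions of the two sides via a ``rearrangement'' using $\kappa(D_{1}+D_{2})\geq\kappa(D_{i})$. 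But the relation you have is only a \emph{numerical} equivalence, and Iitaka dimension is not a numerical invariant, so moving the negative terms to the other side gives two effective divisors that are merely numerically equivalent, and you cannot conclude that their Iitaka dimensions agree. (This particular gap is repairable here because the adjoint rigid fiber is rationally connected, so numerical and $\mathbb{Q}$-linear equivalence of $\mathbb{Q}$-divisors coincide, and because $\sum_{a_i>0} a_{i}E_{i}$ is bounded above by a multiple of the unique effective representative of the adjoint divisor, which forces its Iitaka dimension to be $0$; but none of this is in your sketch, and it is exactly the crux of the step.)

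The paper avoids all of this by working on $X'$ itself: Lemma \ref{lemm:alternativedescription} shows that $K_{X'}+a(X,L)L$ lies in the relative interior of $\mathcal{F}^{X'}$ and that for any divisor class $D\in\mathcal{F}^{X'}$ one has $K_{X'}+a(X,L)L-\epsilon D$ $\mathbb{Q}$-linearly equivalent to an \emph{effective} divisor for small $\epsilon>0$, whence $\kappa(K_{X'}+a(X,L)L)\geq\kappa(D)$. Taking $D=p^{*}H$ with $H$ ample on $W$ gives $\kappa(p^{*}H)=\dim W>\kappa(X,K_{X}+a(X,L)L)$ by hypothesis (6), so $p^{*}H\notin\mathcal{F}^{X'}$, i.e.\ some class in $\mathcal{F}_{X'}$ pairs nonzero with $p^{*}H$. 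Combined with your first half this shows $f_{*}\mathcal{F}_{Y}$ lies in a proper face of $\mathcal{F}_{X}$, and equality of $b$-values then forces non-injectivity. I recommend replacing your fiberwise contradiction with this direct application of Lemma \ref{lemm:alternativedescription}, which supplies the $\mathbb{Q}$-linear (not merely numerical) effectivity needed to compare Iitaka dimensions.
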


\begin{proof}
Note that for any diagram of smooth varieties
\begin{equation*}
\xymatrix{ Y'' \ar[r]^{f''} \ar[d]_{\psi_{Y}} &  X'' \ar[d]_{\psi_{X}} \\
Y' \ar[r]^{f'} &  X' }
\end{equation*}
with $\psi_{Y}, \psi_{X}$ birational the hypotheses of the theorem still hold for $f''$.  Thus by passing to birational models (which for convenience we absorb into the notation) we may assume that $Y' = Y$, $X' = X$, and the canonical model for $K_{X} + a(X,L)L$ is a morphism.

Let $\mathcal{F}_{Y,f^{*}L}$ and $\mathcal{F}_{X,L}$ be the faces as in Definition \ref{defi:facedefinition} with respect to $f^{*}L$ and $L$ respectively.  Fix an ample divisor $H$ on $W$.  Note that $p^{*}H$ vanishes on every element of $f_{*}\mathcal{F}_{Y,f^{*}L}$, since there is some $\epsilon > 0$ such that $K_{Y} + a(Y,f^{*}L)f^{*}L - \epsilon f^{*}p^{*}H$ is pseudo-effective.  However, $p^{*}H$ does not vanish on every element of $\mathcal{F}_{X,L}$.  Indeed, Lemma \ref{lemm:alternativedescription} shows that for every divisor $D$ with class in $\mathcal{F}^{X,L}$ there is some sufficiently small $\epsilon > 0$ such that $K_{X} + a(X,L)L - \epsilon D$ is $\mathbb{Q}$-linearly equivalent to an effective divisor.  Since the Iitaka dimension of $p^{*}H$ is greater than $\kappa(X,K_{X} + a(X,L)L)$, we deduce that $p^{*}H \not \in \mathcal{F}^{X,L}$.  Thus $f_{*}\mathcal{F}_{Y,f^{*}L} \subsetneq \mathcal{F}_{X,L}$, and since the $b$-values are equal $f$ must be face contracting.
\end{proof}

\section{A conjectural description of exceptional sets in Manin's Conjecture}
\label{sect: conjecturaldescription}

Let $F$ be a number field and suppose that we have a geometrically rationally connected and geometrically integral smooth projective variety $X$ defined over $F$ carrying a big and nef line bundle $\mathcal L = \mathcal{O}_{X}(L)$ with an adelic metrization on $X$.  The adelic metrization defines a height function on the rational points of $X$.  Manin's conjecture predicts the asymptotic growth rate of the counting function for rational points of bounded height after removing an exceptional thin set.   

Originally \cite{BM} and its refinement \cite{Peyre} predicted that the exceptional set for Manin's Conjecture consisted of points on a proper closed subset.  However there are now many counterexamples to these two versions of Manin's Conjecture (\cite{BT-cubic}, \cite{EJ06}, \cite{Els11}, \cite{BL16}, and \cite{LeRudulier}). These counterexamples arise from geometric obstructions; for example, it is possible that as we vary over breaking thin maps $f: Y \to X$ the union of the sets $f(Y(F))$ is Zariski dense.  \cite{Peyre03} was the first to modify the conjecture by proposing that the exceptional set in Manin's Conjecture is contained in a thin set. Here we give a conjectural geometric description of the exceptional set in Manin's Conjecture.  In fact, we will construct this set over an arbitrary field $F$ of characteristic $0$:

\begin{defi}
\label{defi:exceptionalsets}
Let $F$ be a field of characteristic $0$.
Suppose that $X$ is a geometrically uniruled and geometrically integral smooth projective variety over $F$ equipped with a big and nef $\mathbb{Q}$-divisor $L$.
When constructing the exceptional set it is harmless to replace $X$ by a birational model, thus we will assume that the canonical model $\pi : X\rightarrow W$ for $K_{X} + a(X, L)L$ is a morphism.

\

\noindent
{\bf The definition of $Z_0$}:
Let $Z_0$ be the set of rational points contained in the union of $\mathbf B_+(L)$ and a proper closed subset $\pi^{-1}V$ where $V \subset W$ is a proper closed subset such that over $W^\circ = W \setminus V$, $\pi$ is smooth and for any closed point $w \in W \setminus V$, we have $a(X_w, L|_{X_w}) = a(X, L)$.
Note that $Z_0$ consists of points on a proper closed subset of $X$.

\

\noindent
{\bf The definition of $Z_1$}:
As $f: Y \rightarrow X$ varies over all $F$-thin maps such that $Y$ is geometrically integral and smooth, $d(Y,f^{*}L) < d(X,L)$ and 
\[
(a(X, L), b(F, X, L)) \leq (a(Y, f^*L), b(F, Y, f^*L)),
\]
we define the set $Z_1 \subset X(F)$ by
\[
Z_1 = \bigcup_f f(Y(F)) \subset X(F).
\]

\

\noindent
{\bf The definition of $Z_2$}:
As $f: Y \rightarrow X$ varies over all $F$-thin maps such that $Y$ is geometrically integral and smooth, $d(Y,f^{*}L) = d(X,L)$, $f(Y)$ meets with $\pi^{-1}(W^\circ)$, and either
\[
(a(X, L), b(F, X, L)) < (a(Y, f^*L), b(F, Y, f^*L)),
\]
or the $a$ and $b$ values are equal and $f$ is face contracting, we define the set $Z_2 \subset X(F)$ by
\[
Z_2 = \bigcup_f f(Y(F)) \subset X(F).
\]

\

\noindent
{\bf The definition of $Z_3$}:
As $f: Y \rightarrow X$ varies over all $F$-thin maps such that $Y$ is geometrically integral and smooth, $d(Y,f^{*}L) > d(X,L)$ and 
\[
(a(X, L), b(F, X, L)) \leq (a(Y, f^*L), b(F, Y, f^*L)),
\]
we define the set $Z_3 \subset X(F)$ by
\[
Z_3 = \bigcup_f f(Y(F)) \subset X(F).
\]
By Lemma~\ref{lemm:d(Y)>d(X)}, $Z_{3}$ is contained in a proper closed subset of $X$.

\end{defi}

We propose the following refinement of Manin's Conjecture which includes a description of the exceptional thin set.  A similar but weaker statement was predicted in \cite{LTDuke}.  For any subset $Q \subset X(F)$, we let $N(Q, \mathcal L, T)$ denote the number of rational points on $Q$ whose height associated to $\mathcal L$ is bounded above by $T$.

\begin{conj}[Manin's Conjecture] \label{conj: maninsconjecture}
Let $F$ be a number field.  Let $X$ be a geometrically rationally connected and geometrically integral smooth projective variety defined over $F$ and let $\mathcal{L}$ be a big and nef line bundle with an adelic metrization on $X$.

Let $Z$ be the union of $Z_0$, $Z_1$, $Z_2$, and $Z_3$.  
Suppose that $X(F)$ is not a thin set.
Then we have
\[
N(X(F) \setminus Z, \mathcal L, T) \sim c(F, Z, \mathcal L)T^{a(X, L)} \log (T)^{b(F, X, L)-1}
\]
as $T \rightarrow \infty$ where $c(F, Z, \mathcal L)$ is Peyre-Batyrev-Tschinkel's constant introduced in \cite{Peyre} and \cite{BT}.
\end{conj}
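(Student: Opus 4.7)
The plan proceeds in two phases. First, I would reduce the conjecture to an "asymptotic counting" statement by using the paper's main result (Theorem \ref{theo: precisetheorem}), which says exactly that the proposed exceptional set $Z = Z_0 \cup Z_1 \cup Z_2 \cup Z_3$ is contained in a thin subset of $X(F)$. This verifies the thin-set description of the exceptional set, and it is the only ingredient that the machinery of this paper supplies unconditionally. What remains is purely arithmetic: show that $N(X(F)\setminus Z,\mathcal L,T) \sim c(F,Z,L) T^{a(X,L)} \log(T)^{b(F,X,L)-1}$.

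Second, I would reduce the counting problem on $X$ to counting on the Iitaka/canonical fibers. Since replacing $X$ by a birational model is harmless, assume $\pi : X \to W$ is a morphism and work over $W^\circ \subset W$ where $\pi$ is smooth; points lying over $W\setminus W^\circ$ or inside $\mathbf B_+(L)$ are absorbed into $Z_0$. By Lemma \ref{lemm:ainvandcanonicalfibers}, the geometric generic fiber $X_w$ is an adjoint rigid pair with $a(X_w,L|_{X_w}) = a(X,L)$, and under the rational connectedness hypothesis $X_w$ is (birationally) a Fano variety. One then counts rational points fiber-by-fiber, applying Manin's Conjecture for $(X_w, L|_{X_w})$ and summing over $w \in W^\circ(F)$ of bounded anticanonical height, keeping track of how the $b$-invariant accumulates from the base via Corollary~\ref{coro: bvalequality}. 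The point of removing $Z_1$, $Z_2$, $Z_3$ from each fiber is precisely that (by the constructions of Section~\ref{sec: geoinv}) this suffices to remove all geometric contributions that would inflate the leading exponents or distort Peyre's Tamagawa measure.

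The fiberwise input is itself Manin's Conjecture for Fano varieties, which is still open in general but established in large classes: toric varieties (Batyrev--Tschinkel), equivariant compactifications of vector groups (Chambert-Loir--Tschinkel), flag varieties and their generalizations, certain homogeneous spaces, low-degree smooth complete intersections via the circle method, and families treated by universal torsors. In each such case one has a height zeta function whose rightmost pole sits at $s = a(X,L)$ with order $b(F,X,L)$, and a residue computation identifies the leading coefficient. For the fibration step I would use Chambert-Loir--Tschinkel's framework of adelic integration on fibrations to assemble the local factors into the global Tamagawa-type measure appearing in Peyre's constant.

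The main obstacle, by some distance, is the identification of the constant $c(F,Z,L)$. Even granting fiberwise Manin for the Fano fibers $X_w$, one must check two delicate points: (i) convergence of the sum over twists contributing to $Z_2$ (where $a,b$ match but $f$ is face contracting), so that removing them from the count does not leave an uncontrolled tail in the leading asymptotic; and (ii) that the Peyre--Batyrev--Tschinkel adelic measure on $X(\mathbf A_F)\setminus \overline{Z}$ agrees with the fiberwise product measure after the above reduction, including a correct accounting of the Brauer--Manin obstruction at each fiber. Both points ultimately rest on the face-contraction analysis of Section~\ref{subsec: facecontracting}, combined with an effective Hilbert irreducibility estimate strong enough to show that $Z$ is not just thin but "thin with negligible density" in the sense of Peyre's equidistribution refinement — this last quantitative input is precisely what is presently unavailable in full generality and is where the conjecture will stand or fall.
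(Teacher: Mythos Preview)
The statement you are attempting to prove is a \emph{conjecture}, not a theorem: the paper does not prove it and does not claim to. There is no ``paper's own proof'' to compare against. The paper's contribution regarding this conjecture is exactly what you identify in your first paragraph --- Theorem~\ref{theo: precisetheorem} shows that the proposed exceptional set $Z$ is contained in a thin subset of $X(F)$, establishing the internal consistency of the conjecture. The asymptotic $N(X(F)\setminus Z,\mathcal L,T)\sim c(F,Z,L)T^{a(X,L)}\log(T)^{b(F,X,L)-1}$ is left open.

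Your proposal is therefore not a proof but an outline of a strategy, and you correctly diagnose where it breaks down: the fiberwise input is Manin's Conjecture for adjoint rigid (Fano-type) varieties, which is itself open in general, and the identification of the leading constant requires quantitative control on thin sets that is currently unavailable. So your write-up is an honest description of the state of affairs, but it should not be labeled a proof proposal. If anything, it would be more accurate to say that the paper verifies the geometric self-consistency of the conjecture while the arithmetic asymptotic remains conjectural --- which is precisely how the paper itself frames the situation.
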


\begin{rema}
Assuming the conjecture of Colliot-Th\'el\`ene that the Brauer-Manin obstructions are the only obstructions to weak approximation for  geometrically rationally connected smooth projective varieties, it follows that $X(F)$ is not thin as soon as there is a rational point when $F$ is a number field. See the remark after Conjecture 1.4 in \cite{BL16}.
\end{rema}

\begin{rema}
When $(X, L)$ is adjoint rigid, the constant $c(F,Z,\mathcal L)$ does not depend on $Z$.  But if $(X,L)$ is not adjoint rigid, then the definition of $c(F,Z,\mathcal L)$ involves a summation of Tamagawa numbers over the base of the canonical map so that we must keep track of which fibers are removed by $Z$ when defining the constant.  
\end{rema}

\begin{rema}
By \cite{HM07} a smooth projective variety $X$ is geometrically rationally connected whenever it carries a big and nef $\mathbb{Q}$-divisor $L$ such that $(X,L)$ is adjoint rigid. 
(For a careful explanation see \cite[Proof of Theorem 4.5]{LTT14}.)
\end{rema}

\begin{rema}
\cite{Peyre16} formulates an appealing version of Manin's Conjecture using the notion of freeness of a rational point.  Peyre's conjecture has some similarities with Conjecture \ref{conj: maninsconjecture}.  Let $Z^{f}$ denote the exceptional set as in \cite[Formule empirique 6.13]{Peyre16}.   \cite[Proposition 5.8]{Peyre16} shows that $Z^{f}$ includes most points on non-free curves; comparing against \cite[Theorem 1.1 and Proposition 6.14]{LT17}  we should expect these points to account for subvarieties $Y$ with $a(Y,L) > a(X,L)$.

Nevertheless, the two proposals for the exceptional set are different.  The set $Z^{f}$ may fail to be contained in the union of the $Z_{i}$: a general cubic fourfold has empty $Z_{i}$ but admits non-free lines so that $Z^{f}$ is non-empty by \cite[Proposition 5.8]{Peyre16}.  Conversely, the union of the $Z_{i}$ may fail to be contained in $Z^{f}$:  in the example of \cite{BT-cubic} the $Z_{i}$ contains every point on a cubic surface fiber with Picard rank $> 1$ while $Z^{f}$ does not (see \cite[Section 8.3]{Peyre16} and particularly \cite[Remarque 8.9]{Peyre16}). 
\end{rema}

The main theorem of this paper is the following result:

\begin{theo} \label{theo: precisetheorem}
Let $F$ be a field of characteristic $0$. 
Let $X$ be a geometrically uniruled geometrically integral smooth projective variety defined over $F$ and let $L$ be a big and nef $\mathbb Q$-divisor on $X$.  The subsets $Z_{0}$, $Z_1$, $Z_2$, and $Z_{3}$ in Definition~\ref{defi:exceptionalsets} are contained in a thin subset of $X(F)$.
\end{theo}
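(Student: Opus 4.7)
The plan is to deduce Theorem~\ref{theo: precisetheorem} from the stronger versions of Theorems~\ref{theo: mainfiniteness} and~\ref{theo: mainthinness} proved in Sections~\ref{sect: boundedness} and~\ref{sec: twists}, together with the descent result Lemma~\ref{lemm: finitelymanycoversovernf}. The sets $Z_{0}$ and $Z_{3}$ are handled at once: $Z_{0}$ is by construction the set of rational points on a proper closed subset, and $Z_{3}$ is contained in a proper closed subset by Lemma~\ref{lemm:d(Y)>d(X)}. All of the substantive content is in controlling $Z_{1}\cup Z_{2}$, which is an a priori infinite union of images $f(Y(F))$.

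First I would reduce the problem to its geometric core. The thin maps contributing to $Z_{1}$ (strictly smaller $d$) and to $Z_{2}$ (equal $d$, with either strict increase of $(a,b)$ or face contraction) can, after passing to the closures of general canonical-model fibers via Lemmas~\ref{lemm:ainvandcanonicalfibers} and~\ref{lemm:dominantequalitycase}, be repackaged as adjoint rigid maps with well-controlled geometric invariants. The boundedness package provided by Theorems~\ref{theo: aconstruction}, \ref{theo: rigidfamilies}, \ref{theo: akash}, and Proposition~\ref{prop: degree}, all resting on Birkar's solution of the Borisov--Alexeev--Borisov conjecture, then feeds into the stronger form of Theorem~\ref{theo: mainfiniteness} to yield a finite list of maps $\{f_{i}:Y_{i}\to X\}$ defined over $\overline{F}$ such that every relevant $f:Y\to X$ admits an Iitaka base change $\widetilde{Y}$ whose composition with $Y\to X$ factors rationally through one of the $f_{i}$.

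Next I would descend this picture to $F$. The usual covering-space style factoring is only available over $\overline{F}$, so I would invoke Lemma~\ref{lemm: finitelymanycoversovernf}: the crucial point is that we only care about thin maps whose contribution contains a rational point, and such a point provides an $F$-rational basepoint that lets one control the arithmetic \'etale fundamental groups involved. The outcome is that a geometric factoring through $f_{i}$ descends to an $F$-factoring through some twist $f_{i}^{\sigma}$, so that, modulo the closed subsets already absorbed into $Z_{0}$ and $Z_{3}$,
\[
Z_{1}\cup Z_{2}\ \subset\ \bigcup_{i=1}^{r}\ \bigcup_{\sigma}\ f_{i}^{\sigma}\bigl(Y_{i}^{\sigma}(F)\bigr),
\]
with $\sigma$ ranging over those twists for which the appropriate conditions on $(a,b,d)$ and face contraction are satisfied. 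Applying the stronger form of Theorem~\ref{theo: mainthinness} to each $i$, the Hilbert Irreducibility Theorem shows that the inner union over $\sigma$ is thin in $X(F)$; a finite union of thin sets is thin, so the proof is complete.

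The main obstacle will be the descent step. Geometrically, Theorem~\ref{theo: mainfiniteness} provides an unconditional factoring through the $f_{i}$ via the homotopy lifting property of the geometric \'etale fundamental group, but over $F$ one must match the arithmetic fundamental group of $X$ to the automorphism group $\mathrm{Aut}(Y_{i}/X)$ in a way compatible with the chosen rational basepoint in order to recognize a twist of $Y_{i}$ as the target. A secondary obstacle is that the introductory form of Theorem~\ref{theo: mainfiniteness} only addresses the strict case $(a,b)_{X}<(a,b)_{Y}$, whereas $Z_{2}$ also demands the borderline face-contracting case with equal $(a,b)$; this is why one must prove strengthened versions of Theorems~\ref{theo: mainfiniteness} and~\ref{theo: mainthinness}, working with face contraction in the sense of Definition~\ref{defi: facecontraction} and appealing to Lemma~\ref{lemm: facecontractingcondition} to detect it in bounded families of adjoint rigid subvarieties.
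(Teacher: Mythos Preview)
Your proposal is correct and follows essentially the same approach as the paper: dispose of $Z_0$ and $Z_3$ as closed sets, use the boundedness package feeding into Lemma~\ref{lemm: finitelymanycoversovernf} to produce finitely many $F$-maps $\{f_{i,j}\}$ through whose twists every relevant $f:Y\to X$ with a rational point must factor, and then invoke Theorem~\ref{theo:twists} to show the union over the relevant twists is thin. The paper's proof differs only in presentation: it builds an explicit closed set $V$ (absorbing the loci from Theorem~\ref{theo: rigidfamilies} and the sets $C_i$ output by Lemma~\ref{lemm: finitelymanycoversovernf}) and then, for each rational point $f(y)\notin V$, tracks through Lemma~\ref{lemm: finitelymanycoversovernf}(6) and Lemmas~\ref{lemm: monodromyandbvalue}, \ref{lemm:birfaceinv}, \ref{lemm: facecontractingcondition}, \ref{lemm: iitakabasechangeandbvalue} to verify that the receiving twist $\mathcal Y_{i,j}^\sigma$ really satisfies the $(a,b)$ or face-contraction hypothesis needed for Theorem~\ref{theo:twists}; but this is exactly the verification you anticipate in your final paragraph.
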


\begin{rema} \label{rema: computability}
To establish Manin's Conjecture for specific examples Conjecture \ref{conj: maninsconjecture} predicts that one should first calculate the sets $Z_{0}, Z_{1}, Z_{2}, Z_{3}$.  Our proof of Theorem \ref{theo: precisetheorem} will show that in principle by the Borisov-Alexeev-Borisov Conjecture (\cite{birkar16}, \cite{birkar16b}) this computation only involves checking the behavior of subvarieties and covers in a finite degree range.  However, currently Birkar's result is ineffective and in practice there is room for vast improvement of current computational techniques.  For low dimensional examples the framework established by \cite{LTT14}, \cite{LTDuke}, and \cite{Sen17} is often sufficient for calculating these sets.
\end{rema}

Note that in Conjecture \ref{conj: maninsconjecture} we remove point contributions for some thin maps with $a$ and $b$ values equal to $X$.  The following examples show that sometimes, but not always, we must discount contributions from such maps in order to obtain the correct leading constant.  The face contraction condition is the key criterion for distinguishing the two cases.

\begin{exam}[Peyre's constant]
Let $F$ be a number field.
The papers \cite{EJ06}, \cite{Els11}, \cite{BL16}, \cite{BHB18} give many examples of Fano varieties $X$ admitting a Zariski dense set of subvarieties with the same $a$ and $b$ values as $X$ with respect to $-K_{X}$.  Suppose that the rational points on these subvarieties grow at the expected rate.  If we include these points, \cite[Theorem 1.2]{BL16} shows that Manin's Conjecture with Peyre's constant will be violated for an appropriate choice of anticanonical height function.  In order to obtain the correct Peyre's constant we must remove point contributions from all such subvarieties.  Theorem \ref{theo: precisetheorem} shows that such points always lie in a thin set, generalizing the examples proved in \cite{BL16}.
\end{exam}

\begin{exam}
As we vary over all dominant generically finite maps $f: Y \rightarrow X$ of degree $\geq 2$ such that 
\[
(a(X, L), b(F, X, L)) = (a(Y, f^*L), b(F, Y, f^*L)),
\]
the set $\cup f(Y(F))$ need not lie in a thin set of rational points (see \cite[Example 8.7]{LTDuke}).  Thus, in the definition of $Z_{2}$ it is important to only consider contributions from maps which are face contracting.
\end{exam}

Let us compute these exceptional sets for some examples.  In all of the following examples $(X,L)$ will be adjoint rigid, so $Z_{0} = \mathbf B_+(L)$ and $Z_{3}$ will be empty because $d(X, L)$ is equal to the dimension of $X$. Thus they need not be discussed.

\begin{exam}[Surfaces]
Let $F$ be a field of characteristic $0$.  Let $S$ be a geometrically rational geometrically integral smooth projective surface defined over $F$ and let $L$ be a big and nef $\mathbb Q$-divisor on $S$ such that $(S,L)$ is adjoint rigid.  For simplicity let us suppose that the Picard rank of $S$ and the geometric Picard rank of $S$ coincide. Then by \cite[Proposition 5.9]{LTT14} and \cite[Theorem 1.8]{LTDuke} $Z_1$ is contained in a proper closed subset and $Z_2$ is empty. Thus when $F$ is a number field, we expect that Manin's Conjecture should hold after removing points on a closed set. This version of Manin's Conjecture for geometrically rational surfaces has been confirmed for many examples, see e.g.~\cite{dBBD07}, \cite{Bro09}, \cite{Bro10}, and \cite{dBBP12}.
\end{exam}

\begin{exam}[Flag varieties]
Let $F$ be a number field.
Let $X$ be a geometrically integral generalized flag variety defined over $F$ with a rational point
and let $L = -K_X$.
Manin's conjecture for flag varieties has been established in \cite{FMT89} with empty exceptional set.
By \cite{Bor96}, the Brauer-Manin obstructions are the only obstructions to weak approximation, so in particular $X(F)$ is not thin. 
Hence, $Z_1$ does not cover $X(F)$. Since $X$ is homogeneous, this implies that $Z_1$ must be empty. On the other hand, since there are no subvarieties with higher $a$-value and $X$ is simply connected, there is no dominant morphism $f: Y \to X$ such that $a(Y,-f^{*}K_{X}) = a(X,-K_{X})$ and $(Y,-f^{*}K_{X})$ is adjoint rigid by \cite[Corollary 2.20]{Sen17}. Thus we conclude that $Z_2$ is also empty.
\end{exam}

\begin{exam}[Toric varieties]
Let $F$ be a number field.
Let $X$ be a geometrically integral smooth toric variety defined over a number field $F$ and let $L$ be a big and nef divisor on $X$. Manin's Conjecture for such a variety was proved in \cite{BT-general}, \cite{BT-0}, and \cite{Sal98} after removing rational points on the boundary. Suppose that $(X,L)$ is adjoint rigid.
Since any $F$-torus satisfies the weak weak approximation property, by \cite[Theorem 3.5.7]{Serre} we see that $X(F)$ is not thin.  Since the torus part is a homogeneous space, we conclude that $Z_1$ is contained in the boundary. By the same reasoning $Z_2$ is also contained in the boundary.  
So our refinement is compatible with the above results. A similar proof works for smooth equivariant compactifications of other algebraic groups and Manin's Conjecture for such varieties has been established in many cases, see e.g.~\cite{CLT02}, \cite{STBT07} and \cite{ST16}.
\end{exam}

\begin{exam}[Le Rudulier's example]
Let $S$ be the surface $\mathbb P^1 \times \mathbb P^1$ over $\mathbb{Q}$ and set $X = \mathrm{Hilb}^{[2]}(S)$.
\cite{LeRudulier} proved Manin's Conjecture for $(X,-K_X)$.
We briefly explain why her result is compatible with our refinement.
We freely use the notations from \cite[Section 9.3]{LTDuke}.
Let $L = H_1[2] + H_2[2]$. Le Rudulier proved Manin's Conjecture for $L$ after removing rational points on $D_1, D_2, E$ and $f(W(\mathbb{Q}))$. We denote this exceptional set by $Z'$.  The analysis in \cite[Section 9.3]{LTDuke} shows that (i) all subvarieties with higher $a$ values are contained in $D_1, D_2$, or $E$; (ii) the only thin maps $g: Y \rightarrow X$ such that the image is not contained in $D_1\cup D_2\cup E$, $(Y, g^*L)$ is adjoint rigid, $\dim Y < \dim X$, and $(a(X, L), b(\mathbb{Q}, X, L)) \leq (a(Y, g^*L), b(\mathbb{Q}, Y, g^*L))$ are the images of the fibers of one of the projections $\pi_i : W \rightarrow \mathbb P^1$. These imply that $Z_1$ is contained in $Z'$. To analyze $Z_{2}$, we first note that the geometric fundamental group of $X \setminus (D_1 \cup D_2 \cup E)$ is $\mathbb Z/2\mathbb Z$.  Thus, over $\overline{\mathbb Q}$ there is only one possible cover $f: W \rightarrow X$ such that $a(W,-f^{*}K_{X}) = a(X,-K_{X})$ and $(W,-f^{*}K_{X})$ is adjoint rigid.  On the other hand, by copying the argument of \cite[Example 8.6]{LTDuke} in this setting we see that all nontrivial twists of $f: W \rightarrow X$ have $a, b$ values less than $a, b$ values of $X$.  Thus $Z_2 = f(W(\mathbb{Q}))$ is also contained in $Z'$.

Another interesting example of \cite{LeRudulier} is $\Hilb^{[2]}(\mathbb{P}^{2})$ over $\mathbb{Q}$. To obtain the expected growth rate of points of a bounded height, one must remove points from a dominant map $f: W \to \Hilb^{[2]}(\mathbb{P}^{2})$ but not its twists.  It turns out that $f$ is face contracting but its twists are not, giving a geometric explanation of this phenomenon; see \cite[Example 8.6]{LTDuke}.
\end{exam}

The circle method has been successfully used to prove Manin's Conjecture for low degree complete intersections, e.g., \cite{Bir61} and \cite{BHB17}. Verifying our refinement for this class of varieties is out of reach at this moment.  However, based on the properties of rational curves on low degree hypersurfaces proved by \cite{HRS04}, \cite{BK13}, \cite{RY16}, \cite{BV17} and the connection with $a$ and $b$ invariants proved in \cite{LT17}, we expect that $Z_{1}$ and $Z_{2}$ are empty for general smooth hypersurfaces in $\mathbb{P}^{n}$ of degree $\leq n-2$ and for every smooth hypersurface in $\mathbb{P}^{n}$ of degree $\ll \log_2(n)$.

\subsection{Counterexamples to extensions} \label{sect: nonbigdiv}

The Weil height formalism associates a height function to any adelically metrized big line bundle, and it is interesting to study the asymptotic growth rate of points of bounded height in this context as well.  It is natural to ask whether the $a,b$-invariants (defined in the analogous way) will still predict the asymptotic growth rate of points.  There are a few important classes of varieties for which the $a,b$-invariants for big line bundles do indeed agree with point growth rates: for example, when the variety is a toric variety \cite{BT-general} or an equivariant compactification of a vector group \cite{CLT02}.  

In this section we will give a couple of examples which demonstrate the pathological behavior of the $a,b$-invariants when the polarization $L$ is  big but not nef. Some of these are almost Fano varieties in the sense of Peyre (\cite[D\'efinition 3.1]{Peyre03}). In particular we will concentrate on examples for which $-K_{X}$ is big.  One can find many examples of such varieties using the following construction.  Let $W$ be any smooth projective variety and choose an ample divisor $H$ on $W$ such that $H-K_{W}$ is ample.  Then the projective bundle $X = \mathbb{P}_{W}(\mathcal{O} \oplus \mathcal{O}(-H))$ has big anticanonical divisor.  In this situation we have $a(X,-K_{X})=1$ regardless of the choices of $W$ and $H$, while the behavior of rational points depends very heavily on these choices.

\begin{exam}
Let $W$ be the Craighero-Gattazo surface \cite{CG94} defined over $\mathbb{Q}(\zeta)$ where $\zeta^{3} + \zeta^{2} - 1 = 0$.  Note that $W$ is a surface of general type with $H^{1}(W,\mathcal{O}_{W}) = H^{2}(W,\mathcal{O}_{W}) = 0$.  By \cite[Theorem 6.2]{RTU17} $W$ is simply connected, hence $\Pic(W)_{\mathrm{tor}} = 0$.

Using the construction above we obtain a projective bundle $X$ over $W$ which is ``almost Fano'' in the sense that $-K_{X}$ is big, $H^{i}(X,\mathcal{O}_{X}) = 0$ for $i>0$, and $\Pic(X)$ is torsion free.  Equipping $X$ with the anticanonical polarization we have $a(X,-K_{X}) = 1$.  However, Lang's conjecture predicts that the set of rational points on $X$ should not be Zariski dense even after a finite extension of the base field.
\end{exam}

\begin{exam}
Suppose that $W$ is the self-product of an elliptic curve without complex multiplication and construct a projective bundle $X$ over $W$ as above.  The computations of \cite[Example 1.6]{Cutkosky86} show that if $L$ is a big divisor then $a(X,L)$ can be irrational.  (See \cite[Example 2.6]{HTT15} for details.)  In this situation the set of rational points on $X$ is thin.
\end{exam}

Even when $X$ is rationally connected, the $a,b$-invariants can exhibit pathological behavior when the polarization $L$ is big.

\begin{exam}
Choose a pencil of cubics on $\mathbb P^2$ defined over $\mathbb Q$ such that the generic fiber has a positive Mordell-Weil rank. (For the existence of such a pencil, see \cite{Shioda92} and \cite{Kurumadani}.) Let $W$ be the blow-up of $\mathbb{P}^{2}$ along the base locus of this pencil. 
Then $W$ admits infinitely many $(-1)$-curves which are sections of the elliptic fibration. Choose any ample divisor $H$ on $W$ and set $X = \mathbb{P}(\mathcal{O} \oplus \mathcal{O}(-H))$.  If $D$ denotes the rigid section of $\pi: X \to W$, then we have $K_{X} = -2D - H + K_{W}$.  Since $-K_{W}$ is effective, $-K_{X}$ is big.  We equip $X$ with the anticanonical polarization so that $a(X,-K_{X}) = 1$.

Let $\{ T_{i} \}$ denote the infinite set of $(-1)$-curves on $W$ and let $S_{i} = \pi^{-1}(T_{i})$.  Then we have $K_{S_{i}} - K_{X}|_{S_{i}} = \pi^{*}\mathcal{O}_{T_{i}}(-1)$, showing that $a(S_{i},-K_{X}) >1 =  a(X,-K_{X})$.  Since $S_{i}$ is toric, we know that the $a$-invariant actually does predict the asymptotic growth rate of rational points of bounded height on $S_{i}$.  Thus there is a countable union of subvarieties whose rational points grow faster than the expected rate.  In particular, there is no open subset $U \subset X$ such that for sufficiently small $\epsilon > 0$ the number of rational points on $U$ of height bounded above by $B$ is $O_{\epsilon}(B^{a(X,-K_{X})+\epsilon})$.  This is a counterexample to weak Manin's Conjecture for almost Fano varieties which are rationally connected. 
\end{exam}

\section{Twists}
\label{sec: twists}

In this section we work over an arbitrary field of characteristic $0$.  We start with a lemma we will use frequently throughout the paper.

\begin{lemm}[\cite{Cheltsov04}] \label{lemm: birandaut}
Let $f : Y \dashrightarrow X$ be a dominant generically finite rational map between normal projective varieties defined over a field $F$ of characteristic $0$. Then there exists a birational modification $f': Y' \rightarrow X$ of $f$ such that $Y'$ is smooth and projective and $\mathrm{Bir}(\overline{Y}'/\overline{X}) = \mathrm{Aut}(\overline{Y}'/\overline{X})$.

Furthermore, if we fix a big and nef $\mathbb{Q}$-divisor $L$ on $X$, then we may assume that the canonical model for $K_{Y'} + a(Y',f'^{*}L)f'^{*}L$ is a morphism.
\end{lemm}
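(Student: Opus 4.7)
The plan is to construct $Y'$ in three stages, each preserving the good properties established at the previous one, with equivariant resolution being the main technical tool.

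First, by Hironaka's theorems on resolution of singularities and of indeterminacy, we may replace $Y$ by a smooth projective $F$-variety so that $f$ extends to a morphism; this does not affect $\mathrm{Bir}(\overline{Y}/\overline{X})$, which is a birational invariant of the cover.

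Second, since $f$ is generically finite, the group $G := \mathrm{Bir}(\overline{Y}/\overline{X})$ is finite of order at most $\deg f$. Consider the rational map
\[
\varphi : \overline{Y} \dashrightarrow \prod_{g \in G} \overline{Y}, \qquad y \longmapsto \bigl(g(y)\bigr)_{g \in G},
\]
over $\overline{X}$, defined on the dense open where every $g \in G$ is regular, and let $\overline{Z}$ be the closure of its image. The product carries a natural regular $G$-action $h \cdot (y_g)_{g} = (y_{gh})_{g}$; a short check shows that $\overline{Z}$ is stable under this action, and that the restricted action recovers the original birational $G$-action on $\overline{Y}$ through the identity coordinate projection. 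The Galois group $\Gamma := \mathrm{Gal}(\overline{F}/F)$ acts on $G$ by conjugation through a finite quotient; once we equip $\prod_{g \in G}\overline{Y}$ with the twisted Galois action that permutes the factors accordingly, $\varphi$ becomes Galois-equivariant, so $\overline{Z}$ descends to a projective $F$-variety $Z$ carrying a morphism to $X$. Applying a $G$-equivariant resolution of singularities over $F$ in the sense of Bierstone--Milman and Villamayor then yields a smooth projective $F$-variety $Y_1$, birational to $Y$ over $X$, on which $G$ acts by regular automorphisms. Birational invariance gives $\mathrm{Bir}(\overline{Y}_1/\overline{X}) = G$, whence the desired equality $\mathrm{Bir}(\overline{Y}_1/\overline{X}) = \mathrm{Aut}(\overline{Y}_1/\overline{X})$.

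Third, for the statement about the canonical model: since every element of $G$ acts on $Y_1$ as an automorphism over $X$, it preserves both $K_{Y_1}$ and $f_1^{*}L$, and hence acts trivially on each linear system $|m(K_{Y_1} + a(Y_1, f_1^{*}L) f_1^{*}L)|$. Thus the indeterminacy locus of the associated canonical map is $G$-stable and defined over $F$. Another round of $G$-equivariant resolution of indeterminacy over $F$ produces the required $Y' \to X$ on which the canonical map is a morphism, without disturbing the equality $\mathrm{Bir}(\overline{Y}'/\overline{X}) = \mathrm{Aut}(\overline{Y}'/\overline{X})$ established in the previous stage. The main obstacle is Galois descent in the second stage: we must perform the equivariant resolution simultaneously for the finite group $G$ and for $\Gamma$, and this is precisely what is delivered by the functoriality of modern resolution algorithms under smooth base change.
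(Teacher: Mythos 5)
Your argument is correct, but it takes a genuinely different route from the paper's. The paper gets $\mathrm{Bir}(\overline{Y}/\overline{X}) = \mathrm{Aut}(\overline{Y}/\overline{X})$ essentially for free by replacing $Y$ with the Stein factorization of $Y \to X$: on a normal variety finite over $X$, every birational self-map over $X$ extends to an automorphism by the universal property of integral closure. The remaining issue is then smoothness, which the paper handles by passing to a finite Galois extension $F'/F$ over which all of $G$ is defined, resolving equivariantly for $G \rtimes \mathrm{Gal}(F'/F)$, and descending by taking the quotient by the semilinear Galois action. You instead regularize the birational $G$-action directly via the classical graph trick in $\prod_{g \in G}\overline{Y}$, descend the resulting model by twisting the Galois action on the product, and then rely on the functoriality of canonical resolution (compatibility with automorphisms and with base field extension) to stay both $G$-equivariant and defined over $F$. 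Both are standard devices; yours trades the paper's ``resolve over a splitting field, then descend'' for ``descend first, then apply a functorial resolution,'' and puts more weight on the functoriality package of Bierstone--Milman/Villamayor, while the paper's Stein-factorization step makes the $\mathrm{Bir} = \mathrm{Aut}$ identity more transparent. Two cosmetic points: $G$ does not act \emph{trivially} on the linear systems $|m(K_{Y_1} + a(Y_1,f_1^*L)f_1^*L)|$ --- it merely preserves them, which is all you need for the indeterminacy locus to be $G$-stable; and the fact that this locus is defined over $F$ uses the invariance of the $a$-invariant and of section spaces under base field extension (Corollary \ref{coro: flatbasechange}), which is worth citing explicitly.
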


In particular, any twist of $f: Y \dashrightarrow X$ is birational to a twist of $f': Y' \to X$.

\begin{proof} 
We first replace $Y$ by a normal birational model which admits a morphism $f  : Y \to X$.  We then consider its Stein factorization $Y \to \widetilde{Y} \to X$. Note that $\widetilde{Y}$ is the normalization of $X$ in $Y$. We replace $Y$ by $\widetilde{Y}$ so that we may assume $\mathrm{Bir}(\overline{Y}/\overline{X}) = \mathrm{Aut}(\overline{Y}/\overline{X})$. Let $F'/F$ be a finite Galois extension such that all automorphisms in $G = \mathrm{Aut}(\overline{Y}/\overline{X})$ are defined over $F'$. Then $G \rtimes \mathrm{Gal}(F'/F)$ acts on $Y_{F'}$.
We resolve singularities equivariantly (as in \cite[Theorem 0.1]{AW97}) and take the quotient by the Galois group $\mathrm{Gal}(F'/F)$ to obtain a smooth variety $Y'$ satisfying the desired condition on automorphism groups.

We still must prove the last statement.  Let $\pi: Y' \dashrightarrow T$ denote the canonical model for $(Y',a(Y',f'^{*}L)f'^{*}L)$.  Choose the same field extension $F'/F$ as before.  Then the rational map $\pi_{F'}: Y_{F'}' \dashrightarrow T_{F'}$ is equivariant for the group $G \rtimes \mathrm{Gal}(F'/F)$. Thus we may take another equivariant resolution and quotient by the Galois action to ensure that $\pi$ is a morphism.
\end{proof}

We next discuss a lemma encapsulating our application of Hilbert's Irreducibility Theorem.

\begin{lemm} \label{lemm: hilbirrapp}
Let $f: Y \to X$ be a surjective generically finite morphism of geometrically integral normal projective varieties defined over a field $F$ of characteristic $0$.  Suppose that the extension of geometric function fields $\overline{F}(\overline{Y})/\overline{F}(\overline{X})$ is Galois with Galois group $G$. Let $F'/F$ be a finite extension such that $G = \mathrm{Bir}(Y_{F'}/X_{F'})$. There is a thin set of points $Z \subset X(F')$ such that if $x \in X(F') \backslash Z$ then $f^{-1}(x)$ is irreducible and the corresponding extension of residue fields is Galois with Galois group $G$.
\end{lemm}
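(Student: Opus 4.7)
The plan is to reduce to a direct application of the Hilbert Irreducibility Theorem for Galois covers over the Hilbertian field $F'$.

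First, the locus $V \subsetneq X$ where $f$ fails to be finite is a proper closed subset, so $V(F')$ is a thin subset of $X(F')$ that may be absorbed into the eventual $Z$; for the rest of the argument we therefore assume that $f$ is finite. Next, I would verify that after base change the function field extension $F'(Y_{F'})/F'(X_{F'})$ is Galois with group $G$. Since $Y$ is geometrically integral over $F$ and $\overline{F'}$ is an algebraic closure of $F$, the variety $Y_{F'}$ is geometrically integral over $F'$; in particular $F'(Y_{F'})$ is a field and $[F'(Y_{F'}):F'(X_{F'})] = \deg(f) = |G|$. The hypothesis $G = \mathrm{Bir}(Y_{F'}/X_{F'})$ now provides an $F'(X_{F'})$-linear action of $G$ on $F'(Y_{F'})$, and a comparison of orders forces this extension to be Galois with group $G$.

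The final step is to invoke the Hilbert Irreducibility Theorem in its Galois form (see e.g.\ \cite{Serre}): since $F'$ is Hilbertian, there is a thin subset $Z' \subset X(F')$ such that for every $x \in X(F') \setminus Z'$ the specialization of the Galois cover $f_{F'}: Y_{F'} \to X_{F'}$ at $x$ is the spectrum of a Galois field extension of $F'$ with group $G$; equivalently, the fiber $f^{-1}(x)$ is irreducible and its residue field is Galois over $F'$ with group $G$. Taking $Z = Z' \cup V(F')$ gives the desired thin set. I do not anticipate any real obstacle beyond the degree/Galois comparison for the function field extension over $F'$; once that is in place the conclusion is an immediate consequence of HIT.
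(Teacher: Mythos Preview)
Your proposal is correct and follows essentially the same strategy as the paper: restrict to an open subset of $X_{F'}$ over which $f_{F'}$ is a Galois \'etale cover with group $G$, apply Hilbert Irreducibility there (the paper cites \cite[Proposition 3.3.1]{Serre}), and absorb the complement into the thin set $Z$. The only cosmetic difference is that the paper first passes to a smooth birational model $Y'$ with $\mathrm{Aut}(\overline{Y'}/\overline{X}) = \mathrm{Bir}(\overline{Y'}/\overline{X}) = G$ via Lemma~\ref{lemm: birandaut}, whereas you argue directly that $F'(Y_{F'})/F'(X_{F'})$ is Galois with group $G$ by comparing $|G|$ with the degree; both routes arrive at the same open-subset Galois cover to which HIT is applied.
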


\begin{proof}
According to Lemma \ref{lemm: birandaut} there is a birational model $f': Y' \to X$ of $f$ such that $\mathrm{Aut}(\overline{Y'}/\overline{X}) = \mathrm{Bir}(\overline{Y'}/\overline{X}) = G$.  Since $Y$ and $Y'$ only differ in a closed set, it suffices to prove the statement for $Y'$.

Then our finite field extension $F'/F$ satisfies that $\mathrm{Aut}(\overline{Y}'/\overline{X}) = \mathrm{Aut}(Y'_{F'}/X_{F'})$.  Note that $f_{F'}: Y'_{F'} \to X_{F'}$ is a Galois covering over an open subset $X_{F'}^{\circ}$ of $X_{F'}$.  Applying the Hilbert Irreducibility Theorem (\cite[Proposition 3.3.1]{Serre}) to this open set and adding on $(X_{F'} \backslash X_{F'}^{\circ})(F')$, we obtain a thin subset $Z' \subset X_{F'}(F')$ satisfying the desired property with respect to $F'$-points. 
\end{proof}

Using Hilbert's Irreducibility Theorem, we show that if we fix a thin map $f: Y \to X$ then there is a thin set which contains all point contributions $f^{\sigma}(Y^{\sigma}(F))$ from twists $f^{\sigma}$ that are breaking thin maps.

\begin{theo}
\label{theo:twists}
Let $X$ be a geometrically uniruled smooth projective variety over a field $F$ of characteristic $0$  and let $L$ be a big and nef $\mathbb Q$-divisor on $X$.  Suppose that $f: Y \to X$ is a dominant generically finite morphism from a normal projective variety $Y$.  As $\sigma$ varies over all $\sigma \in H^1(F, \mathrm{Aut}(\overline{Y}/\overline{X}))$ such that $Y^{\sigma}$ is irreducible,
\begin{equation*}
(a(X, L), b(F, X, L)) \leq (a(Y^{\sigma}, (f^{\sigma})^*L), b(F, Y^\sigma, (f^\sigma)^*L)),
\end{equation*}
and $f^{\sigma}$ is face contracting the set 
\begin{equation*}
 Z= \bigcup_{\sigma} f^\sigma(Y^\sigma (F)) \subset X(F)
 \end{equation*}
is contained in a thin subset of $X(F)$.
\end{theo}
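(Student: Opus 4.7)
The plan is to prove thinness by reducing to finitely many applications of Hilbert's Irreducibility Theorem via Lemma \ref{lemm: hilbirrapp}, using the Galois closure of $f: Y \to X$ and the hypotheses on $(a, b)$-invariants and face contraction to enumerate a finite collection of relevant ``twist types'' that contribute rational points.

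To set up, I would first apply Lemma \ref{lemm: birandaut} to replace $f$ with a smooth birational model for which $G := \mathrm{Aut}(\overline{Y}/\overline{X}) = \mathrm{Bir}(\overline{Y}/\overline{X})$. This reduction is harmless: twists of the modified $f$ differ from twists of the original only in closed subsets, so thin subsets on $X(F)$ are preserved. Next, I would form the Galois closure $\widetilde{f}: \widetilde{Y} \to X$ so that $\widetilde{Y} \to X$ is Galois over $F$ with group $\widetilde{G}$, and $Y = \widetilde{Y}/H$ for a subgroup $H \leq \widetilde{G}$ with $G = N_{\widetilde{G}}(H)/H$. Applying Lemma \ref{lemm: hilbirrapp} to $\widetilde{f}$ yields a thin subset $Z_0 \subset X(F)$ such that for each $x \in X(F) \setminus Z_0$, the residue extension of the unique point in $\widetilde{f}^{-1}(x)$ is Galois over $F$ with group $\widetilde{G}$, equivalently giving a surjection $\rho_x: \mathrm{Gal}(\overline{F}/F) \twoheadrightarrow \widetilde{G}$.

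The central observation is this: if $x \in X(F) \setminus Z_0$ lies in $f^\sigma(Y^\sigma(F))$, then fixing a lift of the preimage to $\widetilde{Y}(\overline{F})$ determines $\sigma$ in terms of $\rho_x$ and the stabilizer subgroup of that lift. Moreover, the $(a, b)$-invariants of $Y^\sigma$ and whether $f^\sigma$ is face contracting depend only on the twisted $\mathrm{Gal}(\overline{F}/F)$-action on the finite-rank lattice $N^1(\overline{Y})$, which factors through $\widetilde{G}$. Therefore, as $\sigma$ ranges over admissible twists, only finitely many conjugacy classes of subgroups of $\widetilde{G}$ can arise as images of the stabilizer-encoded twist; let $\mathcal{S}$ denote this finite collection. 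For each class $[\Gamma] \in \mathcal{S}$ I would form the intermediate cover $X_\Gamma := \widetilde{Y}/\Gamma \to X$, observe that any $x \in X(F) \setminus Z_0$ lifting to an admissible $Y^\sigma(F)$ of type $[\Gamma]$ must lie in the image of $X_\Gamma(F) \to X(F)$, and conclude that thinness is equivalent to $\Gamma$ being a proper subgroup of $\widetilde{G}$. Taking the union of $Z_0$ with the finitely many thin images $X_\Gamma(F) \to X(F)$ for $[\Gamma] \in \mathcal{S}$ then contains $Z$.

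The hard part will be the last verification: showing that for every admissible twist satisfying face contraction (and equal $(a,b)$-values), the corresponding $\Gamma$ is a proper subgroup of $\widetilde{G}$. For twists with strictly larger $(a, b)$-invariants, the dimensional jump in $\mathcal{F}$ forces this group-theoretic properness automatically. For equal $(a, b)$-values, one must unpack the face contraction condition of Definition \ref{defi: facecontraction}, namely non-injectivity of $f^\sigma_*: \mathcal{F}_{Y^\sigma} \to \mathcal{F}_X$, into the assertion that some element of $\widetilde{G}$ acts nontrivially on a specific subspace of $N^1(\overline{Y})$ related to $\mathcal{F}_Y$, and use Lemma \ref{lemm:pushforwardpreservesfaces} together with the structure of the pushforward map to conclude that $\Gamma \subsetneq \widetilde{G}$.
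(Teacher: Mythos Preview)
Your proposal has the right instinct---use Hilbert irreducibility to control generic fibers, then argue that the hypothesis on $(a,b)$-values and face contraction is incompatible with a rational point over a generic $x$---but the mechanism you propose via intermediate covers $X_\Gamma = \widetilde{Y}/\Gamma$ does not work as stated, and the ``hard part'' you defer is in fact the entire content.

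The concrete problem is with your claim that a rational point $x \in X(F)\setminus Z_0$ coming from an admissible $Y^\sigma(F)$ must lift to $X_\Gamma(F)$ for a \emph{fixed} $F$-variety $X_\Gamma$. Different twists $\sigma$ of the same ``type'' do not share a common intermediate cover defined over $F$; at best they share a common \emph{geometric} model, whose $F$-forms are precisely the twists you are trying to enumerate. So the union you write down is not obviously finite, and the reduction to finitely many thin images is circular. Moreover, the assignment $\sigma \mapsto \Gamma$ is never made precise, and ``the dimensional jump in $\mathcal{F}$ forces group-theoretic properness'' is not an argument.

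The paper's proof is more direct and avoids the Galois closure entirely. After reducing to the case where $\overline{F}(\overline{Y})/\overline{F}(\overline{X})$ is already Galois with group $G$ (the non-Galois case is dispatched by citing \cite[Proposition 8.2]{LTDuke}), one applies Lemma~\ref{lemm: hilbirrapp} to $f$ itself over a finite extension $F_1$ splitting $N^1(\overline{Y})$ and $G$, producing a thin set $Z'\subset X(F)$. The key step, which your sketch lacks, is a direct algebraic computation: one shows that $N^1(\overline{Y})^G$ is spanned by $f^*N^1(\overline{X})$ together with $f$-exceptional divisors, whence there is a surjection
\[
N^1(\overline{X})/\Span(\mathcal{F}^{\overline{X}}) \twoheadrightarrow N^1(\overline{Y})^G/\Span(\mathcal{F}^{\overline{Y}})^G.
\]
Now if $x\notin Z'$ and $y\in Y^\sigma(F)$ lies over $x$, the fiber $(f^\sigma)^{-1}(x)$ being a trivial $G$-torsor forces $\mathrm{Gal}(\overline{F}/F)$ to act on $N^1(\overline{Y^\sigma})$ through a surjection onto $G$. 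Taking Galois invariants of the displayed surjection then gives $b(F,Y^\sigma,f^{\sigma*}L)\le b(F,X,L)$ directly, and equality forces $f^\sigma_*:\mathcal{F}_{Y^\sigma}\to\mathcal{F}_X$ to be an isomorphism, so $f^\sigma$ is not face contracting. Thus no admissible twist contributes a point outside $Z'$, and $Z\subset Z'$.

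In short: rather than building auxiliary covers, the paper proves the contrapositive---any twist contributing a rational point outside a single fixed thin set automatically fails the admissibility hypothesis---and the engine is the surjectivity of $N^1(\overline{X})\to N^1(\overline{Y})^G$ modulo the relevant faces.
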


\begin{proof}
We start with several simplifications.  If $X$ is not geometrically integral, then $X(F)$ is empty since $X$ is smooth.  So we may suppose $X$ is geometrically integral. 

Suppose that $Y$ is not geometrically integral.  Then any twist $Y^{\sigma}$ of $Y$ which has a rational point not contained in $\Sing(Y^{\sigma})$ must be reducible.  Thus, the set $Z$ is contained in the thin set $f(\Sing(Y))(F)$.  So from now on we assume that $Y$ is geometrically integral.

If $f: Y \to X$ induces an extension of geometric function fields $\overline{F}(\overline{Y})/\overline{F}(\overline{X})$ that is not Galois, then we may conclude by \cite[Proposition 8.2]{LTDuke}. (Although \cite[Proposition 8.2]{LTDuke} is stated over a number field the proof applies to an arbitrary field of characteristic $0$.)  So we may assume that $\overline{F}(\overline{Y})/\overline{F}(\overline{X})$ is Galois with Galois group $G$.  

Suppose that $Y$ is not smooth.  Choose a birational model $f': Y' \to X$ as in Lemma \ref{lemm: birandaut}.  Note that the statement for $f'$ implies the statement for $f$.  Indeed, if $B$ denotes the closed subset of $Y$ where the rational map $\phi: Y \dashrightarrow Y'$ is not defined then 
\begin{equation*}
\bigcup_{\sigma} f^\sigma(Y^\sigma (F)) \subset \bigcup_{\tau} f'^\tau(Y'^\tau(F)) \cup f(B)(F)
 \end{equation*}
where $\tau$ varies over all twists of $f': Y' \to X$ as in the statement of the theorem.  So from now on we assume that $Y$ is smooth.  Similarly, if $Y$ is smooth but $G$ does not coincide with $\mathrm{Aut}(\overline{Y}/\overline{X})$, then we may apply the same construction to reduce to the case when $G =\mathrm{Aut}(\overline{Y}/\overline{X}) = \mathrm{Bir}(\overline{Y}/\overline{X})$.

Since $f$ is dominant Lemma \ref{lemm: genfinite} shows that the only case we need to consider is when $a(Y,f^{*}L) = a(X,L)$. 
Suppose that $F_1/F$ is a finite extension so that $N^1(\overline{Y}) = N^1(Y_{F_1})$ and $G =  \mathrm{Aut}(Y_{F_1}/X_{F_1})$.
By Lemma \ref{lemm: hilbirrapp} we obtain a thin set $Z'' \subset X(F_1)$ such that for any point $x \in X(F_1)\backslash Z''$ the fiber $f^{-1}(x)$ is irreducible over $F_1$ and the corresponding extension of residue fields is Galois with Galois group $G$.  We let $Z'= Z'' \cap X(F)$ which is contained in a thin set by \cite[Proposition 3.2.1]{Serre}.

We prove that if a twist $\sigma$ satisfies $f^{\sigma}(Y^{\sigma}(F)) \not \subset Z'$ then $b(F,Y^{\sigma},f^{\sigma *}L) \leq b(F,X,L)$ and if equality of $b$-invariants is achieved then $f^{\sigma}$ is not face contracting.

We first claim that $N^1(\overline{Y})^G$ is spanned by $N^1(\overline{X})$ and $\overline{f}$-exceptional divisors. To see this, denote the Stein factorization of $\overline{f}: \overline{Y} \to \overline{X}$ by $\overline{g}: \overline{Y} \to \overline{W}$ and $\overline{h}: \overline{W} \to \overline{X}$.  Let $D_{\overline{Y}}$ be a $G$-invariant divisor on $\overline{Y}$ and let $D_{\overline{X}} = \overline{f}_{*}D_{\overline{Y}}$.  We have $\overline{g}_{*}D_{\overline{Y}} = \frac{1}{|G|}\overline{h}^{*}D_{\overline{X}}$ as $\mathbb{Q}$-Weil divisors on $\overline{W}$.  Thus $D_{\overline{Y}} - \frac{1}{|G|}\overline{g}^{*}\overline{h}^{*}D_{\overline{X}}$ is exceptional for $\overline{g}$, and hence for $\overline{f}$.  This proves the claim.

 Let $\mathcal{F}^{\overline{X},\overline{L}}$ be the minimal face of $\Eff^1(\overline{X})$ containing $a(X, L)L + K_X$ and $\mathcal{F}^{\overline{Y},\overline{f}^{*}\overline{L}}$ be the minimal face of $\Eff^1(\overline{Y})$ containing $a(X, L)f^*L + K_Y$.  Since $\mathcal{F}^{\overline{Y},\overline{f}^{*}\overline{L}}$ contains all $\overline{f}$-exceptional effective divisors, we conclude that the natural map
\begin{align}
\label{surjection}
N^1(\overline{X})/\Span ( \mathcal{F}^{\overline{X},\overline{L}} ) \rightarrow N^1(\overline{Y})^G/\Span ( \mathcal{F}^{\overline{Y},\overline{f}^{*}\overline{L}})^G
\end{align}
is surjective.

Now suppose that $x \in X(F) \backslash Z'$ and that there is a point $y \in Y^{\sigma}(F)$ with $f^{\sigma}(y) = x$.  Then the twist $Y^{\sigma}$ corresponds to an element $\sigma \in H^{1}(F,\mathrm{Aut}(\overline{Y}/\overline{X}))$, i.e.~an equivalence class of $1$-cocycles $\Gal(\overline{F}/F) \to G$. By abuse of notation, we will also use $\sigma$ to denote a $1$-cocycle representing this equivalence class.  Note that the fiber $f^{-1}(x)$ is irreducible over $F_1$ and the corresponding extension of residue fields is Galois with Galois group $G$. Since the $G$-torsor $(f^{\sigma})^{-1}(x) \to x$ is also trivial we obtain a surjection $\sigma: \Gal(\overline{F}/F_1) \twoheadrightarrow G$.
We claim that the image of $\mathrm{Gal}(\overline{F}/F)$ in $\mathrm{GL}(N^{1}(\overline{Y}^\sigma))$ contains the image of $G$ in $\mathrm{GL}(N^{1}(\overline{Y}^\sigma))$.  Indeed, choose a finite extension $F'/F_{1}$ such that
\[
Y^{\sigma} \otimes_{F} F' \cong Y \otimes_F F'.
\]
Suppose that $\sigma$ is represented by the $1$-cocycle $\mathrm{Gal}(F'/F) \ni s \mapsto \sigma_s \in G = \mathrm{Aut}(Y_{F'}/X_{F'})$.  Then $Y^{\sigma}$ is the quotient of $Y \otimes_{F} F'$ by $\mathrm{Gal}(F'/F)$ where the Galois action on $Y \otimes_F F'$ is given by the composition
\[
\sigma_s \circ (\mathrm{id}_Y \otimes s).
\]
By our construction, $\sigma$ induces a surjective homomorphism from $\mathrm{Gal}(F'/F_1)$ to $G$ and $\mathrm{Gal}(F'/F_1)$ acts trivially on $N^1(Y_{F'})$ via $\mathrm{id}_Y \otimes s$.  Thus the image of $\mathrm{Gal}(\overline{F}/F)$ in $\mathrm{GL}(N^{1}(\overline{Y^{\sigma}}))$ contains the image of $G$.
Since the action of $\Gal(\overline{F}/F)$ on the N\'eron-Severi space factors through a finite group, by taking the Galois invariant part of (\ref{surjection}) we obtain a surjection
\[
N^1(X)/\Span ( \mathcal{F}^{X,L} ) \rightarrow (N^1(\overline{Y}^\sigma)^G/\Span ( \mathcal{F}^{\overline{Y}^\sigma,\overline{f}^{\sigma *}\overline{L}})^G)^{\Gal(\overline{F}/F)}.
\]
Since the image of $\mathrm{Gal}(\overline{F}/F)$ in $\mathrm{GL}(N^{1}(\overline{Y^{\sigma}}))$ contains the image of $G$ in $\mathrm{GL}(N^{1}(\overline{Y^{\sigma}}))$, we have
\begin{align*}
(N^1(\overline{Y}^\sigma)^G/\Span (\mathcal{F}^{\overline{Y}^\sigma,\overline{f}^{\sigma *}\overline{L}} )^G)^{\Gal(\overline{F}/F)} & = (N^1(\overline{Y}^\sigma)/\Span ( \mathcal{F}^{\overline{Y}^\sigma,\overline{f}^{\sigma *}\overline{L}} ))^{\Gal(\overline{F}/F)}\\
& = N^1(Y^\sigma)/\Span ( \mathcal{F}^{Y^\sigma,f^{\sigma *}L}).
\end{align*}
Thus we conclude that $b(F, Y^\sigma, f^{\sigma *}L) \leq b(F, X, L)$.  If the equality is achieved, then we have that $\Span (\mathcal{F}_{Y^{\sigma},f^{\sigma*}L})$ maps isomorphically to $\Span (\mathcal{F}_{X,L})$ and the cover is not face contracting.
\end{proof}

\begin{proof}[Proof of Theorem~\ref{theo: mainthinness}:]
Theorem~\ref{theo: mainthinness} is the special case of Theorem 6.3 when $Y$ is smooth and $f^\sigma : Y^\sigma \to X$ has invariants which satisfy the strict inequality so that it is face contracting.
\end{proof}

In the proof of Theorem \ref{theo: precisetheorem} we will often need to replace a thin map $f: Y \to X$ by a map $f': Y' \to X$ constructed using an Iitaka base change.  We must show that if $f$ is a breaking thin map then certain twists of $f'$ are also breaking thin maps -- this will ensure that we do not ``lose'' any rational point contributions when performing this replacement.

\begin{lemm} \label{lemm: iitakabasechangeandbvalue}
Let $Y$ be a geometrically uniruled smooth projective variety over a field $F$ of characteristic $0$ and let $L$ be a big and nef $\mathbb{Q}$-divisor on $Y$.  Suppose that $K_{Y}+a(Y,L)L$ has positive Iitaka dimension and let $\pi: Y \dashrightarrow B$ 
denote the canonical model.  Suppose that $h: T \to B$ is any dominant generically finite map from a projective variety $T$.  
Let $g: Y' \to Y$ be the Iitaka base change of $Y$ corresponding to $h$.
Then for every twist $g^{\sigma}: Y'^{\sigma} \to Y$ of $g$ with $Y'^{\sigma}$ irreducible, the induced map $g^{\sigma}_{*}: \mathcal{F}_{Y'^{\sigma},g^{\sigma *}L} \to  \mathcal{F}_{Y,L}$ is surjective. In particular we have $b(F,Y'^{\sigma},g^{\sigma*}L) \geq b(F,Y,L)$.
\end{lemm}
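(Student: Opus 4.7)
The strategy is to first establish geometric properties of the untwisted map $g : Y' \to Y$ and prove the surjectivity of the pushforward $\overline{g}_*$ on faces over $\overline{F}$, then deduce the twisted statement by a Galois averaging argument.

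After replacing $Y$ and $Y'$ by suitable smooth birational models, I would arrange that both canonical maps $\pi : Y \to B$ and $\pi' : Y' \to T$ are morphisms fitting into a commutative square with $g$ and $h$. The construction of the Iitaka base change ensures that a general fiber $Y'_t$ maps birationally onto $Y_{h(t)}$; combined with Lemma~\ref{lemm:ainvandcanonicalfibers}, Lemma~\ref{lemm: genfinite}, and Lemma~\ref{lemm:ainvdominantfamily}, this forces $a(Y', g^*L) = a(Y, L)$. Using the ramification formula for $g$ and writing $K_{Y} + a(Y,L)L \sim_{\mathbb Q} \pi^*H_B + E$ for some ample $H_B$ on $B$ and effective $E$, one verifies the hypotheses of Lemma~\ref{lemm:birationaltocanonical} and identifies $\pi'$ birationally with the canonical model for $K_{Y'} + a(Y', g^*L)g^*L$.

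Next, working over $\overline{F}$, I would show that $\overline{g}_* : \mathcal F_{\overline{Y'}} \to \mathcal F_{\overline{Y}}$ is surjective. Shrink to a smooth open $T^\circ \subset T$ whose image lies in the smooth locus of $\pi$ and on which $a$-invariants of $\pi'$-fibers equal $a(Y,L)$. For a general $t \in T^\circ$ the induced map $Y'_t \to Y_{h(t)}$ is birational, so Lemma~\ref{lemm:birfaceinv} gives an isomorphism $\mathcal F_{Y'_t} \cong \mathcal F_{Y_{h(t)}}$, while Lemma~\ref{lemm: monodromyandbvalue}(3) supplies surjections $\mathcal F_{Y'_t} \twoheadrightarrow \mathcal F_{\overline{Y'}}$ and $\mathcal F_{Y_{h(t)}} \twoheadrightarrow \mathcal F_{\overline{Y}}$ (applied componentwise if $\overline{Y'}$ is reducible); chasing the commutative diagram yields the desired surjection.

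Finally, for a twist $g^\sigma$ we have $\overline{Y'^\sigma} = \overline{Y'}$ and $\overline{g^\sigma} = \overline{g}$, so $\mathcal F_{Y'^\sigma}$ is the invariant part of $\mathcal F_{\overline{Y'}}$ under the twisted Galois action $\Gamma^\sigma$, which differs from the original Galois action by a cocycle valued in $\mathrm{Aut}(\overline{Y'}/\overline{Y})$. Since $\overline{g}$ is defined over $F$ and elements of $\mathrm{Aut}(\overline{Y'}/\overline{Y})$ lie over $\overline{Y}$, the map $\overline{g}_*$ intertwines the twisted action on the source with the ordinary Galois action on the target. Given $v \in \mathcal F_Y$, lift to $w \in \mathcal F_{\overline{Y'}}$ by the previous step and average over the finite image of $\Gamma^\sigma$ acting on $N_1(\overline{Y'})$; since the face is convex and $\Gamma^\sigma$-stable (its defining class is invariant under both $\Gamma$ and $\mathrm{Aut}(\overline{Y'}/\overline{Y})$), the average lies in $\mathcal F_{\overline{Y'}}^{\Gamma^\sigma} = \mathcal F_{Y'^\sigma}$ and pushes forward to $v$. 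The $b$-invariant inequality then follows from $\dim \mathcal F_{Y'^\sigma} \geq \dim \mathcal F_Y$. The main technical difficulty will be Step~1: verifying carefully that the Iitaka base change, after appropriate birational resolution, satisfies the hypotheses needed for Lemma~\ref{lemm: monodromyandbvalue}(3) to apply simultaneously on both $Y$ and $Y'$.
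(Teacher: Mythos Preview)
Your proposal is essentially correct and reaches the same conclusion, but the route differs from the paper's in one substantive way that is worth noting.

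The paper does not pass to $\overline{F}$ and descend via averaging. Instead, it observes that one can work with the twist $Y'^{\sigma}$ directly over $F$: after replacing $Y$ and $Y'$ by models with $\mathrm{Bir}=\mathrm{Aut}$ (Lemma~\ref{lemm: birandaut}), the Stein factorization of $Y'^{\sigma}\to Y\to B$ produces a variety $T^{\sigma}$ over $F$, and the same Lemma~\ref{lemm:birationaltocanonical} argument you give for the untwisted $Y'$ identifies $Y'^{\sigma}\to T^{\sigma}$ birationally with the canonical model for $(Y'^{\sigma},a\,g^{\sigma*}L)$. One then runs your commutative square entirely over $F$: for a general closed point $t\in T^{\sigma}$ with image $b\in B$, Lemma~\ref{lemm:birfaceinv} gives $\mathcal F_{Y'^{\sigma}_{t}}\cong\mathcal F_{Y_{b}}$, and Lemma~\ref{lemm: monodromyandbvalue}(3) applied over $F$ to both $Y'^{\sigma}\to T^{\sigma}$ and $Y\to B$ supplies the two surjections. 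This avoids the averaging step altogether.

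What your approach buys is a clean separation of the geometric content (surjectivity over $\overline{F}$) from the arithmetic content (descent). The price is extra bookkeeping: you must handle possible reducibility of $\overline{Y'}$ componentwise, check that the twisted Galois action preserves $\mathcal F_{\overline{Y'}}$ (which it does, since both $K_{\overline{Y'}}$ and $\overline{g}^{*}\overline{L}$ are fixed by $\mathrm{Aut}(\overline{Y'}/\overline{Y})$), and verify the intertwining $\overline{g}_{*}\circ\Gamma^{\sigma}=\Gamma\circ\overline{g}_{*}$. The paper's approach sidesteps all of this by recognizing that Lemma~\ref{lemm: monodromyandbvalue}(3) is already stated over an arbitrary field of characteristic~$0$, so there is no need to leave $F$. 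Either way, the crux is exactly what you identify in your final remark: one must know that the map $Y'^{\sigma}\to T^{\sigma}$ (or $\overline{Y'}\to\overline{T}$) is birationally the canonical model, and this follows from pulling back an ample divisor on $B$ through the finite map $T^{\sigma}\to B$ and invoking Lemma~\ref{lemm:birationaltocanonical}.
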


\begin{proof}
Note that in this situation we have $a(Y'^{\sigma},g^{\sigma*}L) = a(Y,L)$, 
since the inequality $\leq$ follows from Lemma \ref{lemm: genfinite} and the inequality $\geq$ follows from Lemmas \ref{lemm:ainvandcanonicalfibers} and \ref{lemm:ainvdominantfamily}.  Let $\phi: \widetilde{Y} \to Y$ be a birational morphism resolving the canonical model map $\pi$ such that $\widetilde{Y}$ is smooth and set $\widetilde{\pi} = \pi \circ \phi$. By applying Lemma \ref{lemm: birandaut} to $Y' \dashrightarrow \widetilde{Y}$, we obtain a smooth birational model $\widetilde{Y}'$ of $Y'$ and a morphism $\widetilde{g}: \widetilde{Y}' \to \widetilde{Y}$ such that $\mathrm{Bir}(\overline{\widetilde{Y}'}/\overline{\widetilde{Y}}) = \mathrm{Aut}(\overline{\widetilde{Y}'}/\overline{\widetilde{Y}})$ and 
the canonical map for $(\widetilde{Y}'^{\sigma}, a(\widetilde{Y}'^{\sigma},\widetilde{g}^{\sigma*}\phi^{*}L)\widetilde{g}^{\sigma*}\phi^{*}L)$ is a morphism.  We claim that the desired statement for $g: Y' \to Y$ follows from the corresponding statement for $\widetilde{g}: \widetilde{Y}' \to \widetilde{Y}$.  Indeed, every twist $g^{\sigma}$ of $g$ is birational to some twist $\widetilde{g}^{\tau}$ of $\widetilde{g}$, and by Lemma \ref{lemm:birfaceinv} we can deduce the desired surjection for $g^{\sigma}_{*}$ from the corresponding surjection for $\widetilde{g}^{\tau}_{*}$.

Let $T^\sigma \to B$ be the Stein factorization of $\widetilde{Y}'^\sigma \to \widetilde{Y} \to B$.  Since $\pi': \widetilde{Y} \to B$ is the canonical model for $(\widetilde{Y},a(\widetilde{Y},\phi^{*}L)\phi^{*}L)$, there is an ample divisor $H$ on $B$ such that $K_{\widetilde{Y}} + a(\widetilde{Y},\phi^{*}L)\phi^{*}L - \widetilde{\pi}^{*}H$ is $\mathbb{Q}$-linearly equivalent to an effective divisor.  Since $T^{\sigma} \to B$ is finite, the pullback of $H$ to $T^{\sigma}$ is still ample, and by the ramification formula we have that $K_{\widetilde{Y}'^{\sigma}} + a(\widetilde{Y}'^{\sigma},\widetilde{g}^{\sigma*}\phi^{*}L)\widetilde{g}^{\sigma*}\phi^{*}L - \widetilde{g}^{\sigma*}\widetilde{\pi}^{*}H$ is $\mathbb{Q}$-linearly equivalent to an effective divisor.
Thus Lemma \ref{lemm:birationaltocanonical} shows that the map $\widetilde{Y}'^{\sigma} \to T^{\sigma}$ agrees with the canonical map for $(\widetilde{Y}'^{\sigma}, a(\widetilde{Y}'^{\sigma},\widetilde{g}^{\sigma*}\phi^*L)\widetilde{g}^{\sigma*}\phi^*L)$ over an open subset of $T^{\sigma}$.  
For a general closed point in $t \in T^\sigma$ with $b = g^\sigma(t)$, consider the commuting diagram
\begin{equation*}
\xymatrix{
\mathcal{F}_{\widetilde{Y}'^\sigma_t, \widetilde{g}^{\sigma*}\phi^*L|_{\widetilde{Y}'^\sigma_t}} \ar@{>}[r] \ar@{>}[d]& \mathcal{F}_{\widetilde{Y}_b, \phi^*L|_{\widetilde{Y}_b}} \ar@{>}[d]\\
\mathcal{F}_{\widetilde{Y}'^\sigma, \widetilde{g}^{\sigma*}\phi^*L} \ar@{>}[r] & \mathcal{F}_{\widetilde{Y}, \phi^*L} 
}.
\end{equation*}
Lemma~\ref{lemm: monodromyandbvalue} (3) shows that the two vertical arrows are surjections.  We also know that $\mathcal{F}_{\widetilde{Y}'^\sigma_t, \widetilde{g}^{\sigma*}\phi^*L|_{\widetilde{Y}'^\sigma_t}} \to \mathcal{F}_{\widetilde{Y}_b, \phi^*L|_{\widetilde{Y}_b}}$ is an isomorphism by Lemma~\ref{lemm:birfaceinv}.  We deduce that the bottom arrow is also a surjection, proving the desired statement.
\end{proof}

\section{The boundedness of breaking thin maps} \label{sect: boundedness}
Our next goal is to prove a boundedness statement for the set of breaking thin maps.  
In this section we work over an algebraically closed field $F$ of characteristic $0$.

\begin{defi} \label{defi:goodfamily}
A good family of adjoint rigid varieties is a morphism $p: \mathcal U \to W$ of smooth quasi-projective varieties and a relatively big and nef $\mathbb{Q}$-divisor $L$ on $\mathcal U$ satisfying the following properties:
\begin{enumerate}
\item The map $p$ is projective, surjective, and smooth with irreducible fibers.
\item The $a$-invariant $a(\mathcal{U}_{w},L|_{\mathcal{U}_{w}})$ is constant and positive for the fibers $\mathcal{U}_{w}$ over closed points and $(\mathcal{U}_{w},L|_{\mathcal{U}_{w}})$ is adjoint rigid for each fiber.
\item The $b$-value $b(F,\mathcal{U}_{w},L|_{\mathcal{U}_{w}})$ is constant for the fibers $\mathcal{U}_{w}$ over closed points.
\item Let $Q$ denote the union of all divisors $D$ in fibers $\mathcal{U}_{w}$ such that $a(D,L|_{D}) > a(\mathcal{U}_{w},L|_{\mathcal{U}_{w}})$. Then $Q$ is closed and flat over $W$.  Furthermore, if we set $\mathcal{V} := \mathcal U \backslash Q$, there is a projective birational map $\phi: \mathcal{U}' \to \mathcal{U}$ that is an isomorphism over $\mathcal{V}$ such that $\mathcal{U}'$ is smooth over $W$ and $\mathcal{U}' \backslash \mathcal{V}$ is a strict normal crossings divisor relative to $W$.
\end{enumerate}
Note that the restriction of $L$ to every fiber of $p$ is nef and since by assumption the $a$-invariant is constant on the fibers the restriction of $L$ to every fiber of $p$ is also big.  A base change of a good family is defined to be the good family induced via base change by a map $g: T \to W$.  We say that $p$ has a good section if there is a section $W \to \mathcal U \backslash Q$, i.e.~if there is a section avoiding $Q$.

A good morphism of good families is a diagram
\begin{equation*}
\xymatrix{\mathcal Y \ar[r]^{f} \ar[d]_{q}&  \mathcal U \ar[d]^{p} \\
T \ar[r]_{g} & W}
\end{equation*}
and a relatively big and nef $\mathbb{Q}$-divisor $L$ on $\mathcal U$ such that $p$ and $q$ are good families of adjoint rigid varieties (with respect to $L$ and $f^{*}L$ respectively), the relative dimensions of $p$ and $q$ are the same, and $a(\mathcal{Y}_{t},f^{*}L|_{\mathcal{Y}_{t}}) = a(\mathcal{U}_{g(t)},L|_{\mathcal{U}_{g(t)}})$ for any point $t \in T$.  
\end{defi}

\begin{lemm} \label{lemm:avaluesandfibers}
Let $p: X \to W$ be a surjective morphism of projective varieties and let $L$ be a big and nef $\mathbb{Q}$-Cartier divisor on $X$.  Suppose that an irreducible component $Y$ of a general fiber of $p$ satisfies $a(Y,L) > 0$.  Fix an ample divisor $H$ on $W$.  Then there is an open subset $W^{\circ} \subset W$ and a positive integer $m$ such that
\begin{equation*}
a(X,L+mp^{*}H) = a(Y,L|_{Y})
\end{equation*}
for any irreducible component $Y$ of a fiber $X_{w}$ over $W^{\circ}$.
\end{lemm}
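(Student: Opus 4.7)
The plan is to first reduce to the case where $X$ and $W$ are smooth projective, $p$ is smooth with integral fibers over a Zariski-open $W^{\circ} \subset W$, and by Theorem~\ref{theo: aconstancy} the fiberwise invariant $a(X_{w}, L|_{X_{w}})$ equals a constant $a > 0$ for all closed $w \in W^{\circ}$; this is the candidate value for the equality. Such a reduction is harmless because the invariants in question are preserved by resolution of $X$ and by replacing $W$ with the Stein factorization of $p$ (on which $H$ pulls back to an ample class, so the lemma for the new setup implies it for the original).

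The lower bound $a(X, L + m p^{*} H) \geq a$ for every $m \geq 0$ follows immediately from Lemma~\ref{lemm:ainvdominantfamily} applied to the dominant family $\{X_{w}\}_{w \in W^{\circ}}$ with polarization $L + m p^{*} H$, using $(L + m p^{*} H)|_{X_{w}} = L|_{X_{w}}$. Moreover $a(X, L + m p^{*} H)$ is non-increasing in $m$ because $p^{*} H$ is nef, so it suffices to exhibit a single $t \geq 0$ such that $K_{X} + aL + t p^{*} H \in \Eff^{1}(X)$: for then $K_{X} + a(L + m p^{*} H)$ is pseudo-effective whenever $am \geq t$, giving $a(X, L + m p^{*} H) \leq a$.

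By \cite{BDPP}, this pseudo-effectivity reduces to $(K_{X} + aL + t p^{*} H) \cdot C \geq 0$ for every movable curve class $C$ on $X$, which I verify in two cases. If $C$ is $p$-vertical (i.e., $p^{*} H \cdot C = 0$), the moving family of $C$ consists of $p$-contracted curves covering $X$, so $[C]$ is represented by a movable class on a general fiber $X_{w}$; applying \cite{BDPP} on $X_{w}$ to the pseudo-effective divisor $K_{X_{w}} + a L|_{X_{w}} = (K_{X} + aL)|_{X_{w}}$ gives $(K_{X} + aL) \cdot C \geq 0$, and the inequality holds for any $t$. Otherwise $p^{*} H \cdot C > 0$ and the inequality rearranges to $t \geq -(K_{X} + aL) \cdot C / (p^{*} H \cdot C)$. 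To bound this supremum, I invoke Lemma~\ref{lemm: terminalpair} to write $L \sim_{\mathbb{Q}} \Delta_{L} + A_{L}$ with $A_{L}$ ample and $\Delta_{L}$ of small enough coefficients that $(X, a \Delta_{L})$ is terminal, hence $\varepsilon$-lc for some $\varepsilon > 0$. Then Lemma~\ref{lemm:conetheorem} applied to this pair with ample class $a A_{L}$ produces finitely many extremal rays of $\Eff_{1}(X)_{K_{X} + a \Delta_{L} \geq 0} + \Nef_{1}(X)$, each generated by a movable curve, on which $K_{X} + aL$ is negative; the vertical case rules out any such ray being $p$-vertical, so the finitely many ratios are all finite, and any $t$ at least their maximum makes $K_{X} + aL + t p^{*} H$ pseudo-effective.

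The main obstacle is the vertical case: precisely identifying a $p$-vertical movable class on $X$ with a pushforward of a movable class from a general fiber of $p$, and verifying that an arbitrary movable $C$ with $(K_{X} + aL) \cdot C < 0$ is a non-negative combination of the finitely many extremal rays produced by Lemma~\ref{lemm:conetheorem} plus a $(K_{X}+aL)$-non-negative remainder. Both steps rely on the structure of the movable cone under the morphism $p$ and on the dual characterization of the pseudo-effective cone via \cite{BDPP}.
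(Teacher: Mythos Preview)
Your proposal is correct and follows essentially the same route as the paper: reduce to $X$ smooth with $p$ smooth over $W^{\circ}$, invoke Theorem~\ref{theo: aconstancy} to get a constant fiberwise value $a$, obtain the lower bound from Lemma~\ref{lemm:ainvdominantfamily}, and for the upper bound use Lemma~\ref{lemm: terminalpair} together with Lemma~\ref{lemm:conetheorem} to reduce to finitely many $(K_{X}+aL)$-negative extremal movable rays, show none are $p$-vertical, and then choose $t$ large enough.

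The one substantive difference is how you exclude $p$-vertical extremal rays. You argue geometrically: a $p$-vertical movable curve on $X$ gives a movable curve on a general fiber $X_{w}$, so $(K_{X}+aL)\cdot C=(K_{X_{w}}+aL|_{X_{w}})\cdot C\geq 0$ by \cite{BDPP} on the fiber. The paper instead uses a purely numerical argument attributed to Peternell: since $K_{X}+aL$ restricts to a pseudo-effective class on a general fiber, $K_{X}+aL+\epsilon A'$ is $p$-relatively big for any ample $A'$ and any $\epsilon>0$, hence $K_{X}+aL+\epsilon A'+c_{\epsilon}p^{*}H$ is big for some $c_{\epsilon}$; if $p_{*}\alpha_{i}=0$ this forces $(K_{X}+aL+\epsilon A')\cdot\alpha_{i}>0$ for every $\epsilon>0$, contradicting the negativity of $\alpha_{i}$. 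Your version is perfectly valid, but (as you yourself flag) it requires passing from a movable \emph{class} to an actual covering family of $p$-contracted curves, which one handles by density of such classes in $\Nef_{1}(X)$ and continuity; the Peternell argument sidesteps this by never leaving the numerical setting. The final cone-theoretic step---that non-negativity on the finitely many $\alpha_{i}$ together with non-negativity on $\Eff_{1}(X)_{K_{X}+aL\geq 0}$ forces pseudo-effectivity---is used identically in both arguments.
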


\begin{proof}
It suffices to prove the statement after replacing $X$ by a smooth birational model.  By applying Theorem \ref{theo: aconstancy} to the Stein factorization of $p$ 
we see there is an open subset $W^{\circ} \subset W$ over which $p$ is a smooth morphism and the $a$-invariant of the components of the fibers with respect to $L$ is constant.  For any positive integer $m$ Lemma \ref{lemm:ainvdominantfamily} shows that a component $Y$ of a general fiber $X_{w}$ satisfies
\begin{equation*}
a(X,L+mp^{*}H) \geq a(Y,(L + mp^{*}H)|_{Y}) = a(Y,L|_{Y}).
\end{equation*}
To show the reverse inequality, it suffices to consider the case when $a(Y,L|_{Y}) < a(X,L)$.  We first prove that $K_{X} + a(Y,L|_{Y})L + \ell p^{*}H$ is pseudo-effective for some sufficiently large $\ell$. By Lemma \ref{lemm: terminalpair} we can write $a(Y,L|_{Y}) L\sim_{\mathbb Q} \Delta + A$ where $\Delta$ is an effective $\mathbb{Q}$-divisor, $A$ is an ample $\mathbb{Q}$-divisor, and $(X,\Delta+A)$ is a terminal pair.  Lemma \ref{lemm:conetheorem} shows there is a finite set of numerical classes $\{ \alpha_{i} \}_{i=1}^{r}$ which generate the extremal rays of $\Eff_{1}(X)_{K_{X} + \Delta \geq 0} + \Nef_{1}(X)$ which have negative intersection against $K_{X} + a(Y,L|_{Y})L$.

The following argument of \cite{Peternell} shows that none of these classes $\alpha_{i}$ satisfies $p_{*}\alpha_{i} = 0$.  Fix an ample divisor $A'$ on $X$ and fix $\epsilon > 0$.  Since $K_{X} + a(Y,L|_{Y})L + \epsilon A'$ is $p$-relatively big, there is some positive $c_{\epsilon}$ such that $K_{X} + a(Y,L|_{Y})L + \epsilon A' + c_{\epsilon}p^{*}H$ is big.  In particular, for every $i$
\begin{equation*}
(K_{X} + a(Y,L|_{Y})L + \epsilon A' + c_{\epsilon}p^{*}H) \cdot \alpha_{i} > 0.
\end{equation*}
If $p_{*}\alpha_{i} = 0$ then we obtain $0 < (K_{X} + a(Y,L|_{Y})L + \epsilon A') \cdot \alpha_{i}$ for every $\epsilon > 0$, a contradiction.

Since $p_{*}\alpha \neq 0$ we may choose $\ell$ sufficiently large so that
\begin{equation*}
\ell p^{*}H \cdot \alpha_{i} > -(K_{X} + a(Y,L|_{Y})L) \cdot \alpha_{i}
\end{equation*}
for every $i$.  By construction $K_{X} + a(Y,L|_{Y})L + \ell p^{*}H$ has positive intersection against every $\alpha_{i}$.  It is also clear that $K_{X} + a(Y,L|_{Y})L + \ell p^{*}H$ has non-negative intersection against any $\beta \in \Eff_{1}(X)_{K_{X} + a(Y,L|_{Y})L \geq 0}$. 
Together these show that $K_{X} + a(Y,L|_{Y})L + \ell p^{*}H$ is pseudo-effective. Then for any $m \geq \ell/a(Y,L|_{Y})$, we have
\begin{equation*}
a(X,L+mp^{*}H) \leq a(Y,L|_{Y})
\end{equation*}
proving the reverse inequality.
\end{proof}

\begin{lemm} \label{lemm: opensetgoodfamily}
Suppose $p: X \to W$ is a surjective projective morphism of varieties such that $X$ is smooth and $p$ has connected fibers.  Let $L$ be a $p$-relatively big $\mathbb{Q}$-Cartier divisor which is the restriction to $X$ of a nef $\mathbb{Q}$-Cartier divisor on some projective compactification of $X$.  Suppose furthermore that the general fiber $X_{w}$ of $p$ is adjoint rigid with respect to $L$. Then there is a non-empty open subset $W^{\circ} \subset W$ with preimage $X^{\circ} = p^{-1}(W^{\circ})$ such that $p: X^{\circ} \to W^{\circ}$ is a good family of adjoint rigid varieties.
\end{lemm}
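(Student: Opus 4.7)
The plan is to verify, after successively shrinking $W$ to smaller non-empty open subsets, each of the conditions (1)--(4) in Definition \ref{defi:goodfamily}, taking $\mathcal U = X$. Note that since $L$ is the restriction of a nef $\mathbb Q$-Cartier divisor on a projective compactification of $X$, $L$ itself is nef on $X$. First I would shrink $W$ to a smooth affine open subset and apply generic smoothness to $p$ so that $p$ becomes smooth; combined with connectedness of fibers, this makes all fibers geometrically integral, giving condition (1). The general fiber is adjoint rigid with positive $a$-invariant, hence geometrically uniruled, and by openness of the uniruled locus in a smooth family we may further shrink $W$ so every fiber is geometrically uniruled. Applying Theorem \ref{theo: aconstancy} to $p$ then shrinks $W$ further so that both $a(X_w, L|_{X_w})$ and $\kappa(X_w, K_{X_w} + a(X_w, L|_{X_w}) L|_{X_w})$ are constant on fibers; since the general fiber is adjoint rigid, the constant Iitaka dimension must be $0$, so every fiber is adjoint rigid, establishing condition (2). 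Condition (3) then follows from \cite[Theorem 1.2]{Sengupta17}, which asserts that the $b$-invariant is constant on an open set of fibers in a smooth family of uniruled projective varieties.

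The bulk of the work lies in verifying condition (4). Let $a_0$ denote the common value of $a(X_w, L|_{X_w})$. By Theorem \ref{theo: HJ}(1), for each fiber only finitely many $a$-values strictly larger than $a_0$ can be attained by subvarieties of that fiber. Applying Theorem \ref{theo: aconstruction} to any projective compactification of $X$ parametrizes all higher-$a$ adjoint rigid subvarieties by finitely many bounded subschemes of the Hilbert scheme. Among these I would single out the families whose general member is a divisor contained in a fiber of $p$. Using generic flatness and stratification, and shrinking $W$ so that each of these finitely many families becomes flat over $W$ with the correct fiber-theoretic behavior (so that its restriction to each fiber exhausts the higher-$a$ divisors there), the union of their members realizes $Q$ as a closed subscheme of $X$ that is flat over $W$.

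To produce $\phi: \mathcal U' \to X$ I would apply Hironaka's relative resolution of singularities to the pair $(X, Q)$ over $W$: this yields a projective birational $\phi$ that is an isomorphism over $\mathcal V = X \setminus Q$ and such that the exceptional divisor together with the strict transform of $Q$ is a strict normal crossings divisor on $\mathcal U'$. A further shrinking of $W$, via generic smoothness applied to $\mathcal U' \to W$ and to each stratum of $\mathcal U' \setminus \mathcal V$, ensures that $\mathcal U'$ is smooth over $W$ and that $\mathcal U' \setminus \mathcal V$ is strict normal crossings relative to $W$. The main obstacle will be the flatness assertion in condition (4): organizing the higher-$a$ divisors across fibers via the finitely many bounded families of Theorem \ref{theo: aconstruction} so that their union both equals $Q$ fiber-by-fiber and is flat over $W$.
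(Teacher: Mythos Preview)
Your treatment of conditions (1)--(3) matches the paper's. The gap is in condition (4), precisely at the point you flag as the main obstacle.

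You propose applying Theorem \ref{theo: aconstruction} to a projective compactification $X'$ with the extended divisor $L'$. Two issues arise. First, $L'$ is only known to be nef on $X'$, not big, so the theorem does not apply; this is easily repaired by adding a pullback $p'^{*}A'$ of an ample divisor from the compactified base $W'$, which leaves restrictions to fibers unchanged. The second issue is the real gap: Theorem \ref{theo: aconstruction} (or \ref{theo: HJ}) only parametrizes subvarieties of $X'$ with $a$-value at least $a(X',L')$, whereas you need the divisors $D$ in fibers with $a(D,L|_D) > a_0 := a(X_w,L|_{X_w})$. By Lemma \ref{lemm:ainvdominantfamily} one has $a_0 \leq a(X',L')$, and in general the inequality is strict; divisors $D$ with $a_0 < a(D,L|_D) < a(X',L')$ are then invisible to your families, so you cannot conclude that their union is closed, let alone that it exhausts $Q$ fiber-by-fiber.

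The paper closes this gap via Lemma \ref{lemm:avaluesandfibers}: replacing $L'$ by $L' + m p'^{*}H'$ for a very ample $H'$ on $W'$ and $m \gg 0$ forces $a(X', L' + m p'^{*}H') = a_0$. Theorem \ref{theo: HJ} applied to this adjusted pair then yields a single closed set $Q_{*} \subset X'$; after increasing $m$ so that $Q_{*}$ stabilizes, applying Lemma \ref{lemm:avaluesandfibers} again to the components of $Q_{*}$ dominating $W'$, and shrinking $W^{\circ}$, one gets $Q_{*} \cap X^{\circ} = Q_{+}$, so $Q_{+}$ is closed. Your remaining steps (taking codimension-$1$ components, generic flatness, relative log resolution) are then fine and agree with the paper.
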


\begin{proof}
Let $W^{\circ}$ denote a smooth open subset of $W$ over which $p$ is smooth. We construct the family by repeatedly shrinking $W^{\circ}$ (and thus also shrinking its $p$-preimage $X^{\circ}$).  After shrinking $W^{\circ}$, by Theorem~\ref{theo: aconstancy} we may ensure that the $a$-invariant is constant and positive and that all fibers over $W^{\circ}$ are adjoint rigid with respect to $L$.   By \cite[Theorem 1.2]{Sengupta17} after shrinking $W^{\circ}$ again we may ensure that the $b$-invariant is constant.

Let $Q_{+}$ denote the union of all projective subvarieties $Y$ which are contained in some fiber $X_{w}$ over $W^{\circ}$ and which satisfy $a(Y,L) > a(X_{w},L)$.  We claim that after shrinking $W^{\circ}$ the set $Q_{+}$ is closed.  To verify this, let $p': X' \to W'$ denote a projective morphism of projective varieties such that $X'$ and $W'$ contain $X$ and $W$ as open subsets and $p'$ restricts to $p$ on $X$.  After replacing $X'$ by a birational model we may assume that $X'$ is smooth and there is a nef divisor $L'$ on $X'$ that restricts to $L$ on $X$.  Since $L'$ is $p'$-relatively big, there is an ample divisor $A'$ on $W'$ such that $L' + p'^{*}A'$ is big and nef.  Since the restrictions of $L'$ and $L'+p'^{*}A'$ to the fibers $X_{w}$ are the same divisor, it suffices to prove the theorem with $L' + p'^{*}A'$ in place of $L'$.  Thus from now on we assume that $L'$ is a big and nef $\mathbb{Q}$-Cartier divisor.

Fix a very ample divisor $H'$ on $W'$.  After possibly shrinking $W^{\circ}$ Lemma \ref{lemm:avaluesandfibers} shows that we may find a positive integer $m$ such that
\begin{equation*}
a(X_{w},L) = a(X',L'+mp^{*}H')
\end{equation*}
for every fiber $X_{w}$ over $W^{\circ}$.

By Theorem \ref{theo: HJ} the union of all subvarieties $Y$ of $X'$ satisfying $a(Y,L'+mp'^{*}H') > a(X',L' + mp'^{*}H')$ is closed.  We let $Q_{*}$ denote this closed set.  Note that $Q_{+} \subset Q_{*}$.  We claim that if we increase $m$ and shrink $W^{\circ}$ further then $Q_{*} \cap X^{\circ}$ coincides with $Q_{+}$.  First note that by Noetherian induction $Q_{*}$ must eventually stabilize as $m$ increases.  Thus after increasing $m$ we may suppose that $Q_{*}$ does not change upon further increasing the coefficient of $H'$.  After shrinking $W^{\circ}$, we may suppose that each component of $Q_{*}$ that intersects $X^{\circ}$ dominates $W'$.  By applying Lemma \ref{lemm:avaluesandfibers} to the finitely many components of $Q_{*}$ that surject onto $W'$ and are not contained in $\mathbf{B}_{+}(L')$, we see that if we increase $m$ and shrink $W^{\circ}$ further then $Q_{*} \cap X^{\circ} = Q_{+}$ as claimed.  In particular, this implies that $Q_{+}$ is a closed set.

Set $Q$ to be the codimension $1$ components of $Q_{+}$.  By shrinking $W^{\circ}$ we may ensure that $p: Q \to W^{\circ}$ is flat. After applying a resolution of singularities and shrinking $W^\circ$ further we may guarantee that the condition (4) of Definition~\ref{defi:goodfamily} is true.
Note that this set now coincides with $Q$ as defined in Definition \ref{defi:goodfamily} and satisfies all the necessary properties.
\end{proof}

Suppose that $p: \mathcal U \to W$ is a good family of adjoint rigid varieties.  Let $\mathcal V \subset \mathcal U$ be the complement of the set $Q$ as in Definition \ref{defi:goodfamily}.  Suppose that $p$ admits a good section $\zeta$.  
The hypotheses of \cite[Expos\'e XIII Exemples 4.4]{Gro} are verified by the existence of the good section along with Definition \ref{defi:goodfamily} (4), showing that 
\begin{equation*}
\pi_{1}^\et(\mathcal V,\zeta(w)) = \pi_{1}^\et(\mathcal V_{w},\zeta(w)) \rtimes \zeta_* \pi_{1}^\et(W,w)
\end{equation*}
for every fiber $\mathcal{V}_{w} = \mathcal{V} \cap \mathcal{U}_{w}$ over a closed point $w$.

By \cite[Th\'eor\`eme II.2.3.1]{Gro2}, the \'etale fundamental group of a smooth algebraic variety defined over $F$ is topologically finitely presented.  Thus there are only finitely many open subgroups of a given finite index.  Hence, for an open finite index subgroup $\Xi \subset \pi_{1}^\et(\mathcal{V}_{w},\zeta(w))$ its normalizer $N \subset \pi_{1}^\et(W,w)$ has finite index and the subgroup $\Xi \rtimes \zeta_*N$ has finite index in $\pi_{1}^\et(\mathcal{V},\zeta(w))$.

\begin{lemm} \label{lemm: finitelymanycoversinduction}
Let $p: \mathcal U \to W$ be a good family of adjoint rigid varieties with a good section $\zeta$.  Suppose furthermore that the divisor $L$ on $\mathcal{U}$ is the restriction of a nef $\mathbb{Q}$-Cartier divisor on a projective compactification of $\mathcal{U}$. There is a finite set of dominant generically finite good morphisms of good families $\{ f_{i}: \mathcal Y_{i} \to \mathcal U\}$ with structure maps $q_{i}: \mathcal Y_{i} \to T_{i}$ and a closed proper subset $D \subsetneq W$ such that the following holds.  Suppose that $q: \mathcal Y \to T$ is a good family of adjoint rigid varieties admitting a good morphism $f: \mathcal Y \to \mathcal U$.  Then either $f(\mathcal{Y})$ is contained in $p^{-1}D$, or there is a base change $\widetilde{q}: \widetilde{\mathcal{Y}} \to \widetilde{T}$ of $q$ by a generically finite surjective morphism $\widetilde{T} \to T$ such that the induced $\widetilde{f}: \widetilde{\mathcal{Y}} \to \mathcal{U}$ factors rationally through the map $f_{j}$ for some $j$.  

In the latter case there is an open subset $\widetilde{T}^{\circ} \subset \widetilde{T}$ such that every fiber of $\widetilde{\mathcal{Y}} \to \widetilde{T}$ lying over $\widetilde{T}^{\circ}$ is mapped birationally under the map $\widetilde{\mathcal{Y}} \dashrightarrow \mathcal{Y}_{j}$ to a fiber of $q_{j}$. 
\end{lemm}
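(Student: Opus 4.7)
The plan is to reduce the question to classifying open subgroups of bounded index in the étale fundamental group of $\mathcal V$, and then to compactify the resulting universal étale covers. The key inputs are Proposition~\ref{prop: degree} to bound the degree of fiberwise maps, Theorem~\ref{theo: akash} to contain branch loci inside $Q$, the semidirect decomposition $\pi_1^{\et}(\mathcal V, \zeta(w)) = \pi_1^{\et}(\mathcal V_w, \zeta(w)) \rtimes \zeta_{*}\pi_1^{\et}(W,w)$ afforded by the good section, and topological finite presentation of the étale fundamental group.

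First I would bound the fiberwise degree. For a good morphism $f:\mathcal Y \to \mathcal U$ over $g:T\to W$, the equality of relative dimensions and the fiberwise agreement of $a$-invariants imply that each restriction $f_t: \mathcal Y_t \to \mathcal U_{g(t)}$ is a dominant generically finite morphism of adjoint rigid pairs with equal $a$-invariant. Proposition~\ref{prop: degree} then gives a uniform bound $M$ on $\deg(f_t)$, depending only on $\dim \mathcal U_w$, $L|_{\mathcal U_w}^{\dim}$ and $a(\mathcal U_w, L|_{\mathcal U_w})$, which are constant in $w$ by the good-family hypothesis. Second, by Theorem~\ref{theo: akash}, every component $B$ of the branch divisor of $f_t$ satisfies $a(B, L|_B) > a(\mathcal U_{g(t)}, L|_{\mathcal U_{g(t)}})$, hence lies in $Q \cap \mathcal U_{g(t)}$ by Definition~\ref{defi:goodfamily}(4). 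Thus $f^{-1}(\mathcal V_T) \to \mathcal V_T$ is finite étale, with each fiber an étale cover of $\mathcal V_{g(t)}$ of degree at most $M$.

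Third I would enumerate the possible fiberwise covers group-theoretically. Fix $w\in W$. Since $\pi_1^{\et}(\mathcal V_w, \zeta(w))$ is topologically finitely presented, there are only finitely many open subgroups $\Xi_1,\dots,\Xi_r$ of index at most $M$. For each $\Xi_i$, its normalizer $N_i \subset \pi_1^{\et}(W,w)$ (acting via permutations of the finite set of index-$\leq M$ subgroups) has finite index, so $\Xi_i \rtimes \zeta_{*}N_i$ is open of finite index in $\pi_1^{\et}(\mathcal V, \zeta(w))$. Let $\widetilde{\mathcal V}_i \to \mathcal V$ be the corresponding étale cover and $\widetilde W_i \to W$ the cover corresponding to $N_i$; this produces a family of connected étale fiberwise covers realizing each $\Xi_i$. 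Fourth I would compactify: choose smooth projective compactifications $\mathcal Y_i$ of $\widetilde{\mathcal V}_i$ with a morphism $f_i: \mathcal Y_i \to \mathcal U$ extending the cover, and apply Lemma~\ref{lemm: opensetgoodfamily} to cut out a base $T_i \subset \widetilde W_i$ over which $q_i:\mathcal Y_i \to T_i$ is a good family of adjoint rigid varieties admitting a good morphism to $\mathcal U$. Define $D \subsetneq W$ to be the union of the (proper closed) images in $W$ of the loci removed in this step, together with the branch loci of the maps $\widetilde W_i \to W$.

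Fifth I would verify the factoring. Given a good morphism $f:\mathcal Y \to \mathcal U$ with $f(\mathcal Y)\not\subset p^{-1}(D)$, the étale cover $f^{-1}(\mathcal V_T)\to \mathcal V_T$ corresponds, after a basepoint choice, to an open subgroup $\Xi \subset \pi_1^{\et}(\mathcal V_{g(t)}, \zeta(g(t)))$ of index $\leq M$, hence to some $\Xi_i$ up to conjugacy. Let $\widetilde T \to T$ be the base change of $T$ by $\widetilde W_i \to W$, followed if necessary by a further generically finite cover encoding the conjugation and the basepoint matching; the resulting $\widetilde f:\widetilde{\mathcal Y}\to \mathcal U$ is an étale cover of $\mathcal V$ trivialized by $\widetilde{\mathcal V}_i$, so the universal property of finite étale covers produces a rational map $\widetilde{\mathcal Y} \dashrightarrow \mathcal Y_i$ whose composition with $f_i$ is $\widetilde f$. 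Over an open $\widetilde T^{\circ} \subset \widetilde T$ on which both families are smooth and both fiberwise étale covers of $\mathcal V_w$ realize the subgroup $\Xi_i$, this rational map restricts to a birational morphism of each fiber of $\widetilde q$ onto a fiber of $q_i$. The main obstacle will be Step~5: the fiberwise classification of étale covers is only up to conjugacy and depends on a basepoint, and $f$ is not given with any compatibility to $\zeta$; this is exactly what forces the generically finite base change $\widetilde T \to T$ and the merely rational factoring, and what makes the careful use of the semidirect decomposition of $\pi_1^{\et}(\mathcal V)$ essential.
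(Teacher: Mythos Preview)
Your proposal is correct and follows essentially the same route as the paper: bound fiberwise degrees via Proposition~\ref{prop: degree}, confine branch loci to $Q$ via Theorem~\ref{theo: akash}, enumerate the finitely many index-$\leq M$ subgroups $\Xi_i$ of $\pi_1^{\et}(\mathcal V_w)$, extend by $\zeta_*N_i$ using the semidirect decomposition, compactify via Lemma~\ref{lemm: opensetgoodfamily}, and then factor through one of the resulting covers by the homotopy lifting property. The one place the paper supplies substantive detail beyond your sketch is exactly your flagged Step~5: to produce a section of $q$ and reconcile basepoints, the paper slices $\mathcal Y$ by general hyperplanes to obtain a base change $T'\to T$ carrying a good section $\eta'$, compares the two semidirect decompositions of $\pi_1^{\et}(\mathcal V)$ (one via $\zeta$, one via $f'\circ\eta'$) through an isomorphism $\Phi$ induced by a path inside the fiber $\mathcal V_w$, and then defines $\widetilde T\to T'^{\circ}$ as the \'etale cover corresponding to the pullback under $f'\circ\eta'$ of $\Phi^{-1}\bigl(\Xi_j\rtimes\zeta_*g_{j*}\pi_1^{\et}(T_j^{\circ})\bigr)$---precisely the ``further generically finite cover encoding the conjugation and the basepoint matching'' you anticipated.
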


\begin{proof}
Let $\mathcal V$ denote the open subset of $\mathcal U$ given by removing the set $Q$ as in Definition \ref{defi:goodfamily}.  By \cite[Expos\'e XIII Exemples 4.4]{Gro}, we know that $\pi_{1}^\et(\mathcal V,\zeta(w))= \pi_{1}^\et(\mathcal V_{w},\zeta(w)) \rtimes \zeta_{*}\pi_{1}^\et(W,w)$ where $w$ is a closed point on $W$.  Since $\mathcal{U}_{w}$ is adjoint rigid with respect to the divisor $L$, \cite[Corollary 2.20]{Sen17} shows that for any generically finite cover of $\mathcal{U}_{w}$ which has the same $a$-value and which is adjoint rigid with respect to the pullback of $L$ the  divisorial components of the branch locus are supported on the set $Q \cap \mathcal{U}_{w}$.  Furthermore, by \cite[Proposition 2.17]{Sen17} there is an upper bound on the degree of such covers depending only on $\dim(\mathcal{U}_{w})$, $a(\mathcal{U}_{w},L)$ and $L|_{\mathcal{U}_{w}}^{\dim (\mathcal{U}_{w})}$. 
Altogether there is a finite set of finite index subgroups $\Xi_{i} \subset \pi_{1}(\mathcal{V}_{w},\zeta(w))$ such that for some fiber of $p$ the corresponding \'etale cover has a projective closure which has the same $a$-value as $\mathcal{U}_{w}$ and is adjoint rigid. 
For each such $\Xi_{i}$ we use $N_{i}$ to denote its normalizer in $\pi_{1}^\et(W,w)$; we also denote $\Upsilon_{i} = \Xi_{i} \rtimes \zeta_*N_{i}$.  As remarked earlier $\Upsilon_{i}$ will always be a subgroup of $\pi_{1}^\et(\mathcal{V},\zeta(w))$ of finite index.

Let $\mathcal{E}_{i}$ denote the \'etale cover of $\mathcal{V}$ corresponding to $\Upsilon_{i}$.  Note that $\mathcal{E}_{i}$ admits a morphism to the \'etale cover $R_{i} \to W$ defined by $N_{i}$.  By construction we know that every fiber of the map $\mathcal{E}_{i} \to R_{i}$ is an \'etale cover of the corresponding fiber of $\mathcal V \to W$, and since these covers are induced by subgroups we deduce that every fiber of $\mathcal{E}_{i} \to R_{i}$ is irreducible.  By taking a projective closure of the fibers of $\mathcal{E}_{i}$ over $R_{i}$ and passing to a resolution we obtain a projective family $r_{i}: \widetilde{\mathcal{E}}_{i} \to R_{i}$.  By Theorem~\ref{theo: aconstancy} there is an open set $R_{i}^{\circ} \subset R_{i}$ over which $r_{i}$ has smooth irreducible fibers and such that the $a$-value and Iitaka dimension of the fibers over $R_{i}^{\circ}$ is constant.  We define a closed subset $D$ of $W$ as the union of the Zariski closures of the images of $R_{i} \backslash R_{i}^{\circ}$ for each $i$. We will enlarge $D$ later.  First suppose that the remaining fibers $\widetilde{\mathcal{E}}_{i}^{\circ} \to R_{i}^{\circ}$ are adjoint rigid with respect to the pullback of $L$ and have the same $a$-value as the fibers of $p$.  Then Lemma \ref{lemm: opensetgoodfamily} shows that $r_{i}$ is a good family of adjoint rigid varieties over an open subset $T_{i}$ of $R_{i}^{\circ}$.  (Note that the condition that $L$ be the restriction of a nef divisor from a projective compactification is preserved by arbitrary pullback so the hypotheses of Lemma \ref{lemm: opensetgoodfamily} are satisfied.)  In this case we call this good family $q_{i}: \mathcal{Y}_{i} \to T_{i}$ and further enlarge $D$ by adding the Zariski closure of the image of $R_{i}^{\circ} \backslash T_{i}$.  Otherwise, the remaining fibers either fail to be adjoint rigid with respect to the pullback of $L$ or fail to have the same $a$-value as the fibers of $p$.  Then we simply ignore the family $\widetilde{\mathcal{E}}_{i}^{\circ} \to R_{i}^{\circ}$. 

We have now constructed a finite set $\{ f_{i}: \mathcal Y_{i} \to \mathcal U \}$ of good morphisms of good families and a set $D$.   Set $W^\circ = W \setminus D$ and let $(T_i)^\circ$ and $\mathcal{V}^\circ$ denote the preimages of $W^{\circ}$ in  $T_i$ and $\mathcal V$.  Note that by the construction, the map $g_{i}: (T_i)^\circ \to W^\circ$ is proper \'etale so that for a point $t_{i} \in T_{i}^{\circ}$ we have that $\Xi_{i} \rtimes \zeta_*g_{i*}\pi_1^\et((T_{i})^\circ,t_{i})$
is a finite index subgroup of $\pi_{1}^\et(\mathcal{V}^{\circ},g_{i}(t_{i}))$ such that the corresponding \'etale cover is an open subset of $\mathcal{Y}_{i}$.  

We next show that the set $\{f_{i}: \mathcal{Y}_{i} \to \mathcal{U} \}$ satisfies the factoring property in the statement of the theorem. Suppose we have a morphism $f: \mathcal Y \to \mathcal U$ as in the statement of the theorem.  Let $T'$ be a general intersection of hyperplanes in $\mathcal{Y}$ that maps generically finitely onto $T$ and is not contained in the $f$-preimage of $Q$.  We let $q': \mathcal{Y}' \to T'$ denote the base change over $T' \to T$, and let $f': \mathcal{Y}' \to \mathcal{U}$ denote the induced map.  After shrinking $T'$ we may ensure that $q': \mathcal{Y}' \to T'$ is a good family and the map $T' \to \mathcal{Y}$ induces a good section $\eta'$ of $q'$ whose $f'$-image in $\mathcal{U}$ is disjoint from $Q$.

Let $\mathcal V_{\mathcal{Y}'}$ denote the open subset obtained by removing the closed subset $Q_{\mathcal{Y}'}$ as in Definition \ref{defi:goodfamily}.  Using the good section $\eta'$, we can identify
\begin{equation*}
\pi_{1}^\et(\mathcal V_{\mathcal{Y}'},\eta'(t'))= \pi_{1}^\et(\mathcal V_{\mathcal{Y}',t'},\eta'(t')) \rtimes \eta'_*\pi_{1}^\et(T',t')
\end{equation*}
for every fiber $\mathcal{V}_{\mathcal{Y}',t'}$ of $q'$.
Note however that this semidirect product structure need not be compatible with the semidirect product structure of $\pi_{1}^\et(\mathcal{V},\zeta(w))$ since there is no relationship between the two sections used in the constructions.

 Let $(T')^\circ$ denote the preimage of $W^{\circ}$ in $T'$.   Note that $(T')^{\circ}$ is empty if and only if the image of $\mathcal{Y}'$ is contained in $p^{-1}(D)$.  
If $(T')^{\circ}$ is not empty, for a general point $t'$ in $T'^{\circ}$ we set $w$ as the image of $t'$ in $W^{\circ}$ via $(T')^\circ \to W^\circ$. 
The map $f'|_{f'^{-1}(\mathcal V)_{t'}} : f'^{-1}(\mathcal V)_{t'} \rightarrow \mathcal V_w$ is proper. 
Furthermore, \cite[Corollary 2.20]{Sen17} shows the map $f'|_{f'^{-1}(\mathcal V)_{t'}}$ is the composition of a birational map with an \'etale map, so in particular it is \'etale outside of a codimension $\geq 2$ subset in $\mathcal V_w$.  We let $\Xi_{j}'$ denote the subgroup defined by the image of $\pi_{1}^\et(f'^{-1}(\mathcal{V})_{t'},\eta'(t'))$ in $\pi_{1}^\et(\mathcal{V}_{w},f'\circ \eta' (t'))$.

We claim there is an isomorphism $\Phi: \pi_{1}^\et(\mathcal{V},f'\circ\eta' (t')) \cong \pi_{1}^\et(\mathcal{V}, \zeta (w))$ such that $p_*^{f'\circ\eta' (t')} = p_*^{\zeta(w)} \circ \Phi$ where for any point $v \in \mathcal V$ above $w$, $p_*^v$ is the map $\pi_1^\et(\mathcal V, v) \to \pi_1^\et(W^\circ, w)$ induced by $p$.  Indeed, over $\mathbb{C}$ one can define such an isomorphism of topological fundamental groups $\pi_{1}(\mathcal{V},f'\circ\eta' (t')) \cong \pi_{1}(\mathcal{V}, \zeta (w))$ using a path in the fiber, and the result for \'etale fundamental groups follows from the comparison theorems for \'etale and topological fundamental groups. Over an arbitrary algebraically closed field, one can reduce to the case of $\mathbb{C}$ using \cite[Expos\'e XIII Proposition 4.6]{Gro} which shows that the \'etale fundamental group is unchanged under extensions of algebraically closed fields of characteristic $0$.  We fix such an isomorphism $\Phi$. 

Under the identification $\Phi$, $\Xi'_{j}$ corresponds to one of the subgroups $\Xi_{j}$ constructed earlier.  Furthermore, since the image of $\mathcal{Y}'$ is not contained in $p^{-1}(D)$, the map $f'$ constructs covers of some fibers $\mathcal{U}_{w}$ over $W^{\circ}$ which have the same $a$-value as $\mathcal{U}_{w}$ and which are adjoint rigid with respect to $L$.  This means that $\Xi_j$ defines a good family $q_j : \mathcal Y_j \to T_j$ via the construction at the beginning of the proof.  In particular, there is a point $t_{j} \in T_{j}^{\circ}$ mapping to $w$ under $g_{j}$.

Since the image of $f'$ is not contained in $p^{-1}(D)$, the group $\pi_{1}^\et((T')^\circ,t')$ maps into $\pi_{1}^\et(\mathcal{V}^\circ, f' \circ \eta'(t'))$ by composing $\eta'$ with the map $f'$ (since by construction the $f'$-image of $\eta'$ avoids $Q$).  Consider the finite index subgroup $M \subset \pi_{1}^\et((T')^\circ,t')$ which is the pullback of $\Phi^{-1}(\Xi_{j} \rtimes \zeta_*g_{j*}\pi_1^\et((T_{j})^\circ,t_{j}))$ by this map. 
Let $\widetilde{q}: \widetilde{\mathcal{Y}} \to \widetilde{T}$ be defined as a resolution of the base-change of $q'$ over the cover $\widetilde{T}$ of $(T')^\circ$ defined by $M$ with morphism $\widetilde{f}: \widetilde{\mathcal{Y}} \to \mathcal{U}$. After shrinking $\widetilde{T}$, if necessary, we may ensure that $\widetilde{q} : \widetilde{\mathcal{Y}} \to \widetilde{T}$ is a good family with a good section.  Thus by using the section $\widetilde{\eta}$ induced from $\eta'$ by base change and shrinking $\widetilde{T}$ to ensure that the hypotheses of \cite[Expos\'e XIII Exemples 4.4]{Gro} apply to $\widetilde{f}^{-1}(\mathcal{V})$, we have an identification
\begin{equation*}
\pi_{1}^\et(\widetilde{f}^{-1}(\mathcal{V}),\widetilde{\eta}(\widetilde{t})) = \pi_{1}^\et(\widetilde{f}^{-1}(\mathcal{V})_{\widetilde{t}},\widetilde{\eta}(\widetilde{t})) \rtimes \widetilde{\eta}_*\pi_{1}^\et(\widetilde{T},\widetilde{t}),
\end{equation*}
where $\widetilde{t}$ is any closed point in $\widetilde{T}$. 
Since $t'$ is general, we can find $\widetilde{t}$ mapping to $t'$.   
Then every element in $\widetilde{f}_{*}\pi_{1}^\et(\widetilde{f}^{-1}(\mathcal{V}),\widetilde{\eta}(\widetilde{t}))$ will be a product of an element in $\Xi_{j}' \rtimes \{ 1\} \subset \pi_{1}^\et(\mathcal{V}^\circ, f' \circ \eta'(t') )$ with an element in $\widetilde{f}_{*}\widetilde{\eta}_*\pi_{1}^\et(\widetilde{T},\widetilde{t})$, so by construction this set is contained in $\Phi^{-1}(\Xi_{j} \rtimes \zeta_*g_{j*}\pi_1^\et((T_{j})^\circ,t_{j}))$.  By the lifting property for \'etale fundamental groups as in \cite[Expos\'e V]{Gro} (see \cite{Chen18} for a careful explanation), the map $\widetilde{f}^{-1}(\mathcal{V}) \to \mathcal{V}$ factors through the cover defined by $\Phi^{-1}(\Xi_{j} \rtimes \zeta_*g_{j*}\pi_1^\et((T_{j})^\circ,t_{j}))$. Hence $\widetilde{\mathcal Y} \rightarrow \mathcal U$ rationally factors through $\mathcal Y_j$.  

It only remains to prove the last claim.  Let $\widetilde{\mathcal{Y}}'$ be a resolution of the map $\widetilde{\mathcal{Y}} \dashrightarrow \mathcal{Y}_{j}$.  Note that we have a morphism $\widetilde{T} \to T_{j}^{\circ}$ induced by the homotopy lifting property via the map $\pi_{1}^\et(\widetilde{T},\widetilde{t}) \to \pi_{1}^\et(W^{\circ},w)$ induced by $\Phi$. 
Altogether we have a commutative diagram 
\begin{equation*}
\xymatrix{ \widetilde{\mathcal{Y}}' \ar[r] \ar[d] &  \mathcal{Y}_{j} \ar[d] \\
\widetilde{T} \ar[r] & T_{j}}.
\end{equation*}
By applying Lemma \ref{lemm: opensetgoodfamily} we see there is an open subset $\widetilde{T}^{\circ} \subset \widetilde{T}$ over which $\widetilde{\mathcal{Y}}'$ is a good family.  By construction every fiber over $\widetilde{T}^{\circ}$ is mapped birationally under $\widetilde{\mathcal{Y}}' \to \mathcal{Y}_{j}$ to the corresponding fiber of $q_{j}$.  After shrinking $\widetilde{T}^{\circ}$, we can also ensure that every fiber of $\widetilde{\mathcal{Y}}'$ over $\widetilde{T}^{\circ}$ is birational to the corresponding fiber of $\widetilde{\mathcal{Y}}$. 
\end{proof}

For later use, we note a useful property of the construction of the previous lemma.

\begin{coro}
\label{coro: finitelymanycoversbasechange}
Let $p: \mathcal U \to W$ be a good family of adjoint rigid varieties with a good section.  Suppose furthermore that the divisor $L$ on $\mathcal{U}$ is the restriction of a nef $\mathbb{Q}$-Cartier divisor on a projective compactification of $\mathcal{U}$. Consider the good morphisms of good families $\{ f_{i}: \mathcal Y_{i} \to \mathcal U\}$ with family maps $q_{i}: \mathcal Y_{i} \to T_{i}$ and the closed proper subset $D \subsetneq W$ constructed by Lemma \ref{lemm: finitelymanycoversinduction}.  Suppose that for each $i$ we fix a projective generically finite dominant map $T_{i}' \to T_{i}$ from a smooth variety $T_{i}'$ and replace $q_{i}: \mathcal{Y}_{i} \to T_{i}$ by the base change $\mathcal Y_{i}' := T_{i}' \times_{T_{i}} \mathcal Y_{i}$ (with the natural induced maps $f_{i}'$ and $q_{i}'$).  Construct a closed subset $D' \subsetneq W$ by taking the union of $D$ with the branch locus of each map $T_{i}' \to W$.  Then the good families $\mathcal Y_{i}'$ (equipped with the morphisms $f_{i}'$ and $q_{i}'$) and the closed subset $D' \subsetneq W$ again satisfy the conclusion of Lemma \ref{lemm: finitelymanycoversinduction}. 
\end{coro}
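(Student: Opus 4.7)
The plan is to deduce the corollary directly from Lemma \ref{lemm: finitelymanycoversinduction}, exploiting the fact that each new family $\mathcal Y_i' = T_i' \times_{T_i} \mathcal Y_i$ still projects to $\mathcal Y_i$. Given a factoring through some $\mathcal Y_j$ supplied by the original lemma, I will lift it to a factoring through $\mathcal Y_j'$ by taking one additional generically finite base change on the $T$-side.

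Concretely, suppose we are given a good morphism $f: \mathcal Y \to \mathcal U$ of good families with structure map $q: \mathcal Y \to T$ and $f(\mathcal Y) \not\subset p^{-1}D'$. Since $D \subseteq D'$ we also have $f(\mathcal Y) \not\subset p^{-1}D$, so Lemma \ref{lemm: finitelymanycoversinduction} produces a generically finite surjective $\widetilde T \to T$, a base change $\widetilde q: \widetilde{\mathcal Y} \to \widetilde T$, an index $j$, and a rational factoring $\widetilde{\mathcal Y} \dashrightarrow \mathcal Y_j$; composition with $q_j$ gives an induced rational map $\mu: \widetilde T \dashrightarrow T_j$. I then let $\widehat T$ be an irreducible component of $\widetilde T \times_{T_j} T_j'$ dominating $\widetilde T$, and let $\widehat{\mathcal Y}$ be an irreducible component of $\widehat T \times_{\widetilde T} \widetilde{\mathcal Y}$ dominating $\widehat T$. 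The compatible rational maps $\widehat{\mathcal Y} \to \widehat T \to T_j'$ and $\widehat{\mathcal Y} \to \widetilde{\mathcal Y} \dashrightarrow \mathcal Y_j$ combine via the universal property of fiber products into a rational factoring
\[
\widehat{\mathcal Y} \dashrightarrow T_j' \times_{T_j} \mathcal Y_j = \mathcal Y_j',
\]
and $\widehat T \to \widetilde T \to T$ is generically finite surjective as a composition of generically finite surjective maps.

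The one point requiring real care is the existence of an irreducible component of $\widetilde T \times_{T_j} T_j'$ dominating $\widetilde T$; this is exactly what the enlargement from $D$ to $D'$ is designed to provide. The image of $\widetilde{\mathcal Y}$ under $p \circ \widetilde f$ equals $p(f(\mathcal Y))$, which by hypothesis is not contained in $D'$; via the factoring through $\mathcal Y_j$ this same image equals $g_j(\mu(\widetilde T))$, so $\mu(\widetilde T)$ must meet $g_j^{-1}(W \setminus D')$. Over $W \setminus D'$ the map $T_j' \to W$ is \'etale by construction of $D'$, and $T_j \to W$ is \'etale over $W \setminus D \supseteq W \setminus D'$ by the output of Lemma \ref{lemm: finitelymanycoversinduction}, so $T_j' \to T_j$ is \'etale over $g_j^{-1}(W \setminus D')$. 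Pulling this \'etale locus back along $\mu$ on the open subset of $\widetilde T$ where $\mu$ is defined yields an \'etale cover of a non-empty open subset of $\widetilde T$; its closure in $\widetilde T \times_{T_j} T_j'$ furnishes the desired $\widehat T$.

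Finally, the birationality-of-general-fibers conclusion of Lemma \ref{lemm: finitelymanycoversinduction} transfers without additional work: over a sufficiently general point of $\widehat T$ a fiber of $\widehat{\mathcal Y} \to \widehat T$ is isomorphic to the corresponding fiber of $\widetilde{\mathcal Y} \to \widetilde T$, since $\widehat T \to \widetilde T$ is generically \'etale; by the original lemma this latter fiber is mapped birationally onto a fiber of $\mathcal Y_j \to T_j$, which under the identification $\mathcal Y_j' = T_j' \times_{T_j} \mathcal Y_j$ is the same as a fiber of $\mathcal Y_j' \to T_j'$. Thus the construction $(\{f_i' : \mathcal Y_i' \to \mathcal U\}, D')$ satisfies the full conclusion of Lemma \ref{lemm: finitelymanycoversinduction}.
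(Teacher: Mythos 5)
Your proposal is correct and follows essentially the same route as the paper: apply Lemma \ref{lemm: finitelymanycoversinduction} to get a factoring through some $\mathcal Y_j$, then perform one further base change over a component of $\widetilde T \times_{T_j} T_j'$ dominating $\widetilde T$, whose existence is guaranteed because $T_j' \to T_j$ is \'etale over the locus lying above $W \setminus D'$. Your explicit treatment of the fiber-birationality claim and of why the image must meet the \'etale locus fills in details the paper leaves implicit, but the underlying argument is identical.
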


\begin{proof}
Given a good family of adjoint rigid varieties $q: \mathcal{Y} \to T$ admitting a good morphism $f: \mathcal{Y} \to \mathcal{U}$, Lemma \ref{lemm: finitelymanycoversinduction} shows that there is a base change $\widetilde{q}: \widetilde{\mathcal{Y}} \to \widetilde{T}$ such that the induced $\widetilde{f}: \widetilde{\mathcal{Y}} \to \mathcal{U}$ either has image contained in $p^{-1}(D)$ or factors rationally through some $f_{i}$.  In the former case, the image is also contained in $p^{-1}(D')$.  In the latter case, let $T_{i}^{\circ}$ denote the complement of the image of the ramification divisor for the map $T_{i}' \to T_{i}$.  Let $T_{i}'^{\circ}$ and $\widetilde{T}^{\circ}$ denote the preimages of $T_{i}^{\circ}$.  If $\widetilde{T}^{\circ}$ is empty, then the image of $\widetilde{f}$ is contained in $p^{-1}(D')$.  
Otherwise, $T_{i}'^{\circ} \times_{T_{i}^{\circ}} \widetilde{T}^{\circ}$ is an \'etale cover of $\widetilde{T}^{\circ}$.  Let $R$ be any irreducible component of this product which dominates $\widetilde{T}^{\circ}$.  Then we can take a base change of $\widetilde{q}: \widetilde{\mathcal{Y}} \to \widetilde{T}$ over the map $R \to \widetilde{T}$ to obtain the desired rational factoring.
\end{proof}

Returning to the proof of boundedness, note that we can apply the argument of Lemma \ref{lemm: finitelymanycoversinduction} to the preimage of any component of the closed set $D$ constructed there.  Arguing by Noetherian induction, we conclude:

\begin{theo} \label{theo: finitelymanycovers}
Let $p: \mathcal U \to W$ be a good family of adjoint rigid varieties with a good section.  Suppose furthermore that the divisor $L$ on $\mathcal{U}$ is the restriction of a nef $\mathbb{Q}$-Cartier divisor on a projective compactification of $\mathcal{U}$. There is a finite set of generically finite good morphisms of good families $\{ f_{i}: \mathcal Y_{i} \to \mathcal U\}$ with family maps $q_{i}: \mathcal Y_{i} \to T_{i}$ such that the following holds.  Suppose that $q: \mathcal Y \to T$ is a good family of adjoint rigid varieties admitting a good morphism $f: \mathcal Y \to \mathcal U$.  Then there is a base change $\widetilde{q}: \widetilde{\mathcal{Y}} \to \widetilde{T}$ of $q$ such that the induced $\widetilde{f}: \widetilde{\mathcal{Y}} \to \mathcal{U}$ factors rationally through the map $f_{j}$ for some $j$ and a general fiber of $\widetilde{q}$ is birational to a fiber of $q_{j}$.
\end{theo}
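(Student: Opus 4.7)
I will argue by Noetherian induction on $\dim W$. When $\dim W = 0$ (or more generally, when Lemma \ref{lemm: finitelymanycoversinduction} produces $D = \emptyset$), the statement is exactly Lemma \ref{lemm: finitelymanycoversinduction}. For the inductive step, apply Lemma \ref{lemm: finitelymanycoversinduction} directly to $p: \mathcal U \to W$ to obtain a finite collection of good morphisms $\{f_i^{(0)}: \mathcal Y_i^{(0)} \to \mathcal U\}$ and a proper closed subset $D \subsetneq W$. For any $q: \mathcal{Y} \to T$ with good morphism $f: \mathcal{Y} \to \mathcal{U}$ whose image is not contained in $p^{-1}(D)$, the conclusion of Lemma \ref{lemm: finitelymanycoversinduction} already gives a base change of $q$ factoring rationally through one of the $f_i^{(0)}$, with general fibers of the base change birational to fibers of $q_i^{(0)}$. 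It remains to handle the case that the image lands in $p^{-1}(D)$.

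For each irreducible component $D' \subset D$, fix a resolution $\psi: \widetilde{D}' \to D'$. Since every fiber of $p^{-1}(D') \to D'$ (over a smooth point) is an entire fiber of $p$, such fibers are integral, have constant $a$-value with respect to $L$, and are adjoint rigid. After resolving the total space of $p^{-1}(D') \times_{D'} \widetilde{D}'$ and applying Lemma \ref{lemm: opensetgoodfamily}, I obtain an open subset $\widetilde{D}'^\circ \subset \widetilde{D}'$ and a good family of adjoint rigid varieties $\widetilde{p}': \widetilde{\mathcal U}' \to \widetilde{D}'^\circ$ equipped with a natural morphism $\widetilde{\mathcal U}' \to \mathcal U$ whose image is contained in $p^{-1}(D')$. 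To produce a good section, choose a general complete-intersection subvariety $T' \subset \widetilde{\mathcal U}'$ which is finite and dominant over $\widetilde{D}'^\circ$ and which avoids the bad locus $Q$ of $\widetilde{p}'$; after shrinking $T'$, the base change $\widetilde{p}'^{*}: \widetilde{\mathcal U}'^{*} \to T'$ is a good family of adjoint rigid varieties with the diagonal giving a good section. Because $\dim T' = \dim \widetilde{D}'^\circ \leq \dim D' < \dim W$, the inductive hypothesis applies to $\widetilde{p}'^{*}: \widetilde{\mathcal U}'^{*} \to T'$ and produces a finite collection of good morphisms $\{\widetilde{f}_{i,D'}: \mathcal Y_{i,D'} \to \widetilde{\mathcal U}'^{*}\}$. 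Composing with $\widetilde{\mathcal U}'^{*} \to \mathcal U$ yields good morphisms $\{f_i^{(1)}\}$ into $\mathcal U$.

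My proposed finite list is $\{f_i^{(0)}\} \cup \{f_i^{(1)}\}$. To verify the factoring property in the remaining case $f(\mathcal Y) \subset p^{-1}(D)$, select a component $D'$ of $D$ containing the image. After base changing $q: \mathcal Y \to T$ along a suitable generically finite map that resolves the induced rational map $T \dashrightarrow \widetilde{D}'$, I obtain a good family over a variety mapping to $\widetilde{D}'^\circ$, together with a good morphism into $\widetilde{\mathcal U}'$; a further base change along the projection $T' \to \widetilde{D}'^\circ$ produces a good morphism into $\widetilde{\mathcal U}'^{*}$. Invoking the inductive hypothesis for $\widetilde{\mathcal U}'^{*}$ and pushing forward along $\widetilde{\mathcal U}'^{*} \to \mathcal U$ yields the required rational factoring through some $f_i^{(1)}$ and the birationality of general fibers. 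Using Corollary \ref{coro: finitelymanycoversbasechange}, one can enlarge the collection $\{f_i^{(0)}\}$ by the additional base changes $T' \to \widetilde{D}'^\circ$ without losing any of its desired properties, which ensures that the various base changes composed along the way can be assembled into a single base change of $q$.

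The main obstacle will be bookkeeping: I must ensure that at every stage of the recursion, the varieties we pass to actually continue to define good families (so that Definition \ref{defi:goodfamily} is satisfied, including the smoothness, constant $a$- and $b$-values, and the SNC condition on $Q$) and that the composed morphisms remain good morphisms, i.e.\ preserve the $a$-invariant equality $a(\mathcal Y_t, f^{*}L|_{\mathcal Y_t}) = a(\mathcal U_{g(t)}, L|_{\mathcal U_{g(t)}})$ after each base change. This requires combining the $a$-invariance of Theorem \ref{theo: aconstancy}, the fiberwise invariance of Lemma \ref{lemm:ainvandcanonicalfibers}, and repeated shrinking of the base, but should go through since all relevant invariants are constant in families after passing to suitable open subsets. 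Once these checks are done, Noetherian induction on $\dim W$ closes the argument.
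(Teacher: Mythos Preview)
Your proposal is correct and follows essentially the same approach as the paper: the paper's entire proof reads ``note that we can apply the argument of Lemma \ref{lemm: finitelymanycoversinduction} to the preimage of any component of the closed set $D$ constructed there. Arguing by Noetherian induction, we conclude,'' and your proof is a careful elaboration of exactly this sentence, including the necessary bookkeeping (restoring a good family via Lemma \ref{lemm: opensetgoodfamily} over a resolution of each component of $D$, and manufacturing a good section by a generically finite base change through a complete-intersection multisection). One minor remark: your invocation of Corollary \ref{coro: finitelymanycoversbasechange} at the end is not really needed, since a composition of generically finite base changes of $q$ is again a generically finite base change of $q$; the corollary concerns base-changing the universal families $\mathcal Y_i$ rather than the test family $\mathcal Y$.
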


As a consequence, we prove a finiteness statement for breaking thin maps.

\begin{theo} \label{theo: thinmapfactoring}
Let $X$ be a uniruled smooth projective variety and let $L$ be a big and nef $\mathbb{Q}$-divisor on $X$.  There is a finite set of thin maps $\{ f_{\ell}: Y_{\ell} \to X\}$ with $a(Y_{\ell},f_{\ell}^{*}L) \geq a(X,L)$ satisfying the following property.  For any thin map $f: Y \to X$ with $a(Y, f^*L) \geq a(X, L)$, after an Iitaka base change to obtain a variety $\widetilde{Y}$ the induced map $\widetilde{f}: \widetilde{Y} \to X$ will either factor rationally through some $f_{k}$ or will have image contained in $\mathbf{B}_{+}(L)$.  Furthermore, in the first case we have
\begin{equation*}
a(Y,f^{*}L) = a(\widetilde{Y}, \widetilde{f}^*L) \leq a(Y_k, f_k^*L)
\end{equation*}
and if equality of $a$-invariants is achieved then
\begin{equation*}
b(F,Y,f^{*}L) \leq b(F,\widetilde{Y},\widetilde{f}^{*}L) \leq b(F,Y_{k},f_{k}^{*}L).
\end{equation*}
\end{theo}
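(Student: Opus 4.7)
The plan is induction on $\dim X$. Within the inductive step, a thin map $f: Y \to X$ with $a(Y, f^{*}L) \geq a(X, L)$ is handled according to whether its image is contained in $\mathbf{B}_{+}(L)$ (the conclusion is trivially satisfied), in a proper closed subset of $X$ but not in $\mathbf{B}_{+}(L)$ (reduce to a smaller-dimensional ambient via induction), or dominates $X$ (use the good-family machinery of Theorems~\ref{theo: rigidfamilies} and~\ref{theo: finitelymanycovers}).

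The dominant case proceeds as follows. Let $\pi_{Y}: Y \dashrightarrow T$ denote the canonical map for $K_{Y} + a(Y, f^{*}L)f^{*}L$. By Lemmas~\ref{lemm:ainvandcanonicalfibers} and~\ref{lemm:dominantequalitycase}, the closure $S \subset X$ of the image of a general fiber is adjoint rigid with $a(S,L|_{S}) \geq a(X,L)$. Theorem~\ref{theo: rigidfamilies} produces finitely many families $p_{i}: \mathcal{U}_{i} \to W_{i}$ of such subvarieties together with a proper closed subset $V \subsetneq X$ such that $S$ is either a member of some $p_{i}$ or contained in $V$; in the latter case the entire image $f(Y)$ lies in $V$, reducing to the subvariety case.

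For each family $p_{i}$, Lemma~\ref{lemm: opensetgoodfamily} shrinks $W_{i}$ to an open subset over which $p_{i}$ is a good family of adjoint rigid varieties, and a generically \'etale finite cover of the base secures a good section (the branch locus and the removed boundary are absorbed into $V$ for induction). Theorem~\ref{theo: finitelymanycovers} then supplies finitely many good morphisms $\widehat{f}_{i,j}: \widehat{\mathcal{Y}}_{i,j} \to \mathcal{U}_{i}$ through which every good family with a good morphism to $\mathcal{U}_{i}$ factors after base change. Resolving $T \dashrightarrow W_{i}$ and taking the Iitaka base change $\widetilde{Y} \to Y$ over the resulting cover of $T$, the variety $\widetilde{Y}$ becomes a good family over that cover, and Theorem~\ref{theo: finitelymanycovers} supplies the rational factoring $\widetilde{Y} \dashrightarrow \widehat{\mathcal{Y}}_{i,j}$; passing to projective closures yields the template $f_{k}: Y_{k} \to X$. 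For the subvariety case, we include the inclusions $V_{d}' \hookrightarrow X$ of smooth models of each component $V_{d}$ of $V$ not contained in $\mathbf{B}_{+}(L)$ as additional $f_{k}$'s, and apply the inductive hypothesis to each $V_{d}'$ (equipped with $L|_{V_{d}'}$) to produce further thin maps handling the higher $a$-value regime.

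The invariant inequalities proceed as follows. The equality $a(Y, f^{*}L) = a(\widetilde{Y}, \widetilde{f}^{*}L)$ comes from the Iitaka base change construction (Lemmas~\ref{lemm:ainvandcanonicalfibers} and~\ref{lemm:ainvdominantfamily}), while $a(\widetilde{Y}, \widetilde{f}^{*}L) \leq a(Y_{k}, f_{k}^{*}L)$ follows from Lemma~\ref{lemm: genfinite} applied to the generically finite factoring $\widetilde{Y} \to Y_{k}$. In the equal-$a$ case the first $b$-inequality is Lemma~\ref{lemm: iitakabasechangeandbvalue}, whose face-pushforward argument applies over any field of characteristic zero, and the second follows from the birational identification of general canonical fibers of $\widetilde{Y}$ and $Y_{k}$ provided by the last clause of Lemma~\ref{lemm: finitelymanycoversinduction}, combined with Lemma~\ref{lemm: monodromyandbvalue}(3) to compare the total faces via the identified fiber faces. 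The main obstacle will be correctly closing the induction when a component $V_{d}$ of $V$ has $a$-value strictly exceeding $a(X,L)$, which is resolved precisely by including the inclusion $V_{d}' \hookrightarrow X$ among the templates so that thin maps whose $a$-values fall in the interval $[a(X,L), a(V_{d}', L|_{V_{d}'}))$ factor through it.
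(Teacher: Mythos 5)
Your proposal has two genuine gaps. The first is structural: the trichotomy ``image in $\mathbf{B}_{+}(L)$ / image in a proper closed subset, handled by induction on dimension / dominant'' does not cover all thin maps. A non-dominant $f:Y\to X$ whose image (or whose general canonical-fiber image $S$) is a \emph{general} member of one of the dominant families $p_{i}$ has image in a proper closed subset that is not contained in $V$ or in any other fixed closed set, so there is no finite list of lower-dimensional ambients to induct on; such maps must still be fed through the good-family machinery, which is why the paper runs Lemma~\ref{lemm: opensetgoodfamily} and Theorem~\ref{theo: finitelymanycovers} over \emph{all} the families of Theorem~\ref{theo: aconstruction}, including the non-dominant ones, rather than over Theorem~\ref{theo: rigidfamilies} (which discards them into $V$). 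Relatedly, you never address the case $a(f(Y),L|_{f(Y)})>a(Y,f^{*}L)$: there Lemma~\ref{lemm:dominantequalitycase} does not apply, the canonical fibers of $Y$ need not have adjoint rigid images, and $f(Y)$ lies in $V^{a_{c}}$ for a suitable threshold $a_{c}$ from Theorem~\ref{theo: HJ} but need not lie in $V$ (a subvariety of larger $a$-value can be swept out by special members of a dominant family). The paper handles this by adding the inclusions of the components of each $V^{a_{c}}$ as templates; inclusions of components of $V$ alone do not suffice.

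The second gap is in the final $b$-inequality. Lemma~\ref{lemm: monodromyandbvalue}(3) only gives surjections $\mathcal{F}_{\mathrm{fiber}}\twoheadrightarrow\mathcal{F}_{\mathrm{total}}$, i.e.\ $b(\mathrm{total})\leq b(\mathrm{fiber})$. Applying it to both $\widetilde{Y}$ and $Y_{k}$ yields
\begin{equation*}
b(F,\widetilde{Y},\widetilde{f}^{*}L)\leq b(F,\widetilde{Y}_{\widetilde{t}},\widetilde{f}^{*}L|_{\widetilde{Y}_{\widetilde{t}}})=b(F,Y_{k,r},f_{k}^{*}L|_{Y_{k,r}})\geq b(F,Y_{k},f_{k}^{*}L),
\end{equation*}
which does not chain to the desired conclusion. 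One needs the two outer comparisons to be \emph{equalities}, which is Corollary~\ref{coro: bvalequality}; its hypotheses force you to build the templates (and the Iitaka base change) with extra base changes that kill the geometric monodromy on the N\'eron--Severi space of a general fiber and that make the family map birationally the canonical model of the total space --- the latter is why the paper inserts the cyclic cover branched over a very ample divisor and invokes Lemma~\ref{lemm:birationaltocanonical}. Your construction omits both steps, so the identification of fiber faces does not transfer to the total spaces. Your treatment of the $a$-inequalities and the first $b$-inequality (via Lemma~\ref{lemm: iitakabasechangeandbvalue}) is essentially sound.
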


\begin{proof}
Let $p_i : \mathcal U_i \to W_i$ be the families from Theorem~\ref{theo: aconstruction}.  If the image of $s_{i}: \mathcal{U}_{i} \to X$ is contained in $\mathbf{B}_{+}(L)$ then we ignore the corresponding family $p_{i}$ from now on.  Otherwise, after a finite base change we may ensure each family has a rational section.  Since $L$ is a nef divisor on $X$, the hypotheses of Lemma \ref{lemm: opensetgoodfamily} hold and so we can shrink the base to obtain a good family with a good section.  By combining a Noetherian induction argument with repeated appeals to Lemma~\ref{lemm: opensetgoodfamily} (and repeated throwing away of families with image in $\mathbf{B}_{+}(L)$), we can repeat the argument along the complement of the open set constructed above to obtain a finite collection of good families with good sections.   To each such family we apply Theorem \ref{theo: finitelymanycovers}.  The result is a finite collection of good families $\{ q_{i,j}: \mathcal{Y}_{i,j} \to T_{i,j} \}$ with maps $g_{i,j}: \mathcal{Y}_{i,j} \to X$.

We next modify the families $q_{i,j}: \mathcal{Y}_{i,j} \to T_{i,j}$ by performing a couple of base changes over $T_{i,j}$.  For notational simplicity, we will use the same notation $q_{i,j}: \mathcal{Y}_{i,j} \to T_{i,j}$ and $g_{i,j}: \mathcal{Y}_{i,j} \to X$ for the results after base change.  First, by taking a base change we may ensure that $g_{i,j}$ is not birational.  Next, we make a base change to kill the monodromy action of $\pi_{1}^{\et}(T_{i,j},t_{i,j})$ on the N\'eron-Severi group of a general fiber of $q_{i,j}$. Finally, we take a base change over a cyclic cover of $T_{i,j}$ whose branch divisor is a very ample divisor.

We define the thin maps $\{ f_{\ell}: Y_{\ell} \to X \}$ as follows.  For each $\mathcal{Y}_{i,j}$ set $D_{i,j}$ to be the closure of $g_{i,j}(\mathcal{Y}_{i,j})$.  By construction $D_{i,j}$ is not contained in $\mathbf{B}_{+}(L)$.  If $a(D_{i,j},L|_{D_{i,j}})$ agrees with the $a$-value of the fibers of $q_{i,j}$, then \cite[Proposition 4.14]{LTDuke} shows that $g_{i,j}$ is generically finite.  
We differentiate the $\mathcal{Y}_{i,j}$ satisfying this property by calling them ``allowable families.''  If $\mathcal{Y}_{i,j}$ is allowable, we add a variety $Y_{\ell}$ which is a resolution of a projective closure of $\mathcal{Y}_{i,j}$ that comes equipped with a morphism $f_{\ell}: Y_{\ell} \to X$ extending $g_{i,j}$ and a morphism $r_{\ell}: Y_{\ell} \to R_{\ell}$ extending the family map $q_{i,j}$.  Since $g_{i,j}$ is not birational, $f_{\ell}$ is a thin map.  
We also add a finite collection of inclusions of subvarieties to the list of $f_{\ell}: Y_{\ell} \to X$ as follows.  Let $\{a_{c} \}$ denote the finite set of constants constructed by Theorem \ref{theo: HJ} (1).  For each $a_{c}$, Theorem \ref{theo: HJ} (2) constructs a proper closed subset $V^{a_{c}} \subsetneq X$.  For every component $V^{a_{c}}_{d}$ of $V^{a_{c}}$, we add the inclusion $V^{a_{c}}_{d} \hookrightarrow X$ as one of the $f_{\ell}: Y_{\ell} \to X$.  Note that when $\mathcal{Y}_{i,j}$ is not allowable then $D_{i,j}$ will be a subvariety of $V^{a(D_{i,j},L|_{D_{i,j}})}$ and so the map $\mathcal{Y}_{i,j} \to X$ will factor through the map $f_{\ell}$ corresponding to the inclusion of a suitable component of $V^{a(D_{i,j},L|_{D_{i,j}})}$.   

Before proving the factoring property of the $f_{\ell}$, we note one additional property that will be needed later.  Suppose we constructed $Y_{\ell}$ from an allowable family.  Recall that in the construction of $Y_{\ell}$ we took a cyclic cover of the corresponding $T_{i,j}$ branched over a very ample divisor.  This guarantees that there is an ample divisor $H$ on $R_{\ell}$ such that $K_{Y_{\ell}} + a(Y_{\ell},f_{\ell}^{*}L)f_{\ell}^{*}L - r_{\ell}^{*}H$ is $\mathbb{Q}$-linearly equivalent to an effective class.  Thus by Lemma \ref{lemm:birationaltocanonical} $r_{\ell}$ is birationally equivalent to the canonical map for $K_{Y_{\ell}} + a(Y_{\ell},f_{\ell}^{*}L)f_{i}^{*}L$.   

Now suppose $f: Y \to X$ is any thin map satisfying $a(Y,f^{*}L) \geq a(X,L)$ and whose image is not contained in $\mathbf{B}_{+}(L)$. Again we separate into two cases: just as before, we say that $Y$ is allowable if $a(f(Y),L|_{f(Y)}) = a(Y,f^{*}L)$.  First suppose that $Y$ is not allowable.  Then the map $f: Y \to X$ factors through the inclusion of $V^{a(f(Y),L|_{f(Y)})}$ into $X$, and hence also factors through some $f_{\ell}$.  If $Y$ is allowable, then we know that $a(f(Y),L|_{f(Y)}) = a(Y,f^{*}L)$.  After resolving we may suppose $Y$ is smooth and admits a morphism $q: Y \to T$ which is the canonical model for $K_{Y} + a(Y,f^{*}L)f^{*}L$.  By Lemma \ref{lemm: opensetgoodfamily} there is an non-empty open subset $T^{\circ} \subset T$ such that $q$ is a good family over $T^{\circ}$.  By Lemma \ref{lemm:dominantequalitycase} we know that possibly after shrinking $T^{\circ}$ the image of every fiber over $T^{\circ}$ in $X$ will have the same $a$-value as $Y$ does and will be adjoint rigid.  Thus, the map $f: Y \to X$ will yield a map $T^{\circ} \to \Hilb(X)$ whose image is contained in the locus parametrizing subvarieties which are adjoint rigid with respect to the pullback of $L$ and have $a$-value equal to $a(Y,f^{*}L)$.  Since $T$ is reduced, after taking a base change and shrinking $T^{\circ}$ we obtain a map $T^{\circ} \to W_{i}$ for some $i$.  After shrinking $T^{\circ}$ and replacing $Y$ by a birational model yet again, the preimage $Y^{\circ}$ will yield a good map of good families $Y^{\circ} \to \mathcal{U}_{i}$. Following through the construction of the $Y_{\ell}$ above, Theorem \ref{theo: finitelymanycovers} shows that if $\widetilde{Y}$ is the Iitaka base change defined by a suitable cover $\widetilde{T} \to T$, the induced map $\widetilde{Y} \to \mathcal{U}_{i}$ will factor rationally through $\mathcal{Y}_{i,j}$ for some $j$. 
We may also ensure that the cover $\widetilde{T} \to T$ is chosen so that the monodromy action on the N\'eron-Severi spaces of a general fiber of $\widetilde{Y} \to \widetilde{T}$ is trivial.  Finally, since we are only trying to prove the existence of a rational factoring, we are free to replace $\widetilde{Y}$ by a smooth resolution, which by abuse of notation we will continue to call $\widetilde{Y}$. 
Let $\widetilde{f}: \widetilde{Y} \to X$ denote the induced map. If $\mathcal{Y}_{i,j}$ is an allowable family, then $\widetilde{f}$ will factor rationally through the corresponding projective closure $Y_{\ell}$.  When $\mathcal{Y}_{i,j}$ is not allowable, then $\widetilde{f}$  
factors through the inclusion $D_{i,j} \hookrightarrow X$, and thus (as discussed above) also through some $f_{\ell}$.  In sum, in every case there is some index $k$ such that the map $\widetilde{f}$ factors rationally through $f_{k}: Y_{k} \to X$. 

We next prove the inequalities for $a$-values.  If $Y$ is not allowable, then $Y_{k}$ has $a$-value at least $a(f(Y),L|_{f(Y)}) > a(Y,L)$.  Otherwise, recall that by Lemma \ref{lemm:ainvandcanonicalfibers} the $a$-value of a pair is the same as the $a$-value of a general fiber of the canonical map.  Since we constructed $\widetilde{Y}$ by a base change over the canonical map, Lemma \ref{lemm:birationaltocanonical} shows that the map $\widetilde{Y} \to \widetilde{T}$ is birational to the canonical map for $K_{\widetilde{Y}} + a(\widetilde{Y},\widetilde{f}^{*}L)\widetilde{f}^{*}L$. Thus the general fibers of the canonical maps for $Y$ and $\widetilde{Y}$ are birational, so we have
\begin{equation*}
a(Y,f^{*}L) = a(\widetilde{Y},\widetilde{f}^{*}L).
\end{equation*}
Next, note that a general fiber of the canonical model for $(\widetilde{Y},a(\widetilde{Y},\widetilde{f}^{*}L)\widetilde{f}^{*}L)$ maps birationally onto a fiber $\mathcal{Y}_{i,j,t_{i,j}}$ of $q_{i,j}: \mathcal{Y}_{i,j} \to T_{i,j}$, and in particular
\begin{equation*}
a(\widetilde{Y},\widetilde{f}^{*}L) = a(\mathcal{Y}_{i,j,t_{i,j}},g_{i,j}^{*}L|_{\mathcal{Y}_{i,j,t_{i,j}}}).
\end{equation*}
Suppose first that $\mathcal{Y}_{i,j}$ is an allowable family. Since the $a$-invariant is constant for the fibers of $q_{i,j}$ (as it is a good family), $Y_{k}$ is dominated by subvarieties with the same $a$-value as $\widetilde{Y}$.  By Lemma \ref{lemm:ainvdominantfamily}  $a(\widetilde{Y}, \widetilde{f}^*L) \leq a(Y_k, f_k^*L)$.  If $\mathcal{Y}_{i,j}$ is not allowable, then as explained before the map $f$ factors through the inclusion $D_{i,j} \hookrightarrow X$ where $D_{i,j}$ has higher $a$-value than the members of the family $\mathcal{Y}_{i,j}$, and thus also higher $a$-value than $\widetilde{Y}$.  Thus in either case
\begin{equation*}
a(\widetilde{Y},\widetilde{f}^{*}L) \leq a(Y_{k},f_{k}^{*}L).
\end{equation*}

Finally, we prove the inequalities for $b$-values.  
Equality of $a$-values will only occur when $Y$ is allowable and the corresponding $\mathcal{Y}_{i,j}$ is an allowable family.  Let $Y_{t}$ denote a general fiber of $q$.  Then Lemma \ref{lemm: monodromyandbvalue} (3) shows that
\begin{equation*}
b(F,Y,f^{*}L) \leq b(F,Y_{t},f^{*}L|_{Y_{t}}).
\end{equation*}
Choose a point $\widetilde{t} \in \widetilde{T}$ mapping to $t$, so that $\widetilde{Y}_{\widetilde{t}}$ is birational to $Y_{t}$.  Thus $b(F,Y_{t},f^{*}L|_{Y_{t}}) = b(F,\widetilde{Y}_{\widetilde{t}},\widetilde{f}^{*}L|_{\widetilde{Y}_{\widetilde{t}}})$. Since the monodromy action on the N\'eron-Severi spaces of general fibers over $\widetilde{T}$ is trivial and the map $\widetilde{Y} \to \widetilde{T}$ is birational to the canonical map for $K_{\widetilde{Y}} + a(\widetilde{Y},\widetilde{f}^{*}L)\widetilde{f}^{*}L$, by Corollary \ref{coro: bvalequality} we have
\begin{equation*}
b(F,\widetilde{Y}_{\widetilde{t}},\widetilde{f}^{*}L|_{\widetilde{Y}_{\widetilde{t}}}) = b(F,\widetilde{Y},\widetilde{f}^{*}L). 
\end{equation*}
Next, note $\widetilde{Y}_{t}$ maps birationally onto a fiber of the map $q_{i,j}$.  By constancy of the $b$-invariant for the fibers of $q_{i,j}$  and the fact that $f_{k}$ extends $g_{i,j}$, we see that
\begin{equation*}
b(F,\widetilde{Y}_{t},\widetilde{f}^{*}L|_{\widetilde{Y}_{t}}) = b(F,Y_{k,r},f_{k}^{*}L|_{Y_{k,r}})
\end{equation*}
for a general fiber $Y_{k,r}$ of $r_{k}: Y_{k} \to R_{k}$. 
Recall that the geometric monodromy action on the N\'eron-Severi space of the fibers of $r_{k}$ over $T_{i,j}$ is trivial.  Since $r_{k}$ is birationally equivalent to the canonical map for $(Y_{k},a(Y_{k},f_{k}^{*}L)f_{k}^{*}L)$, we may apply Corollary \ref{coro: bvalequality} to conclude that
\begin{equation*}
b(F,Y_{k,r},f_{k}^{*}L|_{Y_{k,r}}) = b(F,Y_{k},f_{k}^{*}L).
\end{equation*}
Thus our assertion follows.
\end{proof}

\begin{proof}[Proof of Theorem~\ref{theo: mainfiniteness}:]
Let $\{ f_{i}: Y_{i} \to X \}$ be the set of morphisms from Theorem 7.7.  Suppose $f: Y \to X$ is a generically finite morphism of smooth varieties satisfying $$(a(Y,f^{*}L),b(F, Y,f^{*}L)) > (a(X,L),b(F, X,L))$$ in the lexicographic order.  Theorem 7.7 shows that there is some Iitaka base change $\widetilde{Y}$ of $Y$ such that the induced morphism $\widetilde{f}: \widetilde{Y} \to X$ factors rationally through one $f_{j}: Y_{j} \to X$.  Furthermore, we know that $Y_{j}$  satisfies $(a(Y,f^{*}L),b(F, Y,f^{*}L)) \leq (a(Y_{j},f_{j}^{*}L),b(F, Y_{j},f_{j}^{*}L))$ in the lexicographic order.  Thus $f_{j}: Y_{j} \to X$ is a breaking thin map.  In conclusion, in the setting of Theorem~\ref{theo: mainfiniteness} we may take the subset of the $f_{j}$ which are breaking thin maps.
\end{proof}

\section{Constructing a thin set}
\label{sec: thinset}

In this section we prove analogues of the results in the previous section over a field $F$ of characteristic $0$ and use them to prove the main theorem (Theorem~\ref{theo: precisetheorem}).  We first discuss some constructions involving the \'etale fundamental group.

\begin{defi}
Let $\overline{T}$ be a smooth variety over an algebraically closed field of characteristic $0$ and let $\overline{t}$ be a geometric point on $\overline{T}$.  Let $\Xi$ be a finite index open subgroup of $\pi_1^\et(\overline{T}, \overline{t})$.  We say that $\Xi$ is strongly Galois if it is invariant under $\mathrm{Aut}(\pi_1^\et(\overline{T}, \overline{t}))$ where $\mathrm{Aut}(\pi_1^\et(\overline{T}, \overline{t}))$ is the automorphism group of $\pi_1^\et(\overline{T}, \overline{t})$ as a topological group.

More generally, for any finite index open subgroup $\Xi$, we define the strong Galois closure $\Xi'$ of $\Xi$ to be the intersection
\begin{equation*}
\Xi' := \bigcap_{\Phi \in \mathrm{Aut}(\pi_1^\et(\overline{T}, \overline{t}))} \Phi (\Xi) \subset \pi_1^\et(\overline{T}, \overline{t})
\end{equation*}
Since the \'etale fundamental group is topologically finitely presented by \cite[Th\'eor\`eme II.2.3.1]{Gro2}, there are only finitely many open subgroups of a fixed finite index. Thus this is a finite intersection, and $\Xi'$ is also an open subgroup of finite index.

In particular, if $\overline{R} \to \overline{T}$ is the cover defined by a finite index subgroup $\Xi \subset \pi_1^\et(\overline{T}, \overline{t})$, then the strong Galois closure of $\Xi$ defines an \'etale cover $\overline{R}' \to \overline{T}$.  We will call $\overline{R}' \to \overline{T}$ the strong Galois closure of $\overline{R} \to \overline{T}$.  Note that this notion is independent of the choice of basepoint used to define the cover.
\end{defi}

This construction is particularly useful in the following situation.  Suppose there is a smooth variety $T$ carrying a rational point $t$ defined over the ground field $F$.  Then $\Gal(\overline{F}/F)$ acts on $\pi_1^\et(T, t)$ via the splitting induced by $t$, and hence also acts on $\pi_1^\et(\overline{T}, \overline{t})$ by conjugation.  Thus, for any finite \'etale cover $\overline{R} \to \overline{T}$ the strong Galois closure $\overline{R}' \to \overline{T}$ descends to a morphism $R' \to T$ defined over $F$.

We next define good families of adjoint rigid varieties over $F$.

\begin{defi} \label{defi: goodfamilyovernf}
Fix a field $F$ of characteristic $0$.   A good family of adjoint rigid varieties over $F$ is an $F$-morphism $p: \mathcal{U} \to W$ of smooth quasi-projective varieties and a relatively big and nef $\mathbb{Q}$-divisor $L$ on $\mathcal{U}$ such that the base-change to the algebraic closure is a good family of adjoint rigid varieties over each component of the base.

Let $\overline{Q}$ denote the subset of $\overline{\mathcal{U}}$ as in Definition \ref{defi:goodfamily}.  Note that $\overline{Q}$ descends to $F$ by Proposition \ref{prop: galinvofa}.  We denote this set by $Q$.  A good section of a good family over $F$ is a section avoiding $Q$.
\end{defi}

It is natural to wonder whether one can prove a version of Theorem \ref{theo: thinmapfactoring} over an arbitrary field of characteristic $0$ which takes twists into account.  
However, it seems quite difficult to prove an analogous factoring result since it is hard to decide whether a cover constructed using an \'etale fundamental group descends to the ground field.

Fortunately, in Theorem~\ref{theo: precisetheorem} we do not care about arbitrary breaking thin maps $f: Y \to X$ but only those which contribute a rational point.  By keeping careful track of the rational point we can work with \'etale fundamental groups to construct covers defined over the ground field.  In the end we still need a factoring result for twists, but we only prove it when we have a rational point to work with.  Thus, in the proof below we will mimic the argument of Lemma \ref{lemm: finitelymanycoversinduction} but with many subtle changes which help us keep track of the behavior of rational points and twists.  In particular, in contrast to Section \ref{sect: boundedness} we will need to work with projective varieties throughout.

\begin{lemm} \label{lemm: finitelymanycoversovernf}
Let $X$ be a geometrically uniruled geometrically integral smooth projective variety defined over $F$ and let $L$ be a big and nef $\mathbb Q$-divisor on $X$. Let $p : \mathcal U \rightarrow W$ be a surjective morphism between projective varieties where $\mathcal{U}$ is equipped with a morphism $s : \mathcal U \to X$ which is birational. 
Suppose that there exists an open subset $W^\circ \subset W$ such that $p: \mathcal{U}^\circ \to W^\circ$ is a good family of adjoint rigid varieties over $F$ (where $\mathcal{U}^\circ$ denotes the preimage of $W^\circ$) and that any fiber over $W^\circ$ has the same $a$-invariant with respect to $s^{*}L$ as $X$ does with respect to $L$. Then there is a proper closed subset $C \subsetneq X$ and a finite set of dominant generically finite morphisms  $\{ f_{j}: \mathcal{Y}_{j} \to \mathcal{U} \}$ defined over $F$ that fit into commutative diagrams
\begin{equation*}
\xymatrix{ \mathcal{Y}_{j} \ar[r]^{f_{j}} \ar[d]_{q_{j}} &  \mathcal{U} \ar[d]_{p} \\
T_{j} \ar[r] & W}
\end{equation*}
such that the following holds.
\begin{enumerate}
\item both $\mathcal Y_j$ and $T_j$ are projective and geometrically integral, $\mathcal Y_j$ is smooth, $T_j$ is normal, and $q_{j}: \mathcal{Y}_{j} \to T_{j}$ is generically a good family of adjoint rigid varieties;
\item the canonical model for $a(X, L)f_j^*s^*L + K_{\mathcal Y_j}$ is a morphism and over some open set of $T_j$ this map agrees with $q_j$;  
\item $T_j \rightarrow W$ is dominant, finite, and Galois;
\item $\mathrm{Bir}(\overline{\mathcal Y}_{j}/\overline{X}) = \mathrm{Aut}(\overline{\mathcal Y}_{j}/\overline{X})$;
\item there is a non-empty Zariski open subset $W' \subset W^\circ$ such that for any $j$, for any twist $\mathcal{Y}_{j}^{\sigma}$ over $X$ and for any closed point $b$ in the preimage $T'^{\sigma}_{j}$ of $W'$ we have an isomorphism $\iota^\sigma_{j, b*}: \mathcal{F}_{\mathcal Y^\sigma_{j, b},f_{j}^{\sigma *}s^*L|_{\mathcal Y^\sigma_{j, b}}} \to \mathcal{F}_{\mathcal Y^\sigma_j,f_{j}^{\sigma *}s^*L}$ where 
$\mathcal{Y}^{\sigma}_{j,b}$ denotes the fiber over a closed point $b$ on $T_j'^{\sigma}$, $\iota^\sigma_{j, b} : \mathcal{Y}^{\sigma}_{j,b}\hookrightarrow \mathcal{Y}^{\sigma}_{j}$ is the inclusion, and $\mathcal{F}_{\mathcal Y^\sigma_{j, b},f_{j}^{\sigma *}L|_{\mathcal Y^\sigma_{j, t}}}$ and $\mathcal{F}_{\mathcal Y^\sigma_j,f_{j}^{\sigma *}L}$ are faces of the nef cone of curves as in Definition~\ref{defi:facedefinition};
\item
Suppose that $q: \mathcal{Y} \to T$ is a projective surjective morphism of varieties over $F$ where $\mathcal{Y}$ is smooth and geometrically integral and that we have a diagram
\begin{equation*}
\xymatrix{ \mathcal{Y} \ar[r]^{f} \ar[d]_{q} &  \mathcal{U} \ar[d]_{p} \\
T \ar[r]^{g} & W}
\end{equation*}
 satisfying the following properties:
\begin{enumerate}
\item There is some open subset $T' \subset T$ such that $\mathcal{Y}$ is a good family of adjoint rigid varieties over $T'$ and the map $f: q^{-1}(T') \to \mathcal{U}$ has image in $\mathcal{U}^{\circ}$ and is a good morphism of good families.  
\item There is a rational point $y \in \mathcal{Y}(F)$ such that $s \circ f(y) \not \in C$.
\end{enumerate}
Then for some index $j$ there will be a twist $f_{j}^\sigma : \mathcal Y_j^\sigma \rightarrow \mathcal U$ such that $f(y) \in f_{j}^{\sigma}(\mathcal{Y}_{j}^{\sigma}(F))$.  Furthermore, there is a dominant generically finite map $\widetilde{T} \to T$ such that the main component $\widetilde{q}: \widetilde{\mathcal{Y}} \to \widetilde{T}$ of the base change of $q$ by $\widetilde{T} \to T$
satisfies that the induced map $\widetilde{f}: \widetilde{\mathcal{Y}} \to \mathcal{U}$ will factor rationally through $f_{j}^{\sigma}$ and a general geometric fiber of $\widetilde{q}$ will map birationally to a geometric fiber of the map $q_{j}^{\sigma}: \mathcal{Y}_{j}^{\sigma} \to T_{j}^{\sigma}$.
\end{enumerate}
\end{lemm}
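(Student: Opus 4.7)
The plan is to carry out over $F$ the \'etale-fundamental-group construction of Lemma~\ref{lemm: finitelymanycoversinduction}, using the strong Galois closure to force descent of the covers to the ground field and using the rational point $y$ in property~(6) to single out the correct twist. First I would shrink $W^\circ$ (after possibly a finite base change in its own Galois closure, which preserves all hypotheses) so that $p$ admits a good section. Working geometrically, Lemma~\ref{lemm: finitelymanycoversinduction} together with \cite[Corollary 2.8]{Sen17} (ramification control) and Proposition~\ref{prop: degree} (degree bound) produces finitely many covers $\overline{\mathcal{E}}_i \to \overline{\mathcal{V}}^\circ$ corresponding to finite-index subgroups $\overline{\Upsilon}_i \subset \pi_1^{\et}(\overline{\mathcal{V}}^\circ, \overline{\zeta(\overline{w})})$; the finiteness of the collection rests on \cite[Th\'eor\`eme II.2.3.1]{Gro2}.

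Next I would replace each $\overline{\Upsilon}_i$ by its strong Galois closure. Since the good section of $p$ gives a rational basepoint, $\mathrm{Gal}(\overline{F}/F)$ acts on $\pi_1^{\et}(\overline{\mathcal{V}}^\circ, \overline{\zeta(\overline{w})})$ by conjugation via this splitting, so the strong Galois closure is Galois-stable and the resulting cover descends to an $F$-morphism. Taking projective closures, applying equivariant resolution of singularities, and invoking Lemma~\ref{lemm: birandaut} yields smooth projective $\mathcal{Y}_j$ with $\mathrm{Bir}(\overline{\mathcal{Y}}_j/\overline{X}) = \mathrm{Aut}(\overline{\mathcal{Y}}_j/\overline{X})$ for which the canonical map is a morphism, and passing to the Galois closure of $T_j \to W$ (permitted because $\overline{\Upsilon}_i$ is strongly Galois) gives~(3). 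To secure~(5) I would make further base changes to $T_j$ as in the last stage of the proof of Theorem~\ref{theo: thinmapfactoring}: first killing the geometric monodromy action on $N^1$ of a general fiber, then taking a cyclic cover branched over a very ample divisor. By Lemma~\ref{lemm:birationaltocanonical} and Corollary~\ref{coro: bvalequality}, this forces $s_{j*}^{\sigma}\colon \mathcal{F}_{\mathcal{Y}_{j,t}^\sigma} \to \mathcal{F}_{\mathcal{Y}_j^\sigma}$ to be an isomorphism for every twist, since both the vanishing-monodromy hypothesis and the identification of $q_j^\sigma$ with the canonical model are preserved under twisting.

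Then I would take $C$ to be the union of $\mathbf{B}_+(L)$, the $s$-image of $p^{-1}(W \setminus W')$, the $s$-images of the branch divisors of each $T_j \to W$, and any exceptional loci introduced during resolution. Given $f\colon \mathcal{Y} \to \mathcal{U}$ as in~(6) with rational point $y$ satisfying $s \circ f(y) \notin C$, the point $f(y) \in \mathcal{U}(F)$ lies in $\mathcal{U}^\circ$ and projects to a rational point of $W'$. Using $y$ as basepoint I would identify the geometric subgroup associated with $\overline{f}$, take its strong Galois closure, and match it with one of the $\overline{\Upsilon}_i$. The $1$-cocycle measuring the discrepancy between the basepoint $f(y)$ and a basepoint coming from the good section defines the twist $\sigma$ for which $f(y) \in f_j^\sigma(\mathcal{Y}_j^\sigma(F))$. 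The homotopy lifting property \cite[Expos\'e V]{Gro}, applied to the pullback of $f$ along a suitably chosen generically finite $\widetilde{T} \to T$ (as at the end of the proof of Lemma~\ref{lemm: finitelymanycoversinduction}), produces the rational factoring through $f_j^\sigma$, and Lemma~\ref{lemm: opensetgoodfamily} ensures that general geometric fibers of $\widetilde{q}$ map birationally to geometric fibers of $q_j^\sigma$.

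The main obstacle will be the descent step coupled with the twist-identification. In the geometric setting of Lemma~\ref{lemm: finitelymanycoversinduction} one freely manipulates arbitrary finite-index subgroups of $\pi_1^{\et}$, but the cover defined by such a subgroup need not descend to $F$, and the homotopy lifting property does not \emph{a priori} yield an $F$-morphism. The dual use of the rational point $y$ (to split the Galois action on $\pi_1^{\et}$) and of the strong Galois closure (to produce Galois-stable subgroups) is exactly what reconciles these two difficulties and allows the geometric factoring of $\overline{f}$ to be realized as an $F$-rational factoring through the appropriate twist $f_j^\sigma$.
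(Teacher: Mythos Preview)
Your descent step has a genuine gap. The good section $\overline{\zeta}$ is obtained by taking a general hyperplane section after a finite base change $W^{\mu}\to W$; it is defined only over $\overline{F}$, not over $F$, so it does \emph{not} furnish a rational basepoint and there is no canonical splitting of $\mathrm{Gal}(\overline{F}/F)$ into $\pi_1^{\et}(\mathcal{V}^{\mu})$. Consequently the strong Galois closure of a subgroup of $\pi_1^{\et}(\overline{\mathcal{V}}^{\mu},\overline{v}^{\mu})$ is invariant under $\mathrm{Aut}(\pi_1^{\et}(\overline{\mathcal{V}}^{\mu}))$, but this alone does not force the corresponding cover to descend to $F$. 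Indeed the paper does \emph{not} claim that all the $\overline{\mathcal{E}}_j$ descend: it explicitly bifurcates into those that descend to $F$ admitting a rational point (which become the $\mathcal{Y}_j$) and those that do not (relabelled $\overline{\mathcal{P}}_k$ and absorbed into the closed set $C$). Moreover the subgroup $\widetilde{\Upsilon}_j=\Xi_j\rtimes\overline{\zeta}_*\widetilde{N}_j'$ actually used is \emph{not} itself strongly Galois; only the base piece $\widetilde{N}_j'$ is, and this asymmetry is essential for the later group computation.

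The heart of the proof of property~(6) is then a contradiction argument you do not attempt: given the rational point $y$, one twists $W^{\mu}$ so that it carries a rational point under $y$, manufactures a rational section through a preimage $y'$ of $y$ via a blow-up, and shows that if the geometric factoring went through some $\overline{\mathcal{P}}_k$ then the Stein factorization $Y'$ of the fiber over $y$ would force $\overline{\mathcal{E}}_k$ to descend with a rational point, contradicting the definition of the $\overline{\mathcal{P}}_k$. This descent is established by a delicate verification that $\overline{\zeta}_*(\pi_1^{\et}(\overline{R}_k^\circ))\cdot\widetilde{\Xi}_k=\widetilde{\Xi}_k\cdot\overline{\zeta}_*(\pi_1^{\et}(\overline{R}_k^\circ))$ is a subgroup surjecting onto $\mathrm{Gal}(\overline{F}/F)$, which uses the specific two-step strong-Galois construction of $\widetilde{N}_j'$. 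Your one-line appeal to a ``$1$-cocycle measuring the discrepancy of basepoints'' does not substitute for this computation, and the final step---showing the lift $\overline{h}:\overline{\widetilde{\mathcal{Y}}}_{T^\nu}\dashrightarrow\overline{\mathcal{Y}}_j^{\sigma}$ is Galois-invariant because $\overline{h}$ and $\overline{h}^s$ agree at the rational point $(y',t^\nu)$---is what actually produces the $F$-rational factoring, not the homotopy lifting property alone.
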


The most important property is (6), which guarantees that such a morphism $f: \mathcal{Y} \to \mathcal{U}$ must factor rationally through a twist of one of the $\mathcal{Y}_{j}$. The other properties allow us to keep track of twists. 
Since the proof of Lemma~\ref{lemm: finitelymanycoversovernf} is long and complicated, we start with an outline of the proof:

\

\noindent
\underline{\bf Outline of the proof of Lemma~\ref{lemm: finitelymanycoversovernf}}:
Recall that $\mathcal{U}$ contains a good open subset $\mathcal{V}$ as in Definition~\ref{defi: goodfamilyovernf}.  We will concentrate only on the rational points contained in $\mathcal{V}$ (and add the complement of $\mathcal{V}$ to the ``exceptional set'').  We will frequently need to shrink $\mathcal{V}$ in the course of the proof as our ``bad'' locus increases.

\noindent
{\bf Step 1: Improve the properties of $p$.}

The first step of the proof is to take a base change of $p: \mathcal{U} \to W$ over a finite morphism $W^{\mu} \to W$ so that the base change $p^{\mu}: \mathcal{U}^{\mu} \to W^{\mu}$ admits a good section over an open subset of $W$.  (This is necessary to apply the constructions in Theorem~\ref{lemm: finitelymanycoversinduction} and Theorem~\ref{theo: finitelymanycovers} over $\overline{F}$.)

\noindent
{\bf Step 2: Construct the families $\mathcal{Y}_{j} \to T_{j}$.}

\begin{itemize}
\item Step 2a: Perform construction over $\overline{F}$.

Applying Lemma~\ref{lemm: finitelymanycoversinduction} over $\overline{F}$, we obtain a collection of generically finite covers of $\mathcal{U}^{\mu}$ which satisfy a factoring property.  
The resulting maps have three types which we differentiate in our notation:  
\begin{enumerate}
\item Maps $\overline{\mathcal P}_{k} \to \overline{R}_{k}$ which do not descend to the ground field.
\item Maps which descend to the ground field as $\mathcal{P}_{k} \to R_{k}$ such that no twist of $\mathcal{P}_{k}$ carries a rational point mapping to $\mathcal{V}$.
\item Maps which descend to the ground field as $q_{j}: \mathcal{Y}_{j} \to T_{j}$ such that some twist of $\mathcal{Y}_{j}$ carries a rational point mapping to $\mathcal{V}$.
\end{enumerate}
The first two types are not relevant to the discussion, since their rational point contributions lie in a closed set.  Our focus is on maps of the third type.

\item Step 2b: Improve the properties of $q_{j}$.

We perform a sequence of base changes of $q_{j}: \mathcal{Y}_{j} \to T_{j}$ over finite covers of $T_{j}$ and birational modifications of $\mathcal{Y}_{j}$.  The goal is to ensure three key properties:
\begin{enumerate}
\item the geometric monodromy action on the N\'eron-Severi space of the general geometric fibers of $q_j$ is trivial,
\item the birational automorphism group of $\overline{\mathcal{Y}}_{j}/\overline{X}$ agrees with the automorphism group of $\overline{\mathcal{Y}}_{j}/\overline{X}$, and
\item the map $T_{j} \to W_{j}^{\mu}$ is strongly Galois closed.
\end{enumerate}
We show that one can achieve these properties simultaneously, but unfortunately the construction is a little complicated.
\end{itemize}

\noindent
{\bf Step 3: Prove the universal property of the $q_{j}: \mathcal{Y}_{j} \to T_{j}$.}

Suppose we have a map $q: \mathcal{Y} \to T$ and a rational point $y$ on $\mathcal{Y}$ as in the statement of the theorem.

\begin{itemize}
\item Step 3a: Improve the properties of $q$.

Since we started with a base change $W^{\mu} \to W$ for the map $\mathcal{U} \to W$, we need to take a compatible base change of $q: \mathcal{Y} \to T$.  In fact we construct a base change
\begin{equation*}
\xymatrix{ \widetilde{\mathcal{Y}}_{\widetilde{T}} \ar[r] \ar[d]_{\widetilde{q}} &  \mathcal{Y} \ar[d]_{q} \\
\widetilde{T} \ar[r] & T }
\end{equation*}
over a generically finite map $\widetilde{T} \to T$ so that $\widetilde{\mathcal{Y}}$ admits a rational point $\widetilde{y}$ mapping to $y$, $\widetilde{\mathcal{Y}}$ admits a morphism to $\mathcal{U}^{\mu}$, and $\widetilde{q}$ admits a rational section through $\widetilde{y}$.  The key property is that there is a rational point mapping to $y$; this may necessitate replacing $W^{\mu}$ by a twist.

\item Step 3b: Construct a factoring over $\overline{F}$.

Using Lemma~\ref{lemm: finitelymanycoversinduction} over $\overline{F}$, we see that after perhaps taking a further base change over a generically finite morphism to $\overline{\widetilde{T}}$ the map $\overline{\widetilde{\mathcal{Y}}}_{\overline{\widetilde{T}}} \to \overline{\mathcal{U}^{\mu}}$ factors through one of the $\overline{\mathcal{Y}_{j}}$ or $\overline{\mathcal{P}_{k}}$.

\item Step 3c: Descend an open subset to $F$.

This is the most difficult part of the proof.  Suppose that the map $\overline{\widetilde{\mathcal{Y}}}_{\overline{\widetilde{T}}} \to \overline{\mathcal{U}^{\mu}}$ factors through $\overline{\mathcal{Y}_{j}}$.  The first step is to show that (a Stein factorization of) the fiber $Y'$ of the structural map $\overline{\mathcal{Y}_{j}} \to \overline{T_{j}}$ descends to $F$.  The next is to show that the preimage of $\overline{\mathcal{V}}$ descends to $F$.  We first leverage strong Galois closure to construct a subgroup of $\pi_{1}^{\text{\'et}}(\mathcal{V})$ by piecing together the subgroup defining the fiberwise cover $Y' \to \mathcal{U}^{\mu}_{p^{\mu}(f(y))}$ and the subgroup defining the cover of the base $\overline{T_{j}} \to \overline{W^{\mu}}$.  This subgroup defines an \'etale cover of $\mathcal{V}$ over $F$ which is the desired open set.  Finally, this construction shows the rational point $\widetilde{y} \in \widetilde{\mathcal Y}_{\widetilde{T}}$ has the same image in $\mathcal{U}^{\mu}$ as a rational point $y_{j}$ on some twist of $\mathcal{Y}_{j}$.

\item Step 3d: Construct a factoring over $F$.

Using the rational point $y_{j}$ in the previous step, we show that the factoring over $\overline{F}$ actually descends to $F$ for the corresponding twist of $\mathcal{Y}_{j}$.
\end{itemize}

We now turn to the proof of Lemma~\ref{lemm: finitelymanycoversovernf}:

\begin{proof}[Proof of Lemma~\ref{lemm: finitelymanycoversovernf}]
Let $Q$ denote the closed subset of  $\mathcal{U}^\circ$ as in Definition \ref{defi: goodfamilyovernf} and let $\mathcal{V}$ denote its complement.  During the construction we will shrink $W^{\circ}$ several times; when we do this operation, $\mathcal{U}^{\circ}$ will continue to denote its preimage and $\mathcal{V}$ will continue to denote $\mathcal{U}^{\circ} \backslash Q$. 
 
 \noindent
\underline{\bf Step 1: Improve the properties of $p$.}

We next construct a finite morphism $W^{\mu} \to W$; during this construction we let $W^{\mu \circ}$ denote the preimage of $W^{\circ}$ (and use the same notation after shrinking $W^{\circ}$).  By taking a general complete intersection of hyperplanes in $\mathcal U$ and shrinking $W^\circ$ we obtain a base change $W' \to W$ such that $W'^\circ \to W^\circ$ is proper and \'etale, $\mathcal U'^\circ = \mathcal U^\circ \times_{W^\circ} W'^\circ \to W'^\circ$ is a good family,  and it admits a good section $\zeta'$. 
After taking the Galois closure $W^\mu \rightarrow W'$, we replace $W^\mu \to W$ by its Stein factorization so that $W^\mu \to W$ is Galois and finite.  After shrinking $W^\circ$ again, we may assume that $p^\mu : \mathcal V^{\mu} \rightarrow W^{\mu \circ}$ is a good family and admits a good section $\zeta$ where $\mathcal V^{\mu}$ is $\mathcal V \times_{W^\circ}W^{\mu \circ}$ (and as usual we will use the same notation even after shrinking $W^{\circ}$).  We also set $\mathcal{U}^{\mu}$ to be the main component of $\mathcal U \times_{W}W^{\mu}$ with maps $s^{\mu}: \mathcal{U}^{\mu} \to X$ and $p^{\mu}: \mathcal{U}^{\mu} \to W^{\mu}$.
Let $\overline{D} \subset \overline{W}^{\mu \circ}$ be the proper closed subset obtained by applying Lemma~\ref{lemm: finitelymanycoversinduction} over $\overline{F}$. After including its Galois conjugates, we may assume that $D$ is defined over the ground field.  Initially we set $C$ to be the closure of $s^\mu((p^{\mu})^{-1}(D\cup R)) \cup s(Q)$ where $R$ is the ramification locus of $W^\mu \to W$; we will increase $C$ later. We also shrink $W^\circ$ so that $W^{\mu \circ} \to W^\circ$ is proper and \'etale.

\noindent
\underline{\bf Step 2: Construct the families $\mathcal{Y}_{j} \to T_{j}$.}

We next construct the families $\mathcal{Y}_{j}$.  We may suppose that $\mathcal{V}$ admits a rational point since otherwise condition (6) is vacuous. Since $W^\mu \to W$ is Galois we may ensure that $\mathcal V^\mu$ admits a rational point after replacing $W^\mu$ by its twist.  (Note that after this change the section $\overline{\zeta}$ may not be defined over the ground field but is still defined after base change to $\overline{F}$.  The other properties of $W^{\mu}$ are preserved by replacing by a twist.)
Let $w^{\mu}$ denote a rational point on $W^{\mu \circ}$ which is the image of a rational point in $\mathcal{V}^{\mu}$ and define the geometric point $\overline{v}^\mu = \overline{\zeta}(w^\mu)$.  

{\bf Step 2a: Perform construction over $\overline{F}$}:
Consider the set of subgroups $$\Xi_{j} \subset \pi_1^\et(\overline{\mathcal V}^\mu \cap \overline{\mathcal U}^{\mu}_{w^\mu}, \overline{v}^\mu),$$ constructed as in Lemma \ref{lemm: finitelymanycoversinduction} applied to $\overline{\mathcal{U}}^{\mu \circ} \to \overline{W}^{\mu \circ}$.  Each $\Xi_{j}$ yields a normalizer $N_{j} \subset \pi_1^\et(\overline{W}^{\mu \circ}, w^\mu)$. 
Let $(\Xi_{j} \rtimes \overline{\zeta}_*N_j)'$ be the strong Galois closure of $\Xi_{j} \rtimes \overline{\zeta}_*N_j$ in $\pi_1^\et(\overline{\mathcal V}^\mu, \overline{v}^\mu)$.
We define $\widetilde{N}_{j} \subset N_{j}$ as the preimage of $(\Xi_{j} \rtimes \overline{\zeta}_*N_j)'$ via $\overline{\zeta}_*$. Let $\widetilde{N}_j'$ be the strong Galois closure of $\widetilde{N}_j$ in $\pi_1^\et(\overline{W}^{\mu \circ}, w^\mu)$.
We define $\widetilde{\Upsilon}_{j} = \Xi_{j} \rtimes \overline{\zeta}_*\widetilde{N}_{j}'$.  Each $\widetilde{\Upsilon}_{j}$ defines an \'etale cover $\overline{\mathcal{E}}_{j} \to \overline{\mathcal{V}}^{\mu}$, and by composing with the \'etale map $\overline{\mathcal{V}}^{\mu} \to \overline{\mathcal{V}}$ we obtain an \'etale cover $\overline{\mathcal{E}}_{j} \to \overline{\mathcal{V}}$.  

By taking a projective closure of the fibers of $\overline{\mathcal{E}}_{j}$ over $\overline{W}^{\mu \circ}$ and passing to a resolution we obtain a projective family $\overline{\widetilde{\mathcal{E}}}_{j} \to \overline{W}^{\mu \circ}$. Let $\overline{R}^\circ_j \to \overline{W}^{\mu \circ}$ be the Stein factorization of this map; it is defined by $\widetilde{N}_j'$.
Note that by our construction, $\overline{R}_j^\circ \to \overline{W}^{\mu \circ}$ is strongly Galois. 
After shrinking $W^\circ$ we may assume that for every $j$ the $a$-values and the Iitaka dimension of any fiber of $\overline{\widetilde{\mathcal{E}}}_{j} \to \overline{R}_j^\circ$ is constant.
If fibers of this family do not have the same $a$-value as $X$ or they are not adjoint rigid, then henceforth we disregard these $j$. 
After possibly shrinking $W^\circ$ again we may assume that for every $j$ the family $\overline{\widetilde{\mathcal{E}}}_{j} \to \overline{R}_j^\circ$ is a good family.
We enlarge $C$ by adding $s(p^{-1}(W\setminus W^\circ))$.

We now consider two cases.  First suppose that $\overline{\mathcal{E}}_{j} \to \overline{\mathcal{V}}$ fails to descend to $F$ in such a way that $\overline{\mathcal{E}}_{j}$ admits a rational point.  At least over $\overline{F}$ we can compactify $\overline{\mathcal{E}}_{j} \to \overline{\mathcal{V}}$ to obtain a morphism of smooth projective varieties $\overline{\mathcal{P}}_{k} \to \overline{\mathcal{U}}$ over $\overline{F}$.  We denote by $s_{k}$ the induced map $\overline{\mathcal{P}}_{k} \to \overline{X}$.  
We let $\overline{\mathcal{P}}_{k} \to \overline{R}_{k}$ denote the Stein factorization of the map $\overline{\mathcal{P}}_{k} \to \overline{W}$. 
Note that by the construction in the previous paragraph $\overline{\mathcal P}_k^\circ \rightarrow \overline{R}^\circ_k$ is a good family where $\overline{\mathcal P}_k^\circ$ is the preimage of $\overline{W}^\circ$.  
We enlarge $C$ by adding $s_{k}(\overline{\mathcal P}_{k}\setminus \overline{\mathcal P}_{k}^\circ) \cup s(p^{-1}(\overline{B}_{k})) \cup \overline{E}_{k}$ where $\overline{E}_{k}$ is the branch locus of $s_{k}: \overline{\mathcal P}_k \to \overline{X}$ and $\overline{B}_{k}$ is the branch locus of $\overline{R}_k \to \overline{W}$.
By taking the union with Galois conjugates we may assume that $C$ is defined over the ground field.

For the second case, suppose the map $\overline{\mathcal{E}}_{j} \rightarrow \overline{\mathcal V}$ descends to a morphism $\mathcal{E}_{j} \to \mathcal{V}$ over $F$ in such a way that $\mathcal{E}_{j}$ admits a rational point.  Choose one such $F$-model $\mathcal{E}_{j}$ with a rational point.  
We then define $\mathcal{Y}_{j}$ over $F$ as a smooth projective compactification of $\mathcal E_j$ with a morphism $\mathcal Y_j \to \mathcal U$ extending $\mathcal E_j \to \mathcal V$ and let $T_{j}$ denote the Stein factorization of the map $\mathcal{Y}_{j} \to W$.  
The structure map $q_{j}: \mathcal{Y}_{j} \to T_{j}$ is generically a good family of adjoint rigid varieties. 
We let $T_{j}^{\circ}$, $\mathcal{Y}_{j}^{\circ}$ denote the preimages of $W^{\circ}$ and let $\mathcal E_j$ denote the preimage of $\mathcal V$. We will continue to use this notation after shrinking $W^{\circ}$.

{\bf Step 2b: Improve the properties of $q_{j}$}:
We make a few additional changes to the family.  
First we apply Lemma~\ref{lemm: birandaut} to $\mathcal{Y}_j/\mathcal U$ and replace $\mathcal{Y}_{j}$ by a birational model to ensure that $\mathrm{Bir}(\overline{\mathcal Y}_{j}/\overline{X}) = \mathrm{Aut}(\overline{\mathcal Y}_{j}/\overline{X})$.  
We claim that after shrinking $W^\circ$, $q_{j}$ is a good family of adjoint rigid varieties over $T_{j}^{\circ}$.  Indeed, Lemma \ref{lemm: opensetgoodfamily} shows that this is true after base change to $\overline{F}$, and after possibly shrinking further this open set will descend to $F$.   
Note that by our convention shrinking $W^{\circ}$ also causes $\mathcal{E}_{j}$ to shrink.  After this change if no twist of the $\mathcal{E}_{j}$ contains a rational point, then we add $s_{j}(\mathcal Y_{j}\setminus \mathcal E_{ j}) \cup s(p^{-1}(B_{j})) \cup E_{j}$ to $C$, where $E_{ j}$ is the branch locus of $s_{j}: \mathcal Y_{j}\rightarrow X$ and $B_{j}$ is the branch locus of $T_{j}\rightarrow W$.  To distinguish such types of families, we will henceforth relabel them and add them to the list of $q_k : \mathcal P_k \rightarrow R_k$ with the evaluation map $s_k : \mathcal P_k \rightarrow X$.  If some twist of $\mathcal{E}_{j}$ does contain a rational point, then we replace our families with this twist and continue the construction. In this situation we must also replace $T_j, W^\mu$ by twists in the following way. First, when we replace $\mathcal{Y}_{j}$ by a twist which carries a rational point in $\mathcal{E}_{j}$, we redefine $T_j$ to be the Stein factorization of $\mathcal Y_j \to W$. Since $W^\mu \to W$ is Galois and finite, for some twist $W^{\mu \sigma}$ the image of our rational point on $\mathcal E_j$ is a rational point on $W^{\mu \sigma}$. Then $T_j$ admits a morphism to $W^{\mu \sigma}$.

Since we are in a situation where $T_{j}^{\circ}$ admits a rational point $b_{j}$, there is a fiber of $q_{j}$ over $T_{j}^{\circ}$ defined over the ground field.  This implies that we can find a base change defined over $F$ which kills the geometric monodromy action on the N\'eron-Severi space of a general fiber; indeed one can consider a projective closure of the \'etale cover of $T_{j}^{\circ}$ defined by $G \rtimes \Gal(\overline{F}/F) \subset \pi_1^\et(T_{j}^{\circ}, b_{j})$ where $G$ is the kernel of the geometric monodromy action on the geometric N\'eron-Severi space of the fiber defined over the ground field.
After taking this finite base change (which we continue to represent by the notation $T_{j}$, $\mathcal{Y}_{j}$, etc.~for simplicity), we may assume that the geometric monodromy of $\pi_{1}^{\et}(\overline{T^{\circ}_{j}},b_{j})$ on the N\'eron-Severi space of a general fiber of $q_{j}$ is trivial. While doing so we keep that $T_j$ is normal and $T_j \to W$ is finite. We take another cyclic cover so that the ramification locus contains the pullback of an ample divisor on $T_j$ which does not contain $b_j$.   After shrinking $W^\circ$ we may guarantee that $T_j^\circ \to W^{\mu \circ}$ is proper and \'etale. After taking a strong Galois closure and descending to $F$, 
we may assume that (i) $T_j$ is normal, (ii) $T_{j}/W^\mu$ is finite, and (iii) $T_{j}/W^\mu$ satisfies that the subgroup of $\pi_{1}^{\et}(\overline{W}^{\mu \circ},w^{\mu})$ defined by $\overline{T}_{j}^\circ$ is strongly Galois. We replace $\mathcal Y_j$ by the main component of the base change.
We then apply Lemma~\ref{lemm: birandaut} to $\mathcal Y_j/\mathcal U$ again and replace $\mathcal{Y}_{j}$ by a birational model to ensure that $\mathrm{Bir}(\overline{\mathcal Y}_{j}/\overline{X}) = \mathrm{Aut}(\overline{\mathcal Y}_{j}/\overline{X})$ and the canonical map for $K_{\mathcal{Y}_{j}} + a(X,L)s_{j}^{*}L$ is a morphism. 
We may need to shrink $W^{\circ}$ to preserve the good family structure over its preimage  $T_{j}^{\circ}$.  
By the construction we have $T_{j}^{\circ}$ is proper and \'etale over the open set $W^{\circ}$. 
After possibly shrinking $W^\circ$ we may guarantee that $f_{j} : \mathcal{E}_{j} = f_{j}^{-1}(\mathcal V) \rightarrow \mathcal V$ is \'etale. 
If no twist of $\mathcal{E}_{j}$ contains a rational point again, then we add $s_{j}(\mathcal Y_{j}\setminus \mathcal E_{ j}) \cup s(p^{-1}(B_{j})) \cup E_{j}$ to $C$ and then we relabel this family as one of the $q_k : \mathcal P_k \rightarrow R_k$ with the evaluation map $s_k : \mathcal P_k \rightarrow X$. If some twist of $\mathcal{E}_{j}$ does contain a rational point, then we replace our families with this twist.

After all these changes we have a commutative diagram
\begin{equation*}
\xymatrix{
\mathcal Y_{j} \ar@{>}[r]^{f_{j}} \ar@{>}[d]&  \mathcal U \ar@{>}[d]\\
T_{j} \ar@{>}[r] & W}
\end{equation*}
We enlarge $C$ by adding $s_{j}(\mathcal Y_{j}\setminus \mathcal E_{ j}) \cup s(p^{-1}(B_{j})) \cup E_{j}$ where $E_{ j}$ is the branch locus of $s_{j}: \mathcal Y_{j}\rightarrow X$ and $B_{j}$ is the branch locus of $T_{j}\rightarrow W$.  Note that by the construction we have now verified Lemma \ref{lemm: finitelymanycoversovernf} (1),(3),(4).  Recall that during the construction we took a cyclic cover 
so that the ramification divisor of $T_j \to W$ contains an ample divisor. Hence there is an ample $\mathbb{Q}$-divisor $H$ on $T_{j}$ such that $K_{\mathcal{Y}_{j}} + a(\mathcal{Y}_{j},s_{j}^{*}L)s_{j}^{*}L - q_{j}^{*}H$ is $\mathbb{Q}$-linearly equivalent to an effective divisor.  Thus  Lemma \ref{lemm: finitelymanycoversovernf} (2) follows from Lemma \ref{lemm:birationaltocanonical}.

Before continuing, we prove that there is a Zariski open subset $W' \subset W^{\circ}$ 
such that for any twist $\mathcal{Y}_{j}^{\sigma}$ over $X$ and for any closed point $b \in T'^{\sigma}_{j}$ we have an isomorphism $\iota^\sigma_{j, b*}: \mathcal{F}_{\mathcal Y^\sigma_{j, b},f_{j}^{\sigma *}L|_{\mathcal Y^\sigma_{j, b}}} \to \mathcal{F}_{\mathcal Y^\sigma_j,f_{j}^{\sigma *}L}$ where $\iota^\sigma_{j, b}: \mathcal Y^\sigma_{j, b} \hookrightarrow \mathcal Y^\sigma_{j}$ is the inclusion.  By construction $\overline{\mathcal{Y}}^{\sigma}_{j}$ has a birational model with a structure map to $\overline{T}_{j}^{\circ}$ which has a trivial geometric monodromy action.  Furthermore, the structure map to $\overline{T}_{j}^{\circ}$ is birational to the canonical map by property (2).  Thus Lemma \ref{lemm:birationaltocanonical} verifies the hypotheses of Corollary \ref{coro: bvalequality} on this birational model.  By applying Corollary \ref{coro: bvalequality} to this model and using Lemma \ref{lemm:birfaceinv} to transfer the result to $\overline{\mathcal{Y}}_{j}^{\sigma}$, we deduce that for each $j$ we have an isomorphism $\overline{\iota}^\sigma_{j, b*}: \mathcal{F}_{\overline{\mathcal Y}^\sigma_{j, \overline{b}}, \overline{f}_{j}^{\sigma *} \overline{L}|_{\overline{\mathcal Y}^\sigma_{j, \overline{b}}}}\to \mathcal{F}_{\overline{\mathcal Y}^\sigma_j,\overline{f}_{j}^{\sigma *}\overline{L}}$ for every $\overline{b}$ lying above some open subset of $\overline{W}^{\circ}$. 
The desired equality follows by taking the Galois invariant part.  Since the index set of $j$ is a finite set we find an open subset $W'$ which works for all $j$ simultaneously.  This verifies Lemma \ref{lemm: finitelymanycoversovernf} (5).

Note that the families constructed here are geometrically independent of the initial choice of $\overline{v}^\mu$.  Indeed, we only used $\overline{v}^\mu$ to define geometric covers over $\overline{F}$; all the other choices in the construction were obtained intrinsically from the geometry of this finite set of  covers.  Thus, we can at a later stage choose a (possibly different) basepoint $w^{\mu}$ in a twist of $W^{\mu \circ}$ and pretend that all our constructions were made with respect to this choice all along.

\noindent
\underline{\bf Step 3: Prove the universal property of the $q_{j}: \mathcal{Y}_{j} \to T_{j}$.}

Now we prove the universal property (6) for these families.  Assume that $f: \mathcal Y \rightarrow \mathcal U$ is a morphism and $y \in \mathcal{Y}(F)$ is a rational point as in the statement.
Our goal is to show that $f(y) \in f_j^\sigma(\mathcal Y_j^\sigma(F))$ for some twist $\sigma$.  We set $t = q(y)$. After resolving $T$ and $\mathcal Y$, we may assume that $t$ is a smooth point of $T$.

{\bf Step 3a: Improve the properties of $q$}:
First of all, since $W^{\mu}$ is Galois over $W$, after replacing $W^\mu$ by its twist we may assume that it carries a rational point whose image in $W$ is the same as the image of $t \in T$ in $W$.  Note that since $s \circ f(y) \not \in C$, the map $T \times_W W^\mu \to T$ is \'etale over an open neighborhood of $t$.  Thus there is some component of $T \times_W W^\mu$ which maps dominantly to $T$ and which admits a rational point $t^\mu$ in its smooth locus mapping to $t$.  Let $T^\mu$ be the normalization of this dominant component.  Let $\mathcal Y^\mu$ be a smooth resolution of the main component of $\mathcal Y \times_T T^\mu$. Since the map $T^{\mu} \to T$ is \'etale in a neighborhood of the rational point $t^{\mu}$ we know that the resolution process is an isomorphism on a neighborhood of the fiber of $\mathcal{Y}^{\mu} \to T^{\mu}$ over $t^{\mu}$.  Thus this fiber admits a rational point mapping to $y$ which we denote by $y^{\mu}$. We denote the induced morphism by $q^\mu : \mathcal Y^\mu \to T^\mu$ and let $t^\mu = q^\mu(y^\mu)$. Note that $y^\mu$ is a smooth point of $\mathcal Y^\mu$.

Next, if we let $\widetilde{T}$ be a general intersection of hyperplanes through $y^\mu$, then for the generically finite surjective base change $\widetilde{T} \rightarrow T^\mu$ there is a rational point $\widetilde{t} \in \widetilde{T}(F)$ mapping to $t^\mu$ such that $\widetilde{T}$ is smooth at $\widetilde{t}$ and the main component $\mathcal Y_{\widetilde{T}}$ admits a rational section $\tau$ such that $(y^\mu, \widetilde{t}) = \tau(\widetilde{t})$.
Let $\widetilde{\mathcal Y}_{\widetilde{T}}$ be a resolution of $\mathcal Y_{\widetilde{T}}$ chosen in such a way that $\widetilde{\mathcal Y}_{\widetilde{T}}$ still admits a rational section $\tau$ which is well-defined at $\widetilde{t}$.  
Thus we obtain a rational point $y' \in \widetilde{\mathcal Y}_{\widetilde{T}}$ mapping to $y$ and $\widetilde{t}$. Let $v$ be the image of $y'$  in $\mathcal V^\mu$ and set $w^\mu = p^\mu(v)$ and $\overline{v}^\mu = \overline{\zeta}(w^\mu)$.

{\bf Step 3b: Construct a factoring over $\overline{F}$}:
Recall that the morphism $\mathcal U^{\mu} \to W^{\mu}$ has a geometric section $\overline{\zeta}$.  Working over $\overline{F}$, Lemma \ref{lemm: finitelymanycoversinduction} and Corollary \ref{coro: finitelymanycoversbasechange} show that for some Iitaka base change of $\overline{\widetilde{\mathcal Y}}_{\widetilde{T}}$ the induced map to $\overline{\mathcal{U}}^{\mu}$ factors rationally through $\overline{\mathcal Y_{j}}$ for some $j$ or $\overline{\mathcal P}_{k}$ for some $k$.  Assume for a contradiction that the map factors rationally through $\overline{\mathcal P}_{k}$.  
 We claim that if we take the Stein factorization $Y'$ of the map of fibers $\widetilde{\mathcal Y}_{\widetilde{t}} \rightarrow \mathcal U^{\mu}_{w^\mu}$ and then base change to $\overline{F}$ the result is birational to the adjoint rigid variety $\overline{\mathcal{P}}_{k,\overline{r}}$ where $\overline{r}$ is some preimage of $w^\mu$ 
 which we specify later. Note that due to our construction $\overline{\mathcal{P}}_{k,\overline{r}}$ is irreducible with $a$-invariant equal to $a(X, L)$ and it is adjoint rigid.
To see the claim, first choose an open subset $\overline{\widetilde{T}}^{\circ}$ of $\overline{\widetilde{T}}$ that contains $\widetilde{t}$ 
such that the image of this set in $\overline{W}$ is contained in $\overline{W}^{\circ}$ as defined above and the $\overline{\tau}$-image of this set lies in the preimage of $\overline{\mathcal{V}}$.  
Let $\overline{T}^{\nu}$ denote the \'etale cover of $\overline{\widetilde{T}}^\circ$ defined by the finite index subgroup of $\pi_{1}^{\et}(\overline{\widetilde{T}}^{\circ}, \overline{\widetilde{t}})$ constructed by pulling back under $\overline{\tau}$ the subgroup of $\pi_{1}(\overline{\mathcal{V}}, \overline{v}^\mu)$ corresponding to the \'etale cover defined by $\overline{\mathcal{E}}_{k}$.  For the open subset of $\overline{\widetilde{T}}^{\circ}$ over which we have a good family, just as in Lemma \ref{lemm: finitelymanycoversinduction} we know that the main component of the base change $\overline{\widetilde{\mathcal{Y}}}^{\nu}$ over $\overline{T}^{\nu}$ admits a rational map to $\overline{\mathcal{P}}_{k}$.  Since the map $\overline{T}^{\nu}$ to $\overline{\widetilde{T}}^\circ$ is \'etale, $\overline{\widetilde{\mathcal Y}}^\nu$ is smooth in a neighborhood of the fiber $\overline{\widetilde{\mathcal Y}}^{\nu}_{\overline{t}^{\nu}}$ where $\overline{t}^\nu$ is a geometric point mapping to $\overline{\widetilde{t}}$.
Let $\overline{\mathcal{Y}}^{*}$ denote a smooth resolution of the rational map to $\overline{\mathcal{P}}_{k}$. 
The fiber $\overline{\mathcal{Y}}^{*}_{\overline{t}^{\nu}}$ maps to some fiber $\overline{\mathcal P}_{k,\overline{r}}$.  
We claim that we have a morphism $\overline{T}^\nu \to \overline{R}_k^\circ$ and the resulting morphism $\overline{\mathcal{Y}}^{*} \to \overline{\mathcal P}_k^\circ\times_{\overline{R}_k^\circ} \overline{T}^{\nu }$ is birational. Indeed, we have a morphism $\overline{\mathcal Y}^* \to \overline{\mathcal P}_k^\circ \to \overline{R}_k^\circ$. Taking the Stein factorization induces $\overline{\mathcal Y}^* \to \overline{T}^\nu \to \overline{R}_k^\circ$. The resulting map $\overline{\mathcal Y}^* \to \overline{\mathcal P}_k^\circ \times_{\overline{R}_k^\circ}\overline{T}^\nu$ is a morphism of $\overline{T}^\nu$-schemes which is birational on a general fiber, and thus birational. Since the induced map $\overline{\mathcal{Y}}^{*} \to \overline{\mathcal P}_k^\circ\times_{\overline{R}_k^\circ} \overline{T}^{\nu }$ is birational and the target is smooth, this map has connected fibers.  In particular the map $\overline{\mathcal{Y}}^{*}_{\overline{t}^{\nu}} \to \overline{\mathcal P}_{k,\overline{r}}$ has connected fibers and we deduce that the Stein factorization of $\overline{\mathcal{Y}}^{*}_{\overline{t}^{\nu}} \to \overline{\mathcal{U}}^{\mu}_{\overline{w^\mu}}$ is birational to ${\overline{\mathcal P}}_{k,\overline{r}}$. 
Note that the map $\overline{\mathcal{Y}}^{*}_{\overline{t}^{\nu}} \to \overline{\mathcal{U}}^{\mu}_{\overline{w^\mu}}$ also factors through our original fiber $\overline{\widetilde{\mathcal Y}}_{\overline{\widetilde{t}}}$ and that the first step of this factoring has connected fibers.  Thus the Stein factorization of our original fiber over $\overline{F}$ is also birational to $\overline{\mathcal P}_{k,\overline{r}}$.  Since Stein factorization commutes with base change  to the algebraic closure our assertion follows.
This implies that the subgroup $\Xi_{k}$ which corresponds to the cover $\overline{\mathcal{P}}_{k, \overline{r}} \to \overline{\mathcal U}^{\mu}_{\overline{w^\mu}}$ admits an extension $\widetilde{\Xi}_k \subset \pi_1^\et(\mathcal V^{\mu}_{w}, \overline{v}^\mu)$ corresponding to the cover $Y' \to \mathcal U^{\mu}_{w^\mu}$ defined over the ground field.

We next show that $f_{k}^{-1}(\overline{\mathcal V})$ must descend to the ground field.  
First note that $\pi_1^\et(\overline{R}_k^\circ, \overline{r}_k)$ is preserved as a subset of $\pi_1^\et(W^{\mu \circ}, w^{\mu})$ under conjugation by any element in the image of the splitting $\epsilon: \Gal(\overline{F}/F) \to \pi_1^\et(W^{\mu \circ}, w^{\mu})$ induced by $w^\mu$.
Indeed, by construction there exists some open subset $W^{\mu a}$ containing $W^{\mu \circ}$ 
such that for the preimage $\overline{R}_k^{a}$ of $\overline{W}^{\mu a}$ in $\overline{R}_k$, $\overline{R}_k^{a} \to \overline{W}^{\mu a}$ is proper, \'etale and strongly Galois.  In particular, the strong Galois property implies that $\pi^{\text{\'et}}_1(\overline{R}_k^a, \overline{r}_k)$ is invariant in $\pi^{\text{\'et}}_1(W^{\mu a}, w^\mu)$ under conjugation. Since $\pi^{\text{\'et}}_1(\overline{R}_k^\circ, \overline{r}_k)$ is the preimage of $\pi^{\text{\'et}}_1(\overline{R}_k^a, \overline{r}_k)$ under the surjection $\pi^{\text{\'et}}_1(W^{\mu \circ}, w^\mu) \to \pi^{\text{\'et}}_1(W^{\mu a}, w^\mu)$, it is also preserved under conjugation by the image of $\epsilon$.

Now the homomorphism $\widetilde{\Xi}_k \to  \pi_1^\et(W^{\mu \circ}, w^{\mu})$ factors through $\epsilon : \mathrm{Gal}(\overline{F}/F) \to \pi_1^\et(W^{\mu \circ}, w^{\mu})$.  Using the homotopy exact sequence of \'etale fundamental groups we see that the induced map $\widetilde{\Xi}_{k} \to \mathrm{Gal}(\overline{F}/F)$ is surjective, and we let $\delta : \mathrm{Gal}(\overline{F}/F) \to \widetilde{\Xi}_k$ denote any set-theoretic section.  
Note that every element in $\widetilde{\Xi}_{k}$ is a product of an element of $\Xi_{k}$ and an element in the image of $\delta$.

We claim that $\overline{\zeta}_*(\pi_1^\et(\overline{R}_k^\circ, \overline{r}_k)) \cdot\widetilde{\Xi}_k = \widetilde{\Xi}_k \cdot \overline{\zeta}_*(\pi_1^\et(\overline{R}_k^\circ, \overline{r}_k))$ as subsets of $\pi_{1}^{\et}(\mathcal{V}^{\mu},\overline{v}^{\mu})$.  
 Pick $\overline{\zeta}_*(\gamma) \in \overline{\zeta}_*(\pi_1^\et(\overline{R}_k^\circ, \overline{r}_k))$ and $g \in \Xi_k$ and $\sigma \in \mathrm{Gal}(\overline{F}/F)$.  Then we have
\[
\overline{\zeta}_*(\gamma) g \delta(\sigma) = \overline{\zeta}_*(\gamma) g \overline{\zeta}_*(\gamma)^{-1} \cdot \overline{\zeta}_*(\gamma) \delta(\sigma)\overline{\zeta}_*(\gamma')^{-1} \cdot \overline{\zeta}_*(\gamma'),
\]
where $\gamma' = \epsilon(\sigma)^{-1}\gamma \epsilon(\sigma)$.
By construction $\overline{\zeta}_{*}(\gamma)$ is contained in the normalizer of $\Xi_{k}$ so we have $\overline{\zeta}_*(\gamma) g \overline{\zeta}_*(\gamma)^{-1} \in \Xi_k$.  Thus it suffices to show that $ \overline{\zeta}_*(\gamma) \delta(\sigma)\overline{\zeta}_*(\gamma')^{-1} \in \widetilde{\Xi}_k$. Since $\delta(\sigma) \in \widetilde{\Xi}_k$ we may instead show
\[
 \overline{\zeta}_*(\gamma) \delta(\sigma)\overline{\zeta}_*(\gamma')^{-1}\delta(\sigma)^{-1} \in \Xi_k.
\]
Observe that $\overline{\zeta}_*(\gamma) \delta(\sigma)\overline{\zeta}_*(\gamma')^{-1}\delta(\sigma)^{-1}$ maps to the identity under $\pi_1^\et(\mathcal {V}^{\mu}, \overline{v}^\mu) \to \Gal(\overline{F}/F)$, so that it can be identified as an element in $\pi_1^\et(\overline{\mathcal V}^{\mu}, \overline{v}^\mu)$.  Furthermore it maps to the identity under $\pi_1^\et(\overline{\mathcal V}^{\mu}, \overline{v}^\mu) \to \pi_1^\et(\overline{W}^{\mu \circ}, w^\mu)$, so we can identify $\overline{\zeta}_*(\gamma) \delta(\sigma)\overline{\zeta}_*(\gamma')^{-1}\delta(\sigma)^{-1}$ with an element of $\pi_1^\et(\overline{\mathcal V}^\mu \cap \overline{\mathcal U}^{\mu}_{w^\mu}, \overline{v}^\mu)$.  
Since the image of $\pi_1^\et(\overline{R}_k^\circ, \overline{r}_k)$ via $\overline{\zeta}_*$ is contained in $(\Xi_{k} \rtimes \overline{\zeta}_*N_k)'$, we have $\overline{\zeta}_*(\gamma), \overline{\zeta}_*(\gamma')^{-1} \in (\Xi_{k} \rtimes \overline{\zeta}_*N_k)'$. Then the strong Galois property implies that $\delta(\sigma)\overline{\zeta}_*(\gamma')^{-1}\delta(\sigma)^{-1} \in (\Xi_{k} \rtimes \overline{\zeta}_*N_k)'$.
Thus we have 
\[
 \overline{\zeta}_*(\gamma) \delta(\sigma)\overline{\zeta}_*(\gamma')^{-1}\delta(\sigma)^{-1} \in \Xi_k\rtimes \overline{\zeta}_*N_k.
\]
However since $ \overline{\zeta}_*(\gamma) \delta(\sigma)\overline{\zeta}_*(\gamma')^{-1}\delta(\sigma)^{-1} \in \pi_1^\et(\overline{\mathcal V}^\mu \cap \overline{\mathcal U}^{\mu}_{w^\mu}, \overline{v}^\mu)$ our assertion follows.

Thus $\widetilde{\Xi}_k \cdot \overline{\zeta}_*(\pi_1^\et(\overline{R}_k^\circ, \overline{r}_k))$ is a subgroup of $\pi_{1}^{\et}(\mathcal{V}^{\mu},\overline{v}^{\mu})$.  Since the image of this group in $\Gal(\overline{F}/F)$ is the full group, one may use this group to define an \'etale cover of $\mathcal V^\mu$ that is defined over $F$.  Since $\widetilde{\Xi}_k \cdot \overline{\zeta}_*(\pi_1^\et(\overline{R}_k^\circ, \overline{r}_k))$ is an extension of $\widetilde{\Upsilon}_{k}$, this \'etale cover coincides with $f_{k}^{-1}(\overline{\mathcal V})$ after base change to $\overline{F}$, and thus we will write this cover as $f_{k}^{-1}(\mathcal V) \to \mathcal{V}^{\mu}$.
Moreover we claim that $f_{k}^{-1}(\mathcal{V})$ admits a fiber birational to $Y'$ and this birational map is an isomorphism on an open neighborhood of the image of $y'$.  Indeed, let $R_k^\circ$ be the Stein factorization of a compactification of $f_{k}^{-1}(\mathcal V) \rightarrow \mathcal V^\mu \to W^{\mu \circ}$.  Then the cover $R_{k}^{\circ} \to W^{\mu \circ}$ corresponds to an extension of $\pi_1^\et(\overline{R}_k^\circ, \overline{r}_k)$ by $\Gal(\overline{F}/F)$.
Using the splitting $\delta$ constructed above and pushing forward to $R_{k}^{\circ}$, we obtain a  
group theoretic splitting $\Gal(\overline{F}/F) \to \pi_1^\et(R_k^\circ, \overline{r}_k)$ compatible with the splitting $\epsilon: \Gal(\overline{F}/F) \to \pi^\et_1(W^{\mu \circ}, w^\mu)$. This section is a homomorphism because $\epsilon$ is. 
On the other hand since $R_k$ is Galois over $W^\mu$, $R_k^\circ$ admits a twist $R_k^{\sigma \circ}$ with a rational point mapping to $w^\mu$. Moreover the fundamental group of $R_k^{\sigma \circ}$ also has a splitting compatible with the splitting of $\pi^\et_1(W^{\mu \circ}, w^\mu)$ coming from $w^{\mu}$.  Altogether we conclude that $R_k^\circ$ and $R_k^{\sigma \circ}$ must be isomorphic to each other, or in other words, that $R_{k}^{\circ}$ comes with a rational point $r_k$ mapping to $w^\mu$.  
By comparing fundamental groups, we see that the fiber over $r_{k}$ is birational to the variety defined by $\widetilde{\Xi}_k$ as claimed.  Furthermore, this birational map is an isomorphism on a neighborhood of $y'$ because $y'$ maps to $\mathcal{V}$.  We conclude that $f_{k}^{-1}(\mathcal{V})$ admits a rational point coming from $y$. 
However, the fact that the geometric model descends to the ground field with a rational point contradicts our definition of the $\overline{\mathcal{P}}_{k}$.  We deduce that this case cannot happen; in other words, some base change of $\overline{\widetilde{\mathcal{Y}}}_{\widetilde{T}}$ admits a rational map to $\overline{\mathcal Y}_{j}$ for some $j$.  (We do not yet know that this rational factoring can be achieved over the ground field, but we will verify this soon.) 

{\bf Step 3c: Descend an open subset to $F$}:
Next we would like to show that some twist of $\mathcal Y_{j}$ contains a rational point $y_j$ mapping to $v$. 
We repeat exactly the same construction that we made above to rule out the case $\overline{\mathcal P}_{k}$.  The result is an \'etale cover of $\mathcal{V}^{\mu}$ that admits a rational point mapping to $v$ and after base change to $\overline{F}$ is an open subset of $\overline{\mathcal{Y}}_{j}$.  We deduce that there is a twist $\mathcal{Y}_{j}^{\sigma}$ admitting a rational point whose image is $v$.  Furthermore, the argument shows that the corresponding fiber $\mathcal{Y}^{\sigma}_{j,t_{j}}$ of $\mathcal{Y}_{j}^{\sigma} \to T_{j}^{\sigma}$ is geometrically birational to $Y'$.  Since $\mathcal{Y}^{\sigma}_{j,t_{j}}$ is induced by $\widetilde{\Xi}_j$ corresponding to $Y'$, they are actually birational over the ground field.

{\bf Step 3d: Construct a factoring over $F$}:
To finish the proof of Lemma  \ref{lemm: finitelymanycoversovernf} (6) we must prove a factoring property over $F$.  Recall that we have constructed a rational point $\widetilde{t} \in \widetilde{T}$ and a rational point $y_{j} \in \mathcal{Y}_{j}^{\sigma}$ such that the image of $y_{j}$ in $\mathcal{U}$ is the same as the image of $\widetilde{t}$ under the rational map $\widetilde{T} \dashrightarrow \mathcal{U}$ given by the composition of the rational section to $\widetilde{\mathcal{Y}}_{\widetilde{T}}$ and the map to $\mathcal{U}$.  Let $\widetilde{T}^{\dagger}$ be the open subset where the rational map to $\mathcal{U}$ is defined.  Consider the base change
\begin{equation*}
\xymatrix{
\widetilde{T}^{\dagger} \times_{\mathcal U} \mathcal Y_{j}^\sigma \ar@{>}[r]\ar@{>}[d]&  \mathcal Y_{ j}^\sigma \ar@{>}[d]\\
\widetilde{T}^{\dagger} \ar@{>}[r] & \mathcal U}.
\end{equation*}
Since $\mathcal{Y}_{j}^{\sigma} \to \mathcal{U}$ is \'etale on a neighborhood of the image of $\widetilde{t}$ and admits a rational point mapping to the image of $\widetilde{t}$, there is a component $T^{\nu}$ of $\widetilde{T} \times_{\mathcal U} \mathcal Y_{j}^\sigma$ which maps dominantly to $\widetilde{T}^{\dagger}$ and admits a rational point $t^{\nu}$ mapping to $\widetilde{t}$.  Furthermore, the base change $\widetilde{\mathcal Y}_{T^\nu}$ is smooth at any point of the fiber over $t^\nu$.  This construction of a base change is as same as the construction in Lemma~\ref{lemm: finitelymanycoversinduction} and Corollary \ref{coro: finitelymanycoversbasechange}, hence after base changing to $\overline{F}$ the map $\widetilde{\mathcal Y}_{T^\nu}\to X$ factors rationally through the twist $\mathcal Y_{j}^\sigma$.

We claim that the map $\widetilde{\mathcal Y}_{T^\nu}\to X$ factors rationally through $\mathcal Y_{j}^\sigma$ over the ground field. Indeed, by the lifting property over $\overline{F}$ one may find a rational map $\overline{h} : \overline{\widetilde{\mathcal Y}_{T^\nu}} \dashrightarrow \overline{\mathcal Y_{j}^\sigma}$ mapping $(y', t^\nu)$ to the point $y_j$ constructed above.  
 Let $s$ be an element of the Galois group. Then both $\overline{h}$ and $\overline{h}^s$ are lifts of the same map to $\mathcal U$ and they  both map $(y', t^\nu)$ to $y_j$. Thus $\overline{h} = \overline{h}^s$ by the uniqueness of the lift.  Thus our assertion follows.
\end{proof}

\begin{proof}[Proof of Lemma~\ref{lemm: finitelymanycoversovernf_intro}]
Lemma~\ref{lemm: finitelymanycoversovernf_intro} is a simplified version of Lemma~\ref{lemm: finitelymanycoversovernf} where $\mathcal U$ is taken to be $X$.
\end{proof}

Finally we prove our main theorem.

\begin{proof}[Proof of Theorem \ref{theo: precisetheorem}:]
As mentioned before $Z_{0}$ and $Z_{3}$ are contained in proper closed subsets of $X$, so it suffices to consider only $Z_{1}$ and $Z_{2}$.

\textbf{Construction of a closed set:}
Let $V$ be the proper closed subset and $p_{i}: \mathcal{U}_{i} \to W_{i}$ be the projective families from Theorem \ref{theo: rigidfamilies} equipped with surjective evaluation maps $s_{i}: \mathcal{U}_{i} \to X$.  

Suppose that $\mathcal U_i$ is not geometrically irreducible.  
Then the Zariski closure $\overline{s_i(\mathcal U_i(F))}$ is a proper closed subset of $X$ where $s_i : \mathcal U_i \rightarrow X$ is the evaluation map. We enlarge $V$ by adding this proper closed subset to $V$.

Suppose that $\mathcal U_i$ is geometrically irreducible. 
Let us further suppose that the evaluation map $s_i : \mathcal U_i \rightarrow X$ is birational.
After applying a resolution, we may assume that $\mathcal U_i$ is smooth.
Let $W_i^\circ$ be a Zariski open locus so that $p_i : p_i^{-1}(W_i^\circ) \rightarrow W_i^\circ$ is a good family of adjoint rigid varieties. Let $Q_i$ be the closed subset associated to this family and  define $\mathcal V_i = p_i^{-1}(W_i^\circ) \setminus Q_i$. We enlarge $V$ by adding the proper closed subset $s_i(\mathcal U_i \setminus \mathcal V_i) \cup s_i(E_i)$ where $E_i$ is the ramification divisor of $s_i$. By applying Lemma~\ref{lemm: finitelymanycoversovernf} to $p_i : \mathcal U_i \rightarrow W_i$, we obtain families $q_{i, j} : \mathcal Y_{i, j} \rightarrow T_{i, j}$ with morphisms $s_{i, j} : \mathcal Y_{i, j} \rightarrow X$. 
We shrink $W_i^\circ$ if necessary so that Lemma~\ref{lemm: finitelymanycoversovernf} (5) is valid for any closed point on $T_{i, j}^{\circ \sigma}$.
We enlarge $V$ by taking the union with $s_i(p_i^{-1}(W_i\setminus W_i^\circ))$ and $C_{i}$ from Lemma~\ref{lemm: finitelymanycoversovernf} for every $i$.

\textbf{Construction of a thin set:}
We now construct a thin set $Z' \subset X(F)$.  The construction involves several steps.  First set $Z' = V(F)$.  
If $\mathcal{U}_{i}$ is geometrically integral and the evaluation map $s_i : \mathcal U_i  \to X$ has degree $> 1$,
then we add $s_i(\mathcal U_i(F))$ to $Z'$.   

Suppose that $\mathcal{U}_{i}$ is geometrically integral and $s_i$ is birational.
As $\sigma$ varies over all $\sigma \in H^1(F, \mathrm{Aut}(\overline{\mathcal Y}_{i, j}/\overline{X}))$ such that
\begin{equation*}
(a(X, L), b(X, L)) \leq (a(\mathcal Y_{i, j}^\sigma, (s_{i,j}^\sigma)^*L), b(F, \mathcal Y_{i, j}^\sigma, (s_{i, j}^\sigma)^*L))
\end{equation*}
and the map is face contracting we add the set
\begin{equation*}
 \bigcup_{\sigma} s_{i,j}^\sigma(\mathcal Y_{i,j}^\sigma (F)) \subset X(F)
 \end{equation*}
 to $Z'$.  We repeat this process for each of the finitely many $\mathcal{Y}_{i,j}$.  Since the $\mathcal{Y}_{i,j}$ are geometrically integral by Lemma~\ref{lemm: finitelymanycoversovernf} (1), $Z'$ is contained in a thin set of $X(F)$ by Theorem~\ref{theo:twists}.  We show that $Z_{1}$ and $Z_{2}$ are contained in $Z'$.

\textbf{The set $Z_{1}$:}  Assume that $f: Y \rightarrow X$ is a thin map such that $Y$ is smooth and geometrically integral, $d(Y,f^{*}L) < d(X,L)$, and
\begin{equation*}
(a(X,L),b(F,X,L)) \leq (a(Y,f^{*}L),b(F,Y,f^{*}L)).
\end{equation*}
We would like to show that for any rational point $y \in Y(F)$ the image $f(y) \in Z'$. We may assume that $f(y) \not\in V$ since otherwise the statement is clear.
This condition implies that $a(Y,f^{*}L) =  a(X,L)$. 

We will next perform several constructions improving the properties of $Y$ so that we may apply Lemma \ref{lemm: finitelymanycoversovernf} (6).  Let $\phi : Y \dashrightarrow B$ be the canonical map for $K_{Y} + a(Y, f^*L) f^*L$. After replacing $Y$ by a birational model (and taking any preimage of $y$), we may assume that the canonical map is a morphism.  If the $a$-values of the images of the general fibers of $\phi$ were larger than $a(X,L)$ then we would have $f(Y) \subset V$, so by assumption we must have an equality instead.  Similarly, since the fibers of the canonical map for $Y$ are adjoint rigid with respect to $f^{*}L$, their images also must be adjoint rigid with respect to $L$ by Lemma \ref{lemm:dominantequalitycase}. Thus $B$ admits a rational map $g : B \dashrightarrow W_i$ for some $i$. After some birational modification (and again taking a preimage of $y$), we may assume that this rational map is a morphism. Then $f:Y \rightarrow X$ rationally factors through $f' : Y \dashrightarrow B \times_{W_i} \mathcal U_i \rightarrow \mathcal U_i$.
After again replacing $Y$ by a birational model and replacing $y$ by any preimage, we may suppose that $f'$ is a morphism and is generically a map of good families of adjoint rigid varieties. 
We may assume that $\mathcal U_i$ is geometrically irreducible and $s_i : \mathcal U_i \rightarrow X$ is birational as otherwise the desired containment of rational points is clear.

We may now apply Lemma~\ref{lemm: finitelymanycoversovernf} (6) to see that there exists $j$ and a twist $\sigma$ such that
\[
f(y) \in s_{i, j}^\sigma(\mathcal Y_{i, j}^\sigma(F)),
\]
and $f : Y \rightarrow X$ factors through $s_{i,j}^{\sigma}: \mathcal Y_{i, j}^\sigma \to X$ after an Iitaka base change.
 It only remains to verify
 \begin{equation*}
(a(X, L), b(F,X, L)) \leq (a(\mathcal Y_{i,j}^{\sigma}, (s_{i,j}^{\sigma})^*L), b(F, \mathcal Y_{i,j}^{\sigma}, (s_{i,j}^{\sigma })^*L)),
\end{equation*}
and if equality is achieved then $s_{i,j}^{\sigma}$ is face contracting. By the construction we know that the $a$-values are the same.

We first show that $b(F,Y,f^{*}L) \leq b(F, \mathcal Y_{i,j}^{\sigma}, (s_{i,j}^{\sigma })^*L)$.  
Let $c \in B$ be a general closed point.  By applying Lemma \ref{lemm: monodromyandbvalue} (3) we obtain
\[
b(F, Y, f^*L) \leq b(F, Y_{c}, f^{*}L).
\]
Since by assumption $f(y) \not \in V$ and $Y_{c}$ is general, $Y_{c}$ will map to a fiber of $\mathcal{Y}_{i,j}^{\sigma}$ lying over $T_{i,j}^{\sigma \circ}$.  Let $t \in T_{i, j}^{\sigma \circ}$ denote the image of $c$ and let $\mathcal{Y}^{\sigma}_{i,j,t}$ denote the corresponding fiber of $q_{i,j}$.  By construction every geometric component of $Y_{c}$ is birational to a geometric component of $\mathcal{Y}^{\sigma}_{i,j,t}$.  Thus Lemma~\ref{lemm:birfaceinv} shows that the $b$-values of these two varieties with respect to $L$ are the same.  Finally, by Lemma~\ref{lemm: finitelymanycoversovernf} (5) we have an equality $b(F, \mathcal Y^{\sigma}_{i,j}, s^{\sigma*}_{i, j}L) = b(F, \mathcal Y^{\sigma}_{i,j,t}, s^{\sigma*}_{i, j}L|_{\mathcal{Y}^{\sigma}_{i,j,t}})$.  Together these inequalities show the desired statement.

Finally suppose that
\[
(a(X, L), b(F, X, L)) = (a(\mathcal Y_{i,j}^{\sigma}, (s_{i,j}^{\sigma})^*L), b(F, \mathcal Y_{i,j}^{\sigma}, (s_{i,j}^{\sigma })^*L))
\]
holds. Note that we assume $d(\mathcal{Y}^{\sigma}_{i,j},s_{i,j}^{\sigma*}L) = d(Y,f^{*}L) < d(X,L)$. Thus
\begin{align*}
\dim(W_{i}) = \dim(T^{\sigma}_{i,j}) & = \dim(\mathcal{Y}^{\sigma}_{i,j}) - d(\mathcal{Y}^{\sigma}_{i,j},s_{i,j}^{\sigma*}L) \\
& >   \dim (X) - d(X,L) = \kappa(X,K_{X} + a(X,L)L).
\end{align*}
By applying Lemma~\ref{lemm: facecontractingcondition} to $s_{i,j}^{\sigma}$ and using the birational equivalence of $X$ and $\mathcal{U}_{i}$ we see that $s_{i,j}^{\sigma}$ is face contracting. 

\textbf{The set $Z_{2}$:}  Assume that $f: Y \rightarrow X$ is a thin map such that $Y$ is smooth and geometrically integral, $d(Y,f^{*}L) = d(X,L)$, and either
\begin{equation*}
(a(X,L),b(F,X,L)) < (a(Y,f^{*}L),b(F,Y,f^{*}L))
\end{equation*}
or equality is achieved and $f$ is face contracting.  We would like to show that $f(Y(F)) \subset Z'$.

The argument is essentially the same as for the set $Z_{1}$.  The only difference is the case when the $a$ and $b$ values are equal.  In this situation, we must show that if $f: Y \to X$ is face contracting then the map $s_{i,j}^{\sigma}: \mathcal Y_{i,j}^{\sigma} \to X$ is also face contracting. Let $\widetilde{Y}$ be a smooth birational model of an Iitaka base change of $Y$ such that $\widetilde{f} : \widetilde{Y} \rightarrow X$ factors  through $s_{i, j}^\sigma : \mathcal Y_{i, j}^\sigma \rightarrow X$.  Lemma \ref{lemm: iitakabasechangeandbvalue} gives a surjection $\mathcal{F}_{\widetilde{Y},\widetilde{f}^{*}L} \to \mathcal{F}_{Y,f^{*}L}$.  Thus it suffices to show that the map $\mathcal{F}_{\widetilde{Y},\widetilde{f}^{*}L} \to \mathcal{F}_{\mathcal Y_{i,j}^{\sigma},s_{i,j}^{\sigma *}L}$ is injective.
As before choose a general fiber $\widetilde{Y}_{c}$ of the canonical map for $\widetilde{Y}$ and let $\mathcal Y_{i,j,t}^{\sigma}$ denote the corresponding fiber of the family map for $\mathcal Y_{i,j}^{\sigma}$.  Recall that by assumption $\widetilde{f}(\widetilde{Y})$ is not contained in $V$.
Since $\widetilde{Y}_{c}$ is general, every geometric component of $\widetilde{Y}_{c}$ is birational to a geometric component of $\mathcal{Y}^{\sigma}_{i,j,t}$ and the image $t$ of $c$ is contained in $T_{i,j}^{\sigma \circ}$.  
Consider the maps
\begin{equation*}
\xymatrix{
 \mathcal{F}_{\widetilde{Y}_{c},\widetilde{f}^{*}L|_{\widetilde{Y}_{c}}} \ar@{>}[r] \ar@{>}[d]& \mathcal{F}_{\widetilde{Y},\widetilde{f}^{*}L} \ar@{>}[d]\\
 \mathcal{F}_{\mathcal Y_{i,j,t}^{\sigma},s_{i,j}^{\sigma *}L|_{\mathcal{Y}_{i,j,t}^{\sigma}}}  \ar@{>}[r] & \mathcal{F}_{\mathcal Y_{i,j}^{\sigma},s_{i,j}^{\sigma *}L} 
}
\end{equation*}
The arrow on the left is an isomorphism by Lemma~\ref{lemm:birfaceinv}.  By Lemma~\ref{lemm: finitelymanycoversovernf} (5) the arrow on the bottom is an isomorphism and by Lemma \ref{lemm: monodromyandbvalue} (3) the arrow on the top is surjective.  This implies that all the arrows are isomorphisms. Thus our assertion follows.
\end{proof}

\bibliographystyle{alpha}
\bibliography{balancedVI}

\end{document}